\newif\ifcref\creftrue
\def\definetac{\newif\iftac}    % Can't define a \newif inside another \if!
\def\definebeamer{\newif\ifbeamer}
\def\definecref{\newif\ifcref}
\else\usepackage{amsthm}\fi
  \definecolor{darkgreen}{rgb}{0,0.45,0} 
\else\usepackage[pagebackref,colorlinks,citecolor=darkgreen,linkcolor=darkgreen]{hyperref}
  \renewcommand*{\backref}[1]{}
  \renewcommand*{\backrefalt}[4]{({%
      \ifcase #1 Not cited.%
            \or Cited on p.~#2%
            \else Cited on pp.~#2%
      \fi%
    })}\fi
\ifcref\usepackage{cleveref,aliascnt}\fi
\let\ea\expandafter
\def\mdef#1#2{\ea\ea\ea\gdef\ea\ea\noexpand#1\ea{\ea\ensuremath\ea{#2}\xspace}}
\def\alwaysmath#1{\ea\ea\ea\global\ea\ea\ea\let\ea\ea\csname your@#1\endcsname\csname #1\endcsname
  \ea\def\csname #1\endcsname{\ensuremath{\csname your@#1\endcsname}\xspace}}
\def\foreachletter#1#2#3{\foreachcount=#1
  \ea\loop\ea\ea\ea#3\@alph\foreachcount
  \advance\foreachcount by 1
  \ifnum\foreachcount<#2\repeat}
\def\foreachLetter#1#2#3{\foreachcount=#1
  \ea\loop\ea\ea\ea#3\@Alph\foreachcount
  \advance\foreachcount by 1
  \ifnum\foreachcount<#2\repeat}
\def\definescr#1{\ea\gdef\csname s#1\endcsname{\ensuremath{\mathscr{#1}}\xspace}}
\def\definecal#1{\ea\gdef\csname c#1\endcsname{\ensuremath{\mathcal{#1}}\xspace}}
\def\definebold#1{\ea\gdef\csname b#1\endcsname{\ensuremath{\mathbf{#1}}\xspace}}
\def\definebb#1{\ea\gdef\csname d#1\endcsname{\ensuremath{\mathbb{#1}}\xspace}}
\def\definefrak#1{\ea\gdef\csname f#1\endcsname{\ensuremath{\mathfrak{#1}}\xspace}}
\def\definesf#1{\ea\gdef\csname i#1\endcsname{\ensuremath{\mathsf{#1}}\xspace}}
\def\definebar#1{\ea\gdef\csname #1bar\endcsname{\ensuremath{\overline{#1}}\xspace}}
\def\definetil#1{\ea\gdef\csname #1til\endcsname{\ensuremath{\widetilde{#1}}\xspace}}
\def\definehat#1{\ea\gdef\csname #1hat\endcsname{\ensuremath{\widehat{#1}}\xspace}}
\def\definechk#1{\ea\gdef\csname #1chk\endcsname{\ensuremath{\widecheck{#1}}\xspace}}
\def\defineul#1{\ea\gdef\csname u#1\endcsname{\ensuremath{\underline{#1}}\xspace}}
\def\autofmt@n#1\autofmt@end{\mathrm{#1}}
\def\autofmt@b#1\autofmt@end{\mathbf{#1}}
\def\autofmt@d#1#2\autofmt@end{\mathbb{#1}\mathsf{#2}}
\def\autofmt@c#1#2\autofmt@end{\mathcal{#1}\mathit{#2}}
\def\autofmt@s#1#2\autofmt@end{\mathscr{#1}\mathit{#2}}
\def\autofmt@i#1\autofmt@end{\mathsf{#1}}
\def\autofmt@f#1\autofmt@end{\mathfrak{#1}}
\def\autofmt@u#1\autofmt@end{\underline{\smash{\mathsf{#1}}}}
\def\autofmt@U#1\autofmt@end{\underline{\underline{\smash{\mathsf{#1}}}}}
\def\autofmt@h#1\autofmt@end{\widehat{#1}}
\def\autofmt@r#1\autofmt@end{\overline{#1}}
\def\autofmt@t#1\autofmt@end{\widetilde{#1}}
\def\autofmt@k#1\autofmt@end{\check{#1}}
\def\auto@drop#1{}
\def\autodef#1{\ea\ea\ea\@autodef\ea\ea\ea#1\ea\auto@drop\string#1\autodef@end}
\def\@autodef#1#2#3\autodef@end{%
  \ea\def\ea#1\ea{\ea\ensuremath\ea{\csname autofmt@#2\endcsname#3\autofmt@end}\xspace}}
\def\autodefs@end{blarg!}
\def\autodefs#1{\@autodefs#1\autodefs@end}
\def\@autodefs#1{\ifx#1\autodefs@end%
  \def\autodefs@next{}%
  \else%
  \def\autodefs@next{\autodef#1\@autodefs}%
  \fi\autodefs@next}
\DeclareSymbolFont{bbold}{U}{bbold}{m}{n}
\DeclareSymbolFontAlphabet{\mathbbb}{bbold}
\newcommand{\done}{\ensuremath{\mathbbb{1}}\xspace}
\newcommand{\dtwo}{\ensuremath{\mathbbb{2}}\xspace}
\mdef\delbar{\overline{\partial}}
\mdef\hf{\textstyle\frac12 }
\mdef\thrd{\textstyle\frac13 }
\mdef\qtr{\textstyle\frac14 }
\newcommand{\op}{^{\mathrm{op}}}
\mdef\Id{\mathrm{Id}}
\mdef\id{\mathrm{id}}
\def\frc#1/#2.{\frac{#1}{#2}}   % \frc x^2+1 / x^2-1 .
\mdef\ten{\mathrel{\otimes}}
\mdef\sqten{\mathrel{\boxtimes}}
\DeclareFontFamily{U}{min}{}
\DeclareFontShape{U}{min}{m}{n}{<-> udmj30}{}
\DeclareFontFamily{U}{mathb}{\hyphenchar\font45}
\DeclareFontShape{U}{mathb}{m}{n}{
      <5> <6> <7> <8> <9> <10> gen * mathb
      <10.95> mathb10 <12> <14.4> <17.28> <20.74> <24.88> mathb12
      }{}
\DeclareSymbolFont{mathb}{U}{mathb}{m}{n}
\DeclareFontFamily{U}{mathx}{\hyphenchar\font45}
\DeclareFontShape{U}{mathx}{m}{n}{
      <5> <6> <7> <8> <9> <10>
      <10.95> <12> <14.4> <17.28> <20.74> <24.88>
      mathx10
      }{}
\DeclareSymbolFont{mathx}{U}{mathx}{m}{n}
\DeclareMathSymbol{\dotplus}       {2}{mathb}{"00}% name to be checked
\DeclareMathSymbol{\dotdiv}        {2}{mathb}{"01}% name to be checked
\DeclareMathSymbol{\dottimes}      {2}{mathb}{"02}% name to be checked
\DeclareMathSymbol{\divdot}        {2}{mathb}{"03}% name to be checked
\DeclareMathSymbol{\udot}          {2}{mathb}{"04}% name to be checked
\DeclareMathSymbol{\square}        {2}{mathb}{"05}% name to be checked
\DeclareMathSymbol{\Asterisk}      {2}{mathb}{"06}
\DeclareMathSymbol{\bigast}        {1}{mathb}{"06}
\DeclareMathSymbol{\coAsterisk}    {2}{mathb}{"07}
\DeclareMathSymbol{\bigcoast}      {1}{mathb}{"07}
\DeclareMathSymbol{\circplus}      {2}{mathb}{"08}% name to be checked
\DeclareMathSymbol{\pluscirc}      {2}{mathb}{"09}% name to be checked
\DeclareMathSymbol{\convolution}   {2}{mathb}{"0A}% name to be checked
\DeclareMathSymbol{\divideontimes} {2}{mathb}{"0B}% name to be checked
\DeclareMathSymbol{\blackdiamond}  {2}{mathb}{"0C}% name to be checked
\DeclareMathSymbol{\sqbullet}      {2}{mathb}{"0D}% name to be checked
\DeclareMathSymbol{\bigstar}       {2}{mathb}{"0E}
\DeclareMathSymbol{\bigvarstar}    {2}{mathb}{"0F}
\DeclareMathSymbol{\corresponds}   {3}{mathb}{"1D}% name to be checked
\DeclareMathSymbol{\boxleft}      {2}{mathb}{"68}
\DeclareMathSymbol{\boxright}     {2}{mathb}{"69}
\DeclareMathSymbol{\boxtop}       {2}{mathb}{"6A}
\DeclareMathSymbol{\boxbot}       {2}{mathb}{"6B}
\DeclareMathSymbol{\updownarrows}          {3}{mathb}{"D6}
\DeclareMathSymbol{\downuparrows}          {3}{mathb}{"D7}
\DeclareMathSymbol{\Lsh}                   {3}{mathb}{"E8}
\DeclareMathSymbol{\Rsh}                   {3}{mathb}{"E9}
\DeclareMathSymbol{\dlsh}                  {3}{mathb}{"EA}
\DeclareMathSymbol{\drsh}                  {3}{mathb}{"EB}
\DeclareMathSymbol{\looparrowdownleft}     {3}{mathb}{"EE}
\DeclareMathSymbol{\looparrowdownright}    {3}{mathb}{"EF}
\DeclareMathSymbol{\curvearrowleftright}   {3}{mathb}{"F2}
\DeclareMathSymbol{\curvearrowbotleft}     {3}{mathb}{"F3}
\DeclareMathSymbol{\curvearrowbotright}    {3}{mathb}{"F4}
\DeclareMathSymbol{\curvearrowbotleftright}{3}{mathb}{"F5}
\DeclareMathSymbol{\leftsquigarrow}        {3}{mathb}{"F8}
\DeclareMathSymbol{\rightsquigarrow}       {3}{mathb}{"F9}
\DeclareMathSymbol{\leftrightsquigarrow}   {3}{mathb}{"FA}
\DeclareMathSymbol{\lefttorightarrow}      {3}{mathb}{"FC}
\DeclareMathSymbol{\righttoleftarrow}      {3}{mathb}{"FD}
\DeclareMathSymbol{\uptodownarrow}         {3}{mathb}{"FE}
\DeclareMathSymbol{\downtouparrow}         {3}{mathb}{"FF}
\DeclareMathSymbol{\varhash}       {0}{mathb}{"23}
\DeclareMathSymbol{\sqSubset}       {3}{mathb}{"94}
\DeclareMathSymbol{\sqSupset}       {3}{mathb}{"95}
\DeclareMathSymbol{\nsqSubset}      {3}{mathb}{"96}
\DeclareMathSymbol{\nsqSupset}      {3}{mathb}{"97}
\DeclareMathAccent{\widecheck}    {0}{mathx}{"71}
\DeclareMathOperator\ob{ob}
\newcommand{\too}[1][]{\ensuremath{\overset{#1}{\longrightarrow}}}
\newcommand{\ot}{\ensuremath{\leftarrow}}
\let\toto\rightrightarrows
\let\into\hookrightarrow
\mdef\we{\overset{\sim}{\longrightarrow}}
\mdef\leftwe{\overset{\sim}{\longleftarrow}}
\let\epi\twoheadrightarrow
\let\xto\xrightarrow
\let\xot\xleftarrow
\def\rightarrowtailfill@{\arrowfill@{\Yright\joinrel\relbar}\relbar\rightarrow}
\newcommand\xrightarrowtail[2][]{\ext@arrow 0055{\rightarrowtailfill@}{#1}{#2}}
\def\twoheadrightarrowfill@{\arrowfill@{\relbar\joinrel\relbar}\relbar\twoheadrightarrow}
\newcommand\xtwoheadrightarrow[2][]{\ext@arrow 0055{\twoheadrightarrowfill@}{#1}{#2}}
\def\slashedarrowfill@#1#2#3#4#5{%
  $\m@th\thickmuskip0mu\medmuskip\thickmuskip\thinmuskip\thickmuskip
   \relax#5#1\mkern-7mu%
   \cleaders\hbox{$#5\mkern-2mu#2\mkern-2mu$}\hfill
   \mathclap{#3}\mathclap{#2}%
   \cleaders\hbox{$#5\mkern-2mu#2\mkern-2mu$}\hfill
   \mkern-7mu#4$%
}
\def\rightslashedarrowfill@{%
  \slashedarrowfill@\relbar\relbar\mapstochar\rightarrow}
\newcommand\xslashedrightarrow[2][]{%
  \ext@arrow 0055{\rightslashedarrowfill@}{#1}{#2}}
\mdef\hto{\xslashedrightarrow{}}
\mdef\htoo{\xslashedrightarrow{\quad}}
\def\xiso#1{\mathrel{\mathrlap{\smash{\xto[\smash{\raisebox{1.3mm}{$\scriptstyle\sim$}}]{#1}}}\hphantom{\xto{#1}}}}
\def\toiso{\xto{\smash{\raisebox{-.5mm}{$\scriptstyle\sim$}}}}
\def\jd#1{\@jd#1\ej}
\def\@jd#1|-#2\ej{\@@jd#1,,\;\vdash\;\left(#2\right)}
\def\@@jd#1,{\@ifmtarg{#1}{\let\next=\relax}{\left(#1\right)\let\next=\@@@jd}\next}
\def\@@@jd#1,{\@ifmtarg{#1}{\let\next=\relax}{,\,\left(#1\right)\let\next=\@@@jd}\next}
\def\jdm#1{\@jdm#1\ej}
\def\@jdm#1|-#2\ej{\@@jd#1,,\\\vdash\;\left(#2\right)}
\long\def\my@drawfill#1#2;{%
\@skipfalse
\fill[#1,draw=none] #2;
\@skiptrue
\draw[#1,fill=none] #2;
}
\newif\if@skip
\newcommand{\skipit}[1]{\if@skip\else#1\fi}
\newcommand{\drawfill}[1][]{\my@drawfill{#1}}
\newcounter{nodemaker}
\newcommand{\twocell}[2][]{%
  \global\edef\mynodeone{twocell\arabic{nodemaker}}%
  \stepcounter{nodemaker}%
  \global\edef\mynodetwo{twocell\arabic{nodemaker}}%
  \stepcounter{nodemaker}%
  \ar[#2,phantom,shift left=3,""{name=\mynodeone}]%
  \ar[#2,phantom,shift right=3,""'{name=\mynodetwo}]%
  \ar[Rightarrow,from=\mynodeone,to=\mynodetwo,"{#1}"]%
}
\newcommand{\drpullback}[1][dr]{\ar[#1,phantom,near start,"\lrcorner"]}
\newif\ifhyperref
  \let\your@state\state
  \def\state#1{\my@state#1}
  \def\my@state#1.{\gdef\currthmtype{#1}\your@state{#1.}}
  \let\your@staterm\staterm
  \def\staterm#1{\my@staterm#1}
  \def\my@staterm#1.{\gdef\currthmtype{#1}\your@staterm{#1.}}
  \let\@defthm\newtheorem
  \def\switchtotheoremrm{\let\@defthm\newtheoremrm}
  \def\defthm#1#2#3{\@defthm{#1}{#2}} % Ignore the third argument (for cleveref only)
  \let\your@section\section
  \def\section{\gdef\currthmtype{section}\your@section}
  \let\your@figure\figure
  \def\figure{\gdef\currthmtype{Figure}\your@figure}
  \def\currthmtype{}
    \def\autoref#1{\ref*{label@name@#1}~\ref{#1}}
    \def\autoref#1{\ref{label@name@#1}~\ref{#1}}
    \let\old@label\label%
    \def\label#1{%
      {\let\your@currentlabel\@currentlabel%
        \edef\@currentlabel{\currthmtype}%
        \old@label{label@name@#1}}%
      \old@label{#1}}
  \let\cref\autoref
  \def\defthm#1#2#3{%
    %% Ensure all theorem types are numbered with the same counter
    \newaliascnt{#1}{thm}
    \newtheorem{#1}[#1]{#2}
    \aliascntresetthe{#1}
    %% This command tells cleveref's \cref what to call things
    \crefname{#1}{#2}{#3}% following brace must be on separate line to support poorman cleveref sed file
  }
    \def\defthm#1#2#3{% Ignore the third argument (for cleveref only)
      %% All types of theorems are number inside sections
      \newtheorem{#1}{#2}[section]%
      %% This command tells hyperref's \autoref what to call things
      \expandafter\def\csname #1autorefname\endcsname{#2}%
      %% This makes all the theorem counters actually the same counter
      \expandafter\let\csname c@#1\endcsname\c@thm}
    \def\defthm#1#2#3{\newtheorem{#1}[thm]{#2}} % Ignore the third argument (for cleveref only)
\let\SK@label\label\fi
    \let\old@label\label
    \let\your@thm\@thm
    \def\@thm#1#2#3{\gdef\currthmtype{#3}\your@thm{#1}{#2}{#3}}
    \def\currthmtype{}
    \def\label#1{{\let\your@currentlabel\@currentlabel\def\@currentlabel%
        {\currthmtype~\your@currentlabel}%
        \SK@label{#1@}}\old@label{#1}}
    \def\autoref#1{\ref{#1@}}
  \let\cref\autoref
\newtheorem{thm}{Theorem}[section]
  \crefname{thm}{Theorem}{Theorems}
\else\theoremstyle{definition}\fi
\else\theoremstyle{remark}\fi
  \crefname{part}{Part}{Parts}
  \crefname{figure}{Figure}{Figures}
  \let\qed\endproof
  \let\your@endproof\endproof
  \def\my@endproof{\your@endproof}
  \def\endproof{\my@endproof\gdef\my@endproof{\your@endproof}}
  \def\qedhere{\tag*{\endproofbox}\gdef\my@endproof{\relax}}
  \def\pr@@f[#1]{\subsubsection*{\sc #1.}}
\def\thmqedhere{\expandafter\csname\csname @currenvir\endcsname @qed\endcsname}
  \let\c@equation\c@subsection
  \let\c@equation\c@thm
\numberwithin{equation}{section}
\mdef\ep{\varepsilon}
\mdef\ph{\varphi}
\let\al\alpha
\let\be\beta
\let\gm\gamma
\let\si\sigma
\let\ze\zeta
\let\th\theta
\title{The derivator of setoids}
\author{Michael Shulman}
\mdef\cECat{\mathcal{EC}\mathit{at}}
\mdef\cUCat{\mathcal{UC}\mathit{at}}
\mdef\sex{\iSet_{\mathsf{ex}}}
\mdef\eex{\sE_{\mathsf{ex}}}
\mdef\sreg{\iSet_{\mathsf{reg}}}
\mdef\ereg{\sE_{\mathsf{reg}}}
\mdef\spos{\iSet_{\mathsf{pos}}}
\mdef\epos{\sE_{\mathsf{pos}}}
\mdef\hom{\mathsf{Hom}}
\mdef\cchom{\mathsf{Hom}_{\mathsf{cc}}}
\def\ob#1{#1_0}
\def\incl#1{\iota_{#1}}
\let\D\sD
\let\T\sT
\let\W\cW
\def\tint{\mathord{\textstyle\int}}
\def\odt{\mathbin{\widetilde{\odot}}}
\mdef\etatil{\widetilde{\eta}}
\def\ana{_{\mathsf{ana}}}
\mdef\sana{\iSpace\ana}
\date{\today}
\thanks{This material is based upon work supported by the Air Force Office of Scientific Research under award
number FA9550-21-1-0009.}
\begin{document}

%~\cite{hs:constructive-ct} for E-category theory
% Setoids in type theory -- GILLES BARTHE, VENANZIO CAPRETTA, OLIVIER PONS: E-categories of setoids~\cite{bcp:setoids}
% Originally introduced by Aczel in 1993~\cite{aczel:galois}
%\cite{bd:tt-lcc-int} for ex/lex E-categories and E-CWFs, \cite{kinoshita:bicat-ecat} for reg/lex ones as locally discrete bicategories
%\cite{kp:cat-setoid} for the category of setoids that we're taking a quotient of, and bicategorical enrichment over \sreg

\begin{abstract}
  Without the axiom of choice, the free exact completion of the category of sets (i.e.\ the category of setoids) may not be complete or cocomplete.
  We will show that nevertheless, it can be enhanced to a derivator: the formal structure of categories of diagrams related by Kan extension functors.
  Moreover, this derivator is the free cocompletion of a point in a class of ``1-truncated derivators'' (which behave like a 1-category rather than a higher category).
  
  In classical mathematics, the free cocompletion of a point relative to all derivators is the homotopy theory of spaces.
  Thus, if there is a homotopy theory that can be shown to have this universal property constructively, its 1-truncation must contain not only sets, but also setoids.
  This suggests that either setoids are an unavoidable aspect of constructive homotopy theory, or more radical modifications to the notion of homotopy theory are needed.
\end{abstract}

\maketitle
\setcounter{tocdepth}{1}
\tableofcontents

\section{Introduction}
\label{sec:introduction}

Can homotopy theory be developed in constructive mathematics, or even in ZF set theory without the axiom of choice?
% Many formal aspects of homotopy theory, such as the abstract theory of Quillen model categories, are fully constructive, as long as various existence statements are interpreted constructively as the existence of a specification function.
% But the axioms of choice and excluded middle are frequently used in the construction and study of particular homotopy theories, including the classical homotopy theory of spaces (or, equivalently, simplicial sets).
%
Recently this question has begun to attract more attention, due partly to the rise of interest in Homotopy Type Theory and Univalent Foundations~\cite{hottbook}.
The latter is a \emph{constructive} type theory whose first model was nevertheless relentlessly \emph{classical}, using the Kan--Quillen model category of simplicial sets~\cite{klv:ssetmodel}.
Since then, constructive models of homotopy type theory have been found in categories of cubical sets~\cite{bch:tt-cubical,bch:univalence-cubical,cchm:cubicaltt,abcfhl:cart-cube,accrs:equivariant}, and the model category of simplicial sets has been developed constructively~\cite{henry:constr-sset,gss:constr-sset,gh:constr-sset-uf,ghss:eff-kan}, though not quite to the point of strictly modeling type theory.

In particular, there are now at least two constructive homotopy theories --- the aforementioned simplicial sets and the equivariant cartesian cubical sets of~\cite{accrs:equivariant} --- that can \emph{classically} be shown to present the homotopy theory of spaces.
However, it is not known whether they are \emph{constructively} equivalent to \emph{each other}. %, and there are some indications that they are \emph{not}.
Thus one may naturally wonder: if they are not equivalent, which is the ``correct'' constructive homotopy theory of spaces?\footnote{By ``space'' we mean some combinatorial notion of $\infty$-groupoid.  It is probably not reasonable to expect a theory of $\infty$-groupoids to be constructively equivalent to the homotopy theory of \emph{topological} spaces, as continuous functions are much less flexible constructively than classically.}
Or, perhaps, are they both ``incorrect''?
What does ``correct'' even mean?

In fact, both of these homotopy theories have a property that at first may seem peculiar: their 1-truncations (meaning their subcategory of homotopy 0-types) are \emph{not} equivalent to the category of (constructive) sets that we started from.
The 1-truncation of simplicial sets appears to be equivalent to the free exact completion \sex of \iSet~\cite{cm:ex-lex}, a.k.a.\ the category of ``setoids'' (Simon Henry, personal communication).
The 1-truncation of equivariant cartesian cubical sets may not be equivalent to \sex (Andrew Swan, personal communication), but neither is it equivalent to \iSet.
This is a significant departure from both classical mathematics and homotopy type theory, in which sets can be regarded, up to equivalence, as homotopy 0-types.
(Note that the inclusion $\iSet\into \sex$ is an equivalence if and only if the axiom of choice holds.)

In particular, this means that when homotopy type theory is interpreted in one of these constructive model categories, its internally-defined ``sets'' will be interpreted in the model as some kind of \emph{setoid} rather than as actual sets.
This is somewhat disturbing for the prospect of constructive applications of homotopy type theory and its semantics.
At a stretch, one might even regard it as evidence for the \emph{incorrectness} of \emph{both} of these model categories.

In this paper we propose one possible correctness criterion for a constructive homotopy theory of spaces.
Moreover, we provide some evidence that, the foregoing remarks notwithstanding, the 1-truncation of \emph{any} theory satisfying this criterion \emph{must} contain at least \sex, not just \iSet.
In a moment we will discuss possible interpretations of this fact, but first let us explain the criterion and the evidence.

% \subsection{Derivators and free cocompletions}
% \label{sec:deriv-free-cocompl}

%To explain our proposed criterion, recall that
Classically, the homotopy theory of spaces has a universal property: it is the free cocomplete $(\infty,1)$-category generated by a point~\cite[5.1.5.6]{lurie:higher-topoi}, just as \iSet is the free cocomplete 1-category generated by a point.
However, this is somewhat circular as a characterization, since an $(\infty,1)$-category is defined to have \emph{spaces} as hom-objects.\footnote{To be sure, not all definitions of $(\infty,1)$-category explicitly incorporate hom-spaces.
  But the question of the correct constructive definition of $(\infty,1)$-category seems likely to be at least as difficult as that of the correct constructive definition of $\infty$-groupoid, i.e.\ homotopy space.}
One possible way around this would be to work with \emph{presentations} of $(\infty,1)$-categories using 1-categorical structures such as Quillen model categories.
However, universal properties \emph{of} $(\infty,1)$-categories (as opposed to universal properties of objects \emph{in} an $(\infty,1)$-category) are hard to express at this level --- indeed, this is one of the main reasons for the recent explicit use of $(\infty,1)$-categories instead of model categories in applications such as~\cite{lurie:higher-topoi}.
Moreover, although in classical mathematics most interesting complete and cocomplete $(\infty,1)$-categories (including all locally presentable ones) can be presented by model categories, we ought not to assume \textit{a priori} that this will still be the case constructively.

Instead, we can work with a 1-categorical \emph{quotient} of an $(\infty,1)$-category.
The ordinary homotopy category, obtained by identifying equivalent pairs of parallel morphisms, is too coarse for this purpose; but an enhancement of it (due to Heller~\cite{heller:htpythys}, Grothendieck~\cite{grothendieck:derivateurs}, and Franke~\cite{franke:triang}) turns out to be sufficient.
Namely, given a complete and cocomplete $(\infty,1)$-category \sC, we consider the homotopy 1-categories of the functor $(\infty,1)$-categories $\sC^A$ for all small 1-categories $A$, together with the restriction functors relating them and their left and right adjoints (homotopy Kan extensions).
This structure is nowadays called a \emph{derivator} (after Grothendieck), and it retains a surprising amount of information about \sC.

In particular, Heller~\cite{heller:htpythys} and Cisinski~\cite{cisinski:presheaves} have shown, in classical mathematics, that the derivator \iSpace of spaces is the free cocompletion of a point.
This means that for any other derivator \D, the category of cocontinuous morphisms $\iSpace\to\D$ (those that preserve the ``formal left Kan extensions'' included in the structure of a derivator) is equivalent to the category $\D(\done)$ of ``diagrams of shape $\done$'' in \D (i.e.\ ``objects of \D''; here $\done$ denotes the terminal category).
Although a derivator is intuitively a homotopical, i.e.\ $(\infty,1)$-categorical, object, formally this universal property lives at the same categorical level as the universal property of \iSet: derivators, like 1-categories, form a 2-category, and the universal property is an equivalence involving hom-categories therein.
In the words of Cisinski~\cite{cisinski:deriv-comment}:

\begin{quote}
  \small
  This provides a first argument that the usual homotopy theory of simplicial sets plays a central role\dots and for this, we didn't take for granted that homotopy types should be that important: its universal property is formulated with category theory only.\dots derivators provide a truncated version of higher category theory which gives us the language to characterize higher category theory using only usual category theory, without any emphasis on any particular model (in fact, without assuming we even know any).
\end{quote}

Thus, a natural correctness criterion for a constructive homotopy theory of spaces would be that it defines a derivator \iSpace that is the free cocompletion of a point.

Of course, it is not \textit{a priori} clear that such a derivator even exists in constructive mathematics.
We will not attempt to construct one in this paper.
Instead, we will attempt to understand how \iSpace \emph{would} behave, if it exists, by studying derivators that ought to be \emph{localizations} of it.
By this we mean derivators that should be obtained from \iSpace by universally inverting some class of morphisms among cocontinuous morphisms, although in good situations this equivalent to being a reflective subcategory of \iSpace (a \emph{reflective localization}).

Classically, \iSpace has many interesting reflective localizations, such as those that invert some set of prime numbers.
More relevantly for us, for all integers $n\ge -2$ it has a reflective localization $\iSpace_n$ consisting of homotopy $n$-types.
In particular, $\iSpace_0$ is just the category $\iSet$ of sets (regarded as a derivator), while $\iSpace_{-1}$ is the poset \iProp of truth values (which, classically, is the two-element lattice) and $\iSpace_{-2}$ is the terminal derivator.
Moreover, each $\iSpace_n$ is the free cocompletion of a point in the world of ``$(n\mathord+1)$-truncated derivators'' --- those that behave like $(n\mathord+1,1)$-categories rather than $(\infty,1)$-categories.%
\footnote{These ``$n$-truncated derivators'' are distinct from the ``$n$-derivators'' of~\cite{raptis:hderiv}.
  The former are 1-derivators (in the terminology of~\cite{raptis:hderiv}) that act as if they arose from an $(n,1)$-category, while the latter generalize the definition of derivator to use $n$-categories in place of 1-categories.}
In particular, this universal property for \iSet generalizes its ordinary one, %\footnote{At least, for mapping into categories that are complete as well as cocomplete.  The more general case can be generalized by working with ``left derivators''~\cite{coley:half-deriv} rather than ordinary derivators.}
giving it a mapping property into all 1-truncated derivators, not just those that arise from 1-categories.

In this paper we will exhibit, in constructive mathematics, derivators $\iSpace_n$ that have this universal property for $n=0,-1,-2$.
In fact, for $n=0$ and $-1$ (and thus presumably for all $n\ge -1$) the notion of ``$(n\mathord+1)$-truncated derivator'' multifurcates constructively into several different notions, with several different corresponding localizations.

For one natural notion of ``1-truncated derivator'', we find that \iSet is the free cocompletion of a point.
However, there are intuitively ``1-categorical'' derivators that are not 1-truncated in this sense.
Notably, we will show that for any complete category \sE having small coproducts preserved by pullback, its exact completion \eex can be enhanced to a derivator, which is \emph{not} ``1-truncated'' in the naive \iSet-based sense.
There is a weaker notion of 1-truncatedness that does encompass these examples, but in this world \iSet is no longer the free cocompletion of the point: instead that role is taken by \sex.\footnote{It is unclear exactly how this universal property of the \emph{derivator} \sex is related to the usual universal property of the \emph{category} \sex.
  But it is reminiscent of the result of~\cite[Corollary to Lemma 4.1]{carboni:free-constr} that classically, the free exact completion of the small-coproduct completion of a small category is equivalent to its presheaf category, i.e.\ its free cocompletion.
  (Note that \iSet is the free small-coproduct completion of a point, as well as the free cocompletion of a point.)}
There is also an intermediate notion of ``1-truncatedness'', whose free cocompletion of a point is a derivator version of \sreg, the free regular completion of \iSet.
We will refer to these three notions of 1-truncatedness as being \emph{\iSet-local}, \emph{\sex-local}, and \emph{\sreg-local} respectively.

A similar thing happens one dimension down: in addition to the lattice \iProp, we have a derivator version of \spos, the preorder reflection of \iSet.
Each of them is the free cocompletion of a point in its corresponding world of local derivators.
% NB: I believe the inclusion of posets \iProp\to\spos is an equivalence exactly when the "propositional axiom of choice" holds (which is so weak it follows from LEM), but the corresponding inclusion of *derivators* is only an equivalence if the full axiom of choice holds.

The class of \sex-local derivators is broader than that of \iSet-local ones, and in particular there is a cocontinuous map of derivators $\sex \to \iSet$ but not conversely.
Thus, if both were realized as reflective subcategories of \iSpace, then \sex would be the larger one.
This provides our evidence that if a free cocompletion of a point exists constructively, its 1-truncation must involve \sex and not just \iSet.%
\footnote{There is the possibility that this 1-truncation could be something even larger than \sex.
  It is not clear whether $(\sex)_{\mathsf{ex}}$ can be made into a derivator at all, but if it could be then it would be one possible candidate.
  % , since the ordinary category \sex is not complete or cocomplete, while it is unclear how to ``exact complete'' a derivator.
  In addition, the 1-truncation of cubical sets may also be larger than \sex (Andrew Swan, personal communication), so it is another possibility.}

I can think of at least three responses to this observation.
The first is to bite the bullet and accept that the correct homotopy theory of spaces is constructively the ``$\infty$-exact completion'' of \iSet, and in particular its 0-truncated objects are setoids rather than sets.
Thus, when applying homotopy theory constructively, we would be forced to use setoids, either exclusively or in tandem with sets.

This may be satisfying if our motivations for constructivity are purely philosophical.
Indeed, some constructivist schools start from a foundation whose primitive objects are not sets but some kind of ``pre-set'' or ``type'' that lacks quotients entirely, such as some formalizations of Bishop's constructive mathematics~\cite{bb:constr-analysis} or Martin-L\"{o}f's original constructive type theory~\cite{martinlof:itt}.
In this case, if ``the category of sets'' is to be exact, it \emph{must} be defined as a free exact completion of the category of pre-sets, and so the appearance of an exact completion is entirely unproblematic.\footnote{Relatedly, note that the model category of simplicial objects constructed in~\cite{ghss:eff-kan} requires only a category with finite limits and extensive countable coproducts.}

However, if we also care about categorical semantics, the appearance of setoids is troubling.
When interpreting constructive mathematics internally in a category, it is the sets, not the setoids, that correspond to objects of that category.
If our category of interest happens itself to be an exact completion of some other category, we might be able to interpret our mathematics in the latter, with the former category appearing as the exact completion of the latter.
However, although some important categories are exact completions (such as some presheaf toposes and realizability toposes), many are not (such as most sheaf toposes), so this approach cannot work for them.
This is related to the problem of constructing ``realizability higher toposes'' whose underlying 1-topos is an ordinary realizability topos~\cite{uemura:cubical-asm,su:ct-cubasm}.

Another problem with exact completions is that they destroy impredicativity: even if \iSet has a subobject classifier, \sex generally will not.
Again, a philosophical predicativist may be unbothered by this, but it is disconcerting to \emph{choose} to work with an impredicative category \iSet and nevertheless be forced into the predicative \sex as soon as we start trying to do homotopy theory.

%This is promising for the potential of simplicial or cubical sets to constructively be a presentation of \iSpace.
%But it is also disappointing for potential constructive applications of the homotopy theory of \iSpace analogous to those in classical mathematics --- assuming, of course, that we care about \iSet more than \sex.
%The latter is the case in most forms of constructive mathematics, particularly those intended to be valid internally in an arbitrary (pre)topos, since many (pre)toposes are not free exact completions of anything.

The second response is to reject our proposed ``correctness criterion'' for the homotopy theory of spaces.
And indeed, there are obvious grounds on which to do so.
Namely, our notion of derivator is based on small categories and \emph{functors} between them; but there are good arguments that in the absence of the axiom of choice, the correct notion of morphism between categories is instead that of an \emph{anafunctor}~\cite{makkai:avoiding-choice,bartels:hgt,roberts:ana}.
This suggests that we should instead be considering ``ana-derivators'' defining using anafunctors.
In that world, it might be the case that the free cocompletion of a point consists of spaces and anafunctors between them, and has \iSet as its 1-truncation.

However, there are difficulties involved in making this work.
Already for categories, it is impossible to prove even in ZF set theory that the bicategory of categories and anafunctors is locally small, cartesian closed, or complete~\cite{ak:nonsmall-ana}.
(There are much weaker axioms than AC that suffice for local smallness and cartesian closure, such as SCSA~\cite{makkai:avoiding-choice} and WISC~\cite{roberts:ana}, but their constructive status is arguable, and it is unclear whether they imply completeness as well.)
It seems likely that similar problems would arise in building a derivator out of 1-groupoids and anafunctors, let alone $\infty$-groupoids and $\infty$-anafunctors.

It may be more feasible to construct only a \emph{left} derivator of groupoids and anafunctors, which has colimits but not limits.
However, there are applications for which this would be insufficient; for instance, defining and constructing stacks requires taking limits over infinite sieves to define categories of descent data.

Finally, the third response is to reject the whole idea of \emph{defining} spaces constructively out of sets, and instead \emph{start} from a foundational theory such as homotopy type theory~\cite{hottbook}, in which spaces are primitive objects.
(Note that ``computably'' constructive flavors of homotopy type theory are also now available, such as the cubical type theories of~\cite{cchm:cubicaltt,abcfhl:cart-cube}.)
This allows ``sets'' to be \emph{defined} as homotopy 0-types, without forcing the appearance of any exact completion.
Semantically, this means working with the internal language of an $(\infty,1)$-topos, within which sits the internal language of a 1-topos.
This would be my personal preferred approach; I will comment on it further in \cref{rmk:univalence}.

\subsection*{Background theory}
\label{sec:background-theory}

We work in an informal constructive set theory, assuming neither the axiom of choice nor the law of excluded middle, with one universe to define a size boundary between large and small categories.
Most or all of our results could probably be formalized in the internal language of an elementary topos containing a universe~\cite{streicher:universes}; or in a membership-based set theory like IZF with a universe (or a weaker variant, since we probably do not need much replacement or collection); or in a dependent type theory with UIP, function extensionality, and quotients, like XTT~\cite{sag:xtt}.
The arguments should be predicative, as long as we allow \iProp, like \iSet, to be a large category.
Importantly, however, we do require effective quotients, so that our category \iSet of sets is exact.

\subsection*{Acknowledgments}
\label{sec:acknowledgments}

I would like to thank Peter LeFanu Lumsdaine, Christian Sattler, Andrew Swan, Simon Henry, Ivan De Liberti, David Roberts, Ulrik Buchholtz, Jacques Carette, and other participants at the Bohemian Logico-Philosophical Caf\'e and the Category Theory Community Server for enlightening discussions.
I am particularly grateful to Ian Coley for a careful reading and helpful feedback.

\section{The free exact completion}
\label{sec:setoids}

We start by reviewing the free exact completion.
Recall that an \textbf{exact category} (in the sense of Barr) is a category with finite limits and such that every internal equivalence relation has a pullback-stable quotient of which it is the kernel.

Let \sE be a 1-category with finite limits; we recall from~\cite{cm:ex-lex} how to build an exact category \eex from it freely.\footnote{$\eex$ is sometimes written $\sE_{\mathsf{ex/lex}}$, to emphasize that we started from a category \sE with only finite limits (i.e.\ one that is left exact, or ``lex'').
  This is to distinguish it from other exact completions such as $\sE_{\mathsf{ex/reg}}$, which requires \sE to be a regular category, and unlike the ex/lex completion is an idempotent operation.}
A first thought might be to take the equivalence relations in \sE as the objects of \eex, each such standing in for the quotient of itself.
This produces a category in which every equivalence relation coming from \sE has an effective quotient (see \cref{thm:ereg}), but it also introduces new equivalence relations that do not yet have quotients.
Thus, we need something more general, which turns out to be the following.

\begin{defn}
  A \textbf{pseudo-equivalence relation} in \sE consists of:
  \begin{itemize}
  \item Objects $X_0$ and $X_1$, with morphisms $s,t:X_1\toto X_0$.
  \item A morphism $r:X_0\to X_1$ such that $sr =tr = 1$.
  \item A morphism $v:X_1\to X_1$ such that $sv = t$ and $tv =s$. % Don't refer to this by that letter
  \item A morphism $m:X_1 \prescript{t}{}{\times}_{X_0}^s X_1 \to X_1$ such that $sm = s\pi_1$ and $tm = t\pi_2$.
  \end{itemize}
\end{defn}

In other words, a pseudo-equivalence relation has the operations of an internal groupoid, but without any axioms.
In particular, any object $X\in \sE$ induces a ``discrete'' pseudo-equivalence relation with $X_1=X_0=X$; this provides a functor $\sE\into \eex$ to the category \eex defined as follows:

\begin{defn}
  The \textbf{free exact completion} $\eex$ of \sE has:
  \begin{itemize}
  \item As objects, pseudo-equivalence relations.
  \item As morphisms $X\to Y$, equivalence classes of pairs of morphisms $f_0:X_0\to Y_0$ and $f_1:X_1\to Y_1$ in \sE with $sf_1 = f_0s$ and $tf_1 = f_0t$, modulo the relation that $(f_0,f_1)\sim (g_0,g_1)$ if there exists a morphism $h:X_0 \to Y_1$ with $sh=f_0$ and $th=g_0$.
  \end{itemize}
  We refer to a pair $(f_0,f_1)$ as a \textbf{morphism representative}, and an $h$ as a \textbf{witness of equality} of two such.
\end{defn}

\begin{rmk}\label{rmk:bicat}
  A pseudo-equivalence relation can also be defined as an internal \emph{bicategory} in \sE such that any two parallel 1-cells are related by a unique 2-cell and all 1-cells are equivalences.
  The tricategory of such ``locally bidiscrete bigroupoids'' is ``locally tridiscrete'', and its homotopy 1-category (obtained by identifying naturally equivalent functors) is \eex.
  Our results about \eex could be obtained by specializing facts about bicategories and tricategories, but we will give concrete proofs instead.
\end{rmk}

It is proven in~\cite{cm:ex-lex} that \eex is an exact category, and that this construction defines a left pseudo-adjoint to the forgetful 2-functor from exact categories to categories with finite limits.
In particular, the inclusion $\sE\into \eex$ preserves finite limits; but even if \sE was already exact, this functor does not in general preserve quotients of equivalence relations.
%(since the quotients of \eex were added ``freely'' without knowledge of any quotients that might have already existed in \sE).
The only exception is if \sE is exact and satisfies the ``axiom of choice'' that regular epimorphisms are split, in which case the inclusion $\sE\into \eex$ is an equivalence.

We will not repeat the proofs of these facts, but we sketch the following:

\begin{lem}
  \eex has finite limits.
\end{lem}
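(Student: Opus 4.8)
The plan is to construct a terminal object, binary products, and equalizers in $\eex$, since these generate all finite limits. The terminal object is the discrete pseudo-equivalence relation $\mathbf 1$ on the terminal object $1\in\sE$ (so $(\mathbf 1)_0=(\mathbf 1)_1=1$): for any $X$ there is exactly one pair $(X_0\to 1,\,X_1\to 1)$, so $\Hom_{\eex}(X,\mathbf 1)$ is a singleton. Binary products are formed levelwise: set $(X\times Y)_i=X_i\times Y_i$ for $i=0,1$ and let every structure map $s,t,r,v,m$ be the product of the corresponding maps of $X$ and $Y$. The only point needing care is the composition map, which is well defined because pullbacks commute with products, giving $(X_1\times Y_1)\times_{X_0\times Y_0}(X_1\times Y_1)\iso(X_1\times_{X_0}X_1)\times(Y_1\times_{Y_0}Y_1)$. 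The projections are the evident levelwise pairs, a cone $(a\maps Z\to X,\,b\maps Z\to Y)$ induces $\langle a,b\rangle=((a_0,b_0),(a_1,b_1))$, and uniqueness up to the equality relation holds because a witness for $pc\sim a$ together with one for $qc\sim b$ pairs up to a witness for $c\sim\langle a,b\rangle$.

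The substantive case is equalizers. Given parallel $f=(f_0,f_1),\,g=(g_0,g_1)\maps X\toto Y$, define $E_0$ to be the object of pairs $(x,w)$ where $x\in X_0$ and $w\in Y_1$ is a ``witness'' with $sw=f_0x$ and $tw=g_0x$; formally
\[ E_0 = X_0 \times_{(f_0,g_0),\,Y_0\times Y_0,\,(s,t)} Y_1. \]
Let $e_0\maps E_0\to X_0$ be the first projection and take $E_1=E_0\times_{e_0,X_0,s}X_1\times_{t,X_0,e_0}E_0$, i.e.\ the object of triples $(\xi,w,w')$ with $\xi\in X_1$ and $w,w'$ witnesses at $s\xi$ and $t\xi$ respectively; this is the ``full'' restriction of the relation $X_1$ along $e_0$. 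The structure maps are inherited from $X$ while carrying the witnesses along, e.g.\ $r_E(x,w)=(r_Xx,w,w)$, $v_E(\xi,w,w')=(v_X\xi,w',w)$, and $m_E((\xi,w,w'),(\xi',w',w''))=(m_X(\xi,\xi'),w,w'')$. The inclusion $e=(e_0,e_1)\maps E\to X$ reads off $\xi$ from $(\xi,w,w')$, and the second projection $\pi\maps E_0\to Y_1$ is a witness that $fe\sim ge$, since $s\pi=f_0e_0$ and $t\pi=g_0e_0$ hold directly from the defining pullback.

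For the universal property, suppose $z=(z_0,z_1)\maps Z\to X$ satisfies $fz\sim gz$, witnessed by some $K\maps Z_0\to Y_1$ with $sK=f_0z_0$ and $tK=g_0z_0$. Then $u_0=(z_0,K)$ lands in the pullback $E_0$, and $u_1(\zeta)=(u_0s_Z\zeta,\,z_1\zeta,\,u_0t_Z\zeta)$ assembles into a morphism $u\maps Z\to E$ with $eu=z$. The key obstacle is uniqueness, and this is exactly where the construction pays off: the equality relation on morphisms of $\eex$ constrains only the level-$0$ components, so two lifts $u,u'$ with $eu\sim z\sim eu'$ are forced to be equal as soon as their level-$0$ parts are connected in $X$. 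Concretely, the witnesses for $e_0u_0\sim z_0$ and $e_0u'_0\sim z_0$ compose — after reversing one with $v_X$ — to a map $\xi\maps Z_0\to X_1$ from $e_0u_0$ to $e_0u'_0$, whereupon $(u_0,\xi,u'_0)\maps Z_0\to E_1$ witnesses $u\sim u'$. Finally, since $E$ satisfies a universal property phrased entirely in terms of the morphisms $f,g$ rather than their chosen representatives, it is independent of those choices up to canonical isomorphism; I expect the verification that $E$ is a pseudo-equivalence relation and the levelwise checks for products to be routine.
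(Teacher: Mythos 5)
Your proof is correct, and its key idea is the same one the paper uses: build the level-$0$ object by adjoining a witness from $Y_1$ via a pullback in $\sE$, and take the level-$1$ object to be the full restriction of the ambient pseudo-equivalence relation along the projection. The difference is in the decomposition of finite limits. The paper generates them from the terminal object and pullbacks, writing for a cospan $X \xto{f} Z \xot{g} Y$ the single formula $P_0=(X_0\times Y_0)\times_{Z_0\times Z_0}Z_1$ and $P_1=(P_0\times P_0)\times_{(X_0\times X_0\times Y_0\times Y_0)}(X_1\times Y_1)$, and stops there (its proof is explicitly a sketch, with no verification of the universal property). You generate them from the terminal object, binary products, and equalizers; your $E$ combined with your levelwise product is exactly what the paper's $P$ becomes when the pullback is unwound as an equalizer of maps into a product. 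Your route isolates where the witness trick is genuinely needed (only the equalizer --- products are purely levelwise), and you actually carry out the factorization and uniqueness arguments, including the observation that uniqueness holds because the equivalence relation on morphisms only constrains level-$0$ components. One caveat you handle correctly, and which the paper also flags right after its proof: both constructions depend on chosen representatives of $f$ and $g$, so they yield limits only up to canonical isomorphism, not a specified limit functor; your final remark that the universal property is stated in terms of the morphisms themselves, not their representatives, is the right way to dispose of this.
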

\begin{proof}
  The terminal object has $T_0=T_1=1$.
  For pullbacks, suppose given a cospan $X \xto{f} Z \xot{g} Y$ in \eex, select representatives $(f_0,f_1)$ and $(g_0,g_1)$ and define
  \begin{alignat*}{2}
    P_0&=(X_0 \times Y_0) \times_{(Z_0\times Z_0)} Z_1
    & \qquad 
    P_1&= (P_0\times P_0) \times_{(X_0\times X_0\times Y_0\times Y_0)} (X_1\times Y_1).\qedhere
  \end{alignat*}
%  For details, see~\cite{cm:ex-lex}.
\end{proof}

The particular objects $P_0$ and $P_1$ constructed above depend on the chosen representatives $(f_0,f_1)$ and $(g_0,g_1)$.
Thus, in the absence of the axiom of choice (now meaning the usual axiom of choice in \iSet), \eex does not have a \emph{specified} pullback functor $(\eex)^{\to\ot}\to \eex$, even if \sE has such a functor.
(Although it does have a specified binary \emph{product} functor.)
The situation with infinite diagrams is even worse: without choice we have no way to select representatives for all the morphisms in the diagram simultaneously, so even if \sE is complete, \eex may not be.

\begin{rmk}
  The category of setoids \emph{is} complete and cocomplete if we regard it as an \cE-category, i.e.\ a category enriched over setoids (see e.g.~\cite{agda-categories}).
  Indeed, from the perspective of \cref{rmk:bicat}, the \cE-category of setoids is a tricategory of certain bicategories, so it can be complete even if its homotopy category is not.
  We will not pursue this direction; the point of this paper is to observe that setoids arise unavoidably in homotopy theory \emph{even} if we try our best to remain in the world of ordinary categories.
  See \cref{sec:anafunctors} for further discussion.
\end{rmk}

We can avoid all these problems with limits and colimits by considering a notion of \emph{coherent} diagrams in \eex.

\begin{defn}
  Let $A$ be a small category.
  A \textbf{coherent $A$-diagram} in \eex is:
  \begin{itemize}
  \item For each object $a\in A$, an object $X_a \in \eex$.
  \item For each morphism $\alpha:a\to a'$ in $A$, a morphism representative $X_\al: X_a \to X_{a'}$, consisting of morphisms $X_{\alpha,0}:X_{a,0}\to X_{a',0}$ and $X_{\alpha,1}:X_{a,1}\to X_{a',1}$ in \sE with $s X_{\alpha,1} = X_{\alpha,0} s$ and $t X_{\alpha,1} = X_{\alpha,0} t$.
  \item For each $a\in A$, a morphism $X_r : X_{a,0}\to X_{a,1}$ with $s X_r = 1$ and $t X_r = X_{1_a,0}$ (i.e.\ a witness that $X_{1_a} \sim 1$).
  \item For each $\alpha:a\to a'$ and $\alpha':a'\to a''$, a morphism $X_{\alpha,\alpha'}:X_{a,0} \to X_{a'',1}$ with $s X_{\al,\al'} = X_{\alpha',0} X_{\al,0}$ and $t X_{\al,\al'} = X_{\al'\al,0}$ (i.e.\ a witness that $X_{\al'} X_\al \sim X_{\al'\al}$).
  \end{itemize}
  For coherent $A$-diagrams $X$ and $Y$,
  a \textbf{morphism representative} $f:X\to Y$ is:
  \begin{itemize}
  \item For each $a\in A$, morphisms $f_{a,0}:X_{a,0}\to Y_{a,0}$ and $f_{a,1}:X_{a,1}\to Y_{a,1}$ with $sf_{a,1} = f_{a,0}s$ and $tf_{a,1} = f_{a,0}t$ (i.e.\ a representative of a morphism $X_a \to Y_a$).
  \item For each $\alpha:a\to a'$ in $A$, a morphism $f_\alpha : X_{a,0} \to Y_{a',1}$ with $s f_\alpha = Y_{\alpha,0} f_{a,0}$ and $t f_\alpha = f_{a',0} X_{\alpha,0}$ (i.e.\ a witness that $Y_{\al} f_a \sim f_{a'} X_\al$).
  \end{itemize}
  A \textbf{witness of equality} between two such representatives is
  \begin{itemize}
  \item a family of morphisms $h_a:X_{a,0} \to Y_{a,1}$ with $s h_a = f_{a,0}$ and $t h_a = g_{a,0}$.
  \end{itemize}
  The \textbf{morphisms} of coherent diagrams are the equivalence classes of morphism representatives, modulo the existence of a witness of equality.
  This defines \textbf{the category of coherent diagrams}, which we denote $\eex(A)$.
\end{defn}

\begin{lem}
  If $A=\done$ is the terminal category, then $\eex(\done)\simeq \eex$.
\end{lem}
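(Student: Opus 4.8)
The terminal category \done has a single object $\ast$ and only its identity $1_\ast$, so unwinding the definitions shows that a coherent \done-diagram $X$ amounts to: an object $X_\ast\in\eex$ (a pseudo-equivalence relation with operations $s,t,r,v,m$); a morphism representative $X_{1_\ast}=(X_{1_\ast,0},X_{1_\ast,1})\colon X_\ast\to X_\ast$; a witness $X_r\colon X_{\ast,0}\to X_{\ast,1}$ that $X_{1_\ast}\sim 1$ (so $sX_r=1$, $tX_r=X_{1_\ast,0}$); and a witness $X_{1_\ast,1_\ast}$ that $X_{1_\ast}X_{1_\ast}\sim X_{1_\ast}$. Likewise a coherent morphism $X\to Y$ is a bare representative $(f_{\ast,0},f_{\ast,1})$ of an \eex-morphism $X_\ast\to Y_\ast$ together with a single coherence witness $f_{1_\ast}\colon X_{\ast,0}\to Y_{\ast,1}$ with $sf_{1_\ast}=Y_{1_\ast,0}f_{\ast,0}$ and $tf_{1_\ast}=f_{\ast,0}X_{1_\ast,0}$, and a witness of equality is a single $h_\ast$ exactly as in the definition of \eex. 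The plan is to show that the evident forgetful functor $U\colon\eex(\done)\to\eex$, sending $X\mapsto X_\ast$ and $(f_{\ast,0},f_{\ast,1},f_{1_\ast})\mapsto(f_{\ast,0},f_{\ast,1})$ (and which is clearly functorial, since composition of coherent morphisms restricts on underlying data to composition in \eex), is essentially surjective and fully faithful.

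Essential surjectivity and faithfulness cost nothing. For the former, an arbitrary $Y\in\eex$ is $U$ of the constant diagram with $X_\ast=Y$, $X_{1_\ast}=(1,1)$, and $X_r=X_{1_\ast,1_\ast}=r$; since here $X_{1_\ast,0}=1$, the reflexivity $r$ of $Y$ has precisely the required source and target. For faithfulness, the point is that a witness of equality between coherent morphism representatives involves only the components $f_{\ast,0},g_{\ast,0}$ — not the coherence data $f_{1_\ast},g_{1_\ast}$ — and is verbatim a witness of equality in \eex between $(f_{\ast,0},f_{\ast,1})$ and $(g_{\ast,0},g_{\ast,1})$. Hence $U$ is well defined on equivalence classes and reflects equality, so $\Hom_{\eex(\done)}(X,Y)\to\Hom_\eex(X_\ast,Y_\ast)$ is injective.

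The only real content is fullness: given any representative $(\phi_0,\phi_1)$ of an \eex-morphism $X_\ast\to Y_\ast$, I must synthesise a coherence witness $f_{1_\ast}$ making $(\phi_0,\phi_1,f_{1_\ast})$ a coherent morphism. This is where the groupoid operations of a pseudo-equivalence relation are needed. In the ``path'' reading, $Y_r\phi_0$ is a path $\phi_0\rightsquigarrow Y_{1_\ast,0}\phi_0$, so its reverse $v\,Y_r\,\phi_0$ is a path $Y_{1_\ast,0}\phi_0\rightsquigarrow\phi_0$; and, using $s\phi_1=\phi_0 s$ and $t\phi_1=\phi_0 t$, the composite $\phi_1 X_r$ is a path $\phi_0\rightsquigarrow\phi_0 X_{1_\ast,0}$. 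These are composable in $Y_\ast$, so I set
\[
 f_{1_\ast}\;:=\;m\bigl(v\,Y_r\,\phi_0,\ \phi_1\,X_r\bigr),
\]
using the operations $v,m$ of $Y_\ast$. A direct check with the identities $sv=t$, $tv=s$, $sm=s\pi_1$, $tm=t\pi_2$, $sY_r=1$, $tY_r=Y_{1_\ast,0}$, $sX_r=1$, $tX_r=X_{1_\ast,0}$ yields $sf_{1_\ast}=Y_{1_\ast,0}\phi_0$ and $tf_{1_\ast}=\phi_0 X_{1_\ast,0}$, which is exactly the required coherence condition.

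Thus every \eex-morphism lifts, so $U$ is full, and combined with the previous paragraph it is an equivalence. I expect the construction of $f_{1_\ast}$ to be the main (indeed the only) obstacle: the coherence datum that a bare \eex-morphism is missing must be built from the two diagrams' own witnesses $X_r$ and $Y_r$ together with the reversal and composition of the target pseudo-equivalence relation, whereas essential surjectivity and faithfulness are formal.
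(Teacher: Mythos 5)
Your proof is correct and is exactly the detailed version of the paper's argument: the paper simply asserts that the extra data in an object of $\eex(\done)$ (the endomorphism representative with its idempotency and identity witnesses) and in its morphisms are ``redundant,'' and your forgetful functor $U$ with the explicit coherence witness $f_{1_\ast}=m(v\,Y_r\,\phi_0,\ \phi_1\,X_r)$ is precisely the straightforward verification the paper leaves to the reader. No gaps; the source/target computations for $f_{1_\ast}$ check out against the axioms $sm=s\pi_1$, $tm=t\pi_2$, $sv=t$, $tv=s$.
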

\begin{proof}
  This is not a definitional equality, since an object of $\eex(\done)$ contains the additional data of an endomorphism representative with witnesses that it is idempotent and equal to the identity.
  But it is straightforward to see that these additional data are redundant.
\end{proof}

\begin{rmk}\label{rmk:nonassoc}
  The 1-category \eex can be expressed as the hom-wise quotient of a 1-category of pseudo-equivalence relations and morphism representatives, as studied in~\cite{kp:cat-setoid}.
  But the same is not true of $\eex(A)$: its morphism representatives cannot be composed associatively (though they become associative after quotienting by witnesses of equality).
  From the perspective of \cref{rmk:bicat}, $\eex(A)$ is the homotopy 1-category of a tricategory of trifunctors. % (explaining the lack of strict associativity of the latter observed in \cref{rmk:nonassoc}).
\end{rmk}

\begin{rmk}\label{rmk:ac-dia}
  If the axiom of choice holds, then because the equivalence relation on morphisms in $\eex(A)$ makes no reference to $f_{a,1}$ or $f_\al$, instead of including the latter as data in a morphism we can simply assert that for each $a$ or $\al$ such a morphism exists.
  Similarly, since the definition of morphisms makes no reference to $X_r$ or $X_{\al,\al'}$, up to equivalence of categories we can simply assert that these exist.
  The latter assertion then says simply that $X$ is a functor $A\to \eex$, and similarly the former says that morphism is just a natural transformation.
  Thus, the axiom of choice implies that $\eex(A) \simeq (\eex)^A$.
  Note that this is the axiom of choice for the ambient set theory, not the ``axiom of choice'' that regular epimorphisms split in \sE (though of course the two coincide if $\sE=\iSet$).
  In addition, even in the absence of the axiom of choice this holds whenever $A$ is a finite category. %, since we can always make finitely many choices.
\end{rmk}

\begin{eg}
  If $u:A\to B$ is a functor between small categories and $X\in \eex(B)$, we have a coherent diagram $u^*X\in \eex(A)$ defined by precomposing all the data of $X$ with the action of $u$ on objects and morphisms.
  This defines a \textbf{restriction} functor $u^*:\eex(B) \to \eex(A)$.
  In particular, the functor $p_A : A\to \done$ induces for any $X\in \eex  \simeq \eex(\done)$ a \textbf{constant} coherent diagram $p_A^*X \in \eex(A)$.
\end{eg}

\begin{thm}\label{thm:lim}
  Suppose \sE is complete, with specified limit functors $\sE^A \to \sE$ for all small categories $A$.
  Then each functor $p_A^* : \eex \to \eex(A)$ has a right adjoint.
\end{thm}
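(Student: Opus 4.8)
The plan is to construct the value of the right adjoint at a coherent diagram $X\in\eex(A)$ as a single pseudo-equivalence relation $L=(L_1\toto L_0)$ built by genuine limits in \sE, and then to exhibit a universal cone. The essential observation is that a coherent $A$-diagram carries \emph{all} of its transition representatives $X_\al$, identity witnesses $X_r$, and composition witnesses $X_{\al,\al'}$ as explicit data in \sE, together with the groupoid operations of each $X_a$. Consequently the obstruction to forming limits described before the theorem --- that without choice one cannot pick representatives for the infinitely many arrows of $A$ coherently all at once --- simply does not arise: every ingredient needed to cut out an ``object of cones'' is already present as data, and we may form that object using the specified limit functors of \sE without any choice.

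Concretely, I would take $L_0$ to be the limit in \sE whose generalized elements $T\to L_0$ are \emph{coherent cones with vertex $T$}: a family $x_a:T\to X_{a,0}$ for $a\in A$, together with a homotopy $x_\al:T\to X_{a',1}$ for each $\al:a\to a'$ satisfying $s\,x_\al = X_{\al,0}\,x_a$ and $t\,x_\al=x_{a'}$, subject to the coherence equations that tie $x_{1_a}$ to $X_r\,x_a$ and that relate $x_{\al'\al}$, $x_\al$, $x_{\al'}$ through $X_{\al,\al'}$ and the composition $m$ of $X_{a''}$. Each clause is an equation between maps into some $X_{a,i}$, so $L_0$ is an (iterated product/equalizer) limit of a small diagram assembled from the $X_{a,i}$ and the structure maps, and it exists since \sE is complete. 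In the same way $L_1$ is the \sE-limit of \emph{coherent homotopies}: pairs of coherent cones $(x_a,x_\al)$, $(y_a,y_\al)$ equipped with $1$-cells $\xi_a:T\to X_{a,1}$ from $x_a$ to $y_a$, compatible with $x_\al$ and $y_\al$. The maps $s,t:L_1\toto L_0$ read off the source and target cones, and $r,v,m$ are induced componentwise from the groupoid operations of the $X_a$; one checks these make $L$ a pseudo-equivalence relation, i.e.\ an object of \eex.

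The projections out of $L_0$ and $L_1$ then assemble into a morphism representative $p_A^*L\to X$ in $\eex(A)$ --- the universal cone --- whose naturality witnesses are the tautological homotopies $x_\al$. The adjunction is the assertion that composition with this cone induces a bijection $\Hom_{\eex}(W,L)\iso\Hom_{\eex(A)}(p_A^*W,X)$ natural in $W\in\eex$. I would verify it by unwinding both sides for $W=(W_1\toto W_0)$: a morphism representative $W\to L$ is a compatible pair $g_0:W_0\to L_0$, $g_1:W_1\to L_1$, and by the defining universal properties of the limits $L_0$ and $L_1$ the map $g_0$ is precisely a family $(f_{a,0},f_\al)$ exhibiting a cone, while the existence of a compatible $g_1$ supplies exactly the edge components $f_{a,1}$ witnessing that each $f_{a,0}$ is a valid morphism representative $W\to X_a$.

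The step I expect to be the main obstacle is matching the two equivalence relations, and here one must handle an asymmetry: as observed in \cref{rmk:ac-dia}, the equivalence relation on morphisms of $\eex(A)$ refers only to the vertex components $f_{a,0}$, so the naturality witnesses $f_\al$ and the edge components $f_{a,1}$ are discarded on passing to equivalence classes. Thus the bijection can hold only \emph{after} quotienting, and the real content is to show that a witness of equality $k:W_0\to L_1$ for two representatives $W\to L$ corresponds bijectively to a family of witnesses $h_a:W_0\to X_{a,1}$ for the associated representatives $p_A^*W\to X$ --- which is exactly what the element-description of $L_1$ is designed to arrange --- and dually that every representative of the latter kind lifts, up to equality, to a coherent one of the former. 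Throughout, no choice intervenes: all limits are formed with the specified functors of \sE and all witnesses are part of the given data of $X$ and $W$, which is precisely why completeness of \sE alone yields the right adjoint even though \eex itself need not be complete.
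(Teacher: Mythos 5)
Your overall strategy is the paper's: build the value of the right adjoint as an ``object of cones'' $L_1\toto L_0$ using the specified limits of \sE (all the needed data being present in a coherent diagram, so no choice is required), take the projections as the counit, and verify the universal property at the level of morphism representatives and witnesses of equality. But there is one decision where you diverge from the paper, and it is fatal as stated: you cut out $L_0$ by \emph{coherence equations} (tying $x_{1_a}$ to $X_r x_a$, and $x_{\alpha'\alpha}$ to a composite of $x_\alpha$, $x_{\alpha'}$, $X_{\alpha,\alpha'}$), and similarly ask the homotopies in $L_1$ to be ``compatible'' with the cone data. The paper's $L_0$ is the equalizer enforcing \emph{only} the source/target equations $s\,x_\alpha = X_{\alpha,0}x_a$ and $t\,x_\alpha = x_{a'}$, and its $L_1$ is the pullback $(L_0\times L_0)\times_{\prod_a (X_{a,0}\times X_{a,0})}\prod_a X_{a,1}$, with no compatibility whatsoever. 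This is not a cosmetic difference: a morphism representative $p_A^*W\to X$ in $\eex(A)$ is defined with \emph{no} coherence conditions on its naturality witnesses $f_\alpha$, and a witness of equality is a family $h_a$ with no compatibility conditions; so with your $L$, the map $\fbar_0$ assembled from the components $(f_{a,0},f_\alpha)$ of an arbitrary representative need not land in $L_0$ at all, and a family $(h_a)$ need not give a map into $L_1$. Both the existence and the uniqueness steps of your last paragraph break down.

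Moreover, one cannot repair this by passing to a coherent representative of the same morphism: pseudo-equivalence relations satisfy no axioms ($m$ is neither associative nor unital, $v$ is not an inverse), so coherent cones may simply fail to exist. Concretely, take $\sE=\iSet$, let $A$ be the group $\mathbb{Z}/2$ (one object $*$, morphisms $1,\sigma$), and let $X_*$ have $X_{*,0}=\{x\}$ and $X_{*,1}=\{p,q\}$ with $r(x)=p$, $v=\mathrm{id}$, and $m$ the group law of $\mathbb{Z}/2$ with unit $p$. Let the diagram act by identity representatives, with $X_r=p$, $X_{\sigma,\sigma}=q$, and the other composition witnesses equal to $p$. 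Then identity coherence forces $x_{1}=p$, while composition coherence for $\sigma\sigma=1$ forces $x_1$ to be a composite (in any order and bracketing) of $q$ with two copies of $x_\sigma$, which equals $q$ for either value of $x_\sigma$; so your $L_0$ is empty and $\Hom_{\eex}(1,L)=\emptyset$, whereas $\Hom_{\eex(A)}(p_A^*1,X)$ is a nonempty singleton. The paper's unconstrained $L$ here consists of four cones all connected by witnesses, i.e.\ $L\cong 1$, as it must be. The fix is simply to delete your coherence and compatibility clauses; the rest of your argument (the counit, the factorization of representatives, and the exact correspondence between witnesses of equality $(h_a)$ and maps $W_0\to L_1$) then goes through as in the paper.
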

\begin{proof}
  We define the ``limit'' of a coherent diagram $Y\in \eex(A)$ as follows.
  Let $L_0$ be the equalizer of the following parallel pair in \sE:
  \[
    \begin{tikzcd}
      \displaystyle\left(\prod_{a\in A} Y_{a,0} \times \prod_{\alpha:a\to a'} Y_{a',1}\right) \ar[r,shift left] \ar[r,shift right] &
      \displaystyle\prod_{\al:a\to a'} (Y_{a',0} \times Y_{a',0}).
  \end{tikzcd}
  \]
  Here the components of the first morphism at $\al:a\to a'$ are $Y_{\al,0}:Y_{a,0}\to Y_{a',0}$ and $1_{Y_{a',0}}$, while those of the second morphism are $s:Y_{a',1} \to Y_{a',0}$ and $t:Y_{a',1} \to Y_{a',0}$.
  Then let $L_1$ be the pullback
  \[ (L_0\times L_0) \;\bigtimes_{\prod_{a\in A} (Y_{a,0} \times Y_{a,0})}\;\textstyle\prod_{a\in A} Y_{a,1}.\]
  Note that $Y$ contains all the necessary data to define these objects, without any choices necessary.
  It is straightforward to show that $L$ is a pseudo-equivalence relation.

  Now we define a counit $p_A^* L \to Y$.
  For each $a$, the components $L_{0} \to Y_{a,0}$ and $L_1 \to Y_{a,1}$ are just the evident projections; and likewise for the morphisms $L_0 \to Y_{a',1}$ for each $\al:a\to a'$.

  It remains to show that any morphism $f:p_A^* X \to Y$ factors uniquely through $L$.
  Choose a representative of $f$; then the components $f_{a,0} : X_0 \to Y_{a,0}$ and $f_{a,1}:X_1 \to Y_{a,1}$ and $f_{\al}:X_0 \to Y_{a',1}$ are exactly what is needed to define morphisms $\fbar_0:X_0\to L_0$ and $\fbar_1:X_1\to L_1$ with $s\fbar_1 = \fbar_0s$ and $t\fbar_1 = \fbar_0t$.
  Moreover, the representatives of the composite $p_A^*X \to p_A^*L \to Y$ are literally equal in \sE to those of $f$, so we can choose $h_a = r f_{a,0}$ to exhibit this composite as equal to $f$ in $\eex(A)$.

  Finally, suppose we have $g:X\to L$ is such that the composite $p_A^*X \xto{g} p_A^*L \to Y$ is equal to $f$ in $\eex(A)$.
  Choosing a representative for $g$, we obtain components $g_{a,0} : X_0 \to Y_{a,0}$ and $g_\al : X_0 \to Y_{a',1}$ and $g_{a,1}:X_1 \to Y_{a,1}$ satisfying the appropriate equations.
  Choosing a witness of equality to $f$, we have morphisms $h_a : X_a \to Y_{a,1}$ with $sh_a = f_{a,0}$ and $th_a = g_{a,0}$.
  But this is exactly what we need to define a witness $h:X_0 \to L_1$ exhibiting $\fbar\sim g$ in \eex.
\end{proof}

For the case of colimits, we need \sE to admit certain free constructions.
Since our eventual interest is mainly in the case $\sE =\iSet$, we will not worry about the minimum this requires of \sE, instead merely noting:

\begin{lem}\label{thm:free-pseqr}
  Suppose \sE has finite limits, and countable coproducts preserved by pullback.
  Then for any parallel pair $R \toto X_0$, there is a pseudo-equivalence relation $X_1\toto X_0$ with a map $\eta : R\to X_1$ over $X_0\times X_0$, such that for any pseudo-equivalence relation $Y_1 \toto Y_0$ and morphism $f_0:X_0 \to Y_0$ with $g:R\to Y_1$ over $f_0\times f_0$, there exists a $f_1 : X_1 \to Y_1$ over $f_0\times f_0$ such that $f_1 \eta = g$:
  \[
    \begin{tikzcd}
      R \ar[ddr,shift left] \ar[ddr,shift right] \ar[drr] \ar[dr] \\
      & X_1 \ar[d,shift left] \ar[d,shift right] \ar[r,dashed,"\exists"'] & Y_1\ar[d,shift left] \ar[d,shift right] \\
      & X_0 \ar[r] & Y_0.
    \end{tikzcd}
  \]
\end{lem}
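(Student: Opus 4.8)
The plan is to construct $X_1$ as the \emph{free term algebra} on the generators $R$ for the three operations $r$, $v$, $m$ of a pseudo-equivalence relation, imposing no relations whatsoever. Since there are no equations to quotient by, every element of the free structure is a finite syntax tree, and I will realize $X_1$ as a countable coproduct of finite-limit objects indexed by the (countably many) shapes of such trees. Concretely, a \textbf{shape} $\tau$ is a finite rooted tree whose leaves are labelled by $R$ or by $\rho$ (a formal reflexivity edge) and whose internal nodes are labelled $v$ (arity one) or $m$ (arity two); there are countably many of these.

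To each shape $\tau$ I assign an object $X_1^\tau\in\sE$ together with maps $(s^\tau,t^\tau)\colon X_1^\tau\to X_0\times X_0$, by recursion on $\tau$: an $R$-leaf gives $R$ with $(s_R,t_R)$; a $\rho$-leaf gives $X_0$ with $(\id,\id)$; a $v$-node gives $X_1^{v(\tau)}=X_1^\tau$ with source and target swapped; and an $m$-node gives the pullback $X_1^{m(\tau_1,\tau_2)}=X_1^{\tau_1}\times_{X_0}X_1^{\tau_2}$ (over $t^{\tau_1}$ and $s^{\tau_2}$), with source $s^{\tau_1}\pi_1$ and target $t^{\tau_2}\pi_2$. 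Each $X_1^\tau$ exists because \sE has finite limits. I then set $X_1=\coprod_\tau X_1^\tau$, a countable coproduct, with $(s,t)$ assembled from the $(s^\tau,t^\tau)$. The maps $\eta\colon R\to X_1$ and $r\colon X_0\to X_1$ are the coproduct injections of the single-$R$-leaf and single-$\rho$-leaf summands, $v$ is the reindexing $\tau\mapsto v(\tau)$, and $m$ is the reindexing $(\tau_1,\tau_2)\mapsto m(\tau_1,\tau_2)$. The one point that genuinely uses the hypothesis is the definition of $m$: I need $X_1\times_{X_0}X_1\iso\coprod_{\tau_1,\tau_2}\bigl(X_1^{\tau_1}\times_{X_0}X_1^{\tau_2}\bigr)$, which follows by distributing the (binary, hence iterated) pullback over the two countable coproducts, exactly the content of ``countable coproducts preserved by pullback.'' The required identities $sr=tr=1$, $sv=t$, $tv=s$, $sm=s\pi_1$, $tm=t\pi_2$ then hold by construction, so $X_1\toto X_0$ is a pseudo-equivalence relation.

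For the universal property, given a pseudo-equivalence relation $Y_1\toto Y_0$ with operations $r_Y,v_Y,m_Y$, a map $f_0\colon X_0\to Y_0$, and $g\colon R\to Y_1$ over $f_0\times f_0$, I define $f_1\colon X_1\to Y_1$ by specifying its restriction $f_1^\tau$ to each summand, by recursion on $\tau$: $f_1^{R\text{-leaf}}=g$; $f_1^{\rho\text{-leaf}}=r_Y f_0$; $f_1^{v(\tau)}=v_Y f_1^{\tau}$; and $f_1^{m(\tau_1,\tau_2)}=m_Y\circ(f_1^{\tau_1}\times f_1^{\tau_2})$, where the induced map into the pullback $Y_1\times_{Y_0}Y_1$ exists because, by the inductive hypothesis, $f_1^{\tau_1}$ and $f_1^{\tau_2}$ lie over $f_0\times f_0$ and hence carry the matching condition of $X$ to that of $Y$. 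A parallel induction shows each $f_1^\tau$ lies over $f_0\times f_0$ (the base cases use that $g$ is given over $f_0\times f_0$ and that $r_Y$ lies over the diagonal), so $f_1=\coprod_\tau f_1^\tau$ lies over $f_0\times f_0$; and $f_1\eta=f_1^{R\text{-leaf}}=g$, as required.

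I expect the only real obstacle to be bookkeeping rather than mathematics: making the recursive assignment of source and target maps precise, and checking that the external recursion over the countable shape set genuinely assembles into morphisms out of the coproduct. The distributivity isomorphism used to define $m$ is the single place where the hypothesis is indispensable. Note that the $f_1$ produced is in fact a homomorphism of pseudo-equivalence relations, since it commutes with $r$, $v$, $m$ by construction; this is stronger than the lemma demands, and the weaker conclusion actually stated --- the existence of \emph{some} edge-map over $f_0\times f_0$ extending $g$, with no uniqueness or homomorphism condition --- is all that the downstream colimit construction will need.
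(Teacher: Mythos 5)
Your proof is correct, and it differs from the paper's in an interesting way. The paper builds $X_1$ in \emph{normal form}: it is the countable coproduct $\sum_{n,\vec\epsilon}\, R^{\epsilon_1}\times_{X_0}\cdots\times_{X_0}R^{\epsilon_n}$ of iterated pullbacks indexed by words in $R^{\pm1}$ (the $n=0$ summand being $X_0$), i.e.\ internally the object of zigzags, which the paper identifies as the free internal $\dag$-category on $R\toto X_0$; symmetry is pushed onto the generators and transitivity is flattened, so no tree structure remains. You instead take the raw free term algebra over the signature $\{r,v,m\}$, keeping every syntax tree as its own summand with no identifications at all. Both constructions invoke the hypothesis in exactly the same single place: pullback-stability of countable coproducts is what lets one define $m$ (concatenation of zigzags in the paper, your reindexing $(\tau_1,\tau_2)\mapsto m(\tau_1,\tau_2)$), since $X_1\times_{X_0}X_1$ must decompose as a coproduct over pairs of summands; and both establish the weak freeness by recursion over the countable index set, which in either version requires \emph{specified} finite limits in \sE (the intended constructive reading of the hypothesis). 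The trade-offs: the paper's zigzag form is leaner and is the description actually exploited downstream (the colimits of \cref{thm:colim} and the characterization of \sex-equivalences in \cref{thm:sex-eqv} are stated in terms of zigzags), but its extension $f_1$ must choose bracketings of $Y$'s transitivity operation and is therefore only weakly compatible with the operations. Your version carries redundant summands (e.g.\ $v(v(R))$ and the $R$-leaf are distinct), but needs no bracketing choices, and, as you correctly observe, your $f_1$ is a strict homomorphism of pseudo-equivalence relations --- a genuine strengthening that the paper's construction does not provide (and the lemma does not need), precisely because $Y$'s operations satisfy no equations that would allow normalization.
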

\begin{proof}
  Define
  \[X_1 = \sum_{n\in \dN\atop \ep_1,\dots,\ep_n \in \{+1,-1\}} R^{\ep_1}\times_{X_0} R^{\ep_2} \times_{X_0} \cdots \times_{X_0} R^{\ep_n}
  \]
  where $R^{+1}$ means the given span $X_0 \ot R \to X_0$ and $R^{-1}$ means the reversed span.
  (The summand for the case $n=0$ is just $X_0$.)
  In the internal language of \sE, $X_1$ is the object of zigzags such as
  \[ x_0 \xto{r_1} x_1 \xot{r_2} x_2 \xot{r_3} \cdots \xto{r_n} x_n \]
  in which each arrow is labeled by an element of $R$, with the two maps $R\toto X_0$ regarded as source and target, and each arrow in the zigzag can point in either direction.
  The resulting $X_1\toto X_0$ is actually the free internal $\dag$-category on the directed graph $R\toto X_0$.

  Finally, given $f_0$ and $g$ as in the statement, we define $f_1$ on each summand of $X_1$ by applying $g$ to each factor of $R$, then the symmetry operation of $Y$ to each factor with $\ep_k=-1$, and then some bracketing of the transitivity operation of $Y$ to combine all the factors into one (in the case $n=0$ this means the reflexivity operation of $Y$).
  The inclusion $\eta$ is the summand with $n=1$ and $\ep_1=+1$, where no operations are needed other than $g$, so we have $f_1\eta=g$.
\end{proof}

We refer to $X_1\toto X_0$ as in \cref{thm:free-pseqr} as the \textbf{free pseudo-equivalence relation} generated by $R\toto X_0$, although to be precise it is only ``weakly free'' (the morphism $f_1$ is not unique).

\begin{thm}\label{thm:colim}
  If \sE has finite limits and small coproducts preserved by pullback, then each functor $p_A^* : \eex \to \eex(A)$ has a left adjoint.
\end{thm}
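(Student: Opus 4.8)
The plan is to dualize the proof of \cref{thm:lim}, replacing the explicit equalizer-and-pullback construction of the limit by the weakly free construction of \cref{thm:free-pseqr}. Given a coherent diagram $Y \in \eex(A)$, I would take the underlying object of the colimit to be the coproduct $C_0 = \coprod_{a\in A} Y_{a,0}$, with coproduct inclusions $\incl{a} : Y_{a,0} \to C_0$. The relation to impose has two sources: the internal relations $Y_{a,1}$ of each $Y_a$, and, for each $\al : a \to a'$ in $A$, the graph of the transition map $Y_{\al,0}$. Accordingly I would form
\[ R = \coprod_{a} Y_{a,1} \;\sqcup\; \coprod_{\al:a\to a'} Y_{a,0}, \]
with parallel pair $R \toto C_0$ whose two legs are, on the summand $Y_{a,1}$, the maps $\incl{a}\circ s$ and $\incl{a}\circ t$, and on the summand $Y_{a,0}$ indexed by $\al$, the maps $\incl{a}$ and $\incl{a'}\circ Y_{\al,0}$. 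Since $\sE$ has small coproducts preserved by pullback, both $C_0$ and $R$ exist, and \cref{thm:free-pseqr} supplies a pseudo-equivalence relation $C_1 \toto C_0$ together with $\eta : R\to C_1$; I set $C = (C_1 \toto C_0)\in\eex$. As in \cref{thm:lim}, no choices are needed, because a coherent diagram carries its transition representatives $Y_{\al,0}$ as part of its data.

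Next I would define the unit $Y \to p_A^* C$: its $0$-components are the inclusions $\incl{a}$, while the remaining data — the maps $Y_{a,1} \to C_1$ and the witnesses $Y_{a,0}\to C_1$ attached to each $\al$ — are read off from $\eta$ restricted to the corresponding summands of $R$. The source/target equations defining a morphism representative into the \emph{constant} diagram $p_A^* C$ then hold because they say exactly that $\eta$ lies over $C_0\times C_0$.

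The core of the argument is to show that composing with this unit induces a bijection $\eex(C, Z) \cong \eex(A)(Y, p_A^* Z)$, natural in $Z\in\eex$. Here I would exploit the defining feature of the free exact completion: the equivalence relation on morphism representatives, both in $\eex$ and in $\eex(A)$, refers only to the $0$-components. Consequently a morphism $C\to Z$ is determined by a map $k_0 : C_0 \to Z_0$ admitting \emph{some} compatible $k_1$, and a morphism $Y\to p_A^* Z$ is determined by a family $(g_{a,0})_a$ admitting \emph{some} compatible higher data $(g_{a,1}, g_\al)$; in both cases two are identified precisely when there is a family $h_a : Y_{a,0}\to Z_1$ with $s h_a = g_{a,0}$ and $t h_a = g'_{a,0}$. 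Since a map out of $C_0 = \coprod_a Y_{a,0}$ is the same as a family $(g_{a,0})_a$, the two quotients have literally the same underlying data and the same equivalence relation. It remains to match the existence clauses: unwinding the definition of a map $R\to Z_1$ over $k_0\times k_0$ shows that it is exactly a compatible family $(g_{a,1}, g_\al)$ (the summand conditions on $Y_{a,1}$ give the $s,t$-compatibility of $g_{a,1}$, and those on $Y_{a,0}$ give $s g_\al = g_{a,0}$, $t g_\al = g_{a',0}Y_{\al,0}$), and \cref{thm:free-pseqr} then says that a compatible $k_1$ exists iff such an $R\to Z_1$ does — necessity by restricting along $\eta$, sufficiency by weak freeness. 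This exhibits $p_A^*$ as having a left adjoint with value $C$ at $Y$.

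The main obstacle is precisely the non-uniqueness built into \cref{thm:free-pseqr}: the extension $k_1$ is not determined by its restriction to $R$, so the passage between $k_1$ and $(g_{a,1}, g_\al)$ is not a bijection at the level of representatives. The point to get right is that this non-uniqueness becomes invisible after quotienting, because the equivalence relation on $\eex$-morphisms ignores the $1$-component entirely — any two representatives with equal $0$-component are identified via the reflexivity witness $r\,k_0$. Thus the same phenomenon that makes limits in $\eex$ delicate without choice is exactly what makes the colimit universal property go through cleanly, with no global choice of representatives ever required.
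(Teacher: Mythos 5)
Your proof is correct and takes essentially the same route as the paper's: take $C_0=\coprod_{a} Y_{a,0}$, generate $C_1$ freely via \cref{thm:free-pseqr}, define the unit from the coproduct inclusions and $\eta$, and deduce the universal property from the fact that the equivalence relation on morphisms (in both $\eex$ and $\eex(A)$) refers only to $0$-components, which is exactly why the weak (non-unique) freeness of \cref{thm:free-pseqr} suffices. The only difference is your choice of generating graph $R$ (a disjoint union of the internal relations and the graphs of the transition maps, versus the paper's $\sum_{\al:a\to a'} \bigl(Y_{a,0}\times_{Y_{a',0}} Y_{a',1}\bigr)$ of triples $(x,x',\xi)$), which generates an isomorphic object and, if anything, matches the data of a morphism representative into a constant diagram slightly more directly.
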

Note that although we only require \sE to have co\emph{products}, here $A$ is an arbitrary small category; thus \eex has more ``colimits'' (in this sense) than \sE does.
\begin{proof}
  Given $X\in \eex(A)$, let $C_0$ be the coproduct $\sum_{a\in A} X_{a,0}$, and let $C_1$ be the pseudo-equivalence relation on $C_0$ freely generated (as in \cref{thm:free-pseqr}) by
  \[
    \begin{tikzcd}
      %\displaystyle\left(\sum_{a\in A} X_{a,1} + \sum_{\al:a\to a'} X_{a,0}\right)
      \displaystyle\sum_{\al:a\to a'} \left(X_{a,0} \times_{X_{a',0}} X_{a',1}\right)
      \ar[r,shift left] \ar[r,shift right] &
      C_0.
    \end{tikzcd}
  \]
  Here the pullback is the ``object of triples $(x,x',\xi)$'' where $x\in X_{a,0}$, $x'\in X_{a',0}$, and $\xi\in X_{a',1}$ is a witness that $X_{\al,0}(x)\sim x'$.
  The projection to $C_0$ picks out $x$ and $x'$ in the summands $X_{a,0}$ and $X_{a',0}$.
  (Note that neither of these is the copy of $X_{a',0}$ that we pull back over; that is $X_{\al,0}(x)$.)
  % Here the component $X_{a,1} \to C_0\times C_0$ is the composite
  % \[X_{a,1} \xto{(s,t)} X_{a,0}\times X_{a,0} \to C_0\times C_0\]
  % and the component $X_{a,0} \to C_0\times C_0$ is the composite
  % \[X_{a,0} \xto{(1,X_{\al,0})} X_{a,0}\times X_{a',0} \to C_0\times C_0.\]

  Now we define a unit $X \to p_A^* C$.
  For each $a$, the component $X_{a,0} \to C_0$ is just the coproduct inclusion.
  To define the component $X_{a,1} \to C_1$, the idea is to send a witness $\xi \in X_{a,1}$ that $x\sim x'$ to the image under $\eta$ of the witness that $X_{1_a,0}(x) \sim x \sim x'$ obtained by transitivity from $\xi$ and $X_r$.
  And to define the witness $X_{a,0} \to C_1$ of naturality associated to $\al:a\to a'$, the idea is to send $x\in X_{a,0}$ to (the image under $\eta$ of) the reflexivity witness that $X_{\al,0}(x) \sim X_{\al,0}(x)$.

  It remains to show that any morphism $f:X\to p_A^*Y$ factors uniquely through $C$.
  Choose a representative of $f$; then the components $f_{a,0}:X_{a,0} \to Y_0$ define a morphism $C_0 \to Y_0$, while the components $f_{a,1}:X_{a,1}\to Y_1$ and $f_{\al} : X_{a,0} \to Y_1$ can be combined with transitivity, and the freeness of $C$, to induce a morphism $\fbar:C\to Y$.
  The composite components $X_{a,0}\to (p_A^*C)_{a,0} \to (p_A^*Y)_{a,0} = Y_0$ are then literally equal to $f_{a,0}:X_{a,0} \to Y_0$, so we can use $h_a = r f_{a,0}$ to exhibit this composite as equal to $f$ in $\eex(A)$.

  Finally, suppose we have $g:C\to Y$ such that the composite $X \to p_A^*C \xto{g} p_A^*Y$ is equal to $f$ in $\eex(A)$.
  Choosing a representative for $g$, we obtain components $g_{a,0} : X_{a,0} \to Y_{0}$ and $g_\al : X_{a,0} \to Y_{1}$ and $g_{a,1}:X_{a,1} \to Y_{1}$ satisfying the appropriate equations.
  Choosing a witness of equality to $f$, we have morphisms $h_a : X_{a,0} \to Y_{1}$ with $sh_a = f_{a,0}$ and $th_a = g_{a,0}$.
  But this is exactly what we need to define a witness $h:C_0 \to Y_1$ exhibiting $\fbar\sim g$ in \eex.
\end{proof}

Thus, although $\eex$ does not have infinite limits or colimits, or specified pullbacks, there is nevertheless a sense in which it is strongly complete and cocomplete.
In \cref{sec:derex} we will see that derivators give us a way of making this precise.

\begin{rmk}
  Combining \cref{rmk:ac-dia,thm:lim}, we see that if the axiom of choice holds and \sE is complete, then so is \eex (as an ordinary category).
  This was already observed by~\cite{ht:free-regex}; in their construction, the axiom of choice enters in the fact that epimorphisms of presheaves are closed under arbitrary products.

  Similarly, combining \cref{rmk:ac-dia,thm:colim}, we see that if the axiom of choice holds and \sE has small coproducts preserved by pullback, then \eex is cocomplete.
  Related facts were observed by~\cite{menni:thesis} and~\cite{cv:reg-exact-cplt}; the axiom of choice is hidden because they deal explicitly only with finite coproducts.
\end{rmk}

\section{Derivators}
\label{sec:der}

A derivator is an abstraction of the structure possessed by the homotopy categories of diagrams in a complete and cocomplete $(\infty,1)$-category.
Early authors such as~\cite{heller:htpythys,grothendieck:derivateurs,franke:triang} chose slightly different sets of axioms, but nowadays the community seems to have mostly settled on the definition of Grothendieck.
% ; we will rephrase this definition in a way that is a bit weaker constructively, but not classically.
%If necessary to disambiguate, our notion could be called a ``weak derivator'', a ``set-indexed derivator'', or a ``finitely-extensive-stack derivator''; but as it is the only notion we will use in this paper, we simply call it a derivator.
As is often the case, we have to rephrase the definition to make it constructively useful.
We will also follow~\cite{heller:htpythys,coley:half-deriv} in distinguishing \emph{left} and \emph{right} derivators that have only ``colimits'' and ``limits'', respectively.

Let \cCat and \cCAT be the 2-categories of small and large categories.
For $A\in\cCat$, let $\ob{A}$ denote the discrete category on its objects, with inclusion $\incl{A}:\ob{A} \to A$.

\begin{defn}
  A \textbf{prederivator} is a 2-functor $\D:\cCat\op\to\cCAT$.
  A prederivator is a \textbf{semiderivator} if:
  \begin{itemize}[leftmargin=4em]
  \item[(Der1)] $\D: \cCat\op\to\cCAT$ preserves products indexed by projective\footnote{A set $I$ is projective if every surjection $J\epi I$ has a section.  Thus finite sets are always projective, and the axiom of choice is equivalently ``all sets are projective''.} sets.  That is, if $I$ is projective, the functor $\D(\sum_{i\in I} A_i) \to \prod_{i\in I} \D(A_i)$ is an equivalence, in the constructive sense that we have a specified quasi-inverse to it.
 % $\D(\emptyset)$ is equivalent to the terminal category, and each induced functor $\D(A\sqcup B) \to \D(A) \times \D(B)$ is an equivalence of categories.
  \item[(Der2)] For any $A\in\cCat$, the functor $\incl{A}^* : \D(A) \to \D(\ob{A})$ is conservative (that is, isomorphism-reflecting).
  \end{itemize}
  A \textbf{left derivator} is a semiderivator such that
  \begin{itemize}[leftmargin=4em]
  \item[(Der3L)] Each functor $u^*: \D(B) \to\D(A)$ has a specified left adjoint $u_!$.
  \item[(Der4L)] Given functors $u: A\to C$ and $v: B\to C$ in \cCat, let $(u/v)$ denote their comma category, with projections $p:(u/v) \to A$ and $q:(u/v)\to B$.
    % \begin{enumerate}
    % \item
      If $B$ is a discrete category, then the canonical mate-transformation $q_!\, p^* \to v^* u_!$ is an isomorphism.\label{item:ider4i}
    % \begin{equation*}
    %   q_! p^* \toiso v^* u_!  \qquad\text{and}\qquad
    %   u^* v_* \toiso p_* q^*.
    % \end{equation*}
  \end{itemize}
  Dually, a \textbf{right derivator} is a semiderivator such that
  \begin{itemize}[leftmargin=4em]
  \item[(Der3R)] Each functor $u^*: \D(B) \to\D(A)$ has a specified right adjoint $u_*$.
  \item[(Der4R)] Given $u$ and $v$ as in (Der4L), if instead $A$ is a discrete category, then the mate-transformation $u^* v_* \to p_* q^*$ is an isomorphism.\label{item:ider4ii}
  \end{itemize}
  A \textbf{derivator} is a semiderivator that is both a left derivator and a right derivator.
  Finally, a prederivator is \textbf{strong} if
  \begin{itemize}[leftmargin=4em]
  \item[(Der5)] For any $A\in\cCat$, the induced functor $\D(A\times \dtwo) \to \D(A)^\dtwo$ is full and essentially surjective, where $\dtwo=(0\to 1)$ is the interval category.
    % with two objects and one nonidentity arrow between them.
  \end{itemize}
\end{defn}

We immediately record the most basic class of examples.

\begin{eg}\label{thm:cat-der}
  Let \sC be an ordinary category, and $\sC(A) = \sC^A$ the functor category, with 2-functorial action by restriction.
  This 2-functor preserves \emph{all} products, and (Der2) holds because isomorphisms in functor categories are pointwise, while (Der5) is obvious since the functor in question is an isomorphism.
  Thus \sC defines a strong semiderivator, which we call a \textbf{representable} semiderivator and abusively denote also by \sC.
  
  If \sC is cocomplete, the restriction functors admit left adjoints given by pointwise Kan extensions; thus (Der3L) holds, and (Der4L) asserts that these Kan extensions are pointwise, so \sC is a left derivator.
  Similarly, if \sC is complete, it is a right derivator.
  In particular, \iSet is a derivator.
\end{eg}

\begin{rmk}
  The usual definition, as e.g.\ in~\cite{groth:der,coley:half-deriv}, differs in that:
  \begin{itemize}
  \item Axiom (Der1) is asserted for all products, not just projectively indexed ones.\footnote{Although sometimes \cCat is replaced in the definition by a smaller 2-category, such as the 2-category of finite categories, finite posets, or finite direct categories.
      In this case (Der1) is weakened to refer only to the coproducts that exist therein, such as finite ones.}
  \item Axiom (Der2) asserts that the family of functors $a^*: \D(A) \to \D(\done)$ are jointly conservative, for all objects $a\in A$.
    This is equivalent to (Der2) in the presence of the classical (Der1), since $\ob{A}\cong \sum_{a\in A} \done$.
  \item Axiom (Der4L) requires that $B$ be the terminal category $\done$, and dually for (Der4R).
    However, by~\cite[Prop.\ 1.26]{groth:der}, in the presence of the classical (Der1) and (Der2) this implies that the same statements hold without \emph{any} restriction on $B$ (see \cref{thm:comma-hoex} below), including in particular our (Der4).
  \end{itemize}
  Thus, the substantial difference is the weakening of (Der1), which is only weaker in the absence of the axiom of choice.\footnote{The assertion of (Der1) for all projective sets is admittedly a fairly transparent trick for forcing the definition to collapse to the classical one in the presence of the axiom of choice, only slightly less blatant than starting with ``if the axiom of choice holds, then\dots''.
  Probably more natural constructively would be to assert (Der1) only for \emph{finite} products.} %, or perhaps leave it out altogether.}
  Our weaker version appears to be necessary constructively; for some explanation, see the proof of \cref{thm:der1}.

  Perhaps surprisingly, our definition suffices for most of the theory of derivators; axiom (Der1) is rarely needed, and usually only for finite products.
  Intuitively, while a classical (pre)derivator has an underlying ordinary category $\D(\done)$, one of our (pre)derivators has an underlying \emph{\iSet-indexed category} consisting of the categories $\D(I)$ where $I$ is a discrete category.
  % \footnote{Our axiom (Der1) says that this indexed category is a stack for the finitary-extensive topology, while the classical (Der1) says it is a stack for the infinitary-extensive topology.
  % The latter kind of stack is completely determined by an ordinary category, but the former may not be, even over \iSet.}
  % Indeed, there is a sense in which an indexed derivator is equivalent to a ``derivator defined in the world of indexed categories'', but we will not get into this.
  We can then reproduce the usual theory by using indexed categories in place of ordinary ones.
  (Note that a prederivator is, in particular, a \cCat-indexed category.)
\end{rmk}

For instance, (Der3L) implies that any left derivator admits ``colimit'' functors given by $(p_A)_!$ for the functor $p_A :A\to\done$, left adjoint to the ``constant diagram'' functor $(p_A)^*$, and dually for right derivators and limits.
The standard (Der4) axioms then says that the general ``Kan extension'' functors $u_*$ and $u_!$ can be computed in terms of these, by the usual formula~\cite[Theorem X.3.1]{maclane}.
Our (Der4) says the same in ``indexed'' or ``internal'' language, referring not only to ``global elements'' $c:1 \to \ob{C}$ but to arbitrary ``generalized elements'' $v:I\to \ob{C}$, where $I$ is a set.

% \begin{lem}\label{thm:pw-kan}
%   Let \D be a derivator, $u: A\to B$ a functor, and $b\in B$ an object, with $b$ also denoting the functor $\done\to B$ picking it out.
%   Let $(b/u)$ and $(u/b)$ denote the comma categories, with projections $q_{b/u}:(b/u) \to A$ and $q_{u/b}:(u/b) \to A$.
%   Then the canonical transformations
%   \[ (p_{u/b})_! (q_{u/b})^* (X) \to b^* u_!(X) \qquad\text{and}\qquad
%     b^* u_*(X) \to (p_{b/u})_* (q_{b/u})^*(X)
%   \]
%   are isomorphisms for any $X\in \D(A)$.\qed
% \end{lem}

We now give some examples of how such ``indexed reasoning'' can be used to reproduce some of the basic results about derivators from the cited references.

\begin{defn}
  For a left derivator \D, a square 2-cell in \cCat:
  \[
    \begin{tikzcd}
      A \ar[r,"p"] \ar[d,"q"'] \twocell{dr} & B \ar[d,"u"] \\
      C \ar[r,"v"'] & D
    \end{tikzcd}
  \]
  is \textbf{\D-exact} if the induced map $q_!\, p^* \to v^* u_!$ is an isomorphism in $\D$.
  Dually, if \D is a right derivator, such a square is \D-exact if the map $u^* v_* \to p_* q^*$ is an isomorphism.
  (If \D is a derivator, then these two maps are adjunction conjugates, hence the two conditions are equivalent.)

  A square is \textbf{left} (resp.\ \textbf{right}) \textbf{homotopy exact} if it is \D-exact for all left (resp.\ right) derivators \D, and \textbf{homotopy exact} if it is \D-exact for all derivators \D.
\end{defn}

Note that left and right homotopy exactness are stronger than homotopy exactness, oppositely to how being a derivator is stronger than being a left or right derivator.
The functoriality property of mates (e.g.~\cite{ks:r2cats}) imply that horizontal and vertical pasting preserves (left and right) homotopy exact squares.

Observe that for a set $I$, an $I$-indexed family of small categories $A:I\to \cCat$ can equivalently be regarded as a category $A$ equipped with a functor $A\to I$, where $I$ denotes also the corresponding discrete category.
That is, $\cCat^I \simeq \cCat/I$.
Moreover, if $f,g:A\to B$ are functors between two objects of $\cCat/I$, any natural transformation $f\Rightarrow g$ in \cCat must in fact lie in $\cCat/I$, since $I$ is discrete.
In particular, a morphism in $\cCat/I$ has a left or right adjoint in $\cCat/I$ if and only if it does so in \cCat.

\begin{lem}[cf.~{\cite[Proposition 1.18]{groth:der}}]\label{thm:radj-hoex}
  For a set $I$, let $r:A\to B$ be a right adjoint in $\cCat/I$.
  Then the identity 2-cell is left homotopy exact:
  \[
    \begin{tikzcd}
      A \ar[r,"r"] \ar[d,"ur"'] \twocell{dr} & B \ar[d,"u"] \\
      I \ar[r,equals] & I
    \end{tikzcd}
  \]
\end{lem}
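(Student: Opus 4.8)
The plan is to produce the required isomorphism not by comparing colimits directly, but by exhibiting both sides of the mate as left adjoints to one and the same functor and then invoking uniqueness of adjoints. Write $\ell \adj r$ for the adjunction in $\cCat/I$ witnessing that $r$ is a right adjoint, with $\ell\maps B\to A$, unit $\eta\maps \id_B \Impl r\ell$, and counit $\ep\maps \ell r \Impl \id_A$. Since this adjunction lives over $I$, we have $ur\ell = u$ on the nose, and the whiskered $2$-cells $u\eta$ and $(ur)\ep$ are identities; these over-$I$ facts are what will do the work.

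First I would apply the $2$-functor $\D\maps \cCat\op\to\cCAT$ to the adjunction $\ell\adj r$. Because $\D$ reverses $1$-cells and preserves the direction of $2$-cells, the images of $\eta$ and $\ep$ are exactly a unit $\id_{\D(B)}\Impl \ell^* r^*$ and a counit $r^*\ell^*\Impl \id_{\D(A)}$, and the triangle identities transport along $\D$. Thus $\D$ converts $\ell\adj r$ into an adjunction $r^*\adj \ell^*$ in $\cCAT$, so $r^*$ acquires a right adjoint $\ell^*$ in addition to the left adjoint $r_!$ supplied by (Der3L).

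Next I would compute the right adjoint of the composite $(ur)_!\, r^*$. By (Der3L) we have $(ur)_!\adj (ur)^*$, and combining with $r^*\adj \ell^*$ shows that $(ur)_!\, r^*$ has right adjoint $\ell^*(ur)^* = \ell^* r^* u^*$. Contravariant functoriality of $\D$ rewrites this composite as $(ur\ell)^*$, which equals $u^*$ by the over-$I$ equation $ur\ell = u$. Hence $(ur)_!\, r^*$ and $u_!$ are both left adjoints of the single functor $u^*$, and so there is a canonical isomorphism between them. Note that this uses only the left-derivator structure (existence of the functors $(-)_!$) together with the purely formal adjunction $r^*\adj\ell^*$, and in particular needs neither (Der4) nor any right adjoints inside $\D$.

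The remaining, and main, step is to check that this canonical isomorphism is the mate-transformation $(ur)_!\,r^*\to u_!$ itself, so that it is precisely the latter which is invertible. Equivalently, I would verify that the conjugate of the mate—the induced transformation between the two right adjoints, both equal to $u^*$—is the identity of $u^*$, since a map of left adjoints is an isomorphism exactly when its conjugate between the right adjoints is. I expect this to be the only real work: it is a diagram chase in the mate calculus, unwinding the definition of the mate in terms of the units and counits of $u_!\adj u^*$, of $(ur)_!\adj(ur)^*$, and of the derived adjunction $r^*\adj \ell^*$, and then collapsing the resulting composite using the triangle identities together with the over-$I$ conditions $u\eta=\id$ and $(ur)\ep=\id$.
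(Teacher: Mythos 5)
Your proposal is correct and takes essentially the same route as the paper: the paper's one-line proof likewise uses the adjunction $r^*\dashv\ell^*$ obtained by applying $\D$ to $\ell\dashv r$, and observes that the mate $(ur)_!\,r^*\to u_!$ is conjugate to the transformation $u^*\to\ell^*(ur)^*$, which is the identity precisely because the adjunction lies over $I$ (so $ur\ell=u$ and $u\eta=\mathrm{id}$), whence the mate is an isomorphism. Your extra scaffolding via uniqueness of left adjoints of $u^*$ is harmless but redundant; your ``main step'' of checking that the conjugate of the mate is the identity \emph{is} the paper's entire proof.
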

\begin{proof}
  If $\ell$ is the left adjoint of $r$, then the map $(ur)_!\, r^* \to u_!$ is conjugate to $u^* \to \ell^* (ur)^*$, which is an identity since the entire adjunction lies over $I$; hence it is also an isomorphism.
\end{proof}

\begin{lem}[cf.~{\cite[Proposition 1.26]{groth:der}}]\label{thm:comma-hoex}
  Any comma square is left and right homotopy exact:
  \[
    \begin{tikzcd}
      (u/v) \ar[r,"p"] \ar[d,"q"'] \twocell{dr} & A \ar[d,"u"] \\
      B \ar[r,"v"'] & C
    \end{tikzcd}
  \]
\end{lem}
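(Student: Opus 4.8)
Fix a left derivator $\D$; showing the comma square is $\D$-exact for every such $\D$ is exactly left homotopy exactness, and right homotopy exactness is dual (restrict along $\incl{A}$ and use (Der4R) together with the dual of \cref{thm:radj-hoex}). The plan is to bootstrap from the discrete case already built into (Der4L), which makes the comma square of $u,v$ into a $\D$-exact square whenever the domain of $v$ is discrete. Write $i=\incl{B}:\ob{B}\to B$. Since the mate $q_!\,p^*\to v^*u_!$ is a transformation of functors valued in $\D(B)$, (Der2) reduces $\D$-exactness to showing that its restriction $i^*$ is an isomorphism in $\D(\ob{B})$.

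The target is easy: $i^*v^*u_! = (vi)^*u_!$, and the comma category $(u/vi)$ of $u$ and $vi:\ob{B}\to C$, with projections $\tilde p,\tilde q$, falls under (Der4L) because $\ob{B}$ is discrete, giving $\tilde q_!\,\tilde p^* \cong (vi)^*u_! = i^*v^*u_!$. The work is in computing $i^*q_!$. For this I form a second comma category $(q/i)$ of $q:(u/v)\to B$ and $i$, with projections $\pi_1:(q/i)\to(u/v)$ and $\pi_2:(q/i)\to\ob{B}$; again $\ob{B}$ is discrete, so (Der4L) gives $(\pi_2)_!\,\pi_1^* \cong i^*q_!$, whence $i^*q_!\,p^* \cong (\pi_2)_!\,(p\pi_1)^*$.

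It remains to identify $(\pi_2)_!\,(p\pi_1)^*$ with $\tilde q_!\,\tilde p^*$, and here \cref{thm:radj-hoex} enters. The category $(u/vi)$ sits inside $(q/i)$ as a reflective subcategory over $\ob{B}$: the inclusion $j$ equips an object with the identity comparison, while the reflector $k$ absorbs the comparison $\beta$ into the structure map by composing with $v\beta$. One checks $kj=\id$ with unit $\id\Rightarrow jk$, and that $j,k$ commute with the projections $\tilde q,\pi_2$ to $\ob{B}$; since $\ob{B}$ is discrete, any natural transformation between functors over $\ob{B}$ automatically lies in $\cCat/\ob{B}$, so $k\dashv j$ in $\cCat/\ob{B}$. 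As $j$ is then a right adjoint in $\cCat/\ob{B}$, \cref{thm:radj-hoex} yields $\tilde q_!\,j^* \cong (\pi_2)_!$; combined with the identity $p\pi_1 j=\tilde p$ this gives $(\pi_2)_!\,(p\pi_1)^* \cong \tilde q_!\,\tilde p^*$. Chaining the three isomorphisms produces $i^*q_!\,p^* \cong \tilde q_!\,\tilde p^* \cong i^*v^*u_!$.

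The main obstacle is not any single computation but the compatibility bookkeeping: one must verify that this chain of isomorphisms is precisely $i^*$ applied to the canonical mate of the original square, and not merely some isomorphism between the same two functors. This is exactly where the functoriality of mates under pasting — the same principle that makes pasting preserve homotopy exact squares — does the work, the three isomorphisms being the mates of the two comma squares and of the adjunction square of \cref{thm:radj-hoex}. Granting this, (Der2) concludes the proof. Note that the argument invokes (Der4) only at discrete categories and uses no instance of (Der1) beyond (Der2), so it goes through constructively.
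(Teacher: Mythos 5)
Your proof is correct, and its skeleton is the paper's: reduce along $\incl{B}$ using (Der2), apply (Der4L) to the two comma squares with discrete corner, namely $(q/\incl{B})$ over $\ob{B}$ and $(u/v\incl{B})$ over $\ob{B}$, bridge them by an adjunction over $\ob{B}$ via \cref{thm:radj-hoex}, and invoke functoriality of mates under pasting to see that the resulting chain of isomorphisms really is $\incl{B}^*$ of the canonical mate. The paper packages that last bookkeeping step as an equality of two pasted rectangles rather than as a chain of three mates, but this is the same argument; and your pasting identification is right, since the horizontal pasting of the $j$-square, the $(q/\incl{B})$-comma square, and the given square has the same boundary and the same $2$-cell as the $(u/v\incl{B})$-comma square.

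Where you genuinely diverge from the paper is in the direction in which \cref{thm:radj-hoex} is applied, and your direction is the correct one. The paper asserts that the induced functor $k\colon (q/\incl{B})\to(u/v\incl{B})$ ``is a right adjoint'' and applies \cref{thm:radj-hoex} to it. As you observe, $k$ is in fact the \emph{reflector}: $k\dashv j$, with counit $kj=\id$ and unit $\id\Rightarrow jk$ whose component at $((a,b',\gamma),b,\beta)$ is induced by $\beta$ itself; so $k$ is a left adjoint and it is $j$ that is a right adjoint over $\ob{B}$. Moreover $k$ need not admit any left adjoint: take $A=\done$ with $u$ picking out $c\in C$, and let $B$ contain two parallel arrows $\beta_1,\beta_2\colon b'\to b$ with $v\beta_1\circ\gamma=v\beta_2\circ\gamma=\delta$ for some $\gamma\colon c\to vb'$; then $(u/v\incl{B})$ is discrete, and the fiber of $k$ over $(b,\delta)$ --- which is the comma category whose initial object would be the value of a left adjoint --- is the cospan $(b',\gamma,\beta_1)\to(b,\delta,\id_b)\ot(b',\gamma,\beta_2)$, which has no initial object. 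So \cref{thm:radj-hoex} cannot be invoked for $k$; it must be invoked for $j$, exactly as you do, yielding $\tilde q_!\,j^*\cong(\pi_2)_!$. (The square with $k$ on top is nevertheless left homotopy exact, but the proof of that is to cancel it against the $j$-square using functoriality of mates --- i.e.\ your argument in disguise.) In short, your proof is complete, and it also repairs a genuine misstep in the paper's own proof; the lemma itself and the overall strategy are unaffected.
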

\begin{proof}
  We prove the left case.
  By (Der2) and (Der4), it suffices to prove that the pasted rectangle on the left below is homotopy exact, in which the left-hand square is also a comma:
  \[
    \begin{tikzcd}
      (q/\incl{}) \ar[r] \ar[d] \twocell{dr} & (u/v)  \ar[r,"p"] \ar[d,"q"'] \twocell{dr} & A \ar[d,"u"] \\
      \ob{B} \ar[r,"\incl{}"'] & B \ar[r,"v"'] & C
    \end{tikzcd}
    \qquad=\qquad
    \begin{tikzcd}
      (q/\incl{}) \ar[r] \ar[d] \twocell{dr} & (u/v\incl{})  \ar[r,"p"] \ar[d,"q"'] \twocell{dr} & A \ar[d,"u"] \\
      \ob{B} \ar[r,equals] & \ob{B} \ar[r,"v\incl{}"'] & C
    \end{tikzcd}
  \]
  But this is equal to the pasted rectangle on the right above, where the right-hand square is a comma and the left-hand square is an identity.
  And the induced functor $(q/\incl{}) \to (u/v\incl{})$ is a right adjoint, so by \cref{thm:radj-hoex} and (Der4) both of these squares are homotopy exact.
\end{proof}

\begin{lem}[cf.~{\cite[Proposition 1.24]{groth:der}}]\label{thm:opf-hoex}
  If $u$ is a cloven Grothendieck opfibration, then the identity in any pullback square is left homotopy exact:
  \[
    \begin{tikzcd}
      A \ar[r,"p"] \ar[d,"q"'] \drpullback & B \ar[d,"u"] \\
      C \ar[r,"v"'] & D
    \end{tikzcd}
  \]
  Dually, if $v$ is a cloven Grothendieck fibration, such a pullback square is right homotopy exact.
\end{lem}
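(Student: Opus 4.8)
The plan is to factor the pullback square through the \emph{comma} square of the same cospan and then combine homotopy exactness of comma squares (\cref{thm:comma-hoex}) with the right-adjoint criterion (\cref{thm:radj-hoex}). Write $(u/v)$ for the comma category of $B\xto{u}D\xot{v}C$, with objects triples $(b,c,\phi\maps ub\to vc)$ and projections $p'\maps(u/v)\to B$, $q'\maps(u/v)\to C$. The strict pullback $A=B\times_D C$ includes into it via $i\maps A\to(u/v)$, $(b,c)\mapsto(b,c,\id_{ub})$; this is fully faithful and satisfies $p'i=p$ and $q'i=q$. Hence the pullback square is the horizontal pasting of a square expressing $i$ over $C$ with the comma square:
\[
  \begin{tikzcd}
    A \ar[r,"i"] \ar[d,"q"'] & (u/v) \ar[r,"p'"] \ar[d,"q'"'] & B \ar[d,"u"] \\
    C \ar[r,equals] & C \ar[r,"v"'] & D
  \end{tikzcd}
\]
and, since the comma $2$-cell restricts along $i$ to the identity, the pasted $2$-cell is exactly the identity $2$-cell of the pullback square.

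First I would construct a left adjoint to $i$ over $C$ from the cleavage; this is the one place the opfibration hypothesis is used. Given $(b,c,\phi)$, clovenness supplies an opcartesian arrow $b\to \phi_! b$ over $\phi$, so that $u(\phi_! b)=vc$ and $(\phi_! b,c)\in A$. Setting $L(b,c,\phi)=(\phi_! b,c)$ defines a functor $L\maps(u/v)\to A$ (functoriality on morphisms is forced by the universal property of opcartesian arrows), and the pair consisting of the opcartesian lift together with $\id_c$ furnishes a unit $(b,c,\phi)\to iL(b,c,\phi)$. A direct check of the universal property gives $L\adj i$, and because $q'i=q$, $qL=q'$, and the unit is carried by identities in $C$, this adjunction lies entirely in $\cCat/C$.

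It then follows that the left-hand square is left homotopy exact. This is precisely the situation of \cref{thm:radj-hoex} — a right adjoint over a base, with the identity $2$-cell over that base — the only difference being that the base $C$ here need not be discrete. The computation in the proof of \cref{thm:radj-hoex} is insensitive to this: the mate $q_!\,i^*\to q'_!$ is conjugate to the identity $q'^*\to L^*q^* = (qL)^* = q'^*$, which is invertible for every left derivator. (If one prefers to use \cref{thm:radj-hoex} only for a set base, pull the adjunction back along $\incl{C}\maps\ob{C}\to C$ — base change $\cCat/C\to\cCat/\ob{C}$ being a $2$-functor, hence adjunction-preserving — apply the set-indexed \cref{thm:radj-hoex} to the pulled-back square, and transfer the conclusion back using (Der2) and (Der4) exactly as in the proof of \cref{thm:comma-hoex}.) The comma square is left homotopy exact by \cref{thm:comma-hoex}, so by closure of left homotopy exact squares under horizontal pasting the whole rectangle — i.e.\ the original pullback square — is left homotopy exact, which is the claim.

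For the dual, when $v$ is a cloven fibration the same construction with \emph{cartesian} lifts in $C$ yields a right adjoint $R$ to $i$ over $B$, exhibiting $i$ as a left adjoint in $\cCat/B$; pasting the corresponding square (now right homotopy exact, by the dual of \cref{thm:radj-hoex}) with the right homotopy exact comma square gives the result. Most efficiently, the fibration case is simply the image of the opfibration case under passage to opposite categories, which exchanges left and right derivators and turns fibrations into opfibrations. The only genuine work is the construction of the fibered adjunction $L\adj i$ — the main obstacle, and the step where clovenness is essential; everything else is formal mate calculus.
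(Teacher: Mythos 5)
Your proof is correct, but it follows a genuinely different decomposition from the paper's. The paper never forms the comma category $(u/v)$ of the given cospan: it first reduces everything to \emph{discrete} bases by pasting with comma squares over the inclusions $\incl{C}$ and $\incl{D}$ (using (Der2), (Der4), and functoriality of mates), and then applies \cref{thm:radj-hoex} exactly as stated, i.e.\ over sets, twice --- once to $\incl{D}^*(B)\to(u/\incl{D})$, whose left adjoint over $\ob{D}$ comes from the cleavage of $u$, and once to $\incl{C}^*(A)\to(q/\incl{C})$, using that the pullback $q$ of a cloven opfibration is again a cloven opfibration. You instead factor the pullback square as the inclusion square $i\maps A\to(u/v)$ over $1_C$ pasted with the global comma square, quote \cref{thm:comma-hoex} for the latter, and build a single fibered adjunction $L\adj i$ over the non-discrete base $C$. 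Your route is more modular: it reuses \cref{thm:comma-hoex} as a black box and avoids the auxiliary fact that cloven opfibrations are stable under pullback. The paper's route never invokes any lemma beyond its stated set-indexed hypotheses, which is the discipline it maintains for constructive reasons. Your key observation --- that the proof of \cref{thm:radj-hoex} is insensitive to discreteness of the base, the only real input being that the adjunction lies in the strict slice --- is correct, and your adjunction does lie in $\cCat/C$: you verify the unit, and the counit is then automatic (whisker the triangle identity $(i\epsilon)\circ(\eta i)=1_i$ with $q'$, or use the uniqueness clause of the opcartesian universal property).

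One caveat on your parenthetical fallback: ``pull the adjunction back along $\incl{C}$'' must mean applying the \emph{comma} construction $(-/\incl{C})\maps\cCat/C\to\cCat/\ob{C}$ (a 2-functor, hence adjunction-preserving), exactly as in the proof of \cref{thm:comma-hoex} --- not the strict pullback $\incl{C}^*(-)$. With strict pullbacks the transfer back fails: the rectangle obtained by pasting your inclusion square with the (Der4)-exact comma square $(q/\incl{C})$ does not factor through $\incl{C}^*A$ at all, and the comparison squares one would want, namely strict pullbacks of the comma projection $q'$ along $\incl{C}$, have exactness that is itself an instance of the lemma being proved (since $q'$ is a split opfibration), so that reading would be circular. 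With the comma reading, the rectangle factors as the square $(q/\incl{C})\to(q'/\incl{C})$ over $\ob{C}$ --- to which the set-based \cref{thm:radj-hoex} applies via the transported adjunction --- pasted with the (Der4)-exact comma square $(q'/\incl{C})$, and everything goes through. Since your primary argument stands on its own, this does not affect the correctness of the proof.
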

\begin{proof}
  We prove the left case.
  Let $\incl{D}^*(B)$ be the pullback
  \[
    \begin{tikzcd}
      \incl{D}^*(B) \ar[d] \ar[r] \drpullback & B \ar[d,"u"] \\
      \ob{D}\ar[r,"\incl{D}"'] & D
    \end{tikzcd}
  \]
  Then there is an induced functor $\incl{D}^*(B) \to (u/\incl{D})$, and the cleaving of $u$ supplies a left adjoint to it over $\ob{D}$.
  Similarly, since $q$ is also a cloven opfibration, the induced functor $\incl{C}^*(A) \to (q/\incl{C})$ is a right adjoint over $\ob{C}$.
  Therefore, by (Der2) and (Der4) and \cref{thm:radj-hoex}, it suffices to prove that the following pasting is homotopy exact:
  \[
    \begin{tikzcd}
      \incl{C}^*(A) \ar[r]\ar[d] \twocell{dr} & (q/\incl{C}) \ar[r] \ar[d] \twocell{dr} & A \ar[r,"p"] \ar[d,"q"'] \drpullback  & B \ar[d,"u"] \\
      \ob{C} \ar[r,equals] & \ob{C} \ar[r,"\incl{C}"'] & C \ar[r,"v"'] & D.
    \end{tikzcd}
  \]
  But this factors as
  \[
    \begin{tikzcd}
      \incl{C}^*(A) \ar[r]\ar[d] \drpullback & \incl{D}^*(B) \ar[r] \ar[d] \twocell{dr} & (u/\incl{D}) \ar[r] \ar[d] \twocell{dr} & B \ar[d,"u"] \\
      \ob{C} \ar[r,"\ob{v}"'] & \ob{D} \ar[r,equals] & \ob{D} \ar[r,"\incl{D}"'] & D.
    \end{tikzcd}
  \]
  Here the left- and right-hand squares are homotopy exact by (Der4), while the middle square is homotopy exact by \cref{thm:radj-hoex}.
\end{proof}

%To state the universal property we are interested in, we need the notion of morphism of derivators.

\begin{defn}
  A \textbf{morphism} of prederivators is a pseudonatural transformation, and a \textbf{transformation} is a modification.
  We say a morphism $G:\D\to\D'$ of left derivators is \textbf{cocontinuous} if for any functor $u:A\to B$, the canonical mate-transformation
  \[
    \begin{tikzcd}
      \D(A) \ar[r,"G"] \ar[d,"u_!"'] \twocell{dr} & \D'(A) \ar[d,"u_!"] \\
      \D(B) \ar[r,"G"'] & \D'(B)
    \end{tikzcd}
  \]
  is an isomorphism.
  We denote the category of morphisms and transformations by $\hom(\D,\D')$, and its full subcategory of cocontinuous morphisms by $\cchom(\D,\D')$.\footnote{Sometimes the notation $\hom_!$ is used instead, but I find this insufficiently visually distinctive.}
\end{defn}

\begin{lem}\label{thm:cocts-disc}
  A morphism $G:\D\to\D'$ is cocontinuous if and only if the above condition holds when $B$ is discrete.
\end{lem}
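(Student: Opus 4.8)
The plan is to prove the nontrivial direction: assuming the mate $u_!G \to Gu_!$ is an isomorphism whenever the codomain is discrete, I would deduce it for an arbitrary functor $u:A\to B$. The converse is immediate, since discrete categories are a special case of the general condition. Throughout, write $\mu_u : u_!\,G_A \to G_B\,u_!$ for the mate-transformation whose invertibility is at issue; note that both its source and target are functors landing in $\D'(B)$.

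The first step is to reduce to a discrete codomain using (Der2). Since $\incl{B}^* : \D'(B)\to\D'(\ob{B})$ is conservative, $\mu_u$ is an isomorphism if and only if $\incl{B}^*\mu_u$ is, so it suffices to analyze the whiskered cell $\incl{B}^*\mu_u$. To do this I would bring in the comma square for $u$ and $\incl{B}$:
\[
  \begin{tikzcd}
    (u/\incl{B}) \ar[r,"p"] \ar[d,"q"'] \twocell{dr} & A \ar[d,"u"] \\
    \ob{B} \ar[r,"\incl{B}"'] & B
  \end{tikzcd}
\]
whose base $\ob{B}$ is discrete. By \cref{thm:comma-hoex} this square is homotopy exact, hence both \D-exact and \D'-exact, giving canonical isomorphisms $q_!\,p^* \iso \incl{B}^* u_!$ in \D and in \D'. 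Combining the \D'-version with the pseudonaturality isomorphism of $G$ along $p$ yields $\incl{B}^* u_!\,G_A \iso q_!\,G_{(u/\incl{B})}\,p^*$, and combining the \D-version with the pseudonaturality isomorphism of $G$ along $\incl{B}$ yields $\incl{B}^* G_B\,u_! \iso G_{\ob{B}}\,q_!\,p^*$.

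Under these two identifications, the cell $\incl{B}^*\mu_u$ becomes the mate $\mu_q : q_!\,G_{(u/\incl{B})} \to G_{\ob{B}}\,q_!$ whiskered on the right by $p^*$. Since $\ob{B}$ is discrete, the hypothesis gives that $\mu_q$ is an isomorphism, so $\mu_q\,p^*$ is one as well; tracing back through the displayed isomorphisms shows $\incl{B}^*\mu_u$ is an isomorphism, and then (Der2) gives that $\mu_u$ itself is. This completes the argument modulo the single identification just asserted.

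The main obstacle is precisely that identification: verifying that, after transport along the homotopy-exactness isomorphisms and the pseudonaturality cells of $G$, the whiskered cell $\incl{B}^*\mu_u$ really coincides with $\mu_q\,p^*$, rather than merely sharing its source and target. This is a bookkeeping exercise in the mate calculus. I expect it to follow from the functoriality of mates under pasting (\cite{ks:r2cats}): both cells can be exhibited as the mate of one and the same pasted $2$-cell, assembled from the comma-square $2$-cell and the pseudonaturality square of $G$, so their equality is forced by compatibility of the mate construction with horizontal and vertical pasting. Everything else in the proof is formal.
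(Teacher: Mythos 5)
Your proof is correct and takes essentially the same route as the paper: the paper's one-sentence proof likewise reduces the condition for arbitrary $u:A\to B$ to the condition for $q:(u/\incl{B})\to\ob{B}$ via (Der2), comma-square exactness, and the functoriality of mates under pasting, which is exactly the bookkeeping identification you flag at the end. The only cosmetic difference is that you invoke \cref{thm:comma-hoex}, whereas for this particular comma square (whose lower-left corner $\ob{B}$ is discrete) axiom (Der4L) applies directly.
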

\begin{proof}
  By functoriality of mates, combined with (Der2) and (Der4), we can deduce the condition for arbitrary $u:A\to B$ from the condition for $q:(u/\incl{B}) \to \ob{B}$.
\end{proof}

\begin{thm}[in classical mathematics]\label{thm:spaces-afcp}
  Every Quillen model category \cM induces a derivator $\nHo(\cM)$.
  If $\nsSet$ denotes the Kan--Quillen model category of simplicial sets, then $\iSpace = \nHo(\nsSet)$ is the free cocompletion of a point: there is an object $\ast \in \iSpace(\done)$ such that for any derivator \D, the induced functor
  \[ \cchom(\iSpace,\D) \to \D(\done) \]
  is an equivalence of categories.
\end{thm}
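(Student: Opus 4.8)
The plan is to prove the two assertions separately. For the first --- that every Quillen model category \cM yields a derivator \nHo(\cM) --- I would follow the standard classical argument (Cisinski): for each small $A$ equip the functor category $\cM^A$ with the injective (or projective) model structure and set $\nHo(\cM)(A):=\Ho(\cM^A)$, with restriction along $u:A\to B$ inducing $u^*$. A Quillen adjunction between functor categories descends to a derived adjunction on homotopy categories, which gives (Der3L) and (Der3R); the base-change axioms (Der4) reduce to the well-known fact that derived Kan extensions are computed pointwise by homotopy (co)limits over comma categories, i.e.\ a homotopy-theoretic Quillen Theorem A / Bousfield--Kan argument, which is precisely the model-categorical incarnation of \cref{thm:comma-hoex}. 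Axioms (Der1), (Der2), (Der5) follow directly from the objectwise nature of the weak equivalences. This is routine classical model-category theory, so I would only sketch it.

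For the universal property, the functor to be inverted is evaluation at the point, $F\mapsto F(\ast)$. The organizing principle is that $\ast$ generates \iSpace under homotopy colimits: every space is canonically the homotopy colimit of the constant point-diagram over its category of simplices. Concretely, for a simplicial set $K$ with category of simplices $(\Delta/K)$ and projection $\pi:(\Delta/K)\to\done$, the classifying space of $(\Delta/K)$ is weakly equivalent to $K$, so $\pi_!\,\pi^*\ast\iso K$ in $\iSpace(\done)$. Relative to an arbitrary base $A$, this upgrades to a presentation of any object of $\iSpace(A)$ as a left Kan extension of a point-valued diagram along the projection $\el_A(K)\to A$ of the relative category of elements; because that projection is a Grothendieck opfibration, \cref{thm:opf-hoex} computes this $\pi_!$ fiberwise, matching the shape-\done\ statement on each fiber. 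This density statement is the technical heart: it is a derivator reformulation of the coend/Bousfield--Kan description of $K$ as the homotopy colimit of its cells (each representable being contractible), and establishing it requires a Fubini-type manipulation of homotopy colimits together with the Theorem-A identification of the relevant comma categories.

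Granting density, both halves follow formally. For the quasi-inverse, given $X\in\D(\done)$ I would transport the same colimit formula into \D, setting $F_X(K):=\pi_!(\pi^* X)$ and, more generally, defining $F_X$ on $\iSpace(A)$ by the relative Kan extension of the constant $X$-diagram along $\el_A(K)\to A$, using the structure supplied by (Der3L) and (Der4L). Since $(\Delta/\ast)=\done$ we get $F_X(\ast)\iso X$, and cocontinuity of $F_X$ is a consequence of the associativity/Fubini properties of left Kan extensions in a derivator (functoriality of mates together with (Der4L) and \cref{thm:comma-hoex}). For full faithfulness, given cocontinuous $F,G$ a transformation $F\to G$ is forced by its component at $\ast$: since $F$ and $G$ preserve the colimit presentation of each $K$, writing $F(K)\iso\pi_!\,\pi^* F(\ast)$ and similarly for $G$ shows that any compatible family of components is determined by, and reconstructible from, the single map $F(\ast)\to G(\ast)$.

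The main obstacle I anticipate is not the formal bookkeeping but the density statement and the coherence of $F_X$. The value $F_X(K)=\colim_{(\Delta/K)}X$ is easy to write, but verifying that these values assemble into a genuine pseudonatural morphism of prederivators $\iSpace\to\D$ --- compatible across all shapes $A$, carrying all the $2$-cell coherence of a morphism of derivators, and cocontinuous uniformly in $A$ --- is exactly the delicate part of the Heller--Cisinski argument. This is where one must exploit the ``indexed'' form of (Der4) recorded in the excerpt, reasoning about all generalized elements $v:I\to\ob C$ at once rather than working one shape at a time; a naive shape-by-shape construction would not glue.
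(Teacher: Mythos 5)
The paper does not actually prove this theorem: its entire ``proof'' is a one-line attribution to Heller and Cisinski, since the result is classical and serves only as motivation for the constructive development that follows. So there is no detailed argument in the paper to compare yours against, beyond the remark in \cref{sec:free-cocompletion} that the classical route localizes \cCat at the Thomason weak equivalences and descends the morphism $\odot:\cCat\times\D\to\D$ of \cref{thm:odot}. Your sketch is a legitimate outline of that same classical circle of ideas, approached from the simplicial-set side rather than the \cCat side: your density statement (that $K$ is the homotopy colimit of the point over its category of simplices $\Delta/K$) and the Thomason correspondence are two faces of one fact, since the nerve of $\Delta/K$ is weakly equivalent to $K$ via the last-vertex map. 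It is also worth noting that your overall plan --- present every object as a point-valued homotopy colimit, define the quasi-inverse by transporting that colimit formula into \D, and obtain full faithfulness because cocontinuous morphisms preserve the presentation --- is precisely the strategy the paper itself executes constructively for \sex in \cref{thm:self-odt,thm:sex-rfcc}, with the Grothendieck construction $\tint\Xtil$ playing the role your category of simplices plays, and with the homotopies of \cref{thm:eq-rhtpy,thm:comp-rhtpy} handling the coherence problem you correctly identify as the delicate point.

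One genuine flaw in your first step: equipping $\cM^A$ with the injective or projective model structure does not work for an arbitrary Quillen model category. The projective structure requires \cM to be cofibrantly generated and the injective structure requires it to be combinatorial, whereas the statement asserts the conclusion for \emph{every} model category; Cisinski's proof of that generality avoids putting any model structure on $\cM^A$ and is considerably more delicate. For the main assertion about $\iSpace$ this gap is harmless, since the Kan--Quillen model structure is cofibrantly generated, so your construction and universal-property argument go through as sketched. A minor further point: your closing worry about needing the ``indexed'' form of (Der4) is misplaced here, since the theorem is explicitly classical, and with the axiom of choice the usual pointwise form of (Der1)--(Der4) suffices.
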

\begin{proof}
  %When \D is a derivator in the usual sense,
  In essense, this is due to Heller~\cite{heller:htpythys} and Cisinski~\cite{cisinski:presheaves,cisinski:basicloc}.
\end{proof}

%For further exposition of derivators, see for instance~\cite{groth:der}.

We will also need two-variable morphisms of derivators, as in~\cite{gps:additivity}.

\begin{lem}[{cf.~\cite[Theorem 3.11]{gps:additivity}}]
  For prederivators $\D_1,\D_2,\D_3$, to give a morphism $\D_1\times\D_2 \to \D_3$ is equivalent to giving a family of functors
  \[ \D_1(A) \times \D_2(B) \to \D_3(A\times B) \]
  varying pseudonaturally over $\cCat\op\times\cCat\op$.\qed
\end{lem}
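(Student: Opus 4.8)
The plan is to give two explicit, mutually inverse constructions and to verify they are inverse up to coherent isomorphism. Throughout, $\D_1\times\D_2$ denotes the pointwise product prederivator, with $(\D_1\times\D_2)(A)=\D_1(A)\times\D_2(A)$ and restriction $u^*\times u^*$; since the statement is about prederivators, only the 2-functoriality of the $\D_i$ and the pseudonaturality of morphisms enter, so the argument is purely 2-categorical and uses no (Der) axiom. Because prederivators are strict 2-functors, reindexing composes on the nose, and every isomorphism produced below is assembled from specified pseudonaturality data; hence the correspondence is itself specified, with no appeal to choice.

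From a morphism $G:\D_1\times\D_2\to\D_3$ I would build the family
\[ \Phi_{A,B}(X,Y):=G_{A\times B}(\pi_A^*X,\,\pi_B^*Y),\qquad X\in\D_1(A),\ Y\in\D_2(B), \]
where $\pi_A:A\times B\to A$ and $\pi_B:A\times B\to B$ are the projections; this is a composite of functors, so it is functorial in $(X,Y)$. For $u:A'\to A$ and $v:B'\to B$, strict 2-functoriality gives $\pi_{A'}^*u^*=(u\times v)^*\pi_A^*$ (from $u\pi_{A'}=\pi_A(u\times v)$) and likewise on the $B$-side, so the pseudonaturality cell of $\Phi$ at $(u,v)$ is simply the pseudonaturality cell of $G$ at the functor $u\times v$. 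Conversely, from a family $\Phi$ I would build the morphism
\[ G_A:=\Delta_A^*\circ\Phi_{A,A}:\D_1(A)\times\D_2(A)\to\D_3(A), \]
with $\Delta_A:A\to A\times A$ the diagonal; using $\Delta_B u=(u\times u)\Delta_A$, its pseudonaturality cell at $u$ is, again on the nose, the pseudonaturality cell of $\Phi$ at $(u,u)$.

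That these are inverse is controlled by two identities in \cCat. For $\Phi\mapsto G\mapsto\Phi'$ we get $\Phi'_{A,B}(X,Y)=\Delta_{A\times B}^*\,\Phi_{A\times B,A\times B}(\pi_A^*X,\pi_B^*Y)$; the pseudonaturality of $\Phi$ at $(\pi_A,\pi_B)$ followed by $\Delta_{A\times B}^*$ yields $\big((\pi_A\times\pi_B)\Delta_{A\times B}\big)^*\Phi_{A,B}(X,Y)$, and $(\pi_A\times\pi_B)\Delta_{A\times B}=1_{A\times B}$, so $\Phi'\cong\Phi$. For $G\mapsto\Phi\mapsto G'$ we get $G'_A(X,Y)=\Delta_A^*\,G_{A\times A}(\pi_1^*X,\pi_2^*Y)$ with $\pi_1,\pi_2:A\times A\to A$; the pseudonaturality of $G$ at $\Delta_A$ rewrites this as $G_A\big((\pi_1\Delta_A)^*X,(\pi_2\Delta_A)^*Y\big)$, and $\pi_i\Delta_A=1_A$, so $G'\cong G$. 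Applying the same recipe to modifications produces the action on transformations, so the two constructions assemble into an equivalence between $\hom(\D_1\times\D_2,\D_3)$ and the category of two-variable families.

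The hard part is not these identities but the surrounding coherence bookkeeping: one must check that each inherited cell really satisfies the unit and composition axioms of a pseudonatural transformation, and that the two round-trip isomorphisms are modifications. I would organize this verification around the 2-functoriality of the product $\cCat\times\cCat\to\cCat$ together with the identities $\pi_i\Delta=1$ and $(\pi_1\times\pi_2)\Delta=1$, so that each coherence condition collapses to the corresponding one for $G$, for $\Phi$, or for the underlying strict 2-functors, rather than expanding the pasting diagrams by hand.
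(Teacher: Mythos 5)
Your proposal is correct and matches the intended argument: the paper gives no proof of this lemma (its proof is deferred to the cited Theorem~3.11 of Groth--Ponto--Shulman), but immediately after the statement it records exactly the correspondence you construct, namely $M \oast_A N \cong \Delta_A^*(M\oast N)$ and $M\oast N \cong \pi_1^* M \oast_{A\times B} \pi_2^* N$. Your verification that these assignments are mutually inverse, via the identities $(\pi_A\times\pi_B)\Delta_{A\times B}=1_{A\times B}$ and $\pi_i\Delta_A=1_A$, together with the coherence bookkeeping you outline, is the standard proof of that cited result.
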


If $\oast$ is such a two-variable morphism, we write $\oast_A : \D_1(A) \times \D_2(A) \to \D_3(A)$ for its components in the ordinary (or ``internal'') sense, and $\oast : \D_1(A) \times \D_2(B) \to \D_3(A\times B)$ for the above equivalent ``external'' components.
The relationship is that $M \oast_A N \cong \Delta_A^*(M\oast N)$ while $M\oast N \cong \pi_1^* M \oast_{A\times B} \pi_2^* N$.

\begin{defn}
  A morphism $\oast:\D_1\times\D_2 \to \D_3$ of left derivators is \textbf{cocontinuous in its first variable} if for any $u:A\to B$ and $M\in \D_1(A)$ and $N\in \D_2(C)$, the following mate-transformation is an isomorphism in $\D_3(B\times C)$:
  \[ (u\times 1)_!\,(M\oast N) \too (u_!\,M) \oast N. \]
\end{defn}

See~\cite[Warning 3.6]{gps:additivity} for why this has to be formulated with the external product rather than the internal one.
There is a dual notion of cocontinuity in the second variable, and an analogue of \cref{thm:cocts-disc} for two-variable morphisms.

Finally, since $\D(A) \to \D(A)\times \D(A)$ is equivalent to $\nabla^* : \D(A) \to \D(A+A)$ (this uses (Der1) for finite coproducts), in a right derivator the former functor also has a right adjoint.
Thus any right derivator \D is ``cartesian monoidal'', with a product morphism $\times : \D\times \D\to\D$.

\begin{defn}\label{defn:distrib}
  We say a derivator \D is \textbf{distributive} if this $\times$ is cocontinuous in both variables.\footnote{Technically this definition does not require \D to be a full derivator, only a ``left derivator with binary products'', but we will have no use for that generality.}
\end{defn}

  For example, a complete and cocomplete category regarded as a derivator as in \cref{thm:cat-der} is distributive if binary products preserve colimits in each variable, in the usual sense.
  In particular, \iSet is distributive.

\section{The derivator of setoids}
\label{sec:derex}

Let \sE be, to start with, a category with finite limits.

\begin{lem}
  $\eex : \cCat\op\to\cCAT$ is a 2-functor.
\end{lem}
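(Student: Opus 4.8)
The plan is to supply the three layers of a strict 2-functor and to organise the verification around a single reduction. On objects we already have the categories $\eex(A)$, and on a functor $u\maps A\to B$ we have the restriction $u^*\maps\eex(B)\to\eex(A)$ defined above by reindexing every datum of a coherent diagram, a morphism representative, or a witness of equality along the action of $u$. I would first note that this is well defined precisely because $u$ is a \emph{strict} functor: the witness $X_r$ at $u(a)$ serves at $1_a$ because $u(1_a)=1_{u(a)}$, and the coherence witness $X_{u(\al),u(\al')}$ serves at a composable pair because $u(\al'\al)=u(\al')u(\al)$. Reindexing commutes with the index-wise formulas for composing representatives and for witnesses of equality, so each $u^*$ is a functor, and because reindexing along a composite is the composite of reindexings we get $(vu)^*=u^*v^*$ and $(1_A)^*=1$ strictly.

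The key step is to observe, using \cref{rmk:ac-dia}, that the relation identifying two morphism representatives of $\eex(A)$ mentions only their object-level components $f_{a,0}$. Hence a morphism $X\to Y$ is determined by the family of \eex-morphisms $[f_a]\maps X_a\to Y_a$, two representatives present the same morphism exactly when these families coincide, and --- crucially --- such a family underlies an honest morphism iff each square $[Y_\al]\circ[f_a]=[f_{a'}]\circ[X_\al]$ commutes in \eex, since a naturality witness $f_\al$ is by definition a witness of exactly this equality. This reduces every remaining claim to an equation between families of \eex-morphisms, in which the non-associativity of representatives flagged in \cref{rmk:nonassoc} never appears.

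Finally, for a natural transformation $\varphi\maps u\Rightarrow v$ between $u,v\maps A\to B$ I would define $\varphi^*\maps u^*\Rightarrow v^*$ by declaring the $a$-component of $\varphi^*_X$ to be $[X_{\varphi_a}]$, the class of the representative $X$ assigns to $\varphi_a\maps u(a)\to v(a)$. That this is a morphism amounts to $[X_{v(\al)}][X_{\varphi_a}]=[X_{\varphi_{a'}}][X_{u(\al)}]$, which follows from coherence (each side equals $[X_{v(\al)\varphi_a}]$ and $[X_{\varphi_{a'}u(\al)}]$) together with naturality of $\varphi$ (so $v(\al)\varphi_a=\varphi_{a'}u(\al)$). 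Naturality of $\varphi^*$ in $X$ is the equation $[Y_{\varphi_a}][f_{u(a)}]=[f_{v(a)}][X_{\varphi_a}]$, i.e.\ the naturality of the coherent-diagram morphism $f$ instantiated at $\varphi_a$. The 2-functor axioms then fall out at the level of \eex-components: $[X_{1_{u(a)}}]=\id$ via $X_r$ gives $(1_u)^*=1_{u^*}$; $[X_{\psi_a}][X_{\varphi_a}]=[X_{\psi_a\varphi_a}]$ via coherence gives $(\psi\varphi)^*=\psi^*\cdot\varphi^*$; and the two whiskerings match because the components $[X_{z(\varphi_a)}]$ and $[X_{\varphi_{w(a')}}]$ are literally those of $\varphi^*$ at $z^*X$ and of $w^*\varphi^*$. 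The only genuine labour is the existence of the witnesses invoked in the key reduction, a brief manipulation of the reflexivity, symmetry and transitivity operations of the pseudo-equivalence relations; everything else is bookkeeping that the reduction renders automatic.
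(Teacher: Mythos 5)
Your first paragraph (strict functoriality of $u^*$, and $(vu)^*=u^*v^*$, $(1_A)^*=1$ holding on the nose) is fine and agrees with the paper. The problem is your ``key reduction.'' You claim, citing \cref{rmk:ac-dia}, that a morphism of $\eex(A)$ is determined by its family of $\eex$-components $[f_a]$, that two representatives present the same morphism exactly when these families coincide, and that a family of $\eex$-morphisms whose naturality squares commute in $\eex$ underlies an honest morphism of coherent diagrams. All three statements are precisely the content of \cref{rmk:ac-dia}, and that remark is explicitly conditional on the axiom of choice. Constructively, ``for each $a$ there exists a witness $h_a$'' (which is what $[f_a]=[g_a]$ for all $a$ says) does not yield the chosen \emph{family} $(h_a)_{a\in A}$ that the definition of a witness of equality in $\eex(A)$ demands; likewise ``each square $[Y_\al][f_a]=[f_{a'}][X_\al]$ commutes in $\eex$'' does not yield the chosen family of naturality witnesses $(f_\al)_\al$ that a morphism representative must contain. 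Since the entire point of the paper is to work without choice, this reduction is unavailable, and everything you hang on it --- defining $\varphi^*_X$ by giving only the classes $[X_{\varphi_a}]$, the naturality of $\varphi^*$ in $X$, and the 2-functor axioms checked as equations between families of classes --- is left ungrounded as written.

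The repair is to work uniformly with representatives and witnesses throughout, which is what the paper's proof does: for $\mu : u\Rightarrow v$, the component of $\mu^*_X$ at $a$ is the representative $(X_{\mu_a,0},X_{\mu_a,1})$ supplied directly by the data of $X$, and the naturality witness at $\al : a\to a'$ is \emph{constructed}, not asserted to exist, by applying the pseudo-transitivity $m$ (and symmetry) to the coherence witnesses $X_{u\al,\mu_{a'}}$ and $X_{\mu_a,v\al}$, which are composable because $\mu_{a'}\cdot u\al = v\al\cdot\mu_a$ holds on the nose. This formula is a function of $\al$, so no choice is invoked; the same pattern (the family $a\mapsto X_r$ at $ua$, families of coherence witnesses, and for naturality in $X$ the family $a\mapsto f_{\mu_a}$ already packaged inside a representative $f$) handles the remaining verifications. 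You do name exactly these ingredients, so your argument is repairable; but the repair consists in promoting each of your pointwise existence claims to an explicitly given family, at which point the ``reduction'' does no work and what remains is the paper's proof.
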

\begin{proof}
  First, the restriction functors $u^*:\eex(B) \to \eex(A)$ are strictly functorial, being given by simple composition with the data of $u$.
  Second, given a natural transformation $\mu : u\Rightarrow v :A\to B$ with components $\mu_a : ua \to va$, for any $X\in \eex(B)$ we have an induced family of morphisms $X_{\mu_a,0}:X_{ua,0} \to X_{va,0}$ and $X_{\mu_a,1}:X_{ua,1} \to X_{va,1}$.
  Third, for $\al:a\to a'$, by applying the pseudo-transitivity $m$ to $X_{u\al,\mu_{a'}}$ and $X_{\mu_a,v\al}$, we have morphisms $X_{ua,0} \to X_{va',1}$ exhibiting naturality.
  Thus, we obtain a morphism of coherent diagrams $u^*X \to v^*X$.
  The 2-functoriality axioms follow straightforwardly.
\end{proof}

\begin{lem}\label{thm:der1}
  \eex satisfies (Der1).
\end{lem}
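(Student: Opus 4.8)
The functor in question is $F \colon \eex\!\left(\sum_{i\in I}A_i\right) \to \prod_{i\in I}\eex(A_i)$ whose $i$-th component is restriction $\iota_i^*$ along the coproduct inclusion $\iota_i\colon A_i \to \sum_{i\in I}A_i$. The plan is to exhibit an explicit quasi-inverse and to pinpoint projectivity of $I$ as the single place where choice is invoked. The key observation is that the coproduct category $\sum_{i\in I}A_i$ is a disjoint union: its objects are pairs $(i,a)$ with $a\in A_i$, and every morphism, identity, and composite lives inside a single $A_i$. Consequently each piece of data defining a coherent $(\sum_i A_i)$-diagram --- the pseudo-equivalence relations $X_{(i,a)}$, the morphism representatives $X_\alpha$, and the witnesses $X_r$ and $X_{\alpha,\alpha'}$ --- as well as each datum of a morphism representative or of a witness of equality, decomposes uniquely as an $I$-indexed family of the corresponding data on the individual $A_i$. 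In particular a family $(X^{(i)})_{i\in I}\in\prod_i\eex(A_i)$ assembles, with no choices, into the unique coherent diagram on $\sum_i A_i$ whose restriction along each $\iota_i$ is $X^{(i)}$; thus $F$ is already a bijection on objects.

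First I would define the quasi-inverse $G$ on morphisms. Given a family $(f^{(i)})_{i\in I}$ with $f^{(i)}\colon X^{(i)}\to Y^{(i)}$ in $\eex(A_i)$, each $f^{(i)}$ is an \emph{equivalence class} of representatives, so to name a single representative over all of $\sum_i A_i$ I must select one representative of each $f^{(i)}$ simultaneously. Form the set
\[ R = \setof{(i,\rho) | i\in I,\ \rho \text{ a representative of } f^{(i)}} \]
with its projection $\pi\colon R\to I$. Since each class $f^{(i)}$ is inhabited, $\pi$ is a surjection; as $I$ is projective, $\pi$ admits a section, which is exactly a choice of representative $\rho^{(i)}$ of each $f^{(i)}$. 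Because the defining equations of a morphism representative on $\sum_i A_i$ only ever relate data within a single $A_i$, the family $(\rho^{(i)})_i$ glues to a morphism representative $\rho$ on $\sum_i A_i$; I set $G\big((f^{(i)})_i\big)=[\rho]$, with $G$ acting on objects by the choice-free gluing above.

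It then remains to check that $G$ is well defined and two-sided inverse to $F$, and here projectivity is used once more in exactly the same way. If $(\rho^{(i)})_i$ and $(\rho'^{(i)})_i$ arise from two sections, then $\rho^{(i)}$ and $\rho'^{(i)}$ represent the same class $f^{(i)}$, so each pair is linked by a witness of equality $k^{(i)}$; choosing such a family via projectivity and gluing yields a global witness $[\rho]=[\rho']$, so $G$ is independent of the section and is functorial. The composite $FG$ is the identity, since restricting the glued representative along $\iota_i$ returns $\rho^{(i)}$. For $GF$, a morphism $g=[\sigma]$ on $\sum_i A_i$ restricts to $([\sigma|_{A_i}])_i$, and the re-glued representative differs from $\sigma$ only piecewise, each difference witnessed on $A_i$; assembling these witnesses (projectivity again) gives $[\rho]=[\sigma]=g$. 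Hence $F$ is an isomorphism of categories, in particular an equivalence with specified quasi-inverse $G$, which is what (Der1) requires.

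The one genuine obstacle is the selection of representatives, and I expect it to be resolved precisely by projectivity of $I$ --- and by nothing weaker in general. For a non-projective index set there may be no way to choose compatible representatives across all $i$, so $F$ need be neither full nor essentially surjective at the level of morphisms; this is exactly why (Der1) is asserted only for projective index sets rather than for all sets.
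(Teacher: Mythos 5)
Your proposal is correct and takes essentially the same route as the paper: the paper likewise observes that the functor is bijective on objects without any choices, and invokes projectivity of $I$ exactly twice---once to select representatives of the $f^{(i)}$ (giving fullness) and once to select witnesses of equality (giving faithfulness). Your explicit quasi-inverse $G$ is just a repackaging of ``bijective on objects, full, and faithful,'' with the same two uses of projectivity doing all the work.
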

\begin{proof}
  For any coproduct of categories, the functor $\eex(\sum_i A_i) \to \prod_i \eex(A_i)$ is bijective on objects.
  To show that it is full, we must select representatives for a family of morphisms in each $\eex(A_i)$ to assemble them into a representative for a morphism in $\eex(\sum_i A_i)$; this is possible when $I$ is projective.
  Similarly, to show that it is faithful, we must select witnesses of equality in each $\eex(A_i)$ to assemble into such a witness in $\eex(\sum_i A_i)$, which is also possible when $I$ is projective.
\end{proof}

\begin{lem}\label{thm:der2}
  \eex satisfies (Der2).
\end{lem}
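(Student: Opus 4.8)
The statement to prove is that whenever $f\maps X\to Y$ is a morphism of $\eex(A)$ whose restriction $\incl{A}^* f$ is an isomorphism in $\eex(\ob{A})$, the morphism $f$ is already an isomorphism in $\eex(A)$. The plan is to construct an explicit two-sided inverse, and the observation that sidesteps the axiom of choice is that the hypothesis does not merely say that the components $f_a$ are \emph{invertible}: it literally supplies an inverse morphism $g\maps\incl{A}^*Y\to\incl{A}^*X$ in $\eex(\ob{A})$. First I would fix a representative $(f_{a,0},f_{a,1};f_\al)$ of $f$, instantiate this inverse $g$, and fix a representative $(g_{a,0},g_{a,1})$ of it together with representatives of the two equations $g\circ\incl{A}^*f=1$ and $\incl{A}^*f\circ g=1$; the latter are witness families $\phi_a\maps X_{a,0}\to X_{a,1}$ and $\psi_a\maps Y_{a,0}\to Y_{a,1}$ for $g_a f_a\sim 1$ and $f_a g_a\sim 1$. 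Each is a single choice of a representative of a single morphism or equation in $\eex(\ob{A})$, so no choice principle is invoked, and crucially the data $g_{a,0},g_{a,1},\phi_a,\psi_a$ are thereby available \emph{uniformly} in $a$.

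The next step is to promote $g$ to a morphism of $\eex(A)$ by supplying, for each $\al\maps a\to a'$, a naturality witness $g_\al\maps Y_{a,0}\to X_{a',1}$ with $s g_\al = X_{\al,0}\,g_{a,0}$ and $t g_\al = g_{a',0}\,Y_{\al,0}$. I would obtain this by the standard ``naturality of the inverse'' chase, carried out internally using only the groupoid operations $r,v,m$ of the pseudo-equivalence relations (no axioms being available). Beginning from $X_{\al,0}g_{a,0}$, one inserts $1\sim g_{a'}f_{a'}$ via the reverse of $\phi_{a'}$, rewrites $f_{a'}X_{\al,0}\sim Y_{\al,0}f_{a,0}$ via the reversal $v f_\al$ of the given naturality witness for $f$, and contracts $f_a g_a\sim 1$ via $\psi_a$, landing at $g_{a',0}Y_{\al,0}$; concatenating these three witnesses with $m$ produces a single $g_\al$ with the prescribed source and target. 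The intermediate witnesses must be transported along the relevant $1$-component maps, such as $g_{a',1}$ and $Y_{\al,1}$, and precomposed with $g_{a,0}$ and $X_{\al,0}$. Because every ingredient is already fixed data, this formula is uniform in $\al$ and again uses no choice.

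Finally I would check that $(g_{a,0},g_{a,1};g_\al)$ is a legitimate morphism representative (its source/target equations hold by construction) and that it inverts $f$ on both sides in $\eex(A)$. For the latter, the point is that, as noted in \cref{rmk:ac-dia}, a witness of equality between parallel morphism representatives in $\eex(A)$ consists only of a family of maps into the $1$-components and imposes no condition on the $\al$-components; hence the already-chosen $\phi_a$ and $\psi_a$ witness $g\circ f\sim 1_X$ and $f\circ g\sim 1_Y$ in $\eex(A)$ verbatim, exactly as they did in $\eex(\ob{A})$. I expect the main obstacle to be purely the bookkeeping of the internal chase defining $g_\al$: since the pseudo-equivalence relations satisfy no associativity, unit, or inverse \emph{laws}---only the stated equations relating $s,t$ to $r,v,m$---one must assemble the composite witness carefully so that its source and target come out exactly as $X_{\al,0}g_{a,0}$ and $g_{a',0}Y_{\al,0}$, with all intermediate maps threaded through the relevant pullbacks correctly.
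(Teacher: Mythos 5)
Your proposal is correct and follows essentially the same route as the paper's proof: extract the inverse representative and the two equality witnesses from the hypothesis in $\eex(\ob{A})$ (a single existential instantiation, so no choice is needed), build the naturality witnesses $g_\al$ by the standard ``pointwise-invertible natural transformation'' chase carried out with the operations $r,v,m$, and then observe that the very same equality witnesses serve verbatim in $\eex(A)$ since witnesses of equality there impose no condition on the naturality components. The only difference is one of detail: you spell out the transport of the intermediate witnesses through $g_{a',1}$, $Y_{\al,1}$, etc., which the paper compresses into the phrase ``copying the usual proof.''
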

\begin{proof}
  Let $f:X\to Y$ be a (representative of a) morphism in $\eex(A)$, with components $f_{a,0}$, $f_{a,1}$, and $f_\al$.
  If it is invertible in $\eex(\ob{A})$, then we have families of morphisms $g_{a,0}:Y_{a,0} \to X_{a,0}$ and $g_{a,1}:Y_{a,1} \to X_{a,1}$ representing morphisms of pseudo-equivalence relations $Y_a\to X_a$, and such that $g f \sim 1$ and $f g \sim 1$ in $\eex(\ob{A})$.
  The latter mean that there exist $h_a : X_{a,0} \to Y_{a,1}$ with $sh_a = g_{a,0}f_{a,0}$ and $th_a = 1$, and also $k_a : Y_{a,0} \to X_{a,1}$ with $sk_a = f_{a,0} g_{a,0}$ and $tk_a = 1$.
  Using a chosen such $h$ and $k$, we can define (copying the usual proof that a pointwise invertible natural transformation is invertible in the functor category) for each $\al:a\to a'$ a morphism $g_{\al}:Y_{a,0} \to X_{a',1}$ making $g$ a representative of a morphism in $\eex(A)$.
  The same $h$ and $k$ then witness that $g f = 1$ and $f g = 1$ in $\eex(A)$.
\end{proof}

\begin{lem}\label{thm:der3}\label{thm:der4}
  If \sE is complete, then \eex is a right derivator.
  If \sE has pullback-stable coproducts, then \eex is a left derivator.
\end{lem}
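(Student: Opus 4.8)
Since \cref{thm:der1,thm:der2} already supply (Der1) and (Der2), making \eex a semiderivator, what remains is (Der3) and (Der4). The plan is to carry out the left case in full and to treat the right case by the evident dual argument, in which the colimits of \cref{thm:colim} over the comma categories $(u/b)$ are everywhere replaced by the limits of \cref{thm:lim} over the comma categories $(b/u)$, the left adjoints $u_!$ by right adjoints $u_*$, the discreteness of $B$ in (Der4) by discreteness of $A$, and the hypothesis of pullback-stable coproducts by completeness of \sE. For the left case the strategy is to build, for every functor $u\maps A\to B$, an explicit left adjoint $u_!$ to $u^*$ as a ``pointwise'' version of \cref{thm:colim}, and then to read (Der4L) off directly from the resulting formula.

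First I would construct $u_!$. Given $X\in\eex(A)$, I define a coherent $B$-diagram $u_! X$ whose value at $b\in B$ is the colimit, formed exactly as in the proof of \cref{thm:colim} applied to the category $(u/b)$, of the restriction of $X$ along the projection $(u/b)\to A$; concretely its object of points is the coproduct $\sum X_{a,0}$ over the objects $(a,\beta\maps ua\to b)$ of $(u/b)$, and its pseudo-equivalence relation is the one freely generated, as in \cref{thm:free-pseqr}, by the morphisms of $(u/b)$ together with the relations $X_{a,1}$. Pullback-stable small coproducts are exactly what is needed to form these, and, as in \cref{thm:colim}, no choices enter, so the adjoint is genuinely specified. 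The decisive simplification is that a morphism $\gamma\maps b\to b'$ of $B$ acts by the reindexing $(a,\beta)\mapsto(a,\gamma\beta)$, which is \emph{strictly} functorial on points; consequently $u_! X$ is strict on objects and its coherence data $X_r$ and $X_{\gamma,\gamma'}$ can be taken to be reflexivity witnesses. Functoriality of $u_!$ and its well-definedness on equivalence classes of representatives are checked as in \cref{thm:colim}, recalling (\cref{rmk:nonassoc}) that representatives compose functorially only after quotienting by witnesses of equality.

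Next I would establish the adjunction $u_!\adj u^*$. The unit $X\to u^* u_! X$ has component at $a$ the colimit coprojection indexed by the object $(a,1_{ua})\in(u/ua)$, and the counit $u_! u^* Y\to Y$ is induced at $b$ by the cocone $\{\,Y_{ua}\xto{Y_\beta}Y_b\,\}$ over $(a,\beta)\in(u/b)$ assembled from the functoriality of $Y$. The natural bijection $\Hom_{\eex(B)}(u_! X,Y)\iso\Hom_{\eex(A)}(X,u^* Y)$ (equivalently, the triangle identities) is then obtained by feeding the data of a morphism through the universal property of the free pseudo-equivalence relation and of the colimits of \cref{thm:colim}, one object of $B$ at a time. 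I expect this to be the main obstacle --- not for any conceptual reason, but because every morphism is an equivalence class of representatives carrying naturality data $f_\alpha$ and witnesses of equality, all of which must be tracked carefully through the universal properties, exactly as in \cref{thm:colim} but now relative to $B$ rather than to \done.

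Finally I would deduce (Der4L), which is almost immediate from the construction. For $u\maps A\to C$ and $v\maps B\to C$ with $B$ discrete, the comma category splits as $(u/v)=\sum_{b\in B}(u/vb)$, and $q\maps(u/v)\to B$ carries the $b$-th summand to $b$. To see that the mate $q_!\,p^*\to v^* u_!$ is invertible it suffices, by (Der2), to restrict to the objects of $B$; unwinding the explicit formulas, its value at $b$ compares the $b$-component of $q_!\,p^* X$, which is the colimit of $(p_b)^* X$ over $(u/vb)$ for the projection $p_b\maps(u/vb)\to A$, with $(u_! X)_{vb}$, which by the very construction of $u_!$ is that same colimit. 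The two are literally the same object, so the mate is an identity, and \eex is a left derivator.
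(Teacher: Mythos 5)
Your proposal is correct and follows essentially the same route as the paper: both construct $u_!$ (resp.\ $u_*$) pointwise as the colimits of \cref{thm:colim} (resp.\ limits of \cref{thm:lim}) over comma categories, relying on the fact that those constructions have constructive universal properties at the level of morphism representatives and witnesses of equality, and then read off (Der4) from the identification of the relevant comma categories when the base is discrete. The only cosmetic differences are that you work out the left case while the paper details the right case, and your observation that the comma-category reindexing is strictly functorial (so the coherence witnesses of $u_!X$ can be reflexivities) is a clean shortcut where the paper instead assembles the functoriality data from projection representatives and compatibility witnesses.
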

\begin{proof}
  We can use the classical construction of pointwise Kan extensions~\cite[Theorem X.3.1]{maclane} essentially verbatim, due to the fact that the constructions in \cref{thm:lim,thm:colim} are not just adjoints, but have a constructive universal property with respect to \emph{representatives} of morphisms and \emph{witnesses} of equality.
  That is, there is a function which, given a representative for a morphism $(p_A)^* X \to Y$ in $\eex(A)$, produces a representative for the corresponding morphism $X \to L$, where $L$ is the limit constructed in \cref{thm:lim}; and similarly for witnesses of equality between morphisms, and for colimits.
  The construction of these functions is essentially contained in the proofs of \cref{thm:lim,thm:colim}.

  Consider the case of limits; the case of colimits is analogous.
  Given $u:A\to B$, for any $b\in B$ we have the comma category $(b/u)$ with projection $q_b:(b/u) \to A$.
  For $X\in \eex(A)$ and $b\in B$, define $(u_*X)_b = (p_{b/u})_* q_b^* X$, with the limit functor $(p_{b/u})_*$ constructed as in \cref{thm:lim}.
  For a morphism $\beta:b\to b'$ in $B$, the above remark implies that we can give a morphism representative $(u_*X)_b \to (u_*X)_{b'}$ by giving a morphism representative $(p_{b'/u})^* (p_{b/u})_* q_b^* X \to q_{b'}^* X$, consisting of morphism representatives $(p_{b/u})_* q_b^* X \to X_{a}$ for all morphisms $\beta':b'\to ua$, with compatibility witnesses.
  These latter representatives can be given by the projections from $(p_{b/u})_* q_b^* X$ corresponding to the composite $\beta'\beta : b\to ua$, and similarly for the compatibility witnesses.
  Likewise, the same principles yield witnesses of functoriality and a universal property of $u_*$ as a right adjoint of $u^*$.
  Thus (Der3R) holds.
  To prove (Der4R), in a comma square with $A$ discrete:
  \[
    \begin{tikzcd}
      (u/v) \ar[r,"p"] \ar[d,"q"'] \twocell{dr} & A \ar[d,"u"] \\
      B \ar[r,"v"'] & C,
    \end{tikzcd}
  \]
  the construction above shows that $(u^* v_* X)_a$ and $(p_* q^* X)_a$ are limits (as in \cref{thm:lim}) of the restrictions of $X$ to a pair of isomorphic categories $(ua/v)$ and $(a/p)$.
  Thus, these limits are isomorphic, in a constructive way that can be done simultaneously for all $a\in A$.
\end{proof}

\begin{lem}\label{thm:der5}
  \eex satisfies (Der5).
\end{lem}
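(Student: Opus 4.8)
The plan is to show directly that the comparison functor $R\colon\eex(A\times\dtwo)\to\eex(A)^\dtwo$ is full and essentially surjective. Here $R$ sends a coherent $A\times\dtwo$-diagram $X$ to the morphism $i_0^*X\to i_1^*X$ of coherent $A$-diagrams induced, via the 2-functoriality established at the start of this section, by the canonical natural transformation $i_0\Rightarrow i_1$ between the two inclusions $i_0,i_1\colon A\to A\times\dtwo$ at levels $0$ and $1$; its component at $a$ is the morphism $(1_a,\epsilon)\colon(a,0)\to(a,1)$, where $\epsilon\colon 0\to1$ is the nontrivial arrow of \dtwo. For essential surjectivity, fix a morphism $g\colon M\to N$ of coherent $A$-diagrams and choose a single morphism representative, with components $g_{a,0},g_{a,1}$ and naturality witnesses $g_\alpha$ (only this one choice is made, so no choice principle is invoked). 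We build a coherent $A\times\dtwo$-diagram $X$ by setting $X_{(a,0)}=M_a$ and $X_{(a,1)}=N_a$ on objects; using the diagram data of $M$ and $N$ on level-preserving morphisms, $X_{(\alpha,1_0)}=M_\alpha$ and $X_{(\alpha,1_1)}=N_\alpha$; and taking on a level-crossing morphism $(\alpha,\epsilon)\colon(a,0)\to(a',1)$ the composite representative $(g_{a',0}M_{\alpha,0},\,g_{a',1}M_{\alpha,1})$. The reflexivity witnesses $X_r$ and the coherence witnesses $X_{\phi,\psi}$ for composable pairs lying within a single level are inherited from those of $M$ and $N$, while those for a level-crossing pair are assembled from $g_\alpha$, the coherence witnesses of $M$ and $N$, and the operations $r,v,m$. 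By construction $i_0^*X=M$ and $i_1^*X=N$ on the nose, and the diagonal morphism $X_{(1_\bullet,\epsilon)}$ has representative $(g_{a,0}M_{1_a,0},\,g_{a,1}M_{1_a,1})$, which represents the same morphism $M\to N$ as $g$ because $M_{1_a}\sim 1$ via $M_r$; hence $R(X)\cong g$.

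For fullness, suppose $X,Y\in\eex(A\times\dtwo)$ with $R(X)=(g\colon M\to N)$ and $R(Y)=(g'\colon M'\to N')$, and let a morphism of $\eex(A)^\dtwo$ be given, namely morphisms $p\colon M\to M'$ and $q\colon N\to N'$ with $g'p=qg$ in $\eex(A)$. Choose representatives for $p$ and $q$ and a witness of equality for the square $g'p\sim qg$. These assemble into a morphism representative $X\to Y$ in $\eex(A\times\dtwo)$: on the objects $(a,0)$ and $(a,1)$ we use the components of $p$ and $q$; the naturality witnesses for level-preserving morphisms are those of $p$ and $q$; and the naturality witnesses for the level-crossing morphisms $(\alpha,\epsilon)$ are produced from the square-commutativity witness together with the naturality witnesses of $p,q$ and the coherence data of $X,Y$, again using $r,v,m$. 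By construction this representative restricts to (representatives of) $p$ and $q$ on the two levels, so $R$ carries it to the given square, proving fullness.

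The main obstacle is the bookkeeping in constructing the level-crossing witnesses: in essential surjectivity one must check that the proposed coherence witnesses $X_{\phi,\psi}$ satisfy the defining equations $sX_{\phi,\psi}=X_{\psi,0}X_{\phi,0}$ and $tX_{\phi,\psi}=X_{\psi\phi,0}$, and in fullness that each level-crossing $f_{(\alpha,\epsilon)}$ satisfies $sf_{(\alpha,\epsilon)}=Y_{(\alpha,\epsilon),0}f_{(a,0)}$ and $tf_{(\alpha,\epsilon)}=f_{(a',1)}X_{(\alpha,\epsilon),0}$. Each such verification amounts to an explicit manipulation of the pseudo-groupoid operations in the internal language of \sE and is routine but tedious. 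As in \cref{thm:der1,thm:der2}, the argument uses no choice beyond selecting one representative or witness for each piece of the given data, so it goes through without the axiom of choice.
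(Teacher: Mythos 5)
Your proposal is correct and takes essentially the same approach as the paper: the paper's proof is a one-liner observing (analogously to \cref{rmk:ac-dia}) that since \dtwo is finite, the functor $\eex(A\times\dtwo)\to\eex(A)^\dtwo$ is actually an equivalence, and your explicit construction of the lifted coherent diagram, the level-crossing naturality witnesses, and the fillers built from $r,v,m$ is exactly what that remark compresses. The only differences are in degree of detail and that the paper asserts slightly more (the functor is an equivalence, not merely full and essentially surjective), so your argument is a faithful unwinding of the intended one.
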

\begin{proof}
  Analogously to \cref{rmk:ac-dia}, since \dtwo is finite, the functor in question is actually an equivalence.
\end{proof}

\begin{thm}\label{thm:eex-der}
  For any complete category \sE with small coproducts preserved by pullback, \eex is a strong distributive derivator.
\end{thm}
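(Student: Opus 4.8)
The plan is to treat the two assertions separately: first assemble the preceding lemmas into the statement that \eex is a strong derivator, and then prove distributivity on its own. The first half is pure assembly. \Cref{thm:der1,thm:der2,thm:der5} show that \eex satisfies (Der1), (Der2) and (Der5), and \cref{thm:der3} shows that completeness of \sE makes \eex a right derivator while pullback-stable coproducts make it a left derivator. Since \sE is assumed complete with pullback-stable small coproducts, \eex is therefore a semiderivator that is both a left and a right derivator, i.e.\ a derivator, and it is strong; only distributivity remains.

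Being a right derivator, \eex is cartesian monoidal (see the discussion after \cref{defn:distrib}), and its product is computed degreewise: the product of pseudo-equivalence relations $X,Y$ has $(X\times Y)_i=X_i\times Y_i$ for $i=0,1$, so that the external product satisfies $(M\boxtimes N)_{(a,c),i}=M_{a,i}\times N_{c,i}$. Distributivity asks that $\times$ be cocontinuous in each variable; by the symmetry of the cartesian product it suffices to treat the first variable, and by the two-variable analogue of \cref{thm:cocts-disc} it suffices to show, for every $u:A\to B$ with $B$ \emph{discrete} and all $M\in\eex(A)$, $N\in\eex(C)$, that
\[ (u\times 1)_!\,(M\boxtimes N)\too (u_!\,M)\boxtimes N \]
is an isomorphism.

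Here the discreteness of $B$ pays off twice. First, $u$ is then canonically a cloven Grothendieck opfibration, so \cref{thm:opf-hoex} applied to the pullback square
\[
  \begin{tikzcd}
    A\times C \ar[r,"\pi_A"] \ar[d,"u\times 1"'] \drpullback & A \ar[d,"u"] \\
    B\times C \ar[r,"\pi_B"'] & B
  \end{tikzcd}
\]
yields the base-change isomorphism $(u\times 1)_!\,\pi_A^* M\cong \pi_B^*\,u_!\,M$. Since $M\boxtimes N=\pi_A^* M\times(u\times1)^*\pi_C^* N$ (using $\pi_C\circ(u\times1)=\pi_C$), this reduces the claim to the projection formula $(u\times1)_!\big(\pi_A^* M\times(u\times1)^*\pi_C^* N\big)\cong\big((u\times1)_!\,\pi_A^* M\big)\times\pi_C^* N$. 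Second, for $(b,c)\in B\times C$ the comma category computing the left-hand side at $(b,c)$ is $A_b\times(C/c)$, where $A_b=u\inv(b)$, and the diagram we colimit sends $(a,(c',\gamma))$ to $M_a\times N_{c'}$. Splitting this as an iterated colimit over $C/c$ and over $A_b$, and using the density isomorphism $\colim_{C/c} N\cong N_c$ (a standard consequence of homotopy exactness, \cref{thm:comma-hoex}), the projection formula follows once we know that binary product with a \emph{fixed} object of \eex commutes with colimits: in the inner colimit $N_{c'}$ is constant, and in the outer one the object $\colim_{A_b}M$ is constant.

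Proving that $(-)\times Y$ is cocontinuous for fixed $Y$ is where the hypotheses on \sE are genuinely needed, and I expect the degree-$1$ bookkeeping to be the main obstacle. Degreewise, $(-)\times Y$ acts as $(-)\times Y_0$ and $(-)\times Y_1$ on \sE; since coproducts in \sE are pullback-stable these functors preserve coproducts, and they preserve pullbacks automatically, so they commute with each ingredient of the colimit construction of \cref{thm:colim}: the coproduct $\sum_a X_{a,0}$ and, more delicately, the countable coproduct of iterated fibre products of zigzags defining the free pseudo-equivalence relation of \cref{thm:free-pseqr}. The fiddly point is to check, summand by summand and compatibly with the source, target, unit, symmetry and transitivity data, that multiplying the free pseudo-equivalence relation on $R\toto C_0$ by $Y_1$ reproduces the free pseudo-equivalence relation generated by the corresponding span for $X\times Y$, and that the unit and the induced comparison maps agree up to a witness of equality. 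Each such identification is an instance of distributivity of $\times$ over coproducts and pullbacks in \sE, so they all go through; combining this with the reduction above proves that \eex is distributive.
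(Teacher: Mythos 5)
Your proof is correct in substance, and its decisive ingredient is the same as the paper's: binary products in $\eex$ are computed degreewise, the ``colimits'' of \cref{thm:colim} are assembled from coproducts and pullbacks in $\sE$, and pullback-stability of coproducts makes degreewise products preserve both of these in each variable. The packaging differs, though. The paper applies this observation directly to the pointwise comma-category construction of $u_!$ for arbitrary $u:A\to B$; you instead reduce to discrete $B$ via the two-variable analogue of \cref{thm:cocts-disc}, use \cref{thm:opf-hoex} for base change, and run a projection-formula argument (Fubini plus the density isomorphism coming from \cref{thm:comma-hoex}) to isolate the single statement ``product with a constant object commutes with colimits'', which you then verify degreewise. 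Your route buys a cleaner final check (only a fixed $Y_0$, $Y_1$ to push through \cref{thm:free-pseqr}) at the price of more derivator formalism; the paper's is shorter but leaves more of the same bookkeeping implicit.

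Two caveats. First, your fiberwise phrasing (``for $(b,c)\in B\times C$ the comma category computing the left-hand side at $(b,c)$ is $A_b\times(C/c)$'') must be read uniformly over the discrete category $B\times\ob{C}$: under the weakened (Der1), isomorphisms over an infinite discrete base are \emph{not} detected point by point, so the comma categories, the Fubini splitting, the density isomorphism, and the constant-factor lemma all need to be formulated in families, exactly as the proof of \cref{thm:der3} does. All your ingredients are uniform (Der4L is stated for discrete $v$, and the constructions are explicit), so this is a repair of exposition rather than of mathematics --- but in this paper's constructive setting it is the kind of point that must be said. Second, the final identification is not literally ``an instance of distributivity in $\sE$'': multiplying the free pseudo-equivalence relation of \cref{thm:free-pseqr} by $Y_1$ does not reproduce the free pseudo-equivalence relation on the product generators --- an $n$-fold zigzag of the latter carries $n$ witnesses from $Y_1$ glued over $Y_0$, not a single one --- and the two are only isomorphic in $\eex$, via comparison maps built from the reflexivity, symmetry, and transitivity operations of $Y$. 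You hedge in the right direction (``up to a witness of equality''), and the paper's own proof elides the same point, so this is a matter of stating the comparison accurately rather than a gap in the approach.
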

\begin{proof}
  We have verified all the strong derivator axioms in \cref{thm:der1,thm:der2,thm:der3,thm:der4,thm:der5}, so it remains only to prove distributivity.
  For this, we note that if in \cref{thm:lim} $A$ is discrete, we can replace the construction given there by the simpler $L_0 = \prod_a Y_{a,0}$ and $L_1 = \prod_a Y_{a,1}$.
  Now since the ``colimits'' in \cref{thm:colim} are constructed out of pullbacks and coproducts, and both of these are preserved in each variable by finite products, it follows that the derivator products in \eex preserve its left Kan extensions in each variable.
\end{proof}

\begin{cor}
  \sex is a strong distributive derivator.\qed
\end{cor}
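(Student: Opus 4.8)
The plan is to invoke \cref{thm:eex-der} in the special case $\sE = \iSet$, since $\sex$ is by definition $\eex$ for $\sE = \iSet$. It therefore suffices to confirm the two hypotheses of that theorem for the category of sets: that \iSet is complete, and that it has small coproducts preserved by pullback. Everything substantive has already been done in proving \cref{thm:eex-der}; the corollary is a pure specialization.

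Both hypotheses are standard features of \iSet, and — crucially for us — both hold without the axiom of choice or excluded middle. For completeness, a small limit is computed as usual: a product as a dependent function set, and a general limit as the subset of such a product cut out by the defining equations. Since our background set theory supplies dependent products and comprehension for small families, these limits again lie in \iSet. For coproducts, the disjoint union $\sum_{i\in I} X_i$ is the coproduct of a small family, and its pullback-stability is precisely the (infinitary) extensivity of \iSet: for any $f\colon Y \to \sum_{i} X_i$ one has a natural decomposition $Y \cong \sum_{i} f^{-1}(X_i)$, so that pulling a coproduct back along an arbitrary morphism recovers the coproduct of the pullbacks of the summands.

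I anticipate no genuine obstacle here. The only point worth flagging is the constructive one just made: completeness and the universality of coproducts in \iSet are consequences of structure already assumed of our set theory (dependent products, comprehension, and the effective quotients that make \iSet exact), and do not secretly appeal to choice. With the two hypotheses thus verified, \cref{thm:eex-der} applies verbatim and yields that \sex is a strong distributive derivator.
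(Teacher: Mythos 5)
Your proposal is correct and matches the paper exactly: the corollary is stated with an immediate \qed precisely because it is the specialization of \cref{thm:eex-der} to $\sE = \iSet$, whose completeness and pullback-stable small coproducts hold constructively in the background theory. Your explicit verification of these two hypotheses (and the remark that neither requires choice) is exactly the implicit content of the paper's argument.
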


\begin{rmk}
  The free exact completion is not in general idempotent.
  In particular, we can have $(\sex)_{\mathsf{ex}} \not\simeq \sex$.
  % \footnote{Indeed, the canonical inclusion $\sE \to \eex$ coincides with the inclusion of the the projective objects.
  %   Thus, if the inclusion $\sex \to (\sex)_{\mathsf{ex}}$ is an equivalence, then all objects of $\sex$ are projective, and hence also in the image of \iSet; so $\iSet\to \sex$ is also an equivalence, and thus all objects of \iSet are projective, i.e.\ the axiom of choice holds.}
  However, since $\sex$ is not complete or cocomplete as a \emph{category}, \cref{thm:eex-der} does not imply that $(\sex)_{\mathsf{ex}}$ is a derivator.
  It is unclear whether there is a notion of ``exact completion of a derivator''.
\end{rmk}

\section{Equivalences and locality}
\label{sec:deqv}

As suggested in the introduction, we are interested in derivators that satisfy a relative version of \cref{thm:spaces-afcp}, being a free cocompletion of a point in a world of ``1-categorical derivators''.
Thus, we may start by asking what it is that makes a derivator 1-categorical.
Intuitively, an $(\infty,1)$ ``is'' a 1-category if all its hom-spaces are 0-truncated; but a derivator does not have explicit hom-spaces.

However, we can detect the same information using limits and colimits of constant diagrams.
For instance, for any object $M$ of an $(\infty,1)$-category, the limit of the constant diagram
\[ M \toto M \]
is the \emph{free loop space object} $LM$ of $M$, which is equivalent to $M$ just when $M$ is 0-truncated.
Similarly, one dimension down, the product $M\times M$ is equivalent to $M$ just when $M$ is $(-1)$-truncated, i.e.\ subterminal.
Thus, the ``1-categorical'' or ``0-categorical'' nature of an $(\infty,1)$-category is detected by limits of constant diagrams of this shape.

More generally, in any derivator we can consider the following relative notion.

\begin{defn}\label{defn:deqv}
  Let $u:A\to B$ and $v:B\to I$ be functors, where $I$ is a discrete set.
  We say $u$ is a \textbf{\D-equivalence over $I$}, for a prederivator \D, if $u^*$ is fully faithful on the image of $v^*$.
\end{defn}

\begin{lem}\label{thm:deqv}
  If \D is a left derivator, then $u$ is a \D-equivalence over $I$ if and only if the map $(vu)_!\,(vu)^* \to v_!\, v^*$ is an isomorphism.
  Dually, if \D is a right derivator, then $u$ is a \D-equivalence over $I$ if and only if the map $v_*\, v^* \to (vu)_*\,(vu)^*$ is an isomorphism.
\end{lem}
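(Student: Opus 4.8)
The plan is to translate the full-faithfulness condition of \cref{defn:deqv} through the defining adjunctions and then invoke Yoneda; I treat the left case, the right case being strictly dual. First I would record the two structural facts $(vu)^* = u^*v^*$ and the canonical isomorphism $(vu)_! \iso v_!u_!$ (the latter from the $2$-functoriality of \D together with uniqueness of adjoints), so that the comparison morphism in the statement is the whiskered counit
\[ (vu)_!\,(vu)^* = v_!\,u_!\,u^*\,v^* \xto{\,v_!\,\ep\,v^*\,} v_!\,v^*, \]
where $\ep : u_!\,u^* \imp \id_{\D(B)}$ is the counit of $u_! \adj u^*$. (One checks this is the canonical map by noting that postcomposing it with the counit of $v_!\adj v^*$ recovers the counit of the composite adjunction $(vu)_! \adj (vu)^*$.) Then I would unwind the definition: since full faithfulness is invariant under passing to isomorphic objects, $u$ is a \D-equivalence over $I$ exactly when, for all $W,W' \in \D(I)$, the function
\[ \D(B)(v^*W,\,v^*W') \too \D(A)(u^*v^*W,\,u^*v^*W') \]
induced by $u^*$ is a bijection.

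The second step is a twofold adjunction chase. Transporting the codomain across $u_!\adj u^*$ identifies the displayed function with precomposition by the counit $\ep_{v^*W}$, that is, with $\D(B)(v^*W, v^*W') \to \D(B)(u_!u^*v^*W,\, v^*W')$. Transporting both hom-sets across $v_!\adj v^*$ then rewrites this, by naturality of the adjunction isomorphism in the contravariant slot, as precomposition by $v_!(\ep_{v^*W})$:
\[ \D(I)(v_!v^*W,\,W') \too \D(I)(v_!u_!u^*v^*W,\,W'). \]
The point to observe is that $v_!(\ep_{v^*W})$ is precisely the $W$-component of the comparison morphism above.

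The final step is Yoneda. For fixed $W$, the last display is a bijection for all $W'$ if and only if $v_!(\ep_{v^*W})$ is an isomorphism in $\D(I)$ (corepresentable Yoneda); quantifying over $W$, the full-faithfulness condition holds for all $W,W'$ precisely when every component of $(vu)_!(vu)^* \to v_!v^*$ is invertible, i.e.\ when that morphism is an isomorphism. The right-derivator case runs identically, with $u_!,v_!$ replaced by the right adjoints $u_*,v_*$, the counit $\ep$ replaced by the unit of $u^* \adj u_*$, precomposition replaced by postcomposition, and the representable form of Yoneda. I do not expect a genuine obstacle: the argument is entirely formal and constructively valid, since all adjoints in a derivator are specified. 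The only points demanding care are the correct identification of the canonical comparison $2$-cell, the bookkeeping of which adjunction acts on which slot, and the reading of ``fully faithful on the image'' as requiring \emph{both} the source and the target to lie in the (essential) image of $v^*$ — it is exactly this symmetric reading that produces the $\D(I)$-level condition $v_!v^*$ rather than a condition phrased in $\D(B)$.
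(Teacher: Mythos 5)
Your proof is correct and follows essentially the same route as the paper's: the paper likewise identifies, via the adjunctions $u_!\dashv u^*$ and $v_!\dashv v^*$, the precomposition map $\D(I)(v_!\,v^*X,Y)\to\D(I)((vu)_!\,(vu)^*X,Y)$ with the full-faithfulness map $\D(B)(v^*X,v^*Y)\to\D(A)(u^*v^*X,u^*v^*Y)$, and then concludes by the Yoneda lemma. Your write-up merely adds the explicit identification of the comparison morphism as the whiskered counit $v_!\,\ep\,v^*$ and spells out the adjunction chase that the paper leaves implicit.
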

\begin{proof}
  By the Yoneda lemma, the stated condition for left derivators is equivalent to saying that
  \[ \D(I)(v_!\, v^* X, Y) \to \D(I)((vu)_!\,(vu)^* X ,Y) \]
  is a bijection for all $X,Y\in\D(I)$.
  But this map is isomorphic to
  \[ \D(I)(v^* X, v^* Y) \to \D(I)(v^* u^* X , v^* u^* Y), \]
  and this being a bijection for all $X,Y$ is \cref{defn:deqv}.
\end{proof}

The above considerations might lead us to say that a prederivator \D is 1-categorical if the functor $(\cdot \toto \cdot) \to \done$ is a \D-equivalence, and 0-categorical if the functor $(\done+\done) \to \done$ is a \D-equivalence.
However, as we will see, things are a bit more subtle than this.
We begin by recording some basic properties of the \D-equivalences.

\begin{lem}\label{thm:deqv-pb}
  % \fxnote{Are this and the next lemma also true for any prederivator?}
  If $f:I\to J$ is a function between discrete sets and $u:A\to B$ is a \D-equivalence over $J$, for a left or right derivator \D, then the pullback $f^*(u)$ is a \D-equivalence over $I$:
  \[
    \begin{tikzcd}
      f^*A \ar[r,"f^*u"] \ar[d] \drpullback & f^*B \ar[r,"f^*v"] \ar[d] \drpullback & I \ar[d,"f"]\\
      A \ar[r,"u"'] & B \ar[r,"v"'] & J
    \end{tikzcd}
    \]
\end{lem}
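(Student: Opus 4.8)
The plan is to prove the left-derivator case and obtain the right-derivator case by the dual argument (interchanging $_!$ with $_*$ and using the second half of \cref{thm:deqv}). Write $g=f^*v$ and observe that $g\circ f^*u=f^*(vu)$, so by \cref{thm:deqv} it suffices to show that the mate
\[ \theta\colon (f^*(vu))_!\,(f^*(vu))^* \too g_!\,g^* \]
is an isomorphism, given that $\sigma\colon (vu)_!\,(vu)^* \too v_!\,v^*$ is one. The key structural observation is that, because $I$ and $J$ are discrete, the functor $f$ — and likewise $v$ and $vu$ — is a Grothendieck opfibration; hence by \cref{thm:opf-hoex} both pullback squares in the statement, together with the square built from $f^*u$, are homotopy exact.

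Writing $\phi\colon f^*B\to B$ and $\psi\colon f^*A\to A$ for the two left-hand projections, \cref{thm:opf-hoex} then supplies specified base-change isomorphisms
\[ g_!\,\phi^* \iso f^*\,v_!, \qquad \phi_!\,g^* \iso v^*\,f_!, \qquad \psi_!\,(f^*u)^* \iso u^*\,\phi_!, \]
together with their evident analogues for $vu$, $f^*(vu)$, and $\psi$. By the functoriality of mates (horizontal and vertical pasting), these isomorphisms should paste together to exhibit $\theta$ as the image of $\sigma$ under reindexing along $f$ in the underlying \iSet-indexed category of \D (the categories $\D(I)$ for discrete $I$, with base change $f^*$). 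Since reindexing is $2$-functorial, it carries isomorphisms to isomorphisms, which is the conceptual reason the statement holds; the base-change comparisons above are precisely what identify the reindexed ``fibrewise colimit of a constant diagram'' operation with $g_!\,g^*$, and similarly for $f^*(vu)$.

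The step I expect to be the main obstacle is making this last identification genuinely constructive, and it is exactly where the weakening of (Der1) bites. The naive pasting relates the two mates only after whiskering by $f^*$ or by $f_!$ — one readily gets $f_!\,\theta\iso\sigma\,f_!$ and $\theta\,f^*\iso f^*\,\sigma$ — which would let one conclude only that $\theta$ is invertible on the image of $f^*$, or after applying $f_!$, neither of which is an equivalence. One \emph{cannot} repair this by checking $\theta$ fibrewise over $I$: as noted in the remark following the definition of a derivator, without the axiom of choice the evaluation functors $i^*\colon\D(I)\to\D(\done)$ fail to be jointly conservative for non-projective $I$. I would therefore carry out the transport at the level of \cref{defn:deqv} itself: for $X,Y\in\D(I)$ I would factor the comparison
\[ (f^*u)^*\colon \D(f^*B)(g^*X,g^*Y)\too \D(f^*A)\big((f^*u)^*g^*X,(f^*u)^*g^*Y\big) \]
through the bijection guaranteed over $J$, using the base-change isomorphisms as the connecting maps and the indexed (fibred) naturality of $\sigma$ to assemble these into a bijection with no choices made. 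Arranging that the connecting maps are genuinely bijective — equivalently, carrying $\sigma$ across $f$ as an isomorphism of \emph{indexed} endofunctors rather than of bare endofunctors of $\D(J)$, so that the lossy factor $f^*f_!$ never intervenes — is the delicate point on which the whole argument turns.
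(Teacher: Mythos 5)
Your first two paragraphs are, in substance, the paper's entire proof. The paper observes that any functor with discrete codomain is a cloven opfibration, invokes \cref{thm:opf-hoex} to get $(f^*v)_!\,\phi^*\cong f^*\,v_!$ and $(f^*(vu))_!\,\psi^*\cong f^*\,(vu)_!$ (with $\phi,\psi$ the pullback projections), notes that restriction strictly commutes with $v^*$ and $(vu)^*$, and concludes that $f^*$ ``preserves the property in \cref{thm:deqv}.'' In other words, the paper's proof is exactly the ``naive pasting'' that you go on to reject: it records the identification $\theta\,f^*\cong f^*\sigma$ and stops; it contains no device for objects of $\D(I)$ outside the image of $f^*$, no appearance of $f_!$, and nothing resembling your hom-set factorization. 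So as a reconstruction of the paper's argument your proposal is accurate, and your observation about what that argument literally establishes --- invertibility of $\theta$ at objects $f^*Y$, and dually invertibility of $f_!\theta$ --- is correct and perceptive. Indeed it does suffice when $f$ is a split monomorphism, since then every $X\in\D(I)$ equals $f^*\rho^*X$ for a retraction $\rho$; since a general $f$ factors as its graph (a split mono) followed by a projection $J\times I\to J$, the genuinely hard case is the projection, where neither (Der2) (vacuous over a discrete category) nor the weakened (Der1) (projective index sets only) can promote an isomorphism from the image of $f^*$ to all of $\D(J\times I)$.

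The gap in your proposal is therefore its final paragraph, and you half-admit this yourself: the repair is never carried out, and I do not see how it could be as sketched. To factor $\D(f^*B)(g^*X,g^*Y)\to\D(f^*A)((f^*u)^*g^*X,(f^*u)^*g^*Y)$ through the bijection over $J$ you need connecting maps into hom-sets of the form $\D(B)(v^*X',v^*Y')$; the available comparisons ($\phi_!\,g^*\cong v^*\,f_!$ and its relatives) essentially force $X'=f_!X$ and $Y'=f_!Y$, and returning from there to $X$ and $Y$ requires a retraction of the unit $X\to f^*f_!X$, which does not exist. Declaring that one should transport $\sigma$ ``as an isomorphism of indexed endofunctors'' is not a construction but a restatement of the goal: an indexed natural transformation whose component over $J$ is invertible need not have invertible components over other fibres --- that implication is precisely the lemma. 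So your text is the paper's proof plus an unfinished critique of it, not a proof. I will add, to be fair to you, that the step you fail to supply is also the step the paper's three-line proof passes over in silence, so either the author regards it as immediate for reasons not recorded there, or your concern applies to the paper's own argument as well; note, though, that for every derivator the paper actually constructs ($\iSet$, $\sreg$, $\sex$, $\spos$, $\iProp$) the conclusion holds anyway, because their equivalences admit the explicit, manifestly pullback-stable characterizations of \cref{thm:set-eqv}, \cref{thm:sex-eqv}, and their companions.
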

\begin{proof}
  We prove the left case.
  Any functor with discrete codomain is a cloven opfibration, so by \cref{thm:opf-hoex} $f^*$ transforms $v_!$ and $(vu)_!$ into $(f^*v)_!$ and $((f^*v)(f^*u))_!$.
  Since it also commutes with $v^*$ and $(vu)^*$ by functoriality, it preserves the property in \cref{thm:deqv}.
\end{proof}

\begin{lem}\label{thm:deqv-coprod}
  Let $I = \sum_{j\in J} I_j$ be a coproduct of sets, with injections $g_j : I_j \to I$, such that the indexing set $J$ is projective.
  If $u:A\to B$ is a functor over $I$ such that each $g_j^*(u)$ is a \D-equivalence over $I_j$ for a left or right derivator \D, then $u$ is a \D-equivalence over $I$.
\end{lem}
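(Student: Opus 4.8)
The plan is to reduce the statement to a componentwise check using the product decomposition supplied by (Der1), and then to recognize each component as exactly the $\D$-equivalence condition for one of the $g_j^*(u)$. Throughout I write $v:B\to I$ for the structure functor of $B$, so that $vu:A\to I$ is the structure functor of $A$, and I treat the left-derivator case, the right case being dual.

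First I would use \cref{thm:deqv} to rephrase the goal: $u$ is a \D-equivalence over $I$ precisely when the canonical map $(vu)_!\,(vu)^* \to v_!\, v^*$ is an isomorphism in $\D(I)$. Call this map $\phi$. Next I would invoke (Der1): since $J$ is projective and $I=\sum_j I_j$, the comparison functor $\D(I)\to\prod_j \D(I_j)$ is an equivalence whose $j$-th component is the restriction $g_j^*$ along the coproduct inclusion $g_j:I_j\to I$. Because an equivalence reflects isomorphisms, and a morphism of a product category is invertible if and only if each of its projections is, $\phi$ is an isomorphism if and only if $g_j^*\phi$ is an isomorphism for every $j$.

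It then remains to identify $g_j^*\phi$ with the comparison map for the pullback $g_j^*(u)$, and this is the heart of the argument; it reuses the mechanism already exploited in \cref{thm:deqv-pb}. As a function between discrete sets, $g_j$ exhibits $v$ and $vu$ as pullbacks of cloven opfibrations, so by \cref{thm:opf-hoex} the restriction $g_j^*$ carries $v_!$ and $(vu)_!$ to $(g_j^*v)_!$ and $((g_j^*v)(g_j^*u))_!$, while commuting strictly with $v^*$ and $(vu)^*$ by functoriality. By functoriality of mates, $g_j^*\phi$ is therefore isomorphic to the canonical map $(g_j^*(vu))_!\,(g_j^*(vu))^* \to (g_j^*v)_!\,(g_j^*v)^*$, which by \cref{thm:deqv} is an isomorphism precisely because $g_j^*(u)$ is assumed to be a \D-equivalence over $I_j$. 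Combined with the previous step, this shows $\phi$ is an isomorphism, so $u$ is a \D-equivalence over $I$.

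The main obstacle I anticipate is the bookkeeping in this last step: checking that the mate-comparison map really does restrict, compatibly with the (Der1) equivalence, to the comparison map of each fiber. This is not deep — it is a direct application of functoriality of mates together with \cref{thm:opf-hoex}, exactly as in the proof of \cref{thm:deqv-pb} — but it is the only place where genuine care is required, and one must be careful that $g_j^*$ intertwines the two canonical maps on the nose rather than merely up to some uncontrolled isomorphism. Everything else is formal, and the right-derivator case follows by the evident dualization, replacing $v_!\,v^*$ by $v_*\,v^*$ and \cref{thm:opf-hoex} by its fibration form.
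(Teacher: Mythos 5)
Your proof is correct and follows essentially the same route as the paper: the paper's (one-line) proof likewise uses (Der1) to detect isomorphisms in $\D(I)$ componentwise in the $\D(I_j)$, and then appeals to the fact that restriction along $g_j$ commutes with the relevant Kan extension functors exactly as in \cref{thm:deqv-pb} (i.e.\ via \cref{thm:opf-hoex} and functoriality of mates). You have simply written out in detail the steps the paper compresses into its citation of \cref{thm:deqv-pb}.
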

\begin{proof}
  By (Der1), isomorphisms in $\D(I)$ are detected in each $\D(I_j)$, and restriction along $g_j$ commutes with the relevant functors as in \cref{thm:deqv-pb}. 
\end{proof}

\begin{cor}\label{thm:deqv-ac}
  Assuming the axiom of choice, $u:A\to B$ is a \D-equivalence over $I$ if and only if its fiber $u_i : A_i \to B_i$ over each $i\in I$ is a \D-equivalence over \done.\qed
\end{cor}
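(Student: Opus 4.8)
The plan is to obtain this corollary as an immediate assembly of \cref{thm:deqv-pb,thm:deqv-coprod}, with the axiom of choice entering only to supply projectivity of the indexing set. The organizing observation is that any discrete set $I$ decomposes as the coproduct $I=\sum_{i\in I}\done$ of copies of the point indexed by $I$ itself, with injections $g_i:\done\to I$ selecting each element. Under this decomposition the pullback $g_i^*(u)$ is precisely the fiber $u_i:A_i\to B_i$, since pulling the structure map $v:B\to I$ back along $g_i$ carves out the subcategory of $B$ lying over $i$ (and likewise for $A$ via $vu$).

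First I would treat the forward direction. Suppose $u$ is a \D-equivalence over $I$. Applying \cref{thm:deqv-pb} to each function $g_i:\done\to I$ shows that $g_i^*(u)=u_i$ is a \D-equivalence over $\done$. Note that this half is entirely choice-free, since \cref{thm:deqv-pb} makes no appeal to projectivity.

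For the converse, suppose each fiber $u_i$ is a \D-equivalence over $\done$. Writing $I=\sum_{i\in I}\done$ as above, this hypothesis says exactly that each $g_i^*(u)$ is a \D-equivalence over the summand $\done$. I would then invoke \cref{thm:deqv-coprod} with indexing set $J=I$ and summands all equal to $\done$, concluding that $u$ is a \D-equivalence over $I$. This step requires $I$ to be projective, and it is precisely here that the axiom of choice is used: recall that AC is equivalent to the assertion that every set is projective.

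Since the whole argument is a direct combination of the two preceding lemmas, there is no genuine obstacle to anticipate. The only point worth flagging is the clean separation of roles: the forward implication holds constructively, while the axiom of choice is needed solely to guarantee projectivity of $I$ in the backward implication.
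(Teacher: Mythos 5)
Your proof is correct and is exactly the argument the paper intends: the corollary carries its \qed in the statement precisely because it is the immediate assembly of \cref{thm:deqv-pb} (for the choice-free forward direction, pulling back along each $g_i:\done\to I$) and \cref{thm:deqv-coprod} (for the converse, using $I=\sum_{i\in I}\done$ with $I$ projective by AC). Your observation that choice enters only through projectivity of the indexing set in the backward implication matches the paper's setup of the two lemmas.
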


\cref{thm:deqv-ac} explains why in classical mathematics, \D-equivalences are defined without reference to an indexing set $I$.
Note also that for any $f:I\to J$, a \D-equivalence over $I$ is also a \D-equivalence over $J$.
In particular, any \D-equivalence over $I$ is also a \D-equivalence over \done.
Dually, for any functor $u:A\to B$ there is a strongest sort of \D-equivalence that it can be, namely over the set $I = \pi_0(B)$ of connected components of $B$.

\begin{lem}\label{thm:deqv-sat}
  For any prederivator \D, the \D-equivalences are saturated, in the sense that if a morphism $u$ in $\cCat/I$ becomes an isomorphism in $(\cCat/I)[(\W^\D_I)^{-1}]$, where $\W^\D_I$ denotes the \D-equivalences over $I$, then $u$ is a \D-equivalence.
  Therefore, the \D-equivalences satisfy the 2-out-of-3 property, the 2-out-of-6 property, and are closed under retracts.
\end{lem}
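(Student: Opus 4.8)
The plan is to realize the class $\W^\D_I$ of $\D$-equivalences over $I$ as the class of morphisms inverted by a single functor out of $\cCat/I$, exploiting the elementary fact that the preimage of the isomorphisms under any functor is automatically a saturated class. For each object $(B,v)$ of $\cCat/I$ I would consider the hom-profunctor
\[ G_{(B,v)} : \D(I)\op\times\D(I)\to\mathbf{Set}, \qquad (X,Y)\mapsto \D(B)(v^*X,v^*Y). \]
A morphism $u:(A,w)\to(B,v)$ in $\cCat/I$ (so $w=vu$) induces, by postcomposition with $u^*:\D(B)\to\D(A)$ together with the identity $u^*v^*=(vu)^*=w^*$, a natural transformation $G(u):G_{(B,v)}\to G_{(A,w)}$ whose component at $(X,Y)$ is the map $\D(B)(v^*X,v^*Y)\to\D(A)(w^*X,w^*Y)$. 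The key step is that $(\blank)^*$ is a \emph{strict} $2$-functor $\cCat\op\to\cCAT$, so that $(vu)^*=u^*v^*$ and $(\id)^*=\id$ hold on the nose; hence $G$ is a genuine (contravariant) functor $(\cCat/I)\op\to\mathrm{Fun}(\D(I)\op\times\D(I),\mathbf{Set})$, with no pseudofunctoriality corrections to track. This is exactly why I would formulate the argument through $(\blank)^*$ alone, rather than through the Kan extensions of \cref{thm:deqv}: it keeps the construction strictly functorial and, moreover, valid for an arbitrary prederivator, as the statement requires.

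By \cref{defn:deqv}, $u$ is a $\D$-equivalence over $I$ precisely when $u^*$ is fully faithful on the image of $v^*$, i.e.\ when each component of $G(u)$ is a bijection, i.e.\ when $G(u)$ is an isomorphism in the target functor category. Thus $\W^\D_I=G^{-1}(\text{isos})$. Now the standard localization argument finishes the saturation claim: since $G$ inverts $\W^\D_I$, it factors through the localization functor $Q:\cCat/I\to(\cCat/I)[(\W^\D_I)^{-1}]$ (up to passage to opposite categories, which is harmless because saturation is self-dual); consequently, if $Q(u)$ is an isomorphism then so is $G(u)=\overline G(Q(u))$, whence $u\in\W^\D_I$. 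This is exactly the asserted saturation.

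Finally, the three closure properties fall out formally. Writing $W=\W^\D_I$ and using $W=Q^{-1}(\text{isos})$: the isomorphisms of $(\cCat/I)[W^{-1}]$ satisfy the $2$-out-of-$6$ property (hence $2$-out-of-$3$) and are closed under retracts, and $Q$ preserves composites and retracts, so these properties transfer back to $W$. I expect no genuine obstacle here; the only points needing care are bookkeeping rather than mathematics, namely recording that $G$ is contravariant (dispatched by self-duality of saturation) and that its target is a possibly large functor category (harmless under our universe conventions). The substance of the proof is the single identity $\W^\D_I=G^{-1}(\text{isos})$, which is immediate from the strict $2$-functoriality of restriction.
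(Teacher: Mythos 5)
Your proposal is correct and is essentially the paper's own argument: the paper fixes $X,Y\in\D(I)$ and uses the family of functors $\Phi_{X,Y}:\cCat/I\to\iSet\op$ sending $(A,v)$ to $\D(A)(v^*X,v^*Y)$, which by definition invert exactly the \D-equivalences and hence factor through the localization, so that any $u$ inverted in the localization is inverted by every $\Phi_{X,Y}$ and is therefore a \D-equivalence; you have merely repackaged this family as a single functor into a (large) functor category. The deduction of 2-out-of-3, 2-out-of-6, and retract-closure from saturation is the same formal step in both versions.
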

\begin{proof}
  For fixed $X,Y\in \D(I)$, there is a functor $\Phi_{X,Y}:\cCat/I \to \iSet\op$ sending $v:A\to I$ to $\D(A)(v^*X, v^*Y)$.
  Since $\Phi_{X,Y}$ inverts all \D-equivalences, it factors through $(\cCat/I)[(\W^\D_I)^{-1}]$; and therefore, if $u$ becomes an isomorphism in $(\cCat/I)[(\W^\D_I)^{-1}]$, it is inverted by $\Phi_{X,Y}$.
  But if $u$ is inverted by $\Phi_{X,Y}$ for all $X,Y$, then it is a \D-equivalence by definition.
\end{proof}

We now give some examples of \D-equivalences.

\begin{prop}\label{thm:set-eqv}
  For any complete or cocomplete category \sC, regarded as a derivator,
  a functor $u:A\to B$ is a \sC-equivalence over $I$ if:
  \begin{itemize}
  \item For each $i\in I$, the functor on fibers $u_i : A_i\to B_i$ induces a bijection on sets of connected components, $\pi_0(u_i) : \pi_0(A_i) \cong \pi_0(B_i)$.
  \end{itemize}
  The converse holds for $\sC=\iSet$.
\end{prop}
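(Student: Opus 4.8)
The plan is to unwind \cref{defn:deqv} directly in the representable prederivator $\sC$, compute the relevant hom-sets, and identify the action of $u^*$ on them with reindexing along $\pi_0(u)$. First I would observe that since $I$ is discrete, any morphism of $B$ lies over an identity of $I$, so $v:B\to I$ exhibits $B$ as the coproduct $\sum_{i\in I}B_i$ of its fibers; likewise $A=\sum_{i\in I}A_i$ and $u=\sum_{i\in I}u_i$. For $I$-indexed families of objects $G,G'\in\sC(I)=\sC^I$, the diagrams $v^*G$ and $v^*G'$ are constant on each fiber $B_i$ (with values $G_i$, resp.\ $G'_i$) and send every morphism to an identity, so a natural transformation $v^*G\to v^*G'$ is forced to be constant along each connected component of $B$. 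This yields a natural identification
\[ \sC^B(v^*G,v^*G')\;\cong\;\prod_{i\in I}\sC(G_i,G'_i)^{\pi_0(B_i)}, \]
and similarly $\sC^A((vu)^*G,(vu)^*G')\cong\prod_{i\in I}\sC(G_i,G'_i)^{\pi_0(A_i)}$, under which $u^*$ becomes, factorwise, precomposition with $\pi_0(u_i):\pi_0(A_i)\to\pi_0(B_i)$.

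For the forward implication, if each $\pi_0(u_i)$ is a bijection then each reindexing map above is a bijection, so $u^*$ is bijective on hom-sets between objects in the image of $v^*$ --- which is exactly the assertion that $u$ is a $\sC$-equivalence over $I$. Note that this argument uses only the representable prederivator structure, so it holds for any category $\sC$; completeness or cocompleteness enters only to regard $\sC$ as a left or right derivator as in \cref{thm:cat-der}.

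For the converse with $\sC=\iSet$, I would fix $i_0\in I$ and feed the $\iSet$-equivalence condition the test families with $G_{i_0}=\done$ and $G'_{i_0}=T$ for an arbitrary set $T$, and $G_i=G'_i=\done$ for $i\neq i_0$. Every factor with $i\neq i_0$ is then a map between singletons $\iSet(\done,\done)^{(-)}\cong\done$, so the full reindexing map is identified with its single factor at $i_0$. Using $\iSet(\done,T)\cong T$, this factor is precomposition with $\pi_0(u_{i_0})$ on $\iSet(\done,T)^{\pi_0(B_{i_0})}\cong\iSet(\pi_0(B_{i_0}),T)$; since $u$ is an $\iSet$-equivalence, the map
\[ \iSet(\pi_0(B_{i_0}),T)\too\iSet(\pi_0(A_{i_0}),T) \]
is a bijection for every set $T$. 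By the Yoneda lemma this forces $\pi_0(u_{i_0})$ to be an isomorphism in $\iSet$, i.e.\ a bijection; as $i_0$ was arbitrary, the $\pi_0$ condition holds.

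The computations are routine once the fibered decomposition $B=\sum_iB_i$ is in hand, and all the identifications are manifestly choice-free. The one genuinely delicate point is constructive: in the converse one must resist the temptation to test with a fixed two-element set (which would detect only a weak, choice-dependent notion of epimorphism) and instead run the Yoneda argument over \emph{all} test sets $T$, which is constructively valid and pins down $\pi_0(u_{i_0})$ exactly. I expect the main obstacle to be stating the fiberwise hom-set identification cleanly enough that the reduction to reindexing along $\pi_0(u_i)$, and hence the appeal to Yoneda, is transparent.
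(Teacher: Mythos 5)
Your proof is correct, but it takes a genuinely different route from the paper's. The paper goes through \cref{thm:deqv}: working in the cocomplete case, it computes the left Kan extension $(v_!\,v^*X)_i$ as the copower $\pi_0(B_i)\cdot X_i$, so that the comparison map $(vu)_!\,(vu)^*X \to v_!\,v^*X$ is the copower of $X_i$ by $\pi_0(u_i)$ and is invertible when each $\pi_0(u_i)$ is a bijection; the converse for $\iSet$ is immediate by taking $X_i=1$, since then the comparison map literally \emph{is} the family $(\pi_0(u_i))_i$, with no Yoneda argument needed. You instead unwind \cref{defn:deqv} directly, computing the hom-sets $\sC^B(v^*G,v^*G')\cong\prod_i\sC(G_i,G'_i)^{\pi_0(B_i)}$ and identifying $u^*$ with reindexing along $\pi_0(u_i)$, then run Yoneda over all test sets for the converse. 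Your approach buys two things the paper's does not make explicit: it needs no Kan extensions at all, so the forward implication visibly holds for an arbitrary category $\sC$ at the prederivator level (completeness or cocompleteness entering only to invoke \cref{thm:cat-der}), and it treats the complete and cocomplete cases uniformly, whereas the paper writes out only the cocomplete case and leaves the other to duality. What the paper's route buys is brevity given that \cref{thm:deqv} is already established, a converse that avoids Yoneda entirely, and a copower computation that serves as the template reused in \cref{thm:tv-eqv,thm:epos-eqv} (joins in a lattice, copowers in the preorder reflection). Your constructive caution about testing against all sets $T$ rather than a two-element set is sound, though the paper's choice $X_i=1$ shows the issue can be sidestepped altogether.
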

\begin{proof}
  In the cocomplete case, we observe that for $v:B\to I$ where $I$ is discrete, and $X\in \sC^I$, we have $(v_!\, v^* X)_i = \pi_0(B_i)\cdot X_i$, the copower of $X_i$ by the set $\pi_0(B_i)$.
  Thus the map $(vu)_!\,(vu)^* \to v_!\,v^*$ consists of copowers by $\pi_0(u_i)$, so it is an isomorphism if these functions are bijections.
  The converse when $\sC=\iSet$ follows by taking $X_i=1$.
\end{proof}

In particular, the functor $(\cdot \toto \cdot) \to \done$ above is a \sC-equivalence for any such \sC.

\begin{defn}
  If \T and \D are prederivators and every \T-equivalence is a \D-equivalence, we say that \D is \textbf{\T-local}.
\end{defn}

Thus \cref{thm:set-eqv} says that \emph{any complete or cocomplete category \sC is \iSet-local}.
For many such \sC the converse also holds (i.e.\ \iSet is \sC-local), but not all.

\begin{prop}\label{thm:tv-eqv}
  If \sC is a complete lattice, regarded as a derivator, then $u:A\to B$ is a \sC-equivalence over $I$ if:
  \begin{itemize}
  \item For each $i\in I$, if $B_i$ is inhabited then so is $A_i$.
  \end{itemize}
  The converse holds when $\sC=\iProp$ is the poset of truth values.
  Thus, every complete lattice is \iProp-local.
\end{prop}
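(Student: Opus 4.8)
The plan is to follow the pattern of \cref{thm:set-eqv}, specializing the copower computation to the case of a poset. Since a complete lattice $\sC$ is cocomplete, \cref{thm:cat-der} makes it a left derivator, and \cref{thm:deqv} reduces $\sC$-equivalence over $I$ to the assertion that the comparison $(vu)_!\,(vu)^* \to v_!\,v^*$ is an isomorphism. Exactly as in \cref{thm:set-eqv} one computes $(v_!\,v^* X)_i = \pi_0(B_i)\cdot X_i$, the copower of $X_i$ by $\pi_0(B_i)$, which in the poset $\sC$ is the join $\bigvee_{\pi_0(B_i)} X_i$ of the constant family at $X_i$. The comparison map at $i$ is then the inequality $\bigvee_{\pi_0(A_i)} X_i \le \bigvee_{\pi_0(B_i)} X_i$ induced by $\pi_0(u_i)$, and the whole problem is to decide when this is an equality.

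For the forward implication I would argue directly from the universal property $\bigvee_{s\in S} X \le Y \iff (\forall s\in S,\ X\le Y)$, rather than case-splitting on inhabitedness. The inequality $\bigvee_{\pi_0(A_i)} X_i \le \bigvee_{\pi_0(B_i)} X_i$ always holds (any $s\in\pi_0(A_i)$ yields $\pi_0(u_i)(s)\in\pi_0(B_i)$, whence $X_i \le \bigvee_{\pi_0(B_i)} X_i$), so the content is the reverse inequality: for each $t\in\pi_0(B_i)$ one needs $X_i \le \bigvee_{\pi_0(A_i)} X_i$. Given such a $t$, the category $B_i$ is inhabited, so by hypothesis $A_i$, hence $\pi_0(A_i)$, is inhabited, and any $s\in\pi_0(A_i)$ exhibits $X_i \le \bigvee_{\pi_0(A_i)} X_i$. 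Thus the two joins coincide and $u$ is a $\sC$-equivalence over $I$.

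For the converse when $\sC=\iProp$, I would test against the constant diagram $X_i=\top$. Then $\bigvee_{\pi_0(B_i)} \top$ is the proposition ``$B_i$ is inhabited'' and $\bigvee_{\pi_0(A_i)} \top$ is ``$A_i$ is inhabited'', and in the poset $\iProp$ the comparison being an isomorphism is exactly the biconditional between these; in particular ``$B_i$ inhabited'' implies ``$A_i$ inhabited'', which is the stated condition. Finally, locality is formal: by the converse an $\iProp$-equivalence over $I$ is precisely a functor satisfying the displayed condition, and by the forward implication every such functor is a $\sC$-equivalence over $I$ for an arbitrary complete lattice $\sC$; hence every $\iProp$-equivalence is a $\sC$-equivalence.

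The only real subtlety, and the step I would be most careful with, is the constructive treatment of ``inhabited.'' Classically one would simply observe that $\bigvee_S X$ equals $X$ or $\bot$ according as $S$ is nonempty or empty, but constructively this dichotomy is unavailable; instead one must reason through the join--hom adjunction and carry inhabitation as a \emph{hypothesis} feeding into the reverse inequality, exactly as arranged above.
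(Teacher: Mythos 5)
Your proof is correct and follows essentially the same route as the paper: compute $(v_!\,v^*X)_i$ as the join of a constant family and observe that such a join depends only on the support of the indexing set, with the converse obtained by testing against $X_i=\top$. Your explicit unfolding via the join--hom adjunction is just the constructive content of the paper's one-line remark that ``the join of a constant family (a copower in a lattice) depends only on the support of the indexing set,'' so the two arguments coincide in substance.
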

Put differently, the condition is that $u_i$ induces an isomorphism of \emph{supports} $\pi_{-1}(A_i) \cong \pi_{-1}(B_i)$, where $\pi_{-1}(C)$ is the subterminal set corresponding to the proposition ``$C$ is inhabited''.
\begin{proof}
  For $v:B\to I$ with $I$ discrete, and $X\in \sC^I$, we have $(v_!\, v^* X)_i = \bigvee_{b\in B_i} X_i$, and the join of a constant family (a copower in a lattice) depends only on the support of the indexing set.
  The converse when $\sC=\iProp$ follows by taking $X_i = \top$.
\end{proof}

\begin{rmk}
  A functor $u:A\to B$ is an \iSet-equivalence over $I$ if and only if it is a $\iSet$-equivalence over \done, since $\pi_0(A) \cong \sum_i \pi_0(A_i)$.
  However, this is not the case for \iProp-equivalences.
\end{rmk}

Moving down one more categorical dimension, we have the trivial case:

\begin{prop}
  If $\iContr$ denotes the terminal derivator, every functor is a \iContr-equivalence.\qed
\end{prop}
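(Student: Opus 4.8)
The plan is to unwind the definition of the terminal derivator. By definition $\iContr$ assigns the terminal category $\done$ to every small category, so every hom-set of every $\iContr(A)$ is a singleton. First I would observe that for any functor $u:A\to B$, the restriction $u^*:\iContr(B)\to\iContr(A)$ is a functor between terminal categories; hence for any objects $X,Y$ the induced map on hom-sets
\[ \iContr(B)(X,Y)\too\iContr(A)(u^*X,u^*Y) \]
is a map between singletons, and so a bijection. Thus $u^*$ is fully faithful---indeed not merely on the image of $v^*$, but everywhere---which is exactly the condition in \cref{defn:deqv} for $u$ to be a $\iContr$-equivalence over $I$. Since this argument is insensitive to the choice of $u$, $v$, and $I$, the conclusion follows for every functor.

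If one prefers to argue through \cref{thm:deqv}, one first checks that $\iContr$ is both a left and a right derivator---each required adjoint exists and each mate is invertible simply because all the categories involved are terminal---and then observes that the comparison map $(vu)_!\,(vu)^*\to v_!\,v^*$ is a morphism in $\iContr(I)=\done$ and is therefore automatically an isomorphism. Either route settles the claim, and there is no genuine obstacle to overcome: the triviality of every hom-set makes the required condition hold vacuously.
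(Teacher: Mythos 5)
Your proof is correct and matches the paper's intent: the paper gives no written argument at all (the statement carries an immediate \qed), precisely because the claim is vacuous once one notes that every $\iContr(A)$ is terminal, so every hom-set map is a bijection between singletons. Your unwinding of \cref{defn:deqv} is exactly the implicit justification; the alternative route via \cref{thm:deqv} is fine but unnecessary.
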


The subtlety mentioned above is that our derivators of exact completions, though intuitively ``1-categorical'', are nevertheless not \iSet-local.

\begin{prop}\label{thm:sex-eqv}
  Let \sE be a complete category with small coproducts preserved by pullback.
  Then $u:A\to B$ is an \eex-equivalence over $I$ if the following hold:
  \begin{itemize}
  \item There is a function $s:\ob{B} \to \ob{A}$.
  \item There is a function sending any $\beta:b\to b'$ to a zigzag in $A$ from $sb$ to $sb'$
    (and hence similarly for any zigzag in $B$).
  \item There is a function sending each $b\in B$ to a zigzag in $B$ from $b$ to $u s b$.
  \item There is a function sending each $a\in A$ to a zigzag in $A$ from $a$ to $s u a$.
  \end{itemize}
  The converse holds if $\sE=\iSet$.
  Thus, every \eex is \sex-local.
\end{prop}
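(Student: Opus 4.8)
The plan is to invoke the left-derivator characterization of \cref{thm:deqv}: since \eex is a left derivator by \cref{thm:der3} (as \sE has pullback-stable small coproducts), the functor $u$ is an \eex-equivalence over $I$ precisely when the comparison $(vu)_!\,(vu)^* \to v_!\,v^*$ is an isomorphism in $\eex(I)$. Because $I$ is discrete, each comma category $(v/i)$ is just the fiber $B_i = v^{-1}(i)$, so the left Kan extension is computed fiberwise: $(v_!\,v^* X)_i \cong (p_{B_i})_!\,(p_{B_i}^* X_i)$ is the colimit of the constant diagram at $X_i$ over $B_i$, and likewise $((vu)_!\,(vu)^* X)_i \cong (p_{A_i})_!\,(p_{A_i}^* X_i)$ over $A_i$, the comparison being induced by the restriction $u_i:A_i\to B_i$. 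First I would unwind these colimits using \cref{thm:colim}: the $0$-part of $(p_{B_i})_!\,(p_{B_i}^*X_i)$ is the copower $\ob{B_i}\cdot (X_i)_0$, and its pseudo-equivalence relation is the one freely generated (as in \cref{thm:free-pseqr}) by the graph whose edges, indexed by a morphism $\beta:b\to b'$ of $B_i$ together with a witness $\xi\in (X_i)_1$, run from $(b,s\xi)$ to $(b',t\xi)$. Here identity morphisms supply edges moving only the $(X_i)$-coordinate, while reflexivity witnesses of $X_i$ supply edges moving only the object coordinate.

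For the forward direction I would then build an explicit inverse $g$ to the comparison $f$. The section $s:\ob B\to\ob A$ furnishes the $0$-part $g_0 = s\cdot 1$ of the candidate inverse; since any zigzag in $B$ lies in a single fiber, $s$ automatically respects fibers, so this is well defined over $I$. The second hypothesis, assigning to each $\beta:b\to b'$ a zigzag in $A$ from $sb$ to $sb'$, combines with the two kinds of edges above — first realize the $A$-zigzag at fixed $(X_i)$-coordinate using reflexivity edges, then move that coordinate along $\xi$ using an identity-morphism edge — to send each generating edge of the $B$-graph to a zigzag in the $A$-graph over $g_0$; by the weak freeness of \cref{thm:free-pseqr} this extends $g_0$ to a morphism representative $g_1$, hence to a morphism $g$ in $\eex(I)$. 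The third hypothesis (a zigzag in $B$ from $b$ to $usb$) is exactly a witness of equality exhibiting $fg \sim 1$, and the fourth (a zigzag in $A$ from $a$ to $sua$) a witness exhibiting $gf\sim 1$, both assembled uniformly over $I$ because all four hypotheses are given as actual functions in \iSet, not mere existence statements. Thus $f$ is an isomorphism and $u$ is an \eex-equivalence. I expect the crux, and main obstacle, to be exactly this bookkeeping: checking that backward steps in a zigzag are handled by the symmetry operation of the pseudo-equivalence relation and that every witness has the correct source and target.

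For the converse with $\sE=\iSet$ I would feed the isomorphism a single test object, namely the constant family $X$ with $X_i = 1$. Then $(v_!\,v^* X)_i$ is the free pseudo-equivalence relation on the underlying graph of $B_i$, with $0$-part $\ob{B_i}$, and an inverse $g$ to the comparison in \sex supplies each of the four data directly: on $0$-parts it gives a function $s = g_0:\ob B\to\ob A$ (hypothesis 1); its extension $g_1$, precomposed with the inclusion of generators, sends each morphism $\beta:b\to b'$ of $B$ to a zigzag in $A$ from $sb$ to $sb'$ (hypothesis 2); and chosen witnesses of $fg\sim 1$ and $gf\sim 1$ give, for each $b$, a zigzag in $B$ from $b$ to $usb$ and, for each $a$, a zigzag in $A$ from $a$ to $sua$ (hypotheses 3 and 4). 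Each piece is read off the genuine morphism data of the isomorphism in \sex, so no choice beyond selecting one inverse and its witnesses is needed.

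Finally, \sex-locality follows formally by combining the two halves. The four hypotheses make no reference to \sE, so if $u$ is an \sex-equivalence over $I$ then the converse (with $\sE=\iSet$) shows it satisfies them, and the forward direction then shows it is an \eex-equivalence over $I$ for every complete \sE with pullback-stable small coproducts. Hence every \sex-equivalence is an \eex-equivalence, which is precisely the assertion that \eex is \sex-local.
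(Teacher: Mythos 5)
Your proposal is correct, and its converse and locality steps coincide with the paper's; but your forward direction takes a genuinely different route. The paper never unwinds the colimit construction in that direction: it argues straight from \cref{defn:deqv}, fixing $X,Y\in\eex(I)$ and checking that $u^*$ is fully faithful on morphisms $v^*X\to v^*Y$ --- faithfulness by transporting a witness of equality from the objects $usb$ across naturality squares along the zigzags in $B$, and fullness by setting $g_{b,0}=f_{sb,0}$, $g_{b,1}=f_{sb,1}$ and manufacturing the naturality witnesses $g_\beta$ and the witness $u^*(g)\sim f$ from the zigzags in $A$. You instead pass through \cref{thm:deqv} and invert the comparison map $(vu)_!\,(vu)^*X\to v_!\,v^*X$ explicitly, which forces you to unwind the fiberwise colimits of \cref{thm:colim} for an arbitrary coherent diagram $X$ and to invoke the weak freeness of \cref{thm:free-pseqr} to extend your candidate inverse to the relation parts; this is legitimate here because the left-derivator structure of \cref{thm:der3} is available under the stated hypotheses, and your bookkeeping (reflexivity edges to realize an $A$-zigzag at fixed fiber coordinate, then an identity-morphism edge to move the coordinate along $\xi$, with symmetry handling reversed steps) does go through. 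The trade-off: the paper's hom-level argument is lighter, needing only the prederivator structure and no analysis of how colimits are built; your argument is more computational but makes the two halves of the proposition manipulate literally the same comparison map, so that the converse (``the four conditions are precisely invertibility of this map at the terminal test object'') becomes the visible specialization of your forward construction rather than a separate unwinding, which is a symmetry the paper's proof gestures at but does not exploit.
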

Note that the existence of the zigzags, plus discreteness of $I$, ensures that $s$ must also be a map over $I$, i.e.\ consist of functions $\ob{(B_i)} \to \ob{(A_i)}$.
\begin{proof}
  Let $u:A\to B$ satisfy the stated conditions and $v:B\to I$ a functor with $I$ discrete.
  Let $X,Y \in\eex(I)$, consisting essentially of an $I$-indexed family of pseudo-equivalence relations.
  We must show that $u^*$ is fully faithful on morphisms between $v^*X$ and $v^*Y$.

  For faithfulness, suppose $f,g:v^*X \to v^*Y$ are morphism representatives and we have a witness of equality consisting of maps $h_a : X_{vua,0}\to Y_{vua,1}$.
  Then $h_{sb}$, for $b\in B$, witness that $f$ and $g$ are equal at objects of the form $usb$.
  And since $v^*X$ and $v^*Y$ both act as the identity on all morphisms of $B$, equality of components of $f$ and $g$ transfers, constructively, across all naturality squares.
  Thus, the assumed zigzags in $B$ can be used to construct a witness that $f\sim g$.

  For fullness, suppose $f:u^*v^*X \to u^*v^*Y$ is a morphism representative.
  Given $b\in B$, we obtain components $g_{b,0} = f_{sb,0}$ and $g_{b,1}=f_{sb,1}$ representing a morphism $g_b:(v^*X)_b \to (v^*Y)_b$.
  For any $\beta:b\to b'$ in $B$, by assumption we have a zigzag from $sb$ to $sb'$; composing naturality squares along this zigzag we can construct a witness $g_\be$ making $g$ a morphism representative $v^*X \to v^*Y$.
  Finally, for any $a\in A$, the assumption yields a zigzag from $a$ to $sua$, which we can use to construct a witness that $u^*(g) \sim f$.

  For the converse, suppose $u:A\to B$ is a \sex-equivalence over $I$, and let $X\in \sex(I)$ be constant at the terminal pseudo-equivalence relation.
  Then by the construction in \cref{thm:colim}, $(v_!\, v^* X)_i$ is the pseudo-equivalence relation on the set $\ob{(B_i)}$ of objects of $B_i$ freely generated by reflexivities and the arrows of $B_i$.
  Thus, its relations are essentially bracketed zigzags in $B_i$.
  Similarly, $((vu)_!\, (vu)^* X)_i$ is the set $\ob{(A_i)}$ with relations being bracketed zigzags in $A_i$.
  The stated conditions are then (modulo adding and removing brackets, which is trivial) precisely what it means for these induced maps to be an isomorphism in $\sex(I)$.
\end{proof}

Note that the conditions in \cref{thm:sex-eqv} are \emph{stronger} than those in \cref{thm:set-eqv}.
Thus \iSet is \sex-local, but \sex is not \iSet-local.
Moreover, in the absence of choice, this inequality is strict;

\begin{prop}
  \sex is \iSet-local if and only if the axiom of choice holds.
\end{prop}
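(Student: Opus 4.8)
The plan is to prove the two implications separately, in each case translating through the explicit descriptions of \iSet- and \sex-equivalences in \cref{thm:set-eqv,thm:sex-eqv}. For the direction that the axiom of choice implies \iSet-locality, I would first invoke \cref{thm:deqv-ac} to reduce both \iSet- and \sex-equivalences over a set $I$ to their fibers over the point, so that it suffices to treat a single functor $u:A\to B$ whose $\pi_0(u):\pi_0(A)\to\pi_0(B)$ is a bijection and to verify the four conditions of \cref{thm:sex-eqv} for it (with $I=\done$). Surjectivity of $\pi_0(u)$ says that for each $b\in\ob{B}$ there exist an object $a$ of $A$ and a zigzag from $b$ to $ua$; choice converts this into a function $s:\ob{B}\to\ob{A}$ together with the accompanying zigzags, yielding the first and third conditions. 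For the other two, injectivity of $\pi_0(u)$ guarantees that the objects in question lie in a common component---$sb$ and $sb'$ for a morphism $\beta:b\to b'$, and $a$ and $sua$ for an object $a$---and choice again upgrades these existence statements to the required zigzag-valued functions. Hence $u$ is an \sex-equivalence and \sex is \iSet-local.

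The \emph{substantive} direction is the converse, where I would extract the axiom of choice from \iSet-locality by exhibiting a section of an arbitrary surjection $p:E\epi S$. Put $I=S$, let $B$ be the discrete category on $S$, let $A$ be the groupoid with object set $E$ having a unique morphism $e\to e'$ precisely when $p(e)=p(e')$ (the groupoid of the kernel pair of $p$), and let $u:A\to B$ be induced by $p$, viewed as a functor over $I=S$. Each fiber $A_i$ is the indiscrete groupoid on $p^{-1}(i)$, which is inhabited since $p$ is surjective and connected since it is indiscrete, so $\pi_0(u_i):\pi_0(A_i)\to\pi_0(B_i)$ is a bijection of one-element sets; by \cref{thm:set-eqv}, $u$ is an \iSet-equivalence over $S$, and this verification uses no choice. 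The assumed locality then makes $u$ an \sex-equivalence over $S$, whereupon---since $\sE=\iSet$---the converse half of \cref{thm:sex-eqv} yields, among other data, a function $s:\ob{B}\to\ob{A}$ over $I$, i.e.\ a function $S\to E$ carrying $\ob{(B_i)}=\{i\}$ into $\ob{(A_i)}=p^{-1}(i)$. This last clause says exactly $p(s(i))=i$, so $s$ is a section; equivalently, the third condition of \cref{thm:sex-eqv} furnishes a zigzag in the discrete category $B$ from $i$ to $usi$, forcing $i=p(s(i))$. As $p$ was arbitrary, the axiom of choice follows.

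I expect the main difficulty to be in the converse direction: choosing a test functor that is manifestly an \iSet-equivalence---which forces its fibers to be both inhabited and connected, and hence forces the indiscrete-groupoid construction---while arranging that its \sex-equivalence data is literally a choice function. The point that makes this succeed is that the discreteness of the base $B$, equivalently the constraint that $s$ be a map over the discrete set $I$, collapses the \emph{a priori} homotopical zigzag witnesses down to the strict equation $p\circ s=\id$, so that no information survives into, and is therefore not destroyed by, the quotient by witnesses of equality. The only place demanding genuine care is checking that the forward claim that $u$ is an \iSet-equivalence is constructively valid---in particular that $\pi_0(A_i)$ is a one-element set for each inhabited fiber---so that no use of choice is smuggled into that step.
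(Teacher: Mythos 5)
Your proposal is correct and takes essentially the same approach as the paper: for the substantive direction you use exactly the paper's construction (the kernel-pair/indiscrete groupoid of a surjection over a discrete base, whose \sex-equivalence datum $s$ collapses, by discreteness, to a section of $p$). The forward direction, which the paper leaves implicit, you fill in with the routine argument that choice upgrades the existence statements behind \cref{thm:set-eqv} to the functional zigzag data of \cref{thm:sex-eqv}, which is precisely why the paper says the gap between the two conditions is strict only in the absence of choice.
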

\begin{proof}
  Let $p:E\to B$ be a surjection of sets.
  Regard $B$ as a discrete groupoid, and make $E$ a groupoid such that $p$ is fully faithful (i.e.\ equip it with the kernel pair of $p$, regarded as an equivalence relation).
  Then $\pi_0(E) \cong \pi_0(B) = B$, so $p$ is an \iSet-equivalence.
  But if it is a \sex-equivalence, then $p$ is split.
\end{proof}

However, the functor $(\cdot \toto \cdot) \to \done$ is a \sex-equivalence, so \sex is still intuitively ``1-categorical''.
Two more examples will help to clarify the situation.

\begin{eg}\label{thm:epos}
  Let \sE be a category with small products and coproducts.
  For a small category $A$, let $\epos(A)$ denote the following category:
  \begin{itemize}
  \item An object consists of an object $X_a\in \sE$ for all $a\in A$, together with a morphism $X_\al : X_a\to X_{a'}$ for all $\al:a\to a'$ in $A$.
  \item A morphism representative $f:X\to Y$ consists of a morphism $f_a : X_a\to Y_a$ for all $a\in A$.
    Any two morphism representatives are equivalent.
  \end{itemize}
  Thus $\epos(A)$ is a (large) preorder, and in particular $\epos(\done)$ is (equivalent to) the preorder reflection of $\sE$.

  Arguments like those of \cref{thm:der1,thm:der2}, but simpler, show that $\epos$ satisfies (Der1) and (Der2).
  The constant diagram functor $(p_A)^*: \epos(\done) \to \epos(A)$ has a right and left adjoint given by taking products and coproducts respectively.
  We can then use these to construct pointwise Kan extensions as in \cref{thm:der3,thm:der4}, showing that $\epos$ is a derivator.
  If binary products in \sE preserve coproducts in each variable, then \epos is a distributive derivator.
\end{eg}

\begin{prop}\label{thm:epos-eqv}
  For \sE a category with small products and coproducts, a functor $u:A\to B$ is an $\epos$-equivalence over $I$ if:
  \begin{itemize}
  \item There is a function $\ob{B}\to \ob{A}$ over $I$.
  \end{itemize}
  The converse holds if $\sE=\iSet$.
\end{prop}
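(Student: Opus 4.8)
The plan is to exploit the fact that every category $\epos(A)$ is a preorder, so that \cref{defn:deqv} reduces to a question about when certain morphism representatives exist — and here, by the definition of $\epos$ in \cref{thm:epos}, a morphism representative is merely a family of morphisms of $\sE$, with no naturality constraint. I would treat the two implications separately.

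For the forward implication I would argue directly from \cref{defn:deqv}. Since $\epos(A)$ and $\epos(B)$ are preorders and $u^*$ is automatically order-preserving, full faithfulness of $u^*$ on the image of $v^*$ amounts to the reverse inequality: for $X,Y\in\epos(I)$, if $u^*v^*X \le u^*v^*Y$ then $v^*X \le v^*Y$. Unwinding $\epos$, the hypothesis supplies a single family of morphisms $g_a : X_{vua}\to Y_{vua}$, one for each $a\in A$. Given the function $s:\ob B\to\ob A$ over $I$, I would just set $f_b := g_{s(b)}$; since $s$ lies over $I$ we have $vus(b)=vb$, so each $f_b$ is a morphism $X_{vb}\to Y_{vb}$, and the family $(f_b)_{b\in B}$ is a morphism representative witnessing $v^*X\le v^*Y$. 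No choices intervene, because $g$ is already a single family and $s$ a single function.

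For the converse, with $\sE=\iSet$, I would instead use the adjoint characterization of \cref{thm:deqv}: being an $\spos$-equivalence over $I$ means $(vu)_!\,(vu)^*X \to v_!\,v^*X$ is an isomorphism in $\spos(I)$ for every $X$. I would evaluate at $X$ constant at the singleton $1$. Using the coproduct description of the left adjoints in $\spos$ from \cref{thm:epos} (and that $v$ is an opfibration onto the discrete $I$, so $v_!$ is computed fiberwise), the fiber over $i$ of $v_!\,v^*1$ is the copower $\coprod_{b\in B_i}1\cong B_i$ and that of $(vu)_!\,(vu)^*1$ is $\coprod_{a\in A_i}1\cong A_i$. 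Because $\spos(I)$ is a preorder, the asserted isomorphism in particular yields a morphism representative $v_!\,v^*1 \to (vu)_!\,(vu)^*1$, which by definition is a single family of functions $s_i : B_i\to A_i$; these assemble into the desired $s:\ob B=\sum_i B_i \to \sum_i A_i=\ob A$ over $I$.

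The main point to get right — and the reason the converse needs no choice, in contrast to the assembly step behind (Der1) in \cref{thm:der1} — is that a morphism in $\spos(I)$ is by definition one simultaneous $I$-indexed family of morphisms of $\sE$, not a pointwise-existence statement; hence the single isomorphism produced by the $\spos$-equivalence already packages all the $s_i$ at once. The only computational step is identifying $(v_!\,v^*1)_i$ with $B_i$ and $((vu)_!\,(vu)^*1)_i$ with $A_i$, which should be immediate from \cref{thm:epos}; the one thing I would check carefully is that the comparison map is $[u_i]$, so that its inverse in the preorder is a function $B_i\to A_i$ pointing the right way to define $s$ over $I$.
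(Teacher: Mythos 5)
Your proof is correct, but your forward direction takes a genuinely different (and more elementary) route than the paper's. The paper proves both directions through the adjoint criterion of \cref{thm:deqv} and the single computation $(v_!\,v^*X)_i \cong \ob{(B_i)}\cdot X_i$, $((vu)_!\,(vu)^*X)_i \cong \ob{(A_i)}\cdot X_i$: for the forward direction, the function $s:\ob{B}\to\ob{A}$ reindexes these copowers to give a map \emph{backwards} against the canonical comparison, which in a preorder forces the comparison to be an isomorphism; for the converse it sets $X_i = 1$, exactly as you do. Your forward direction instead verifies \cref{defn:deqv} directly: in a preorder, full faithfulness of $u^*$ on the image of $v^*$ is just the reverse implication $u^*v^*X \le u^*v^*Y \Rightarrow v^*X \le v^*Y$, and since a morphism representative in $\epos$ is a bare family with no naturality constraint, precomposing the given family $(g_a)$ with $s$ produces the required family $(g_{s(b)})_{b\in B}$. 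This buys you something real: your forward implication never invokes the Kan extensions of \cref{thm:epos} or the derivator structure at all, only the definition of $\epos$ and of $\epos$-equivalence, whereas the paper's argument presupposes $\epos$ is a left derivator so that \cref{thm:deqv} applies. The paper's version buys uniformity (one copower computation serves both directions) and makes the structural parallel with \cref{thm:set-eqv,thm:tv-eqv} visible. Your concluding remark about why no choice is needed --- a morphism in $\spos(I)$ is by definition a single simultaneous $I$-indexed family --- is exactly the right point, and is the same reason the paper's extraction of $s$ from the inverse morphism is constructive; your worry about identifying the comparison map precisely is unnecessary, since in a preorder an isomorphism supplies a map in each direction regardless of which canonical map realizes it.
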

\begin{proof}
  For $X\in \epos(I)$, by construction $(v_!\,v^* X)_i$ is the copower $\ob{(B_i)}\cdot X_i$, and similarly $((vu)_!\,(vu)^* X)_i=\ob{(A_i)}\cdot X_i$.
  Thus, the condition given yields a map backwards, hence an isomorphism in $\epos(I)$.
  The converse follows by letting $X_i$ be the terminal object.
\end{proof}

As with the relationship between \iSet and \sex, the condition of \cref{thm:epos-eqv} is stronger than that of \cref{thm:tv-eqv}.
Thus \iProp is \spos-local, but \spos is not \iProp-local.
Indeed, \spos is not even \iSet-local, though it is still ``0-categorical'' in that the functor $(\done+\done)\to\done$ is a \spos-equivalence.

It is true that \spos is \sex-local.
It is also local for the following intermediate derivator \sreg:

\begin{eg}\label{thm:ereg}
  For a category \sE with finite limits, its reg/lex completion \ereg is defined to be the full subcategory of \eex on the pseudo-equivalence relations that are kernel pairs.
  Such kernel pairs are, in particular, actual equivalence relations; and if \sE is already exact (like \iSet), then they include all the equivalence relations.

  If we define $\ereg(A)$ as a similar subcategory of $\eex(A)$, then it is closed under the limits of \cref{thm:lim} but not the colimits of \cref{thm:colim}.
  However, the (regular epi, mono) factorization of a pseudo-equivalence relation always yields an equivalence relation.
  Thus, if \sE is exact, then $\ereg(A)$ is reflective in $\eex(A)$; so we can define left Kan extensions in $\ereg(A)$ by composing the reflection with those of $\eex(A)$.
  Since the reflections commute with the restriction functors, (Der4) holds.

  In sum, if \sE is complete, exact, and has small coproducts preserved by pullback, then $\ereg(A)$ is a derivator.
  Since products preserve image factorizations, $\ereg(A)$ is also a distributive derivator.
\end{eg}

\begin{rmk}
  Analogously to \cref{rmk:bicat}, we can view \sreg as the homotopy category of the bicategory of ``bidiscrete groupoids'' (those in which any two parallel arrows are equal).
  See~\cite{kp:cat-setoid,kinoshita:bicat-ecat}.
\end{rmk}

\begin{prop}\label{thm:sreg-eqv}
  Let \sE be complete, exact, and have small coproducts preserved by pullback.
  Then $u:A\to B$ is an \ereg-equivalence over $I$ if the following hold:
  \begin{itemize}
  \item There is a function $s:\ob{B} \to \ob{A}$.
  \item For any $\beta:b\to b'$ in $B$, there exists a zigzag in $A$ from $sb$ to $sb'$ (and hence likewise for any zigzag in $B$).
  \item For any $b\in B$, there exists a zigzag in $B$ from $b$ to $u s b$.
  \item For any $a\in A$, there exists a zigzag in $A$ from $a$ to $s u a$.
  \end{itemize}
  The converse holds if $\sE=\iSet$.
  Thus, every \ereg is \sreg-local.
\end{prop}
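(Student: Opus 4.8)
The plan is to run the same argument as in \cref{thm:sex-eqv}, exploiting the single structural feature that distinguishes \ereg from \eex: every object $Y\in\ereg(I)$ is a \emph{genuine} equivalence relation, so each map $(s,t):Y_{i,1}\to Y_{i,0}\times Y_{i,0}$ is monic. Consequently any witness landing in some $Y_{i,1}$ is \emph{uniquely} determined by its source and target. This is precisely what lets the hypotheses ask only for the mere \emph{existence} of zigzags, whereas \cref{thm:sex-eqv} needed chosen \emph{functions} producing them: a witness assembled by transporting along a zigzag cannot depend on which zigzag is used, so no coherent system of choices is required, and unique witnesses automatically assemble into morphism representatives (the definition of such a representative imposes no coherence relating the naturality witnesses $f_\alpha$ to one another).

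For the forward implication I fix $v:B\to I$ with $I$ discrete and $X,Y\in\ereg(I)$, and show $u^*$ is fully faithful between $v^*X$ and $v^*Y$. Since $v$ is constant on fibres, $v^*X$ and $v^*Y$ act as identities on morphisms of $B$, so a morphism representative $f$ carries, for each $\beta:b\to b'$, a naturality witness exhibiting $f_{b,0}\sim f_{b',0}$ in the relevant fibre. For \textbf{faithfulness}, given a witness of equality $h_a$ between $u^*f$ and $u^*g$, the components $h_{sb}$ relate $f$ and $g$ at the objects $usb$; transporting along a zigzag from $b$ to $usb$ (third hypothesis) via the naturality witnesses of $f$ and $g$, together with the symmetry and transitivity of $Y$, produces the required (unique) witness relating $f$ and $g$ at $b$. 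For \textbf{fullness}, given $f:u^*v^*X\to u^*v^*Y$ I set $g_{b,0}=f_{sb,0}$ and $g_{b,1}=f_{sb,1}$; a zigzag from $sb$ to $sb'$ (second hypothesis) supplies the unique naturality witness $g_\beta$, assembling $g$ into a representative $v^*X\to v^*Y$, while a zigzag from $a$ to $sua$ (fourth hypothesis) supplies the unique witness that $u^*g\sim f$.

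For the converse I specialize to $\sE=\iSet$ and take $X\in\sreg(I)$ constant at the terminal equivalence relation. Running the colimit formula of \cref{thm:colim} fibrewise and then applying the reflection of \cref{thm:ereg} (image factorization), $(v_!\,v^*X)_i$ becomes the equivalence relation on $\ob{(B_i)}$ under which two objects are related exactly when a zigzag joins them in $B_i$, and $((vu)_!\,(vu)^*X)_i$ is zigzag-connectedness on $\ob{(A_i)}$. By \cref{thm:deqv} the comparison map, induced by $\ob{u}$, is an isomorphism in $\sreg(I)$; and an isomorphism of equivalence relations in \sreg is witnessed by an inverse \emph{representative}, that is, an honest function $s:\ob{B}\to\ob{A}$ over $I$ that descends to quotients, together with zigzag witnesses for the two round-trips. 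Unwinding these data yields exactly the four stated conditions. Finally, \sreg-locality drops out: a \sreg-equivalence satisfies the four conditions by this converse, and the forward implication then makes it an \ereg-equivalence.

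I expect the forward direction to be the one point genuinely needing care: one must verify that replacing chosen zigzags by bare existence really is legitimate, and this rests entirely on the monicity of $(s,t):Y_{i,1}\to Y_{i,0}\times Y_{i,0}$, which forces every transported witness to be unique. Once that observation is isolated, both halves of full faithfulness are formal zigzag transports identical in shape to \cref{thm:sex-eqv}. The converse is then routine, the only subtlety being that an isomorphism in \sreg delivers a genuine function $s$---a representative of the inverse morphism---rather than a mere bijection on connected components.
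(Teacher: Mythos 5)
Your proposal is correct and takes essentially the same route as the paper: the paper's own (two-sentence) proof rests on exactly your key observation---that in \ereg witnesses of equality are unique when they exist, since $(s,t):Y_{i,1}\to Y_{i,0}\times Y_{i,0}$ is monic---so that the argument of \cref{thm:sex-eqv} can be copied with mere existence of zigzags replacing specified zigzag-valued functions. Your converse, computing $v_!\,v^*X$ via \cref{thm:colim} followed by the image-factorization reflection of \cref{thm:ereg}, is likewise the intended adaptation.
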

\begin{proof}
  In \ereg, witnesses of equality are unique when they exist; thus it suffices to assert that they exist rather than specifying them functionally.
  Hence, we can essentially copy the proof of \cref{thm:sex-eqv}, but without specified zigzags.
\end{proof}

Clearly \sreg is \sex-local while \iSet is \sreg-local.
Also, \spos is \sreg-local.
Thus, in the preorder where $\D_1 \le \D_2$ means ``$\D_1$ is $\D_2$-local'', we have the fragment shown in \cref{fig:locality}.
%(here \iContr denotes the terminal derivator).
In \cref{sec:anafunctors} we will speculate about extending this upwards.

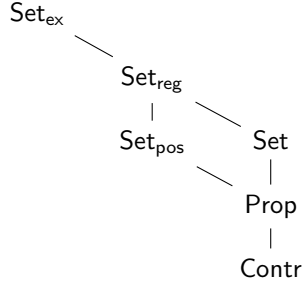
\begin{figure}
  \centering
  \[
  \begin{tikzcd}[row sep={.85cm,between origins},column sep=small]
    \sex \ar[dr,-]\\
    &\sreg \ar[dr,-] \ar[d,-]\\
    &\spos\ar[dr,-] & \iSet \ar[d,-]\\
    && \iProp \ar[d,-] \\
    && \iContr 
  \end{tikzcd}
\]
  \caption{Part of the preorder of relative free cocompletions of a point}
  \label{fig:locality}
\end{figure}

\section{\sex is a relative free cocompletion}
\label{sec:free-cocompletion}

We will show each of the derivators in \cref{fig:locality} is the free cocompletion of a point in the sub-universe of derivators that are local for it, in the following sense.

\begin{defn}
  A left derivator \T is a \textbf{relative free cocompletion of a point} if for any \emph{\T-local} left derivator \D, the ``evaluation at the terminal object $\ast \in \T(\done)$'' functor
  \[ \cchom(\T,\D) \to \D(\done) \]
  is an equivalence of categories.
\end{defn}

How do we prove such universal properties?
As observed by~\cite{heller:htpythys}, there is a derivator that can easily be shown to map into any other left derivator, namely the complete and cocomplete category \cCat.
More generally, we have:

\begin{lem}\label{thm:odot}
  For any left derivator \D, there is a morphism $\odot : \cCat \times \D \to \D$.
  Moreover, if $u:E\to F$ is a morphism in $\cCat^A$ such that $\sum_a u_a$ is a \D-equivalence over $\ob{A}$, then $u\odot M : E\odot M \to F\odot M$ is an isomorphism in \D.
\end{lem}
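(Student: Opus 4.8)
The plan is to realise $\odot$ as the external copower associated with the Grothendieck construction, and then to establish the ``moreover'' clause by restricting everything to objects, where the hypothesis on $\sum_a u_a$ becomes directly applicable through the theory of \D-equivalences developed above.

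First I would construct the morphism. An object of $\cCat(A)=\cCat^A$ is a functor $E:A\to\cCat$, and its Grothendieck construction is a category $\int_A E$ equipped with a cloven opfibration $\pi_E:\int_A E\to A$ whose fibre over $a$ is $E_a$. For $M\in\D(B)$ I would set
\[ E\odot M \;:=\; (\pi_E\times 1_B)_!\,q^*M \;\in\;\D(A\times B), \]
where $q:\int_A E\times B\to B$ is the projection and the left Kan extension is taken along the opfibration $\pi_E\times 1_B:\int_A E\times B\to A\times B$, which exists since \D is a left derivator. A transformation $E\to F$ in $\cCat^A$ induces a functor $\int_A E\to\int_A F$ over $A$ and hence a comparison map of Kan extensions, giving functoriality in the first variable; functoriality in $M$ is immediate. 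To see that these functors assemble into a genuine two-variable morphism in the sense of~\cite[Theorem 3.11]{gps:additivity}, I would use that the Grothendieck construction is strictly compatible with reindexing, $\int_{A'}(\phi^*E)\cong\phi^*\!\int_A E$ as opfibrations for $\phi:A'\to A$, together with the base-change isomorphism for left Kan extension along an opfibration furnished by \cref{thm:opf-hoex}. This step is conceptually routine but carries essentially all of the bookkeeping: the coherence of these pseudonaturality isomorphisms is where the real labour of the construction lies.

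For the ``moreover'' clause, write $\bar u=\sum_a u_a:\sum_a E_a\to\sum_a F_a$, a functor over \ob A, and let $w_E:\sum_a E_a\to\ob A$ and $w_F:\sum_a F_a\to\ob A$ be the projections, so that $w_F\,\bar u=w_E$. By (Der2) for $A\times B$ it suffices to prove that $u\odot M$ is inverted by $\incl{A\times B}^*$, the restriction to $\ob A\times\ob B$. Since $\pi_E\times 1_B$ is an opfibration and the pullback of $\int_A E$ along $\incl A$ is precisely $\sum_a E_a$, \cref{thm:opf-hoex} computes this restriction as
\[ \incl{A\times B}^*(E\odot M)\;\cong\;(w_E\times 1_{\ob B})_!\,(w_E\times 1_{\ob B})^*\,P, \qquad P:=\mathrm{pr}_2^*\,\incl B^*M, \]
where $\mathrm{pr}_2:\ob A\times\ob B\to\ob B$ is the projection and $P\in\D(\ob A\times\ob B)$; the analogous formula holds for $F$, and $\incl{A\times B}^*(u\odot M)$ is identified with the canonical mate induced by $\bar u\times 1_{\ob B}$.

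It then remains to feed in the hypothesis. By assumption $\bar u$ is a \D-equivalence over \ob A; pulling back along the projection $\ob A\times\ob B\to\ob A$ and applying \cref{thm:deqv-pb} shows that $\bar u\times 1_{\ob B}$ is a \D-equivalence over $\ob A\times\ob B$. Hence \cref{thm:deqv} makes the mate $(w_E\times 1_{\ob B})_!\,(w_E\times 1_{\ob B})^*\to(w_F\times 1_{\ob B})_!\,(w_F\times 1_{\ob B})^*$ an isomorphism, in particular at $P$, so $\incl{A\times B}^*(u\odot M)$ is an isomorphism and therefore so is $u\odot M$ by (Der2). Apart from the coherence bookkeeping of the first paragraph, the one point needing care is the identification of this restricted map with the mate of \cref{thm:deqv} on the nose, which I would obtain from naturality in $E$ of the base-change isomorphism of \cref{thm:opf-hoex}; I expect that identification, rather than any genuinely hard estimate, to be the principal obstacle.
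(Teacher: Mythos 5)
Your proposal is correct and follows essentially the same route as the paper's proof: define $E\odot M = (p_E\times 1)_!\,(\pi_2)^*M$ via the Grothendieck construction, reduce the ``moreover'' clause to $\ob{A}\times\ob{B}$ using (Der2), apply \cref{thm:opf-hoex} to the pullback square over the opfibrations, and conclude from the fact that $(\sum_a u_a)\times 1_{\ob{B}}$ is a \D-equivalence over $\ob{A}\times\ob{B}$ by \cref{thm:deqv-pb}. The only difference is presentational: you spell out the identification with the mate of \cref{thm:deqv} and name the restricted object $P$, where the paper treats these identifications as immediate.
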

\begin{proof}
  As in~\cite[Theorem 3.11]{gps:additivity}, to define such a two-variable morphism it suffices to give functors $\odot :\cCat(A) \times \D(B) \to \D(A\times B)$ that vary pseudonaturally in $A$ and $B$.
  The components $\odot_A : (\cCat\times\D)(A) = \cCat(A) \times \D(A) \to\D(A)$ of a pseudonatural transformation are then obtained by composing with restriction along the diagonal $A \to A\times A$.

  Given $E\in \cCat^A$, let $p_E:\tint E\to A$ be its Grothendieck construction, which is a split opfibration.
  Then we have the following diagram:
  \[
    \begin{tikzcd}
      \tint E \times B \ar[r,"\pi_2"] \ar[d,"p_E \times 1"'] & B\\
      A\times B
    \end{tikzcd}
  \]
  Therefore, given $M\in \D(B)$, we can define
  \[E\odot M = (p_E\times 1)_!\, (\pi_2)^* (M) \quad \in \D(A\times B).
  \]
  Pseudonaturality is immediate.

  Now suppose $u:E\to F$ is such that $\sum_a u_a$ is a \D-equivalence over $\ob{A}$.
  To show that $u\odot M$ is an isomorphism, by (Der2) it suffices to restrict it to $\ob{A}\times \ob{B}$.
  And since $p_E\times 1$ and $p_F\times 1$ are opfibrations, by \cref{thm:opf-hoex} the following square is exact, along with the analogous one for $F$:
  \[
    \begin{tikzcd}
      (\sum_a E_a) \times \ob{B} \ar[r] \ar[d,"\ob{(p_E)}\times 1"'] \drpullback & \tint E \times B \ar[d,"p_E\times 1"]\\
      \ob{A} \times \ob{B} \ar[r] & A\times B
    \end{tikzcd}
  \]
  Moreover, the restriction of $M\in \D(B)$ to $(\sum_a E_a) \times \ob{B}$ factors through its restriction to $\ob{B}$ and also to $\ob{A}\times \ob{B}$.
  Now the desired statement simply reduces to the fact that $(\sum_a u_a) \times 1_{\ob{B}}$ is a \D-equivalence over $\ob{A}\times \ob{B}$, which follows from the hypothesis and \cref{thm:deqv-pb}.
\end{proof}

Since left extensions in \D commute with each other, $\odot$ is cocontinuous in its second variable.
If it were also cocontinuous in its first variable, defining $E \mapsto E\odot 1$ would give a cocontinuous morphism $\cCat\to\D$.
This is not generally the case, essentially because $\tint E$ is a \emph{oplax} colimit of $E$ rather than a homotopy colimit.
However, we can make it true by ``localizing \cCat'' in a way that forces such oplax colimits to become ``colimits'' in a derivator.

Classically, there is a universal way to do this, using the Thomason model structure~\cite{thomason:cat-model} on \cCat, which is Quillen equivalent to simplicial sets.
This is roughly the approach of~\cite{heller:htpythys,cisinski:presheaves,cisinski:basicloc}.
Model categories for relative free cocompletions of a point can then be obtained by left Bousfield localization.
It would be interesting to see whether this approach can be reproduced constructively, but we will not attempt to do that here.

Instead, since \cref{fig:locality} contains a maximal element \sex, we will just prove explicitly that \sex is a relative free cocompletion of a point, and then deduce the same property for the other derivators in \cref{fig:locality}.
Of course, a more abstract approach will probably be required to extend these results to higher dimensions.

\begin{defn}\label{thm:Xtil-done}
  For $X\in \sex(\done)$, let $\Xtil$ be the category with object set $X_{0} + X_{1}$ and nonidentity arrows $\xi \to s\xi$ and $\xi \to t\xi$ for all $\xi\in X_{1}$.
\end{defn}

Then $\Xtil\odot 1 \in \sex(\done)$ is the set $X_0 + X_1$ with pseudo-equivalence relation freely generated by $\xi \sim s\xi$ and $\xi\sim t\xi$.

\begin{lem}
  $\Xtil\odot 1$ is isomorphic to $X$ in \sex.
\end{lem}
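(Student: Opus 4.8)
The plan is to give an explicit isomorphism in \sex between $X$ and $Z:=\Xtil\odot 1$ by writing down mutually inverse morphism representatives $\phi:Z\to X$ and $\psi:X\to Z$ and checking that each composite equals an identity \emph{up to a witness of equality}, i.e.\ that $\phi\psi\sim 1_X$ and $\psi\phi\sim 1_Z$, so that $\phi$ and $\psi$ are mutually inverse in \sex. The essential simplification I would lean on is that, by the definition of the morphisms of \sex, the equivalence relation on representatives refers only to their object components: a witness of equality compares $f_0$ and $g_0$ and says nothing about the $X_1$-level data. Hence to prove the two composites are identities I only need to track object components and supply the appropriate witnesses.

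First I would unwind $Z$ from \cref{thm:colim} applied to the diagram on $A=\Xtil$ that is constant at the terminal pseudo-equivalence relation. This gives $Z_0=X_0+X_1$ and exhibits $Z_1$ as the free pseudo-equivalence relation (\cref{thm:free-pseqr}) on $Z_0$ generated by $\eta$ on the arrows of $\Xtil$: for each $\xi\in X_1$ a generator $\eta^s_\xi$ with $s\eta^s_\xi=\xi$ and $t\eta^s_\xi=s\xi$ (witnessing $\xi\sim s\xi$) and a generator $\eta^t_\xi$ with $s\eta^t_\xi=\xi$ and $t\eta^t_\xi=t\xi$ (witnessing $\xi\sim t\xi$). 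This is exactly the description asserted just before the statement.

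Next I would define the two maps. For $\phi$, set $\phi_0=[1_{X_0},s]:X_0+X_1\to X_0$ (the identity on $X_0$ and $\xi\mapsto s\xi$ on $X_1$), and use the universal property of \cref{thm:free-pseqr} to obtain $\phi_1:Z_1\to X_1$ over $\phi_0\times\phi_0$ from the assignment $\eta^s_\xi\mapsto r(s\xi)$ and $\eta^t_\xi\mapsto\xi$; these land over $\phi_0\times\phi_0$ precisely because $\phi_0(\xi)=s\xi$. For $\psi$, let $\psi_0$ be the coproduct inclusion $X_0\into Z_0$ and set $\psi_1(\xi)=m\bigl(v(\eta^s_\xi),\eta^t_\xi\bigr)$, the composed zigzag $s\xi\sim\xi\sim t\xi$; a short computation with $s$, $t$, $v$, $m$ shows $s\psi_1(\xi)=s\xi$ and $t\psi_1(\xi)=t\xi$, so $\psi$ is a valid morphism representative.

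Finally I would check the composites. The object component of $\phi\psi$ is $\phi_0\psi_0=1_{X_0}$ on the nose, so the reflexivity $r$ is a witness that $\phi\psi\sim 1_X$. The object component of $\psi\phi$ is the inclusion on the $X_0$ summand but sends $\xi\in X_1\subseteq Z_0$ to $s\xi\in X_0\subseteq Z_0$ rather than to $\xi$; this is the only nontrivial point. I would assemble a single witness $h:Z_0\to Z_1$ with $sh=(\psi\phi)_0$ and $th=1_{Z_0}$ piecewise over the coproduct: the reflexivity $Z_r$ on the $X_0$ summand, and $v(\eta^s_\xi)$ on the $X_1$ summand, the latter having source $s\xi$ and target $\xi$ exactly as required. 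This gives $\psi\phi\sim 1_Z$. I expect this last direction to be the main obstacle, precisely because $\psi\phi$ genuinely collapses the new vertex $\xi$ onto $s\xi$: the content is to recognize that the generators of the free relation $Z_1$ already contain the required comparison, and that it can be packaged as a single map out of $X_0+X_1$.
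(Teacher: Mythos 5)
Your proposal is correct and follows essentially the same route as the paper: the same pair of mutually inverse representatives (inclusion of $X_0$ with the composite witness $s\xi\sim\xi\sim t\xi$ in one direction; $[1_{X_0},s]$ with generators sent to $r(s\xi)$ and $\xi$ in the other), with the same witnesses for the two composites, including the generator $\xi\sim s\xi$ (up to the symmetry $v$) handling the collapse of $\xi$ onto $s\xi$. The only difference is presentational: you invoke the freeness of \cref{thm:free-pseqr} and the object-component-only nature of witnesses explicitly, which the paper leaves implicit.
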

\begin{proof}
  In one direction, we have a map $X\to \Xtil \odot 1$ that is the inclusion of the summand $X_0$, and sending a witness $\xi$ that $s\xi\sim t\xi$ to the composite witness $s\xi \sim \xi \sim t\xi$.
  In the other direction, we can act as the identity on $X_0$ and send $\xi\in X_1$ to $s\xi$ (say), with the generating witnesses of equality $\xi \sim s\xi$ sent to the reflexivity witness for $s\xi$, and the generating witnesses $\xi\sim t\xi$ sent to the witness $\xi$ that $s\xi\sim t\xi$.
  The composite on $X_0$ is the identity, while the composite on $\Xtil\odot 1$ is equal to the identity via the witnesses $\xi\sim s\xi$.
\end{proof}

We would like to represent a coherent diagram $X\in \sex(A)$ similarly by an object of $\cCat^A$.
However, since $X$ is only functorial up to witnesses of equality, a naive pointwise construction does not produce a functor (or even a pseudofunctor) $\Xtil: A\to \cCat$.
More importantly, the morphisms in $\sex(A)$ are not natural or even pseudonatural for this construction.
Thus, we need some kind of strictification.

\begin{rmk}
  At this point we could attempt to proceed in roughly the same way that derivators are usually constructed in classical homotopy theory (see e.g.~\cite{cisinski:cat-deriv} or~\cite[Proposition 1.30]{groth:der}), by building some kind of \emph{model category} of setoids and morphism representatives whose homotopy category would be $\sex(\done)$.
  We would then lift this model category to a model structure on \emph{strict} $A$-shaped diagrams and strict natural transformations, whose homotopy category would be equivalent to $\sex(A)$.
  The machinery of Quillen adjunctions would then give an alternative approach to the construction of the derivator $\sex$, and the strictness of the morphisms in the model category would make it easier to lift the construction $\Xtil$ to diagrams.

  The first step of this approach to \sex was achieved in~\cite[\S4.1]{henry:weak-modelcats} with the construction of a \emph{weak model category} of setoids whose homotopy category is $\sex(\done)$.
  However, the lifting of weak model structures to categories of diagrams does not exist in the literature yet.
  Rather than develop this machinery here, I have elected to give an explicit construction, which has the additional advantage of being more accessible to a reader without experience in model category theory.
  But it should be clear that this is only feasible because of the very simple nature of the derivator \sex; more complicated examples require more advanced techniques.
\end{rmk}

\begin{defn}
  For $X\in \sex(A)$, let $\Xtil:A\to\cCat$ be the following functor.
  \begin{itemize}
  \item For $c\in A$, the category $\Xtil_{c}$ has two classes of objects:
    \begin{enumerate}
    \item Triples $(a,\al,x)$ where $\al:a\to c$ and $x\in X_{a,0}$, which can be drawn as:
      \[
        \begin{tikzcd}
          x \ar[d,dotted,-] \\
          a \ar[r,"\al"] & c
        \end{tikzcd}
        \]
    \item Tuples $(a,\al,x,a',\al',x',\xi)$ where $\al:a\to a'$ in $A$ and $x\in X_{a,0}$, while $\al':a'\to c$ in $A$ and $x'\in X_{a',0}$, and $\xi\in X_{a',1}$ satisfies $s\xi = X_{\al,0}(x)$ and $t\xi=x'$, as shown:
      \[
      \begin{tikzcd}[column sep=tiny]
        x \ar[d,dotted,-] \ar[rrr,|->] & {}& {}&
        X_{\al,0}(x) \ar[dr,dotted,-] \ar[rr,-,"\xi"]  && x' \ar[dl,dotted,-]
        \\
        a \ar[rrrr,"\al"] &&&&
        a' \ar[rrrr,"\al'"] &&{}&{}&
        c
      \end{tikzcd}
    \]
    \end{enumerate}
  \item The nonidentity morphisms in $\Xtil_c$ are of the form
    \begin{align*}
      (a,\al,x,a',\al',x',\xi) &\to (a,\al'\al,x) \qquad\text{and}\\
      (a,\al,x,a',\al',x',\xi) &\to (a',\al',x').
    \end{align*}
  \item For $\gm:c\to c'$ in $A$, the functor $\Xtil_\gm : \Xtil_c \to \Xtil_{c'}$ is defined on objects by
    \begin{align*}
      \Xtil_\gm(a,\al,x) &= (a,\gm\al,x)\\
      \Xtil_\gm(a,\al,x,a',\al',x',\xi) &= (a,\al,x,a',\gm\al',x',\xi)
    \end{align*}
  \end{itemize}
  For a morphism representative $f:X\to Y$, let $\ftil:\Xtil\to\Ytil$ be the natural transformation whose component $\ftil_c : \Xtil_c \to \Ytil_c$ is defined on objects by
  \begin{align*}
    \ftil_c(a,\al,x) &= (a,\al,f_{a,0}(x))\\
    \ftil_c(a,\al,x,a',\al',x',\xi) &= (a,\al,f_{a,0}(x),a',\al',f_{a,0}(x'),m(f_\al(x),f_{a,1}(\xi))),
  \end{align*}
  where $m$ is the transitivity operation on equality witnesses in $Y_{a'}$.
\end{defn}

\begin{lem}\label{thm:self-odt}
  For any $X\in\sex(A)$ we have a specified isomorphism $\Xtil \odot \mathord\ast\cong X$, where $\ast\in \sex(\done)$ is the terminal object.
  Similarly, for any morphism representative $f:X\to Y$ we have a specified witness that the evident square commutes:
  \[
    \begin{tikzcd}
      \Xtil \odot \ast \ar[r,"\cong"] \ar[d,"\ftil \odot \ast"'] & X \ar[d,"f"]\\
      \Ytil \odot \ast \ar[r,"\cong"'] & Y
    \end{tikzcd}
  \]
\end{lem}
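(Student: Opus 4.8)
The plan is to identify $\Xtil\odot\ast$ explicitly as a coherent $A$-diagram and then write down the isomorphism and its inverse by hand, globalizing the computation already carried out for $A=\done$. First I would unwind the definition of $\odot$ from \cref{thm:odot}: with $B=\done$ and $M=\ast$ the terminal object, $\Xtil\odot\ast=(p_{\Xtil}\times 1)_!\,\pi_2^*\ast$ is the left Kan extension along the split Grothendieck opfibration $p_{\Xtil}:\tint\Xtil\to A$ of the coherent diagram constant at the terminal pseudo-equivalence relation. Since $p_{\Xtil}$ is an opfibration, \cref{thm:opf-hoex} lets me compute this pointwise by restricting to fibers: $(\Xtil\odot\ast)_c$ is the colimit, in the sense of \cref{thm:colim}, of the constant-$\ast$ diagram over the fiber category $\Xtil_c$. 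By the explicit formula in the proof of \cref{thm:colim} (as used in the converse half of \cref{thm:sex-eqv}), this colimit is the \emph{free} pseudo-equivalence relation on the set of objects of $\Xtil_c$, generated by reflexivities together with the arrows of $\Xtil_c$.

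With this concrete description in hand I would build the comparison map $\Xtil\odot\ast\to X$ by specifying it on generators and generating relations and invoking the weak freeness of the colimit (the universal property with respect to morphism \emph{representatives} stressed in the proof of \cref{thm:der3}, cf.\ \cref{thm:free-pseqr}). On objects a type-(1) generator $(a,\al,x)$ goes to $X_{\al,0}(x)\in X_{c,0}$, and a type-(2) generator $(a,\al,x,a',\al',x',\xi)$ goes to $X_{\al'\al,0}(x)$. The generating relation to $(a,\al'\al,x)$ is witnessed by reflexivity, while the relation to $(a',\al',x')$ is witnessed by combining the coherence datum $X_{\al,\al'}(x)$ (relating $X_{\al',0}X_{\al,0}(x)$ to $X_{\al'\al,0}(x)$) with $X_{\al',1}(\xi)$ through the groupoid operations of $X_c$. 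In the other direction I would send $x\in X_{c,0}$ to the type-(1) object $(c,1_c,x)$ and an equality witness $\eta\in X_{c,1}$ to the zigzag through a type-(2) object assembled from $\eta$ and $X_r$; naturality witnesses for morphisms $\gm:c\to c'$ of $A$ come from the same bookkeeping. Checking that the two composites are identities up to specified witnesses then runs parallel to the $A=\done$ case proved above: the composite on $X$ is $X_{1_c,0}(x)\sim x$, witnessed by $X_r(x)$, and the composite on $\Xtil\odot\ast$ is absorbed by zigzags through type-(2) objects carrying reflexivity data.

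For the second assertion I would verify that the square commutes by evaluating both legs on the generators of $\Xtil\odot\ast$ and exhibiting the comparison witness. Chasing a type-(1) generator $(a,\al,x)$, the top-then-right leg gives $f_{c,0}(X_{\al,0}(x))$ while the left-then-bottom leg gives $Y_{\al,0}(f_{a,0}(x))$; these are related precisely by the naturality witness $f_\al(x)\in Y_{c,1}$ of the representative $f$, which therefore supplies the required witness of equality. The type-(2) generators are handled identically, using the definition of $\ftil$ (in particular the transitivity term $m(f_\al(x),f_{a,1}(\xi))$), so that the witnesses assemble into a single specified one over all of $A$.

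The main obstacle is the first step: pinning down $\Xtil\odot\ast$ as the free pseudo-equivalence relation on the objects of $\Xtil_c$ requires carefully combining the opfibration exactness of \cref{thm:opf-hoex} with the explicit colimit of \cref{thm:colim}, and then \emph{constructively} tracking all of the coherence data $X_r$ and $X_{\al,\al'}$ (and the naturality witnesses for each $\gm:c\to c'$) through the isomorphism. Conceptually this merely globalizes the already-proven $A=\done$ computation, but the simultaneous handling of witnesses across the whole diagram shape $A$, with no appeal to choice, is where essentially all of the labor lies.
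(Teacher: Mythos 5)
Your proposal is correct and follows essentially the same route as the paper's proof: identify $(\Xtil\odot\ast)_c$ fiberwise (via the split opfibration $p_{\Xtil}$ and the explicit colimit of \cref{thm:colim}) as the free pseudo-equivalence relation on the objects of $\Xtil_c$, then write down explicit morphism representatives in both directions together with the witnesses exhibiting the composites as identities. The only cosmetic differences are your symmetric choice of image for type-(2) generators (the paper uses $X_{\al',0}(x')$, placing the combined witness $m(X_{\al,\al'}(x),X_{\al',1}(\xi))$ on the other generating relation) and your direct chase of the naturality square using the witnesses $f_\al(x)$, where the paper instead observes that both composites $X\to\Ytil\odot\ast$ through the inverse isomorphisms send $x$ to the same element $(c,1_c,f_{c,0}(x))$.
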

\begin{proof}
  By definition, $\Xtil \odot \mathord\ast$ is the left extension of the constant diagram at $\ast$ along the functor $p_{\Xtil} : \tint\Xtil \to A$.
  Since this functor is a cloven (indeed, split) opfibration, this extension can be computed using colimits, as in \cref{thm:colim}, over the fibers.
  The fiber over $c\in A$ is the category $\Xtil_c$ as defined above.
  Thus, $(\Xtil \odot \mathord\ast)_c$ has underlying set consisting of the triples $(a,\al,x)$ and tuples $(a,\al,x,a',\al',x',\xi)$, with pseudo-equivalence relation freely generated by witnesses
  $(a,\al'\al,x) \sim (a,\al,x,a',\al',x',\xi)$ and $(a,\al,x,a',\al',x',\xi) \sim (a',\al',x')$.

  In one direction, we define a morphism representative $g:\Xtil \odot \mathord\ast \to X$ by
  \begin{align*}
    g_{c,0}(a,\al,x) &= X_{\al,0}(x)\\
    g_{c,0}(a,\al,x,a',\al',x',\xi) &= X_{\al',0}(x')\\
    g_{c,1}((a,\al'\al,x) \sim (a,\al,x,a',\al',x',\xi)) &= m(X_{\al,\al'}(x),X_{\al',1}(\xi))\\
    g_{c,1}((a,\al,x,a',\al',x',\xi) \sim (a',\al',x')) &= r(X_{\al',0}(x'))\\
    g_{\gm}(a,\al,x) &= X_{\al,gm}(x)\\
    g_{\gm}(a,\al,x,a',\al',x',\xi) &= X_{\al',gm}(x')
  \end{align*}
  (extending to all of $(\Xtil \odot \mathord\ast)_{c,1}$ by freeness).
  In the other direction, we define a morphism representative $h: X \to \Xtil \odot \mathord\ast$ by
  \begin{align*}
    h_{c,0}(x) &= (c,1_c,x)\\
    h_{c,1}(\xi) &= (c,1_c,s\xi,c,1_c,t\xi,m(X_r(s\xi),\xi))\\
    h_\gm(x) &= \big(
    (c,\gm,x) \sim
    (c,\gm,x,c',1_{c'},X_{\gm,0}(x),r(X_{\gm,0}(x))) \sim
    (c',1_{c'},X_{\gm,0}(x))
    \big).
  \end{align*}
  The composite in one direction, $h\circ g$, sends $(a,\al,x)$ to $(c,1_c,X_{\al,0}(x))$, for which we have
  \[ (a,\al,x) \sim
    (a,\al,x,c,1_c,X_{\al,0}(x),r(X_{\al,0}(x))) \sim
    (c,1_c,X_{\al,0}(x)).
  \]
  And it sends $(a,\al,x,a',\al',x',\xi)$ to $(c,1_c,X_{\al',0}(x'))$, for which we have
  \[(a,\al,x,a',\al',x',\xi) \sim (a',\al',x')\]
  together with a zigzag like that above.
  And the composite in the other direction, $g\circ h$, sends $x\in X_c$ to $X_{1_c,0}(x)$, which is identified with $x$ by $X_r(x)$.
  Thus, $g$ and $h$ together represent an isomorphism in $\sex(A)$.

  For the second statement, note that $\ftil\odot 1 : \Xtil\odot\ast \to \Ytil\odot \ast$ sends $(a,\al,x)$ to $(a,\al,f_{a,0}(x))$.
  Thus, the composite $X \to \Xtil\odot \ast \to \Ytil\odot \ast$ and $X\to Y \to \Ytil\odot \ast$ both send $x$ to $(c,1_c,f_{c,0}(x))$.
\end{proof}

We emphasize, however, that the construction $f\mapsto \ftil$ does not define any kind of functor yet.
Specifically, it is only defined on morphism \emph{representatives}, which do not compose associatively, and the composite of two morphisms of the form $\ftil$ may no longer be of that form.
Thus, we need some way to also detect witnesses of equality at the categorical level.
For this we use the following ``path space''.

\begin{defn}
  For $X\in \sex(A)$, let $\wp\Xtil:A\to\cCat$ be the following functor.
  \begin{itemize}
  \item For $c\in A$, the category $\wp\Xtil_c$ hos two classes of objects:
    \begin{enumerate}
    \item Triples $(a,\al,\ze)$ where $\al:a\to c$ and $\ze\in X_{a,1}$.
    \item Tuples $(a,\al,\ze,a',\al',\ze',\xi,\xi')$ where $\al:a\to a'$ and $\ze\in X_{a,1}$, while $\al':a'\to c$ and $\ze'\in X_{a',1}$, and $\xi,\xi'\in X_{a',1}$ satisfy $s\xi = X_{\al,0}(s\ze)$, $t\xi = s\ze'$, $s\xi' = X_{\al,0}(t\ze)$, and $t\xi' = t\ze'$.
    \end{enumerate}
  \item The nonidentity morphisms in $\wp\Xtil_c$ are of the form:
    \begin{align*}
      (a,\al,\ze,a',\al',\ze',\xi,\xi') &\to (a,\al'\al,\ze)\\
      (a,\al,\ze,a',\al',\ze',\xi,\xi') &\to (a',\al',\ze').
    \end{align*}
  \item For $\gm:c\to c'$ in $A$, the functor $\wp\Xtil_\gm : \wp\Xtil_c \to \wp\Xtil_{c'}$ is defined on objects by
    \begin{align*}
      \Xtil_\gm(a,\al,\ze) &= (a,\gm\al,\ze)\\
      \Xtil_\gm(a,\al,\ze,a',\al',\ze',\xi,\xi') &= (a,\al,\ze,a',\gm\al',\ze',\xi,\xi').
    \end{align*}
  \end{itemize}
  There are two natural transformations $\si,\tau:\wp\Xtil\to\Xtil$ defined on objects by
  \begin{align*}
    \si_c(a,\al,\ze) &= (a,\al,s\ze)\\
    \si_c(a,\al,\ze,a',\al',\ze',\xi,\xi') &= (a,\al,s\ze,a',\al',s\ze',\xi)\\
    \tau_c(a,\al,\ze) &= (a,\al,t\ze)\\
    \tau_c(a,\al,\ze,a',\al',\ze',\xi,\xi') &= (a,\al,t\ze,a',\al',t\ze',\xi').
  \end{align*}
  Finally, there is a natural transformation $\rho:\Xtil \to \wp\Xtil$ defined on objects by
  \begin{align*}
    \rho_c(a,\al,x) &= (a,\al,rx)\\
    \rho_c(a,\al,x,a',\al',x',\xi) &= (a,\al,rx,a',\al',rx',\xi,\xi),
  \end{align*}
  where $r$ is the witness of reflexivity in $X$.
\end{defn}

\begin{lem}\label{thm:path}
  We have $\si\rho = \tau\rho = 1_{\Xtil}$, and the functors $\sum_a\rho_a$, $\sum_a\si_a$, and $\sum_a\tau_a$ are \sex-equivalences over $\ob{A}$.
\end{lem}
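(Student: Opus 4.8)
The plan is to dispatch the two strict identities by a componentwise computation and then reduce each of the three equivalence claims to the explicit criterion of \cref{thm:sex-eqv}.

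First I would verify $\si\rho = \tau\rho = 1_{\Xtil}$ on objects. On a triple, $\si_c\rho_c(a,\al,x) = \si_c(a,\al,rx) = (a,\al,s(rx)) = (a,\al,x)$ because $sr = 1$, and dually $\tau_c\rho_c = 1$ because $tr = 1$; on a tuple the same holds, using that $\rho$ duplicates the connecting witness $\xi$ into the pair $(\xi,\xi)$, of which $\si$ keeps the first entry and $\tau$ the second. These equalities hold in each fiber and are natural in $c$, so they hold as transformations.

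Next, for the equivalences I would apply \cref{thm:sex-eqv} to $\si,\tau\colon \wp\Xtil\to\Xtil$ and to $\rho\colon\Xtil\to\wp\Xtil$, working fibrewise: since $\sum_a(-)_a$ is a disjoint union of categories, a zigzag over $\ob{A}$ is exactly a zigzag inside a single fiber $\wp\Xtil_c$ or $\Xtil_c$. The section required by the criterion is $\rho$ for $\si$ and $\tau$, and is $\si$ (on objects) for $\rho$. Two of the four conditions are then free: the condition sending a morphism to a connecting zigzag is automatic because $\si,\tau,\rho$ are honest functors (a single arrow is a length-one zigzag), and the condition producing a zigzag from an object to its image under $\si\rho$ or $\tau\rho$ is trivial by the identities just proved. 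Everything therefore reduces to a single family of zigzags: for each object $b'$ of $\wp\Xtil_c$, a specified zigzag from $b'$ to $\rho\si(b')$ (respectively $\rho\tau(b')$ for $\tau$). This one family simultaneously discharges the last condition for $\si$ and the last condition for $\rho$.

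The heart of the argument, and the step I expect to be the main obstacle, is the explicit construction of this zigzag. For a triple $b' = (a,\al,\ze)$, for which $\rho\si(b') = (a,\al,r(s\ze))$, I would connect the two by the single tuple
\[ U = (a,\,1_a,\,\ze,\;a,\,\al,\,r(s\ze),\;\xi,\,\xi') \]
of $\wp\Xtil_c$, whose two faces are precisely $b'$ and $\rho\si(b')$, and whose connecting witnesses are built from the groupoid operations of $X_a$ and the coherence witness $X_r$, namely $\xi = v(X_r(s\ze))$ and $\xi' = m\big(v(X_r(t\ze)),\,v\ze\big)$, where $v$ and $m$ are the symmetry and transitivity of $X_a$. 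Confirming that $\xi,\xi'$ have the sources and targets demanded by the definition of $\wp\Xtil_c$ (so that $U$ is legitimate) is the one genuinely fiddly verification; the witnesses needed for $\rho\tau$ are obtained by interchanging $s$ and $t$ throughout. For a tuple $b'$ I would avoid building a connecting tuple directly and instead route through a face: writing $F_1$ for the operation taking a tuple to its first face, both $\si$ and $\rho$ commute with $F_1$ on objects (and, being functors, with the face morphisms), so $F_1(\rho\si(b')) = \rho\si(F_1(b'))$, and splicing the face morphism, the triple-case zigzag, and the face morphism of $\rho\si(b')$ yields
\[ b' \to F_1(b') \leftarrow U' \to F_1(\rho\si(b')) \leftarrow \rho\si(b') \]
from $b'$ to $\rho\si(b')$. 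All these choices are canonical, so they assemble into the functions required by \cref{thm:sex-eqv}, and the three maps are \sex-equivalences over $\ob{A}$.
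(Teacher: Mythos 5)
Your proposal is correct and takes essentially the same route as the paper's proof: both reduce, via the criterion of \cref{thm:sex-eqv} and the retraction identities $\si\rho=\tau\rho=1$, to producing a zigzag from each object of $\wp\Xtil_c$ to its roundtrip image (the paper merely packages the three claims differently, verifying the criterion only for $\rho$ and deducing $\si$, $\tau$ by 2-out-of-3 from \cref{thm:deqv-sat}), and both build that zigzag from a single connecting tuple in the triple case and by detouring through a face map in the tuple case. If anything, your witnesses are more scrupulous than the paper's: the tuple the paper writes down, with $\xi = rs\ze$ and $\xi'=\ze$, satisfies the required source/target equations only if one tacitly treats $X_{1_a,0}$ as the identity (and replaces $\ze$ by $v\ze$), whereas your $\xi = v(X_r(s\ze))$ and $\xi' = m(v(X_r(t\ze)),v\ze)$ typecheck on the nose.
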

\begin{proof}
  The first statement is evident.
  For the second, by 2-out-of-3 (\cref{thm:deqv-sat}) it suffices to show $\sum_a\rho_a$ is a \sex-equivalence.
  Since $(\sum_a \si_a) \circ (\sum_a \rho_a) = 1$, it suffices to connect each object of $\wp\Xtil_a$ to its image under $\rho_a\si_a$ with a zigzag.

  First we need a zigzag between $(a,\al,\ze)$ and $(a,\al,rs\ze)$, for which we can use
  \[ (a,\al,rs\ze)
    \leftarrow
    (a,1_a,\ze,a,\al,rs\ze,rs\ze,\ze)
    \to
    (a,\al,\ze).
  \]
  Next we need a zigzag between $(a,\al,\ze,a',\al',\ze',\xi,\xi')$ and $(a,\al,rs\ze,a',\al',rs\ze',\xi,\xi)$, for which we compose the zigzag constructed as above for $(a',\al',\ze')$ with the maps
  \begin{align*}
    (a,\al,rs\ze,a',\al',rs\ze',\xi,\xi) &\to (a',\al',rs\ze')\qquad\text{and}\\
    (a',\al',\ze') &\ot (a,\al,\ze,a',\al',\ze',\xi,\xi').\qedhere
  \end{align*}
\end{proof}

\cref{thm:path} says that $\wp\Xtil$ is a ``path space'' relative to the \sex-equivalences.

\begin{defn}
  For morphisms $\phi,\psi : \Xtil \to \Ytil$ in $\cCat^A$, a \textbf{right homotopy} $\phi\sim\psi$ is a morphism $\theta : \Xtil \to \wp\Ytil$ such that $\si\th = \phi$ and $\tau\th = \psi$.
\end{defn}

\begin{lem}\label{thm:eq-rhtpy}
  If $f,g:X\to Y$ are morphism representatives in $\sex(A)$ and $h:f\sim g$ is a witness of equality, then $\ftil$ and $\gtil$ are right homotopic.
\end{lem}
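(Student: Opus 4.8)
The plan is to construct the required right homotopy $\theta:\Xtil\to\wp\Ytil$ by hand and then read off its two endpoints. The key observation is that the witness of equality $h$ is \emph{already} the data of the homotopy: for each $a$ and each $x\in X_{a,0}$ it provides an element $h_a(x)\in Y_{a,1}$ with $s\,h_a(x)=f_{a,0}(x)$ and $t\,h_a(x)=g_{a,0}(x)$, i.e.\ an arrow in $Y_a$ from the value of $\ftil$ to the value of $\gtil$. Since the path-space endpoint transformations $\si$ and $\tau$ are designed exactly to extract the source and target of such an arrow, this element is precisely what must be fed into $\theta$.

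Concretely, I would set
\[ \theta_c(a,\al,x)=(a,\al,h_a(x)) \]
on objects of the first kind, and on objects of the second kind
\[ \theta_c(a,\al,x,a',\al',x',\xi)=\big(a,\al,h_a(x),a',\al',h_{a'}(x'),\,m(f_\al(x),f_{a',1}(\xi)),\,m(g_\al(x),g_{a',1}(\xi))\big), \]
where the two parallel witnesses are exactly the transitivity composites already appearing in the definitions of $\ftil$ and $\gtil$. With this choice the endpoint conditions are essentially forced: applying $\si$ retains the sources $s\,h_a(x)=f_{a,0}(x)$ and $s\,h_{a'}(x')=f_{a',0}(x')$ together with the first witness $m(f_\al(x),f_{a',1}(\xi))$, reproducing $\ftil_c$ verbatim; applying $\tau$ retains the targets $g_{a,0}(x)$, $g_{a',0}(x')$ and the second witness, reproducing $\gtil_c$. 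Thus $\si\theta=\ftil$ and $\tau\theta=\gtil$ hold by construction.

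What remains is routine verification. First, one checks that the displayed tuple genuinely defines an object of $\wp\Ytil_c$, i.e.\ that its two witness-components satisfy the four source/target compatibilities relating them to $\ze=h_a(x)$ and $\ze'=h_{a'}(x')$; these follow from $sh_a=f_{a,0}$, $th_a=g_{a,0}$, the rules $sm=s\pi_1$, $tm=t\pi_2$ for transitivity, and the naturality identities $sf_\al=Y_{\al,0}f_{a,0}$, $tf_\al=f_{a',0}X_{\al,0}$ (and their $g$-analogues). Second, $\theta_c$ sends the two generating morphisms of $\Xtil_c$ to the corresponding generators of $\wp\Ytil_c$, since it leaves the indexing data $(a,\al,a',\al')$ untouched; and naturality in $\gm:c\to c'$ is immediate, as $\Xtil_\gm$ and $\wp\Ytil_\gm$ both act only by post-composing with $\gm$. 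I do not expect a genuine obstacle: the one point deserving attention is the composability inside each $m$, namely $t\,f_\al(x)=f_{a',0}(X_{\al,0}(x))=f_{a',0}(s\xi)=s\,f_{a',1}(\xi)$, which holds by the tuple constraint $s\xi=X_{\al,0}(x)$ (and likewise for $g$) — precisely the composability already needed to see that $\ftil$ and $\gtil$ are well defined.
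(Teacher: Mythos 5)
Your construction is exactly the paper's proof: the paper likewise defines the homotopy $\htil:\Xtil\to\wp\Ytil$ by $\htil(a,\al,x)=(a,\al,h_a(x))$ and $\htil(a,\al,x,a',\al',x',\xi)=(a,\al,h_a(x),a',\al',h_{a'}(x'),m(f_\al(x),f_{a,1}(\xi)),m(g_\al(x),g_{a,1}(\xi)))$, using the same two transitivity composites as the parallel witnesses. Your version is, if anything, slightly more careful: your indices $f_{a',1}(\xi)$ and $g_{a',1}(\xi)$ are the correct ones (since $\xi\in X_{a',1}$, the paper's $f_{a,1}(\xi)$ is a typo), and you spell out the endpoint equations and well-formedness checks that the paper leaves implicit.
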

\begin{proof}
  We define $\htil : \Xtil \to \wp\Ytil$ on objects by
  \( \htil(a,\al,x) = (a,\al,h_a(x))\) and
  \begin{multline*}
    \htil(a,\al,x,a',\al',x',\xi) =\\ (a,\al,h_a(x),a',\al',h_{a'}(x'),m(f_\al(x),f_{a,1}(\xi)),m(g_\al(x),g_{a,1}(\xi))).\qedhere
  \end{multline*}  
\end{proof}

We can now use this path-space to remedy the problems of functoriality.

\begin{lem}\label{thm:comp-rhtpy}
  If $X\xto{f} Y \xto{g} Z$ are morphism representatives in $\sex(A)$, then $\gtil\ftil$ and $\widetilde{gf}$ are right homotopic.
\end{lem}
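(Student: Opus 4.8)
The plan is to exhibit an explicit right homotopy $\theta\maps \Xtil \to \wp\Ztil$ in $\cCat^A$ with $\si\theta = \gtil\ftil$ and $\tau\theta = \widetilde{gf}$, where $gf$ denotes the evident composite morphism representative, with $(gf)_{a,0} = g_{a,0}f_{a,0}$, $(gf)_{a,1} = g_{a,1}f_{a,1}$, and $(gf)_\al(x) = m(g_\al(f_{a,0}(x)),\, g_{a',1}(f_\al(x)))$. The first thing to notice is that $\gtil\ftil$ and $\widetilde{gf}$ already agree on all of the ``object-level'' data: on a triple both send $(a,\al,x)$ to $(a,\al,g_{a,0}f_{a,0}(x))$, and on a tuple both produce the object part $(a,\al,g_{a,0}f_{a,0}(x),a',\al',g_{a',0}f_{a',0}(x'))$. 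They differ only in the final witness coordinate in $Z_{a',1}$, which is precisely the kind of discrepancy that the path category $\wp\Ztil$ is designed to record.

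Concretely, I would compute the two witnesses on a tuple. Applying $\ftil$ and then $\gtil$ produces the witness $W = m\big(g_\al(f_{a,0}(x)),\, g_{a',1}(m(f_\al(x),f_{a',1}(\xi)))\big)$, whereas $\widetilde{gf}$ produces $V = m\big(m(g_\al(f_{a,0}(x)),\, g_{a',1}(f_\al(x))),\, g_{a',1}(f_{a',1}(\xi))\big)$. The source/target conditions on $f_\al$, $g_\al$ and on $f_{a',1},g_{a',1}$ show that both $W$ and $V$ have source $Z_{\al,0}g_{a,0}f_{a,0}(x)$ and target $g_{a',0}f_{a',0}(x')$; that is, they are parallel witnesses. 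I would then define $\theta_c$ on objects by
\[
  \theta_c(a,\al,x) = (a,\al,\, r(g_{a,0}f_{a,0}(x))),
\]
\[
  \theta_c(a,\al,x,a',\al',x',\xi) = \big(a,\al,\,r(g_{a,0}f_{a,0}(x)),\,a',\al',\,r(g_{a',0}f_{a',0}(x')),\,W,\,V\big),
\]
using reflexivity witnesses $r$ for the two ``diagonal'' coordinates (legitimate because the object parts coincide) and $W,V$ for the two ``side'' coordinates. The parallelism just noted is exactly what makes the right-hand side a well-formed object of $\wp\Ztil_c$.

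It then remains to run three routine verifications. First, $\theta$ respects the two nonidentity morphisms of $\Xtil_c$ and is natural in $\gm\maps c\to c'$; both are immediate, since $\theta$ alters only the witness data and leaves the indexing morphisms $\al,\al'$ (and hence their composites with $\gm$) untouched. Second, $\si\theta = \gtil\ftil$: applying $\si$ extracts the sources $s(r(g_{a,0}f_{a,0}(x))) = g_{a,0}f_{a,0}(x)$ and $s(r(g_{a',0}f_{a',0}(x'))) = g_{a',0}f_{a',0}(x')$ together with the first side witness $W$, which is precisely $\gtil\ftil$ on objects. Third, $\tau\theta = \widetilde{gf}$ symmetrically, using targets and the witness $V$.

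The only genuine content is the observation in the second paragraph: the failure of $\gtil\ftil$ to equal $\widetilde{gf}$ is caused by the non-associativity of the transitivity operation $m$ together with the fact that a morphism of pseudo-equivalence relations such as $g_{a',1}$ need not preserve $m$ (pseudo-equivalence relations carry no axioms), so that $W$ and $V$ are genuinely distinct elements of $Z_{a',1}$. The main obstacle is thus purely bookkeeping: checking that these two distinct bracketings share a common source and target, so that they can serve as the parallel pair of side witnesses in a single object of $\wp\Ztil$; once that is done, everything else is formal.
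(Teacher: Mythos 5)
Your proof is correct and is essentially identical to the paper's: the paper's homotopy $\mtil$ is exactly your $\theta$, with reflexivity witnesses $r(g_{a,0}f_{a,0}(x))$, $r(g_{a',0}f_{a',0}(x'))$ in the two $\ze$-coordinates and the two differently-bracketed composite witnesses $W$, $V$ as the parallel pair $\xi,\xi'$. You also spell out the routine source/target and naturality checks that the paper leaves implicit (and you correct some index typos such as $f_{a,1}(\xi)$ for $f_{a',1}(\xi)$), but the construction is the same.
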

\begin{proof}
  By definition, we have
  \begin{align*}
    \gtil_c (\ftil_c(a,\al,x)) &= (a,\al,g_{a,0}(f_{a,0}(x)))\\
    \gtil_c (\ftil_c(a,\al,x,a',\al',x',\xi)) &=
    \begin{multlined}[t]
      (a,\al,g_{a,0}(f_{a,0}(x)),a',\al',g_{a,0}(f_{a,0}(x')),\\
      m(g_{\al}(f_{a,0}(x)),g_{a,1}(m(f_\al(x),f_{a,1}(\xi)))))
    \end{multlined}\\
    \widetilde{gf}_c(a,\al,x) &= (a,\al,g_{a,0}(f_{a,0}(x)))\\
    \widetilde{gf}_c(a,\al,x,a',\al',x',\xi) &=
    \begin{multlined}[t]
      (a,\al,g_{a,0}(f_{a,0}(x)),a',\al',g_{a,0}(f_{a,0}(x')),\\
      m((gf)_\al(x),g_{a,1}(f_{a,1}(\xi))))
    \end{multlined}
  \end{align*}
  where $(gf)_\al$ is the composite witness of naturality.
  Now define $\mtil : \Xtil \to \wp\Ztil$ by
  \begin{align*}
    \mtil_c(a,\al,x) &= (a,\al,r(g_{a,0}(f_{a,0}(x))))\\
    \mtil_c(a,\al,x,a',\al',x',\xi) &=
    \begin{multlined}[t]
      (a,\al,r(g_{a,0}(f_{a,0}(x))),a',\al',r(g_{a,0}(f_{a,0}(x'))),\\
      m(g_{\al}(f_{a,0}(x)),g_{a,1}(m(f_\al(x),f_{a,1}(\xi)))),\\
      m((gf)_\al(x),g_{a,1}(f_{a,1}(\xi)))).\qedhere
    \end{multlined}
  \end{align*}
\end{proof}

\begin{lem}\label{thm:id-rhtpy}
  For $X\in \sex(A)$, the morphisms $\widetilde{1_X}$ and $1_{\Xtil}$ are right homotopic.
\end{lem}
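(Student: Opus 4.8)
The plan is to exhibit an explicit right homotopy $\htil:\Xtil\to\wp\Xtil$ connecting the two maps, in the same concrete style as \cref{thm:eq-rhtpy,thm:comp-rhtpy}. First I would unwind what $\widetilde{1_X}$ actually is. The identity morphism representative $1_X:X\to X$ has components $1_{a,0}=1_{a,1}=\id$ and naturality witness $(1_X)_\al=r\,X_{\al,0}$, the reflexivity witness that $X_{\al,0}\sim X_{\al,0}$. Substituting this into the definition of $\ftil$, one sees that $\widetilde{1_X}$ agrees with the identity functor $1_{\Xtil}$ on every triple object $(a,\al,x)$ and on every coordinate of a tuple object $(a,\al,x,a',\al',x',\xi)$ \emph{except} the last: the witness $\xi$ is replaced by $m(r(X_{\al,0}(x)),\xi)$. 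Since $m(r(X_{\al,0}(x)),\xi)$ has the same source $X_{\al,0}(x)$ and target $x'$ as $\xi$, the two functors differ only by an ``absorbed reflexivity'', which is precisely the kind of discrepancy that $\wp\Xtil$ is designed to record.

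Accordingly, I would define $\htil:\Xtil\to\wp\Xtil$ on objects by
\[ \htil_c(a,\al,x) = (a,\al,rx) \]
and
\[ \htil_c(a,\al,x,a',\al',x',\xi) = \big(a,\al,rx,a',\al',rx',\xi,\,m(r(X_{\al,0}(x)),\xi)\big). \]
The first step is to confirm that the target is a legitimate object of $\wp\Xtil_c$: taking $\ze=rx$, $\ze'=rx'$, and the two inner witnesses $\xi$ and $m(r(X_{\al,0}(x)),\xi)$, the four defining source/target constraints of a $\wp\Xtil$-tuple collapse to $s\xi=X_{\al,0}(x)$ and $t\xi=x'$, both of which hold by definition of the original tuple object. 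I would then check that $\htil$ is a natural transformation: that it carries the two generating morphisms out of a tuple object to the corresponding morphisms of $\wp\Xtil_c$ (immediate, as $rx$ and $rx'$ are the correct $\ze,\ze'$), and that it commutes with the reindexing functor $\wp\Xtil_\gm$ for $\gm:c\to c'$ (immediate, since $\gm$ only alters the $\al'$-coordinate, which $\htil$ leaves untouched).

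Finally I would compute the two composites against the structure maps of $\wp\Xtil$. Applying $\si$ keeps the first inner witness and takes sources, and since $s(rx)=x$, $s(rx')=x'$ this returns exactly $1_{\Xtil}$ on both triples and tuples. Applying $\tau$ keeps the second inner witness and takes targets, and since $t(rx)=x$, $t(rx')=x'$ this returns $(a,\al,x,a',\al',x',m(r(X_{\al,0}(x)),\xi))$ on tuples, which is exactly $\widetilde{1_X}$. Hence $\si\htil=1_{\Xtil}$ and $\tau\htil=\widetilde{1_X}$, exhibiting the required right homotopy. The only real obstacle is bookkeeping: one must verify that the chosen $\ze,\ze'$ together with $\xi$ and $m(r(X_{\al,0}(x)),\xi)$ meet all of the defining constraints of $\wp\Xtil$, and that $\htil$ respects both the fiberwise morphisms and the reindexing functors, but each of these reduces to a direct substitution with no genuine difficulty.
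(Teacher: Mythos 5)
Your proof is correct and is essentially the paper's own: the paper defines $\itil_c(a,\al,x) = (a,\al,rx)$ and $\itil_c(a,\al,x,a',\al',x',\xi) = (a,\al,rx,a',\al',rx',m((1_X)_\al(x),\xi),\xi)$, which is exactly your $\htil$ with the two inner witnesses in the opposite order, so the paper's homotopy has orientation $\si\itil = \widetilde{1_X}$, $\tau\itil = 1_{\Xtil}$ while yours has $\si\htil = 1_{\Xtil}$, $\tau\htil = \widetilde{1_X}$. This orientation difference is immaterial, since the lemma's only use (via \cref{thm:rhtpy-quot}) produces a symmetric equality $\phi\odot\ell=\psi\odot\ell$.
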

\begin{proof}
  We can define $\itil : \Xtil \to \wp\Xtil$ by
  \begin{align*}
    \itil_c(a,\al,x) &= (a,\al,rx)\\
    \itil_c(a,\al,x,a',\al',x',\xi) &= (a,\al,rx,a',\al',rx',m((1_X)_\al(x),\xi),\xi).\qedhere
  \end{align*}
\end{proof}

Now we show that right homotopies are inverted in \sex-local derivators.

\begin{lem}\label{thm:rhtpy-quot}
  Let \D be a \sex-local left derivator.
  For any $X\in \sex(A)$ and $M\in \D(B)$, we have
  \[\si_X \odot 1_M = \tau_X \odot 1_M\]
  as morphisms $\wp\Xtil \odot M \to \Xtil\odot M$ in $\D(A\times B)$.
  Therefore, if $\phi,\psi:\Xtil \to \Ytil$ are right homotopic, then $\phi\odot \ell = \psi\odot \ell$ for any $\ell$.
\end{lem}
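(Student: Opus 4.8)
The plan is to leverage the genuine functoriality of $\odot$ in its first variable together with the fact, recorded in \cref{thm:path}, that $\rho$ is a common section of both $\si$ and $\tau$. First I would fix $M\in\D(B)$ and observe that the assignment $E\mapsto E\odot M$ is an honest functor $\cCat^A\to\D(A\times B)$, being the external two-variable morphism $\odot$ with its second argument frozen. Consequently it carries the identities $\si\rho=\tau\rho=1_{\Xtil}$ of \cref{thm:path} to the identities
\[ (\si\odot 1_M)\circ(\rho\odot 1_M) = 1_{\Xtil\odot M} = (\tau\odot 1_M)\circ(\rho\odot 1_M). \]

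Next I would use \sex-locality to invert $\rho\odot 1_M$. By \cref{thm:path} the functor $\sum_a\rho_a$ is a \sex-equivalence over $\ob A$, hence a \D-equivalence since \D is \sex-local; so \cref{thm:odot} gives that $\rho\odot 1_M:\Xtil\odot M\to\wp\Xtil\odot M$ is an isomorphism. The displayed equations now exhibit both $\si\odot 1_M$ and $\tau\odot 1_M$ as left inverses of this single isomorphism, and a left inverse of an isomorphism is unique (it equals the two-sided inverse). This forces $\si_X\odot 1_M = \tau_X\odot 1_M$, which is the first assertion.

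For the second assertion I would write a right homotopy $\phi\sim\psi$ explicitly as $\phi=\si_Y\theta$ and $\psi=\tau_Y\theta$ for some $\theta:\Xtil\to\wp\Ytil$, and then apply the interchange law for the bifunctor $\odot$ to factor the two sides through $\theta\odot\ell$:
\[ \phi\odot\ell = (\si_Y\odot 1)\circ(\theta\odot\ell), \qquad \psi\odot\ell = (\tau_Y\odot 1)\circ(\theta\odot\ell). \]
The first assertion, now applied to $Y$ in place of $X$, gives $\si_Y\odot 1 = \tau_Y\odot 1$, and comparing the two factorizations yields $\phi\odot\ell=\psi\odot\ell$, as desired.

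There is no serious obstacle here beyond bookkeeping; the one point that genuinely needs care is that $\odot$ is \emph{strictly} functorial in the first variable once the second is fixed, so that the equations $\si\rho=\tau\rho=1$ survive verbatim after applying $(-)\odot M$ and the cancellation argument goes through. This strictness is precisely what the external form of a morphism of derivators supplies, so the argument reduces to the two invocations of \cref{thm:odot} and \cref{thm:path} above.
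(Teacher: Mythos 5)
Your proof is correct and follows essentially the same route as the paper's: apply the functor $(-)\odot M$ to the identities $\si\rho=\tau\rho=1_{\Xtil}$ from \cref{thm:path}, use \sex-locality plus \cref{thm:odot} to see that $\rho\odot 1_M$ is an isomorphism, cancel it, and then deduce the homotopy statement from the definition of right homotopy and bifunctoriality. The only differences are cosmetic — you make explicit that both composites equal the identity and that the first assertion must be applied to $Y$ (the codomain of the homotopy) rather than $X$, a point the paper leaves implicit.
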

\begin{proof}
  By functoriality of $\odot$, we have
  \[ (\si\odot 1_M) \circ (\rho \odot 1_M) = (\tau \odot 1_M) \circ (\rho \odot 1_M). \]
  However, by \cref{thm:path}, $\sum_a \rho_a$ is a \sex-equivalence over $\ob{A}$, and hence also a \D-equivalence since \D is \sex-local.
  Therefore, by \cref{thm:odot}, $\rho\odot 1_M$ is an isomorphism, and thus cancellable.
  So $\si\odot 1_M = \tau\odot 1_M$.

  For the last statement, a right homotopy is a $\th$ with $\si\th = \phi$ and $\tau\th = \psi$.
  Thus, the equation $\si\odot 1_M = \tau\odot 1_M$ implies $\phi\odot \ell = \psi\odot \ell$ by functoriality.
\end{proof}

This implies that $\odot$ descends from \cCat to \sex via $\widetilde{(-)}$.

\begin{defn}
  For $X\in \sex(A)$ and $M\in \D(B)$, define $X\odt M = \Xtil \odot M$.
  Similarly, for $f:X\to Y$ in $\sex(A)$ and $\ell:M\to N$ in $\D(B)$, we choose a representative of $f$ and define $f\odt \ell = \ftil\odot \ell$.
\end{defn}

\begin{prop}\label{thm:odt-components}
  If \D is \sex-local,
  the definition of $f\odt g$ is independent of the choice of representative for $f$, and defines a functor
  \[ \odt: \sex(A) \times \D(B) \to \D(A\times B). \]
\end{prop}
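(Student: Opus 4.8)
The plan is to exhibit $\odt$ as the descent of the two-variable morphism $\odot$ of \cref{thm:odot} along the passage $\widetilde{(-)}$ from morphism representatives to their equivalence classes. Everything reduces to the fact recorded in \cref{thm:rhtpy-quot}, that right-homotopic maps are identified by $\odot$ in a \sex-local derivator; the three homotopy lemmas \cref{thm:eq-rhtpy,thm:comp-rhtpy,thm:id-rhtpy} then supply exactly the homotopies needed to verify, in turn, independence of the representative, functoriality on identities, and functoriality on composites.

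First I would treat independence of the chosen representative. Suppose $f$ and $f'$ both represent the same morphism $X\to Y$ in $\sex(A)$, so that there is a witness of equality $h:f\sim f'$. By \cref{thm:eq-rhtpy}, $\ftil$ and $\widetilde{f'}$ are right homotopic; since \D is \sex-local, \cref{thm:rhtpy-quot} gives $\ftil\odot \ell = \widetilde{f'}\odot \ell$ for every $\ell$. Hence $f\odt \ell$ does not depend on the representative of $f$, which is the first claim (the second variable $\ell$ requires no choice).

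For functoriality I would first note that, by \cref{thm:odot}, $\odot$ restricts to an honest bifunctor $\cCat^A \times \D(B)\to \D(A\times B)$, so the interchange law is available. Given $M\in\D(B)$, \cref{thm:id-rhtpy} says $\widetilde{1_X}$ and $1_{\Xtil}$ are right homotopic, whence \cref{thm:rhtpy-quot} yields
\[ 1_X\odt 1_M = \widetilde{1_X}\odot 1_M = 1_{\Xtil}\odot 1_M = 1_{\Xtil\odot M} = 1_{X\odt M}. \]
For composites, given $X\xto{f} Y\xto{g} Z$ in $\sex(A)$ and $M\xto{\ell} N\xto{k} P$ in $\D(B)$, I would choose as representative of the composite $gf$ the composite $gf$ of the chosen representatives of $g$ and $f$ --- legitimate by the independence just established, and possible because morphism representatives do compose (cf.\ \cref{rmk:nonassoc}). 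Then the interchange law, \cref{thm:comp-rhtpy}, and \cref{thm:rhtpy-quot} give
\[ (g\odt k)\circ(f\odt \ell) = (\gtil\odot k)\circ(\ftil\odot \ell) = (\gtil\ftil)\odot(k\ell) = \widetilde{gf}\odot(k\ell) = (gf)\odt(k\ell). \]

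The proposition is therefore a formal assembly of the preceding lemmas, and I expect no serious obstacle; the genuine work lay upstream in constructing the path-space $\wp\Xtil$ and establishing \cref{thm:path,thm:eq-rhtpy,thm:comp-rhtpy,thm:id-rhtpy}. The one step that warrants care is the interchange equality: it relies on $\odot$ being a genuine bifunctor in each pair of variables $(A,B)$ rather than merely a morphism of prederivators, and on being free to use the composite $gf$ of representatives precisely because $\odt$ has already been shown independent of the representative chosen.
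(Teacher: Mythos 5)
Your proof is correct and takes essentially the same route as the paper's: independence of the representative via \cref{thm:eq-rhtpy} combined with \cref{thm:rhtpy-quot}, and functoriality via \cref{thm:id-rhtpy,thm:comp-rhtpy}. The paper compresses the functoriality check into ``follows similarly,'' so your explicit treatment of the interchange law for the bifunctor $\odot$ and the use of a composite representative for $gf$ is just a fuller write-up of the same argument, not a different one.
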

\begin{proof}
  By \cref{thm:eq-rhtpy}, any witness of equality $h:f\sim g$ between two morphism representatives yields a right homotopy $\ftil\sim\gtil$.
  Thus, by \cref{thm:rhtpy-quot}, we have $f\odt \ell = \ftil\odot \ell = \gtil\odot \ell = g\odt \ell$.
  Functoriality on $\sex(A)$ follows similarly from \cref{thm:comp-rhtpy,thm:id-rhtpy}.
\end{proof}

Now we have to show that these functors vary pseudonaturally in $A$ and $B$.

\begin{defn}
  For $X\in \sex(B)$ and $u:A\to B$, let $\omega_{X,u} : \widetilde{u^*X} \to u^* \Xtil$ be the natural transformation defined on objects by
  \begin{align*}
    \omega_{X,u}(a,\al,x) &= (ua,u\al,x)\\
    \omega_{X,u}(a,\al,x,a',\al',x',\xi) &= (ua,u\al,x,ua',u\al',x',\xi).
  \end{align*}
\end{defn}

\begin{lem}\label{thm:om-laxnat}
  Let $X,Y\in \sex(C)$ and $A\xto{v} B \xto{u} C$, and $f:X\to Y$ a morphism representative.
  Then the map $\omega_{X,1_A} : \Xtil\to \Xtil$ is equal to $1_{\Xtil}$, and the following diagrams commute:
    \[
      \begin{tikzcd}
        \widetilde{v^*u^*X} \ar[r,"{\omega_{u^*X,v}}"] \ar[dr,"{\omega_{X,uv}}"'] &
        v^* (\widetilde{u^*X}) \ar[d,"{v^*\omega_{X,u}}"] \\
        & v^* u^* \Xtil
      \end{tikzcd}
      \qquad
      \begin{tikzcd}
        \widetilde{u^*X} \ar[r,"{\widetilde{u^*f}}"] \ar[d,"{\omega_{u^*X,u}}"'] &
        \widetilde{u^*Y} \ar[d,"{\omega_{Y,u}}"]\\
        u^*\Xtil\ar[r,"{u^*\ftil}"'] & u^*\Ytil.
      \end{tikzcd}
    \]
\end{lem}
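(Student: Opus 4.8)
The plan is to prove all three claims by \emph{direct computation}. Every map in the statement is given by an explicit formula on the two classes of objects---the triples and the tuples---of the categories $\Xtil_c$ involved, and each such category is in fact a poset: its only non-identity arrows run from tuples to triples, and no two of these are composable, so there is at most one morphism between any two objects. Consequently a functor out of $\Xtil_c$ is determined by its effect on objects, and to check that a diagram of natural transformations between such functors commutes it suffices to check agreement on objects, componentwise in the base category.

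The conceptual reason the computation succeeds is that the two kinds of data are manipulated by \emph{disjoint} operations. Each $\omega_{X,u}$ touches only the \emph{indexing data}, sending $(a,\al)\mapsto(ua,u\al)$ and leaving the setoid elements $x,\xi$ fixed; whereas $\ftil$ touches only the \emph{setoid data}, applying the components $f_{a,0},f_{a,1},f_\al$ and the transitivity operation $m$ while leaving the indexing pair fixed. In addition, restriction of coherent diagrams and of $\cCat$-valued functors is \emph{strictly} functorial (as established by the first lemma of this section), so that $\widetilde{v^*u^*X}=\widetilde{(uv)^*X}$ and $v^*u^*\Xtil=(uv)^*\Xtil$ hold on the nose rather than merely up to isomorphism.

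Given these observations I would proceed as follows. The identity claim is immediate: $\omega_{X,1_A}$ applies $1_A$ to the indexing data and nothing to the setoid data, so each of its components is literally the identity functor. For the triangle I would evaluate both paths on a triple $(a,\al,x)$ and a tuple $(a,\al,x,a',\al',x',\xi)$: the composite $v^*\omega_{X,u}\circ\omega_{u^*X,v}$ applies $v$ and then $u$ to the indexing data, yielding $(uva,uv\al,x)$ and the corresponding tuple, which is exactly the value of the diagonal $\omega_{X,uv}$ since $u\circ v=uv$, the setoid data being carried along unchanged by both routes. For the square I would likewise trace the two object types: going top-then-right first applies $f$ through $\widetilde{u^*f}$ (whose relevant components are $(u^*f)_{b,0}=f_{ub,0}$, $(u^*f)_{b,1}=f_{ub,1}$, $(u^*f)_\be=f_{u\be}$) and then $u$ through $\omega_{Y,u}$, while going down-then-across first applies $u$ through the left-hand vertical $\omega_{X,u}$ and then $f$ through $u^*\ftil$ (whose component at $b$ is $\ftil_{ub}$). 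Both composites send a triple $(b_1,\be,x)$ to $(ub_1,u\be,f_{ub_1,0}(x))$, and similarly on tuples.

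The one place that repays care is the witness coordinate of the tuples in the square, where I must verify that the two routes produce the \emph{same} transitivity witness. The top-then-right route yields $m\big((u^*f)_\be(x),(u^*f)_{b_1,1}(\xi)\big)$, while the down-then-across route yields $m\big(f_{u\be}(x),f_{ub_1,1}(\xi)\big)$; these coincide precisely because of the definitional identities $(u^*f)_\be=f_{u\be}$ and $(u^*f)_{b_1,1}=f_{ub_1,1}$ for the restriction of a morphism representative. Everything else is symbol-matching with no genuine mathematical content, so I expect no real obstacle beyond this bookkeeping in the witness slot.
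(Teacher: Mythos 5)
Your proposal is correct and takes essentially the same approach as the paper: the paper's entire proof is ``By inspection of the definitions,'' and your computation is exactly that inspection carried out, including the only point of substance (the witness coordinate in the square, which commutes because $(u^*f)_\be = f_{u\be}$ and $(u^*f)_{b,1} = f_{ub,1}$ hold definitionally). One small caveat: $\Xtil_c$ is not literally a poset --- when $a=a'$, $x=x'$, and $\al'\al=\al'$, the two generating arrows out of a tuple are parallel but distinct --- yet your conclusion that agreement on objects suffices still holds, since every functor in sight sends first-projection generators to first-projection generators and second-projection generators to second-projection generators.
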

\begin{proof}
  By inspection of the definitions.
\end{proof}

\begin{lem}\label{thm:om-sexeqv}
  The functor $\sum_a \omega_{X,u,a}$ is a \sex-equivalence over $\ob{A}$.
\end{lem}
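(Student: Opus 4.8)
The plan is to verify, directly and with specified functions, the four conditions of \cref{thm:sex-eqv} (with $\sE=\iSet$) for the functor $w \coloneqq \sum_a \omega_{X,u,a}\colon \sum_a\widetilde{u^*X}_a \to \sum_a\Xtil_{ua}$, regarded as a functor over $\ob A$. Since those conditions are \emph{sufficient} for being an \sex-equivalence (the forward direction of \cref{thm:sex-eqv} holds for any suitable \sE), producing the four pieces of data will finish the proof. The guiding intuition is that both $\widetilde{u^*X}_c$ and $\Xtil_{uc}=(u^*\Xtil)_c$ are ``categories of configurations'' whose free pseudo-equivalence relation recovers the same setoid $(u^*X)_c = X_{uc}$ (this is exactly \cref{thm:self-odt}), and that $\omega_{X,u}$, which applies $u$ to all the $A$-data of a configuration, is an equivalence relative to the \sex-equivalences.

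First I would define the section $s\colon \ob{(\sum_a\Xtil_{ua})}\to \ob{(\sum_a\widetilde{u^*X}_a)}$ by sending an object of $\Xtil_{ua}$ to the ``based'' triple $(a,1_a,\mathrm{val})$, where $\mathrm{val}\in X_{ua,0}$ is the value at the apex: $X_{\beta,0}(x)$ for a triple $(b,\beta,x)$ and $X_{\beta',0}(x')$ for a tuple $(b,\beta,x,b',\beta',x',\xi)$. (This $\mathrm{val}$ is precisely the representative $g$ built in the proof of \cref{thm:self-odt}.) This is manifestly a function preserving the index $a$, so it lies over $\ob A$, giving the first condition. The third and fourth conditions, relating an object to its image under $w s$ or $s w$, are then uniform: any object is joined to its based representative by a span through a tuple whose connecting witness is a reflexivity $r(\mathrm{val})$; for a general tuple one first maps to its second projection (a triple) and reduces to the triple case.

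The crux is the second condition: for each generating morphism of $\sum_a\Xtil_{ua}$ I must produce, functorially, a zigzag in $\sum_a\widetilde{u^*X}_a$ between the $s$-images of its endpoints. The morphism $(b,\beta,x,b',\beta',x',\xi)\to(b',\beta',x')$ is sent by $s$ to an equality of objects, so it is trivial. The morphism $(b,\beta,x,b',\beta',x',\xi)\to(b,\beta'\beta,x)$ requires joining $(a,1_a,X_{\beta',0}(x'))$ to $(a,1_a,X_{\beta'\beta,0}(x))$, and here the non-strict functoriality of the coherent diagram $X$ genuinely enters. I would assemble the needed witness $\zeta\in X_{ua,1}$ from $X_{\beta',1}(\xi)$, which witnesses $X_{\beta',0}X_{\beta,0}(x)\sim X_{\beta',0}(x')$, together with the coherence datum $X_{\beta,\beta'}(x)$, which witnesses $X_{\beta',0}X_{\beta,0}(x)\sim X_{\beta'\beta,0}(x)$, combined via the symmetry $v$ and transitivity $m$ of $X$; a further composite with $X_r$ corrects the source to $X_{1_{ua},0}(\,\cdot\,)$ so that $\zeta$ assembles into a tuple of $\widetilde{u^*X}_a$ realizing the desired span.

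The main obstacle is exactly this bookkeeping: $X_{\beta',0}X_{\beta,0}$ and $X_{\beta'\beta,0}$ agree only up to the witness $X_{\beta,\beta'}$, and $X_{1_{ua},0}$ is the genuine identity only up to $X_r$, so the witness $\zeta$ must be stitched together carefully from the groupoid operations of $X$ and the coherence data. Everything else is routine and strictly functional. Once the second condition is in place, all four conditions of \cref{thm:sex-eqv} hold with explicitly specified data, so $\sum_a\omega_{X,u,a}$ is an \sex-equivalence over $\ob A$. (Alternatively, one could argue more abstractly: \cref{thm:self-odt} and pseudonaturality of $\odot$ identify both $\widetilde{u^*X}\odot\ast$ and $(u^*\Xtil)\odot\ast$ with $u^*X$ compatibly with $\omega_{X,u}$, whence $\omega_{X,u}\odot\ast$ is an isomorphism; restricting along $\incl{A}$ and invoking \cref{thm:deqv} then yields the claim, using that for $\sE=\iSet$ the terminal coefficient already detects \sex-equivalences. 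I prefer the direct verification for self-containedness.)
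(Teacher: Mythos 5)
Your proof is correct and takes essentially the same approach as the paper: a direct verification of the four conditions of \cref{thm:sex-eqv} for $\sum_a \omega_{X,u,a}$, with the same definition of $s$ on triple-objects, the same reflexivity-witness spans for the roundtrip zigzags, and the same stitching together of $X_{\beta,\beta'}$, $X_r$, symmetry, and transitivity to handle the coherence bookkeeping. The only deviation is that you send tuple-objects to based triples $(a,1_a,X_{\beta',0}(x'))$ where the paper sends them to (coherence-corrected) tuples; both choices work, and yours slightly streamlines matters by making the zigzag for the second-projection morphism trivial.
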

\begin{proof}
  First, we must define $s:\ob{u^*\Xtil} \to \ob{\widetilde{u^*X}}$.
  The first kind of object of $(u^*\Xtil)_c$ is $(b,\be,x)$ for $\be : b\to uc$ and $x\in X_{b,0}$; we send this to $(c,1_c,X_{\be,0}(x))$ in $(\widetilde{u^*X})_c$.
  The second kind of object of $(u^*\Xtil)_c$ is $(b,\be,x,b',\be',x',\xi)$ for $\be:b\to b'$, $x\in X_{b,0}$, $\be':b'\to uc$, $x'\in X_{b',0}$, and $\xi\in X_{b',1}$ a witness that $X_{\be,0}(x) \sim x'$; we send this to $(c,1_c,X_{\be'\be,0}(x),c,1_c,X_{\be',0}(x'),m(X_{\be,\be'}(x),X_{\be',1}(\xi)))$ in $(\widetilde{u^*X})_c$, where $X_{\be,\be'}(x)$ is a functoriality witness of $X$.

  Second, we must send morphisms in $(u^*\Xtil)_c$ to zigzags in $(\widetilde{u^*X})_c$.
  We send a morphism $(b,\be,x,b',\be',x',\xi) \to (b,\be'\be,x)$ to the one-morphism zigzag
  \[ (c,1_c,X_{\be'\be,0}(x),c,1_c,X_{\be',0}(x'),X_{\be',1}(\xi)) \to
    (c,1_c,X_{\be'\be,0}(x)),
  \]
  and similarly we send a morphism $(b,\be,x,b',\be',x',\xi) \to (b',\be',x')$ to the one-morphism zigzag
  \[
    (c,1_c,X_{\be',0}(X_{\be,0}(x)),c,1_c,X_{\be',0}(x'),X_{\be',1}(\xi))
    \to
    (c,1_c,X_{\be',0}(x'))
  \]

  Third, we must relate each object of $(u^*\Xtil)_c$ by a zigzag to its roundtrip image.
  For $(b,\be,x)$, we have
  \[ (b,\be,x) \ot
    (b,\be,x,uc,1_{uc},X_{\be,0}(x),r(X_{\be,0}(x))) \to
    (uc,1_{uc},X_{\be,0}(x)),
  \]
  while for $(b,\be,x,b',\be',x',\xi)$ we have
  \begin{multline*}
  (b,\be,x,b',\be',x',\xi) \to
    (b,\be'\be,x) \ot
    \bullet \to
    (uc,1_{uc},X_{\be'\be,0}(x)) \\\ot
    (uc,1_{uc},X_{\be'\be,0}(x),uc,1_{uc},X_{\be',0}(x'),X_{\be',1}(\xi))
  \end{multline*}
  where the middle zigzag is as above.

  Fourth and finally, we must relate each object of $(\widetilde{u^*X})_c$ by a zigzag to its roundtrip image.
  For $(a,\al,x)$ we have
  \[ (a,\al,x) \ot
     (a,\al,x,c,1_c,X_{u\al,0}(x),r(X_{u\al,0}(x))) \to
     (c,1_c,X_{u\al,0}(x)),
   \]
   while for $(a,\al,x,a',\al',x',\xi)$ we have
   \begin{multline*}
   (a,\al,x,a',\al',x',\xi) \to
     (a,\al'\al,x) \ot
     \bullet \to
     (c,1_c,X_{u(\al'\al),0}(x)) \\ \ot
     (c,1_c,X_{u(\al'\al),0}(x),c,1_c,X_{u\al',0}(x'),m(X_{u\al,u\al'}(x),X_{u\al',1}(\xi)))
   \end{multline*}
   where again the middle zigzag is as above.
\end{proof}

\begin{prop}
  For any \sex-local left derivator \D,
  the functors $\odt$ of \cref{thm:odt-components} vary pseudonaturally in $A,B\in\cCat$.
  Therefore, they define a morphism of derivators
  \[ \odt : \sex \times \D \to \D. \]
\end{prop}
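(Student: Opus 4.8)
The plan is to verify pseudonaturality separately in the two variables and then invoke the equivalence, recorded earlier (after~\cite[Theorem 3.11]{gps:additivity}), between two-variable morphisms and pseudonatural families of functors. Pseudonaturality in the \D-variable is immediate: for $w:B\to B'$ and $X\in\sex(A)$, $M\in\D(B')$, we have $X\odt M=\Xtil\odot M$, and the construction $\Xtil$ is untouched by $w$, so the required isomorphism $(1\times w)^*(X\odt M)\cong X\odt(w^*M)$ is simply the pseudonaturality isomorphism of $\odot$ in its second variable, inheriting that morphism's coherences.

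The substance is pseudonaturality in the \sex-variable. Given $u:A\to A'$, $X\in\sex(A')$, and $M\in\D(B)$, I would define the comparison
\[
  (u^*X)\odt M = \widetilde{u^*X}\odot M
  \xto{\;\omega_{X,u}\odot M\;}
  (u^*\Xtil)\odot M
  \;\cong\;
  (u\times 1)^*(\Xtil\odot M)
  = (u\times 1)^*(X\odt M)
\]
as the composite of two isomorphisms. The second is the pseudonaturality isomorphism of $\odot$ in its first variable, applied to $\Xtil\in\cCat^{A'}$. The first is an isomorphism precisely because $\sum_a\omega_{X,u,a}$ is a \sex-equivalence by \cref{thm:om-sexeqv}, hence a \D-equivalence since \D is \sex-local; \cref{thm:odot} then guarantees that $\omega_{X,u}\odot M$ is invertible. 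This is the only place where \sex-locality enters, and it is essential: without it there is no reason for the strictification comparison $\omega_{X,u}$ to be inverted after applying $\odot$.

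It remains to check that these comparisons are natural in $X$ and satisfy the unit and composition coherences of a pseudonatural transformation. Here \cref{thm:om-laxnat} supplies exactly the needed input at the level of \cCat: the identity $\omega_{X,1}=1_{\Xtil}$ gives the unit coherence, the triangle relating $\omega_{X,uv}$ to $v^*\omega_{X,u}$ and $\omega_{u^*X,v}$ gives the composition coherence, and the square relating $\widetilde{u^*f}$ to $u^*\ftil$ gives naturality in the morphism representative $f$ (well-definedness on equivalence classes of representatives having already been handled in \cref{thm:odt-components}). Applying $(\blank)\odot M$ to these identities and pasting with the coherence isomorphisms of $\odot$ itself yields the corresponding coherences for $\odt$. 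Because $\omega$ involves only the $A$-direction while the $B$-pseudonaturality above comes from $\odot$ untouched, the two are automatically compatible and assemble into a single pseudonatural family over $\cCat\op\times\cCat\op$; by the two-variable morphism lemma this family is precisely a morphism $\odt:\sex\times\D\to\D$.

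I expect the main obstacle to be bookkeeping rather than anything conceptual: one must assemble the several coherence diagrams of a two-variable pseudonatural transformation and confirm that each, under $(\blank)\odot M$, reduces either to a diagram already verified in \cref{thm:om-laxnat} or to a coherence of $\odot$. No new ideas are needed, but keeping track of which restriction functors and which of $\omega$, the $\odot$-coherences, and plain functoriality are invoked at each node is the delicate part.
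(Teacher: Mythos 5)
Your proposal is correct and follows essentially the same route as the paper: the pseudonaturality constraint is the composite of the map induced by $\omega_{X,u}$ (invertible by \cref{thm:om-sexeqv}, \sex-locality, and \cref{thm:odot}) with the pseudofunctoriality isomorphism of $\odot$, and the coherence axioms reduce to \cref{thm:om-laxnat} together with those of $\odot$. The only cosmetic difference is that you treat the two variables separately and then assemble, where the paper writes a single constraint for $u\times v$; your version also makes explicit the appeal to \cref{thm:odot} and locality that the paper leaves implicit.
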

\begin{proof}
  For $u:A\to A'$ and $v:B\to B'$, we define the pseudonaturality constraint
  \[ u^*X \odt v^* M
    = (\widetilde{u^*X} \odot v^* M)
    \xiso{\omega} (u^*\Xtil \odot v^*M)
    \cong (u^*\times v)(\Xtil \odot M)
    = (u^*\times v)(X\odt M).
  \]
  The map induced by $\omega_{X,u}$ is an isomorphism by \cref{thm:om-sexeqv}, while the second isomorphism is the pseudofunctoriality of $\odot$.
  The axioms for a pseudonatural transformation follow from those of $\odot$ and \cref{thm:om-laxnat}.
\end{proof}

\begin{prop}\label{thm:odt-cc}
  The above-defined $\odt$ is cocontinuous in both variables.
  % That is, for $u:A\to B$, $X\in \sex(A)$, and $M\in \D(C)$, the canonical map
  % $(u\times 1)_!\,(X\odt M) \to (u_!\,X) \odt M$ in $\D(B\times C)$ is an isomorphism.
\end{prop}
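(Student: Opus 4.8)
The plan is to establish the two cocontinuity properties separately. Cocontinuity in the second variable I expect to be formal, inherited directly from $\odot$; cocontinuity in the first variable is the substantive part, and I would reduce it to the \sex-equivalence criterion of \cref{thm:odot}. The essential point is that $\odt$ was built precisely to turn the oplax colimits represented by Grothendieck constructions into genuine derivator colimits, and the first-variable computation is where this pays off.

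For the second variable: given $u:A\to B$, $X\in\sex(C)$ and $M\in\D(A)$, the category $\Xtil$ is independent of $M$, so under the definition $X\odt M=\Xtil\odot M$ the comparison map $(1\times u)_!\,(X\odt M)\to X\odt(u_!\,M)$ is literally the map $(1\times u)_!\,(\Xtil\odot M)\to\Xtil\odot(u_!\,M)$ expressing cocontinuity of $\odot$ in its second variable. The latter is an isomorphism (as recorded just after \cref{thm:odot}, since left extensions in \D commute), so no further work is needed.

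For the first variable I would first invoke the two-variable analogue of \cref{thm:cocts-disc} to reduce to a functor $u:A\to I$ with $I$ discrete. Because $I$ is discrete, every morphism of $\int\Xtil$ lies over an identity, so $u\,p_{\Xtil}\colon\int\Xtil\to I$ is a split opfibration; letting $E'\in\cCat^I$ be its indexed category (so $E'_i=\int(\Xtil|_{A_i})$), functoriality of $(-)_!$ identifies the left-hand side $(u\times 1)_!\,(X\odt M)$ with $E'\odot M$, while the right-hand side is $\widetilde{u_!\,X}\odot M$. Restricting the unit $\eta:X\to u^*u_!\,X$ to each fiber $A_i$ and post-composing with the coherence map $\omega$ yields a cocone $\Xtil|_{A_i}\to\widetilde{(u_!\,X)_i}$, whose Grothendieck transpose is a functor $\phi_i:E'_i\to\widetilde{(u_!\,X)_i}$; assembling these gives $\phi\in\cCat^I$, and unwinding the mate shows the comparison map is exactly $\phi\odot M$. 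By the second assertion of \cref{thm:odot} (and \sex-locality of \D), it is therefore an isomorphism for every $M$ as soon as $\sum_i\phi_i$ is an \sex-equivalence over $I$.

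Verifying that last fact is where the work lies, and it is the step I expect to be the main obstacle. Using the explicit description in \cref{thm:colim}, $(u_!\,X)_i$ is the free pseudo-equivalence relation on $\sum_{a\in A_i}X_{a,0}$, so by \cref{thm:Xtil-done} the objects of $\widetilde{(u_!\,X)_i}$ are the elements of that coproduct together with the generating relations; I would then exhibit the four data demanded by \cref{thm:sex-eqv} --- a section of $\sum_i\phi_i$ on objects, plus three functions producing the required zigzags in $\int\Xtil$ --- by the same kind of explicit bookkeeping already carried out in the proof of \cref{thm:om-sexeqv}. Note that testing against $M=\mathord\ast$ only recovers, via \cref{thm:self-odt}, that both sides present $u_!\,X$ (an agreement of connected components), which is strictly weaker than an \sex-equivalence; this is why the full combinatorial verification, rather than a soft colimit-comparison, is unavoidable.
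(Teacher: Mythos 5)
Your skeleton coincides with the paper's own proof step for step: second-variable cocontinuity is inherited formally from $\odot$; the first variable is reduced, via the two-variable version of \cref{thm:cocts-disc}, to $u:A\to I$ with $I$ discrete; the mate is identified with the map $\phi\odot M$ induced by a morphism of $I$-indexed categories (your $\sum_i\phi_i$ is precisely the paper's functor $\tint\omega\etatil:\tint\Xtil\to\tint\widetilde{u_!X}$ between Grothendieck constructions, described fiberwise); and \cref{thm:odot} together with \sex-locality reduces everything to showing that this functor is an \sex-equivalence over $I$. Your closing remark that testing against $M=\ast$ via \cref{thm:self-odt} could never suffice is also correct and well taken.

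The problem is that the proof stops exactly where the real work begins. The \sex-equivalence claim, which occupies more than half of the paper's argument, is deferred on the grounds that it is ``the same kind of explicit bookkeeping'' as \cref{thm:om-sexeqv}; that analogy misses the one genuinely new ingredient. In \cref{thm:om-sexeqv} every required zigzag lives inside a single fiber category over a fixed object of $A$. Here, by contrast, the generating relations of $(u_!X)_i$ produced by \cref{thm:colim} join elements $x\in X_{a,0}$ and $x'\in X_{a',0}$ lying over \emph{different} objects $a,a'$ of $A_i$, so the zigzags demanded by \cref{thm:sex-eqv} --- for instance from $(a,1_a,x)$ to $(a',1_{a'},x')$ when $\al:a\to a'$ and $\xi$ witnesses $X_{\al,0}(x)\sim x'$ --- cannot be assembled from morphisms of the fibers $\Xtil_c$ at all: a zigzag confined to one fiber never changes the underlying object of $A$. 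One is forced to use the morphisms of $\tint\Xtil$ lying over nonidentity arrows of $A$, e.g.\ the morphism $(a,1_a,x,a,1_a,x,r(x))\to(a,\al,x)$ lying over $\al$, which exist only because of the Grothendieck construction. This cross-fiber flexibility is precisely the point at which the oplax colimit $\tint\Xtil$ behaves like a derivator colimit; it \emph{is} the mathematical content of first-variable cocontinuity, and it has no counterpart in \cref{thm:om-sexeqv}. So as written the argument has a genuine hole at its center, although your framework is correct and would accommodate the paper's explicit verification without change.
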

\begin{proof}
  Cocontinuity in the second argument follows from that of $\odot$.
  For cocontinuity in the first argument, by (the two-variable version of) \cref{thm:cocts-disc} it suffices to show that for $u:A\to I$ in \cCat, with $I$ discrete, and $X\in \sex(A)$ and $M\in \D(B)$, the transformation $(u\times 1)_!(X\odt M) \to u_! X \odt M$ is an isomorphism.

  Since $I$ is discrete, we can let $(u_!X)_i$ be the colimit of $X$ restricted to $A_i$ as constructed in \cref{thm:colim}, and put these together into a coherent diagram $u_!X$.
  We then have the adjunction unit $\eta : X \to u^*u_! X$, consisting of the injections into these colimits.
  The map we must show to be an isomorphism is the composite
  \begin{align*}
    (u\times 1)_!(\Xtil \odot M)
    &\xto{\etatil} (u\times 1)_!(\widetilde{u^* u_!X} \odot M)\\
    &\xto{\omega} (u\times 1)_!(u^* (\widetilde{u_!X}) \odot M)\\
    &\toiso (u\times 1)_!\,(u\times 1)^*(\widetilde{u_!X} \odot M)\\
    &\xto{\phantom{\omega}} \widetilde{u_!X} \odot M.
  \end{align*}
  Furthermore, the composite $\omega\etatil$ induces a map $\tint\omega\etatil$ on Grothendieck constructions:
  \[
    \begin{tikzcd}
      \tint \Xtil \ar[d,"p_{\Xtil}"'] \ar[r,"\mathord{\int}\omega\etatil"] & \tint \widetilde{u_!X} \ar[d,"p_{\widetilde{u_!X}}"] \\
      A \ar[r,"u"'] & I,
    \end{tikzcd}
  \]
  and the desired map can then be identified with
  \[ (u\times 1)_!\,(p_{\Xtil}\times 1)_!\, (\pi_2)^*M
    \toiso (p_{\widetilde{u_!X}}\times 1)_!\,(\tint\omega\etatil)_!\,(\pi_2)^*M
    \to (p_{\widetilde{u_!X}}\times 1)_!\,(\pi_2)^*M.
  \]
  where both projections $\tint\Xtil \times B \to B$ and $\tint\widetilde{u_!X} \times B \to B$ are denoted $\pi_2$.
  Therefore, as in the proof of \cref{thm:odot}, it will suffice to show that $\tint\omega\etatil: \tint \Xtil \to \tint \widetilde{u_!X}$ is a \sex-equivalence over $I$.\footnote{This explains our earlier comment that the failure of $\odot$ to be cocontinuous in its first variable is due to $\tint$ being an oplax colimit rather than a homotopy colimit.}

  The objects of $\tint\Xtil$ are those of $\Xtil_c$ for all $c\in A$, hence of the two forms $(a,\al,x)$ and $(a,\al,x,a',\al',x',\xi)$ as usual.
  But its morphisms incorporate the morphisms of $A$ according to the Grothendieck construction; thus we have
  \begin{equation}
    (a,\gm\al'\al,x) \ot (a,\al,x,a',\al',x',\xi) \to (a',\gm\al',x')\label{eq:groth-zigzag}
  \end{equation}
  for any $\gm:c\to c'$.

  Since $I$ is discrete, $\tint \widetilde{u_!X}$ is essentially (up to an inessential modification by $X_r$ witnesses) the simple construction of \cref{thm:Xtil-done} applied to $u_!X$.
  Thus, as objects it has both elements of $(u_!X)_0$, which are pairs $(a,x)$ with $x\in X_{a,0}$, and elements of $(u_!X)_{1}$.
  By construction of $u_!X$, the latter sort of element is a sequence
  \[\Xi = (a_0,x_0,\al_1,\xi_1,a_1,x_1,\al_2,\xi_2,\dots,\al_n,\xi_n,a_n,x_n),\]
  where each $x_k\in X_{a_k,0}$, and for each $k$ either
  \begin{itemize}
  \item $\al_k : a_{k-1}\to a_k$ and $\xi_k$ is a witness that $X_{\al_k,0}(x_{k-1}) \sim x_k$, or
  \item $\al_k : a_{k}\to a_{k-1}$ and $\xi_k$ is a witness that $X_{\al_k,0}(x_{k}) \sim x_{k-1}$.
  \end{itemize}
  Such a sequence then comes with morphisms to both $(a_0,x_0)$ and $(a_n,x_n)$.

  \def\toe{\tint\omega\etatil}
  The functor $\toe$ is defined on objects by
  \begin{align*}
    \toe(a,\al,x) &= (a,x)\\
    \toe(a,\al,x,a',\al',x',\xi) &= (a,x,\al,\xi,a',x').
  \end{align*}
  As always, we use the characterization of \cref{thm:sex-eqv}.

  First, to define a function $s : \ob{(\tint \widetilde{u_!X})} \to \ob{(\tint\Xtil)}$, we send $(a,x)$ to $(a,1_a,x)$, and a zigzag sequence $\Xi$ as above to $(a_0,1_{a_0},x_0)$.

  Second, we can send the morphism $\Xi \to (a_0,x_0)$ to the identity.
  Before deciding what to do with the morphism $\Xi \to (a_n,x_n)$, note that given $\al:a\to a'$ and $\xi$ a witness that $X_{\al,0}(x)\sim x'$, we have a zigzag
  \[ (a,1_a,x) \ot
    (a,1_a,x,a,1_a,x,r(x)) \to
    (a,\al,x) \ot
    (a,\al,x,a',1_{a'},x',\xi) \to
    (a',1_{a'},x')
  \]
  in which the second morphism uses the extra flexibility of~\eqref{eq:groth-zigzag}, with $\gm=\al$.
  Now by concatenating these zigzags, possibly reversed as necessary, we obtain a zigzag from $(a_0,1_{a_0},x_0)$ to $(a_n,1_{a_n},x_n)$ from any $\Xi$, which is what we needed.

  Third, we need to relate any object of $\tint \widetilde{u_!X}$ to its roundtrip image by a zigzag.
  But an object of the form $(a,x)$ is equal to its roundtrip image, while $\Xi$ comes with a basic morpism to its roundtrip image $(a_0,x_0)$.

  Fourth and finally, we need to relate any object of $\tint\Xtil$ to its roundtrip image.
  The roundtrip image of $(a,\al,x)$ is $(a,1_a,x)$, for which we have as above
  \[ (a,1_a,x) \ot
    (a,1_a,x,a,1_a,x,r(x)) \to
    (a,\al,x). \]
  And the roundtrip image of $(a,\al,x,a',\al',x',\xi)$ is $(a,1_a,x)$, for which we have the previous zigzag together with
  \[ (a,\al,x) \ot
    (a,1_a,x,a,\al,x,r(x)) \to
    (a,\al'\al,x) \ot
    (a,\al,x,a',\al',x',\xi)
  \]
  in which the middle morphism uses the extra flexibility of~\eqref{eq:groth-zigzag} with $\gm=\al'$.
\end{proof}

\begin{cor}\label{thm:extend-cchom}
  For any \sex-local left derivator \D and any $M\in \D(\done)$, there is a cocontinuous morphism $(-\odt M) : \sex \to \D$ such that $\ast\odt M \cong M$, where $\ast \in \sex(\done)$ is the terminal object.
\end{cor}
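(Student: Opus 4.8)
The plan is to obtain $(-\odt M)$ simply as the partial application of the two-variable morphism $\odt\colon\sex\times\D\to\D$ constructed above at the fixed object $M\in\D(\done)$. Under the correspondence of \cite[Theorem 3.11]{gps:additivity}, $\odt$ is given by external components $\sex(A)\times\D(B)\to\D(A\times B)$ varying pseudonaturally in $(A,B)$; I would fix $B=\done$ and evaluate the second variable at $M$, obtaining functors $\sex(A)\to\D(A\times\done)\cong\D(A)$, $X\mapsto X\odt M$. Restricting the pseudonaturality constraints of $\odt$ to the slice $B=\done$ then shows that these assemble into a morphism of derivators $(-\odt M)\colon\sex\to\D$. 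This step is the general fact that a two-variable morphism may be partially applied at any object of $\D(\done)$, so it requires no new work.

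Cocontinuity of $(-\odt M)$ is immediate from \cref{thm:odt-cc}. Taking the first-variable cocontinuity statement ``$(u\times1)_!\,(X\odt N)\to(u_!X)\odt N$ is an isomorphism'' with $N=M\in\D(\done)$, and using the identifications $A\times\done\cong A$ and $(u\times1)_!\cong u_!$, gives exactly that $u_!(X\odt M)\to(u_!X)\odt M$ is an isomorphism for every $u\colon A\to B$, which is precisely cocontinuity of the one-variable morphism $(-\odt M)$.

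The only point with genuine content is the unit isomorphism $\ast\odt M\cong M$. By definition $\ast\odt M=\widetilde{\ast}\odot M$, so I would first identify the category $\tint\widetilde{\ast}$. For the terminal pseudo-equivalence relation $\ast\in\sex(\done)$ (where $\ast_0=\ast_1=1$) and $A=\done$, the construction of $\Xtil$ reduces, up to the inessential modification noted in the proof of \cref{thm:self-odt}, to the simple category of \cref{thm:Xtil-done}, which here is the category $C$ with two objects and exactly two parallel nonidentity arrows (the images of $s$ and $t$); thus $\tint\widetilde{\ast}=C$ with projection $p\colon C\to\done$. Let $\done_{\cCat}\in\cCat(\done)$ denote the terminal category regarded as a constant diagram; since $\tint\done_{\cCat}=\done$ with identity projection, the defining formula for $\odot$ gives $\done_{\cCat}\odot M\cong M$ directly. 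The unique map $\widetilde{\ast}\to\done_{\cCat}$ in $\cCat^{\done}$ has underlying functor $p$, and I would check via \cref{thm:sex-eqv} that $p$ is a \sex-equivalence over $\ob{\done}$: let $s$ pick out a type~1 object, send the identity morphism of $\done$ and the point to trivial zigzags, and connect the remaining (type~2) object of $C$ to the type~1 object by one of the two arrows. Since \D is \sex-local, $p$ is then a \D-equivalence, so \cref{thm:odot} makes $\widetilde{\ast}\odot M\to\done_{\cCat}\odot M$ an isomorphism, and composing with $\done_{\cCat}\odot M\cong M$ yields $\ast\odt M\cong M$.

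Thus the proof is essentially formal given the preceding results: the morphism structure and cocontinuity are bookkeeping on top of the two-variable $\odt$ and \cref{thm:odt-cc}. The one step requiring care---and hence the main obstacle, though a mild one---is the explicit identification of $\tint\widetilde{\ast}$ together with the verification that its projection to $\done$ is a \sex-equivalence, which is what powers the unit isomorphism through \cref{thm:odot}. (Alternatively one could compute $\widetilde{\ast}\odot M=p_!\,p^*M$ as a colimit over $C$ and invoke \cref{thm:deqv} directly, but routing through \cref{thm:odot} avoids recomputing the colimit.)
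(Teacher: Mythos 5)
Your proof is correct and takes essentially the same route as the paper: the morphism structure and its cocontinuity are read off from the two-variable $\odt$ and \cref{thm:odt-cc}, and the unit isomorphism $\ast\odt M\cong M$ comes from identifying $\widetilde{\ast}$ with $(\cdot\toto\cdot)$, noting that $(\cdot\toto\cdot)\to\done$ is a \sex-equivalence (hence a \D-equivalence by \sex-locality), and applying \cref{thm:odot}. The only difference is one of explicitness: you verify the \sex-equivalence directly from the criterion of \cref{thm:sex-eqv} and spell out $\done\odot M\cong M$, whereas the paper simply cites these as previously noted.
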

\begin{proof}
  It remains to show that $\ast\odt M \cong M$.
  By definition, $\ast\odt M = \widetilde{\ast}\odot M$, where $\widetilde{\ast}$ is $(\cdot \toto\cdot)$.
  But the functor $\widetilde{\ast} \to \done$ is, as noted previously, a \sex-equivalence.
  Thus the induced map $\widetilde{\ast}\odot M\to M$ is an isomorphism, since \D is \sex-local.
\end{proof}

\begin{thm}\label{thm:sex-rfcc}
  If \D is a \sex-local left derivator, then the functor
  \[ \cchom(\sex,\D) \to \D(\done), \]
  induced by evaluation at $\ast\in \sex(\done)$, is an equivalence of categories.
  In other words, \sex is a relative free cocompletion of a point.
\end{thm}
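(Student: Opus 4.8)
The plan is to exhibit an explicit quasi-inverse. Define $F : \D(\done) \to \cchom(\sex,\D)$ on objects by $F(M) = (-\odt M)$, which is a cocontinuous morphism by \cref{thm:extend-cchom}, and on a morphism $\ell : M\to N$ by the modification $(-\odt \ell)$; the latter is well-defined and functorial because $\odt$ is a two-variable morphism of derivators, hence functorial and pseudonatural in its second variable. Writing $E : \cchom(\sex,\D) \to \D(\done)$ for the evaluation functor $G\mapsto G_{\done}(\ast)$, one composite is immediate: $E(F(M)) = \ast\odt M \cong M$ by \cref{thm:extend-cchom}, and this isomorphism is natural in $M$, so $E\circ F\cong \Id$.

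For the other composite we must show that every cocontinuous $G$ is isomorphic to $(-\odt G(\ast))$, naturally in $G$. The heart of the matter is the computation
\[ G_A(\Xtil \odot \ast) \;\cong\; \Xtil \odot G_{\done}(\ast) \;=\; X \odt G(\ast) \]
for $X\in\sex(A)$. Writing $\Xtil \odot \ast = (p_{\Xtil})_!\,\pi_2^*\ast$, where $p_{\Xtil}:\tint\Xtil \to A$ is the (split opfibration) Grothendieck construction and $\pi_2:\tint\Xtil \to\done$, this is obtained by first commuting $G_A$ past $(p_{\Xtil})_!$ using cocontinuity of $G$, and then commuting it past $\pi_2^*$ using that $G$ is a morphism of prederivators, so that $G_{\tint\Xtil}(\pi_2^*\ast) \cong \pi_2^* G_{\done}(\ast)$. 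Composing with the specified isomorphism $\Xtil\odot\ast\cong X$ of \cref{thm:self-odt} yields a comparison $G_A(X) \cong X\odt G(\ast)$.

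It remains to check coherence: that this family of isomorphisms is natural in $X$, compatible with restriction (hence a modification), and natural in $G$. Naturality in $X$ follows from the second (naturality) square of \cref{thm:self-odt} together with the fact, recorded in \cref{thm:odt-components}, that $f\odt G(\ast)$ is independent of the chosen representative of $f$; compatibility with restriction follows from the pseudonaturality constraints of $\odt$ (built from the maps $\omega$); and naturality in $G$ is a direct check against the mate-transformations defining cocontinuity. With both composites naturally isomorphic to identities, $E$ and $F$ are mutually inverse equivalences, so $E$ is an equivalence of categories. The main obstacle is precisely this last, bookkeeping-heavy step of assembling the pointwise isomorphisms into a genuine transformation of derivator morphisms and verifying its naturality in $G$; the conceptual content is entirely contained in the two-step computation above.
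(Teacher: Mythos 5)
Your proposal is correct and follows essentially the same route as the paper: reduce to showing $GX \cong G(\Xtil\odot\ast) = G\bigl((p_{\Xtil}\times 1)_!\,\pi_2^*\ast\bigr) \cong (p_{\Xtil}\times 1)_!\,\pi_2^*\,G(\ast) = X\odt G(\ast)$ via \cref{thm:self-odt} and cocontinuity, with naturality in $X$ coming from the second part of \cref{thm:self-odt} and naturality in $G$ a direct check. Your treatment is slightly more explicit about the bookkeeping (the modification structure and the pseudonaturality constraints), but the decomposition and the key lemmas invoked are identical to the paper's.
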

\begin{proof}
  The construction of \cref{thm:extend-cchom} is functorial and the isomorphism is natural.
  Thus, it suffices to construct, for any cocontinuous $G:\sex\to\D$, an isomorphism $G X \cong X\odt G(\ast)$, natural in $G$ and in $X\in \sex(A)$.
  For this we have
  \begin{align*}
    GX &\cong G(\Xtil \odot \ast)\\
    &= G((p_{\Xtil} \times 1)_!\, (\pi_2)^* (\ast))\\
    &\cong (p_{\Xtil} \times 1)_!\, (\pi_2)^* G(\ast)\\
    &= \Xtil \odot G(\ast)\\
    &= X \odt G(\ast),
  \end{align*}
  where the first isomorphism is \cref{thm:self-odt}, and the second is because $G$ is cocontinuous.
  Naturality in $G$ is evident, while naturality in $X$ follows from the second part of \cref{thm:self-odt}.
\end{proof}

\section{Other relative free cocompletions}
\label{sec:other-rfc}

Once we have one relative free cocompletion --- in our case, \sex --- it is much easier to construct other \sex-local ones.
First we note that if \D is distributive (\cref{defn:distrib}), then the whole two-variable morphism $\odt:\sex\times \D\to\D$ is determined by the functor $L:\sex\to \D$ defined by
\[ L X = X\odt \ast . \]

\begin{lem}\label{thm:distrib-odt}
  If \D is distributive and \sex-local, we have a natural isomorphism
  \[ X \odt M \cong LX \times M \]
  for $X\in \sex(A)$ and $M\in \D(B)$.
\end{lem}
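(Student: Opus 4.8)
The plan is to compute both sides using the explicit formula for $\odt$ together with the cocontinuity of the cartesian product, reducing everything to a product with a terminal object. Recall that by construction $X\odt M=\Xtil\odot M=(p_{\Xtil}\times 1)_!\,\pi_2^*M$, where $p_{\Xtil}:\tint\Xtil\to A$ is the Grothendieck construction of $\Xtil$ and $\pi_2:\tint\Xtil\times B\to B$ is the projection. Taking the second argument to be the terminal object $\ast\in\D(\done)$ and $B=\done$ specializes this to $LX=X\odt\ast=(p_{\Xtil})_!\,T$, where $T\in\D(\tint\Xtil)$ is the constant diagram at $\ast$; since $p^*$ preserves limits (it is a right adjoint, having left adjoint $p_!$), this $T$ is the terminal object of $\D(\tint\Xtil)$.

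First I would apply cocontinuity of $\times$ in its first variable — which holds by distributivity (\cref{defn:distrib}) — to the functor $u=p_{\Xtil}$, the object $T\in\D(\tint\Xtil)$, and $M\in\D(B)$, obtaining a natural isomorphism
\[ (p_{\Xtil}\times 1)_!\,(T\times M)\;\xrightarrow{\ \sim\ }\;\bigl((p_{\Xtil})_!\,T\bigr)\times M = LX\times M, \]
where $\times$ is the external product. Next I would identify $T\times M$: writing it internally as the product of $\pi_1^*T$ and $\pi_2^*M$ in $\D(\tint\Xtil\times B)$, and using that $\pi_1^*T$ is the terminal object there (again since $\pi_1^*$ is a right adjoint and $T$ is terminal), the cartesian unit isomorphism gives $T\times M\cong\pi_2^*M$. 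Substituting back,
\[ LX\times M\;\cong\;(p_{\Xtil}\times 1)_!\,(T\times M)\;\cong\;(p_{\Xtil}\times 1)_!\,\pi_2^*M\;=\;\Xtil\odot M\;=\;X\odt M, \]
which is the desired isomorphism.

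It remains to promote this to an isomorphism of two-variable morphisms $\sex\times\D\to\D$, i.e.\ to verify naturality in $X\in\sex(A)$ and $M\in\D(B)$. Naturality in $M$ is immediate, since each functor and isomorphism above ($(p_{\Xtil}\times 1)_!$, $\pi_2^*$, the cocontinuity mate, and the unit constraint) is natural in the $\D(B)$-variable. I expect naturality in $X$ to be the main obstacle, because the assignment $X\mapsto\Xtil$ is only functorial up to right homotopy; the cure is to phrase the isomorphism at the level of the morphisms of derivators $L$, $\odt$, and $\times$ rather than of the raw construction $\widetilde{(-)}$. Concretely, both $(X,M)\mapsto X\odt M$ and $(X,M)\mapsto LX\times M$ are genuine two-variable morphisms $\sex\times\D\to\D$ — the former by \cref{thm:odt-cc} and the pseudonaturality already established for $\odt$, the latter as the composite $\sex\times\D\xrightarrow{L\times\id}\D\times\D\xrightarrow{\times}\D$ — and the isomorphism built above is assembled entirely from the coherent data (the cocontinuity mate and the cartesian unit constraint) of these morphisms. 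Hence naturality in $X$ follows from the functoriality recorded in \cref{thm:odt-components} and the surrounding pseudonaturality, with no further checking on representatives required.
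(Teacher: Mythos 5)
Your proof is correct and is essentially the paper's own argument: the same chain of isomorphisms --- the definition $X\odt M=(p_{\Xtil}\times 1)_!\,\pi_2^*M$, the cartesian unit isomorphism involving the terminal object, and cocontinuity of $\times$ in its first variable supplied by distributivity --- just run in the reverse direction, from $LX\times M$ to $X\odt M$ instead of from $X\odt M$ to $LX\times M$. Your closing paragraph on naturality is a sound and slightly more careful elaboration of what the paper leaves implicit.
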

Here $\times$ on the right-hand side denotes the functor $\D(A)\times \D(B) \to \D(A\times B)$ induced by the cartesian product of \D.
\begin{proof}
  By definition,
  \begin{align*}
    X\odt M % &= \Xtil \odot M\\
    &= (p_{\Xtil}\times 1)_!\, \pi_2^* (M) \\
    &\cong (p_{\Xtil}\times 1)_!\, \pi_2^* (\ast \times M) \\
    &\cong (p_{\Xtil}\times 1)_!\, \pi_2^* (\ast) \times M \qquad \text{(by distributivity)}\\
    &= (X\odt \ast) \times M\\
    &= LX \times M.\qedhere
  \end{align*}
%  using distributivity on the penultimate line.
\end{proof}

\begin{cor}\label{thm:distrib-loc-eqv}
  If \D is distributive and \sex-local, and $f:X\to Y$ is a morphism representative in $\sex(A)$ such that $Lf$ is an isomorphism, then $\sum_a \ftil_a$ is a \D-equivalence over $\ob{A}$.
\end{cor}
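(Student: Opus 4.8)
The plan is to verify the criterion of \cref{thm:deqv} by hand, reducing the general case to the case of the monoidal unit, which is exactly what the hypothesis on $Lf$ controls. Write $E^X=\sum_a\Xtil_a$ and $E^Y=\sum_a\Ytil_a$, with fibrewise projections $q^X:E^X\to\ob{A}$ and $q^Y:E^Y\to\ob{A}$, and set $\phi=\sum_a\ftil_a:E^X\to E^Y$, so that $q^Y\phi=q^X$. By \cref{thm:deqv}, it suffices to show that the induced map $(q^X)_!(q^X)^*M\to(q^Y)_!(q^Y)^*M$ is an isomorphism for every $M\in\D(\ob{A})$.

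First I would pin down the unit case. Recall $LX=\Xtil\odot\ast$ and $Lf=\ftil\odot\ast$, where $\ast\in\D(\done)$ is terminal. Taking $B=\done$ in \cref{thm:odot} gives $\Xtil\odot\ast=(p_{\Xtil})_!(p_{\Xtil})^*N_A$ with $N_A=p_A^*\ast$. Since $p_{\Xtil}$ is a split opfibration, pulling back along $\incl{A}$ and applying \cref{thm:opf-hoex} to the resulting square yields a natural isomorphism $\incl{A}^*(\Xtil\odot\ast)\cong(q^X)_!(q^X)^*N$, where $N=\incl{A}^*N_A=p_{\ob{A}}^*\ast$ is the unit for the cartesian product on $\D(\ob{A})$; the same holds for $Y$. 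Under these identifications $\incl{A}^*Lf$ becomes precisely the map $\Psi:(q^X)_!(q^X)^*N\to(q^Y)_!(q^Y)^*N$ induced by $\phi$, so the hypothesis that $Lf$ is an isomorphism says exactly that $\Psi$ is an isomorphism.

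Next I would set up a projection formula to pass from $N$ to arbitrary $M$. Since $\ob{A}$ is discrete, $q^X$ and $q^Y$ are opfibrations, as is $q^X\times1_{\ob{A}}$; the graph of $q^X$ sits in a pullback square
\[
  \begin{tikzcd}
    E^X \ar[r,"{(1,q^X)}"] \ar[d,"q^X"'] & E^X\times\ob{A} \ar[d,"q^X\times1"] \\
    \ob{A} \ar[r,"\Delta"'] & \ob{A}\times\ob{A}
  \end{tikzcd}
\]
to which \cref{thm:opf-hoex} applies, giving $\Delta^*(q^X\times1)_!\cong(q^X)_!(1,q^X)^*$. Combining this base change with the two-variable cocontinuity of the cartesian product (distributivity, \cref{defn:distrib}), exactly as in the proof of \cref{thm:distrib-odt}, yields a natural isomorphism $(q^X)_!(q^X)^*M\cong\bigl((q^X)_!(q^X)^*N\bigr)\times_{\ob{A}}M$ for all $M$, using that $(q^X)^*N$ is the unit of $\D(E^X)$; and likewise for $Y$. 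These isomorphisms are natural in $\phi$, so the comparison map $(q^X)_!(q^X)^*M\to(q^Y)_!(q^Y)^*M$ is identified with $\Psi\times_{\ob{A}}1_M$.

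Since $\Psi$ is an isomorphism by the first step, $\Psi\times_{\ob{A}}1_M$ is an isomorphism for every $M$; by \cref{thm:deqv} this makes $\phi=\sum_a\ftil_a$ a \D-equivalence over $\ob{A}$. I expect the main obstacle to be the projection formula of the third paragraph --- specifically, the bookkeeping needed to confirm that the base-change isomorphism of \cref{thm:opf-hoex} and the cocontinuity isomorphism of distributivity assemble into a single isomorphism that is natural both in $M$ and in the opfibration $q^X$, so that the comparison map is genuinely recognized as $\Psi\times_{\ob{A}}1_M$ rather than merely being abstractly isomorphic to an isomorphism. Once that naturality is secured, the conclusion is immediate.
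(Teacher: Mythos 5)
Your argument is correct, and it uses exactly the same ingredients as the paper's proof --- \cref{thm:deqv}, the opfibration base change of \cref{thm:opf-hoex}, and distributivity --- but performs the two main steps in the opposite order. The paper first applies distributivity globally, citing \cref{thm:distrib-odt} to turn the hypothesis on $Lf$ into the statement that $f\odt M$ is an isomorphism for every $M\in\D(\ob{A})$, and then does a single base change along the map $\ob{A}\to A\times\ob{A}$, $a\mapsto(a,a)$, under which $\ftil\times 1$ pulls back to $\sum_a\ftil_a$, reading off the \D-equivalence condition directly. You instead base-change first, identifying $\incl{A}^*Lf$ with the unit-case comparison map $\Psi$ over $\ob{A}$, and then re-derive a fiberwise projection formula $(q^X)_!(q^X)^*M\cong\bigl((q^X)_!(q^X)^*N\bigr)\times_{\ob{A}}M$ from distributivity plus base change along $\Delta_{\ob{A}}$. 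These are literally the same computation factored differently: the paper's single restriction along $a\mapsto(a,a)$ is the composite of your diagonal $\Delta_{\ob{A}}$ and $\incl{A}\times 1_{\ob{A}}$, and your projection formula is the fiberwise shadow of \cref{thm:distrib-odt}. What the paper's ordering buys is brevity, since \cref{thm:distrib-odt} is already on the books; what yours costs is having to redo that manipulation over $\ob{A}$ and hence the extra mate-naturality bookkeeping you flag at the end --- though that same bookkeeping is present, silently, in the paper's phrase ``the corresponding map for the diagram on the right is also an isomorphism,'' so it is not a defect peculiar to your route.
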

\begin{proof}
  By \cref{thm:distrib-odt}, the assumption implies that $f\odt M$ is an isomorphism for any $M\in \D(B)$.
  In particular, for $M\in \D(\ob{A})$ the induced map
  \[ (p_{\Xtil}\times 1)_!\, \pi_2^* (M) \to (p_{\Ytil}\times 1)_!\, \pi_2^* (M) \]
  is an isomorphism, where the functors fit into the diagram on the left of \cref{fig:dle}.

  The two functors $p_{\Xtil}\times 1$ and $p_{\Ytil}\times 1$ are split opfibrations, and the pullback of $\ftil\times 1$ along $(\incl{A}\times 1_{\ob{A}}) : \ob{A} \to A\times\ob{A}$ is $\sum_a \ftil_a$.
  Thus, the corresponding map for the diagram on the right of \cref{fig:dle} is also an isomorphism; but this is precisely to say that $\sum_a \ftil_a$ is a \D-equivalence over $\ob{A}$.
\end{proof}

\begin{figure}
  \[
    \begin{tikzcd}
      \tint\Xtil\times \ob{A} \ar[dr] \ar[ddr,"{p_{\Xtil}\times 1}"'] \ar[drr,"\pi_2"] \\
      & \tint\Ytil\times \ob{A}  \ar[d,"{p_{\Ytil}\times 1}"] \ar[r,"\pi_2"'] & \ob{A}\\
      & A\times \ob{A}
    \end{tikzcd}\quad
    \begin{tikzcd}
      \sum_a \Xtil_a  \ar[dr] \ar[ddr,""'] \ar[drr,""] \\
      & \sum_a \Ytil_a  \ar[d,""] \ar[r,""'] & \ob{A}\\
      & \ob{A}
    \end{tikzcd}    
  \]
  \caption{Diagrams for the proof of \cref{thm:distrib-loc-eqv}}
  \label{fig:dle}
\end{figure}

\begin{thm}\label{thm:subsex-rfcp}
  If \T is \sex-local and distributive, and $L:\sex\to\T$ has a right adjoint with invertible counit, then \T is a relative free cocompletion of a point.
\end{thm}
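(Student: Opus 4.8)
The plan is to reduce everything to the universal property of \sex that we have already proved in \cref{thm:sex-rfcc}. First I would observe that \T-locality is stronger than \sex-locality: since \T is \sex-local by hypothesis, every \sex-equivalence is a \T-equivalence, hence a \D-equivalence whenever \D is \T-local, so any \T-local \D is automatically \sex-local. Thus \cref{thm:sex-rfcc} applies to \D and gives an equivalence $\cchom(\sex,\D)\to\D(\done)$ by evaluation at $\ast$. Writing $L=(-\odt\ast)\colon\sex\to\T$, which is cocontinuous and satisfies $L\ast\cong\ast$ by \cref{thm:extend-cchom}, precomposition defines a functor $L^*\colon\cchom(\T,\D)\to\cchom(\sex,\D)$ (a cocontinuous morphism precomposed with the cocontinuous $L$ is again cocontinuous), and the composite $\cchom(\T,\D)\xrightarrow{L^*}\cchom(\sex,\D)\xrightarrow{\ \sim\ }\D(\done)$ is precisely evaluation at $L\ast\cong\ast$, i.e.\ the functor we want to be an equivalence. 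So it suffices to prove that $L^*$ is an equivalence of categories.

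The conceptual core, and the step I expect to be the main obstacle, is the claim that \emph{every cocontinuous $F\colon\sex\to\D$ inverts each morphism $f$ of \sex with $Lf$ invertible}. To prove it I would pick a representative of $f$ in some $\sex(A)$. Applying \cref{thm:distrib-loc-eqv} to the distributive, \sex-local derivator \T (so that its $L$ is our $L$) shows that $\sum_a\ftil_a$ is a \T-equivalence over $\ob A$; since \D is \T-local this is also a \D-equivalence, so \cref{thm:odot} makes $\ftil\odot M$ an isomorphism in \D for every $M$. By the proof of \cref{thm:sex-rfcc} there is a natural isomorphism $FX\cong X\odt F\ast=\Xtil\odot F\ast$, under which $Ff$ corresponds to $\ftil\odot F\ast$; hence $Ff$ is an isomorphism. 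This is the one place the locality hypothesis on \D does any work, and making the chain \cref{thm:distrib-loc-eqv}$\to$\T-locality$\to$\cref{thm:odot}$\to$naturality line up (keeping straight which derivator plays the role of the ambient one in each lemma) is the delicate part.

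Granting this claim, I would finish as follows. For full faithfulness, the adjunction $L\dashv R$ of morphisms of prederivators yields, by precomposition, an adjunction $R^*\dashv L^*$ whose counit at $G$ is $G\epsilon\colon G(LR)\to G$; since $\epsilon\colon LR\to\Id$ is invertible this counit is invertible, so the right adjoint $L^*$ is fully faithful, and it stays fully faithful after restricting to the full subcategories of cocontinuous morphisms. For essential surjectivity, given cocontinuous $F$ I would take $G=FR$. The triangle identity forces $L\eta_X$ to be invertible for all $X$, so each $\eta_X$ is an $L$-equivalence and the claim produces a natural isomorphism $F\eta\colon F\xrightarrow{\ \sim\ }FRL=L^*G$. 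It remains to see that $G=FR$ is cocontinuous: the canonical mate $u_!R\to Ru_!$ is, componentwise, an $L$-equivalence (a routine mate computation using cocontinuity of $L$ together with $LR\cong\Id$), so $F$ inverts it, and since $F$ is cocontinuous the mate $u_!(FR)\to(FR)u_!$ factors as the cocontinuity isomorphism of $F$ followed by $F$ applied to the inverted mate, hence is invertible for every $u$. Thus $L^*$ is essentially surjective, so it is an equivalence; composing with $\cchom(\sex,\D)\simeq\D(\done)$ then shows evaluation at $\ast$ is an equivalence $\cchom(\T,\D)\to\D(\done)$, which is exactly the assertion that \T is a relative free cocompletion of a point.
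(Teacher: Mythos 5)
Your proof is correct and follows essentially the same route as the paper's: reduce to showing that precomposition with $L$ is an equivalence $\cchom(\T,\D)\to\cchom(\sex,\D)$, get full faithfulness from the induced adjunction $(-\circ R)\dashv(-\circ L)$ whose counit is invertible because $\epsilon\colon LR\to 1_{\T}$ is, and get essential surjectivity by sending a cocontinuous $F$ to $FR$, using \cref{thm:distrib-loc-eqv}, \T-locality of \D, \cref{thm:odot}, and \cref{thm:sex-rfcc} to show $F$ inverts $\eta$ and that $FR$ is cocontinuous. The only differences are organizational (and harmless): you make explicit the transitivity of locality and isolate the ``$F$ inverts $L$-inverted morphisms'' claim, applying it a second time to the mate $u_!R\to Ru_!$, where the paper instead writes the mate of $FR$'s pseudonaturality isomorphism as a three-square pasting (counit, unit, cocontinuity of $F$) and reuses the already-inverted unit.
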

\begin{proof}
  Let \D be a \T-local left derivator; we must show that the precomposition functor
  \( (-\circ L) : \cchom(\T,\D) \to \cchom(\sex,\D) \)
  is an equivalence.
  We have a commutative square
  \[
    \begin{tikzcd}
      \cchom(\T,\D) \ar[r,"(-\circ L)"] \ar[d] & \cchom(\sex,\D) \ar[d] \\
      \hom(\T,\D) \ar[r,"(-\circ L)"'] & \hom(\sex,\D)      
    \end{tikzcd}
  \]
  in which the vertical functors are fully faithful.
  But the bottom functor has a left adjoint $(-\circ R)$, where $R$ is the right adjoint of $L$, with invertible unit, and hence is also fully faithful.
  Thus the top functor is also fully faithful.
  So it suffices to show it is split essentially surjective, i.e.\ that any cocontinuous $G:\sex\to\D$ factors through $L$, up to isomorphism, by a specified cocontinuous morphism.

  To start with, we have a canonical morphism $GR : \T\to\D$.
  We also have a unit map $\eta:1_{\sex} \to RL$, and since the counit of the adjunction is invertible, $L\eta$ is an isomorphism.
  Thus, by \cref{thm:distrib-loc-eqv}, for any $X\in\sex(A)$, if we choose a representative for $\eta_X$, then $\sum_a (\widetilde{\eta_X})_a$ is a \T-equivalence over $\ob{A}$.
  Since \D is \T-local, this means it is also a \D-equivalence.
  And since $G$ is of the form $(-\odt M)$ for some $M\in \D(\done)$, by \cref{thm:sex-rfcc}, it follows that $G$ also inverts $\eta_X$.
  In other words, $G\eta$ is an isomorphism $G\cong GRL$.

  It remains to show that $GR$ is cocontinuous.
  This means to show that the mate $u_! GR \to GR u_!$ of the isomorphism $GR u^* \cong u^* GR$ is again an isomorphism.
  The latter isomorphism is the pasting composite of the following squares:
  \[
    \begin{tikzcd}
      \T(A) \ar[r,"R"]  \ar[dr,phantom,"\cong"] & \sex(A) \ar[r,equals] \ar[dr,phantom,"\cong"] & \sex(A) \ar[r,"G"] \ar[dr,phantom,"\cong"] & \D \\
      \T(B) \ar[r,equals] \ar[u,"u^*"] & \T(B) \ar[u,"R u^*"'] \ar[r,"R"'] & \sex(B) \ar[u,"u^*"'] \ar[r,"G"'] & \D \ar[u,"u^*"']
    \end{tikzcd}
  \]
  Therefore, by the functoriality of mates, its mate is the pasting composite of the following squares:
  \[
    \begin{tikzcd}
      \T(A) \ar[r,"R"] \ar[d,"u_!"] \twocell{dr} & \sex(A) \ar[d,"u_!L"] \ar[r,equals] \twocell{dr} & \sex(A) \ar[d,"u_!"] \ar[r,"G"] \twocell{dr} & \D \ar[d,"u_!"] \\
      \T(B) \ar[r,equals] & \T(B) \ar[r,"R"'] & \sex(B) \ar[r,"G"'] & \D 
    \end{tikzcd}
  \]
  The left-hand square is the counit $LR \to 1_\T$, which is an isomorphism by assumption.
  The right-hand square is an isomorphism since $G$ is cocontinuous.
  Finally, the middle square is the unit $1_{\sex}\to RL$, which as we just showed is inverted by $G$.
  Thus, the pasting composite is also an isomorphism, so $GR$ is cocontinuous.
\end{proof}

\begin{rmk}
  If we omit the hypothesis of distributivity in \cref{thm:subsex-rfcp}, the same argument implies that \T is a \emph{localization} of \sex, in the sense that the precomposition functor $\cchom(\T,\D) \to \cchom(\sex,\D)$ is fully faithful, and its full image consists of the morphisms $\sex\to \D$ that invert the same morphisms that are inverted by $L:\sex\to\T$.
  (More abstractly, this can be expressed as a \emph{coinverter} in the 2-category of derivators: a 2-categorical colimit that universally forces some 2-cell to become invertible.)
  Distributivity enables us to reexpress this as \T being a relative free cocompletion of a point, without explicit reference to $L:\sex\to \T$.
\end{rmk}

We have already observed that all the derivators in \cref{fig:locality} are \sex-local and distributive.
Thus, it suffices to show that their $L$-functors all have right adjoints.

\begin{eg}
  For $\T=\iSet$, $L$ computes the quotient of each pseudo-equivalence relation in a coherent diagram, obtaining an ordinary diagram of sets.
  This has a right adjoint that assigns to any set the identity (pseudo-)equivalence relation on it, of which it is the quotient.
  Thus, \iSet is a relative free cocompletion of a point.
\end{eg}

\begin{eg}
  For $\T=\iProp$, $L$ computes the support $\pi_{-1}(X_0)$ of each pseudo-equivalence relation in a coherent diagram.
  Since the quotient of a pseudo-equivalence relation is inhabited if and only if $X_0$ is, this factors through $\iSet$ via the standard support functor $\iSet\to\iProp$.
  The latter has a right adjoint assigning to each proposition the corresponding subsingleton, which is its own support.
  Thus, \iProp is a relation free cocompletion of a point.
\end{eg}

We leave the trivial case $\T=\iContr$ to the reader.

\begin{eg}
  For $\T=\spos$, $L$ sends each pseudo-equivalence relation to $X_0+X_1$, which is isomorphic in \spos to $X_0$.
  This has a right adjoint that sends each object $X$ of \spos to the \emph{full} (pseudo-)-equivalence relation on it, i.e.\ $X_0 = X$ and $X_1 = X\times X$.
  The counit is evidently an isomorphism, so \spos is a free cocompletion of a point.
\end{eg}

\begin{eg}
  Finally, for $\T=\sreg$, $L$ sends each pseudo-equivalence in a coherent diagram to its image, which is an actual equivalence relation.
  This has a right adjoint that sends each equivalence relation to itself, regarded as a pseudo-equivalence relation.
  Thus, \sreg is also a free cocompletion of a point.
\end{eg}

\section{Conclusions and speculations}
\label{sec:anafunctors}

We have constructed three different relative free cocompletions of a point, \iSet, \sreg, and \sex, which are nevertheless all intuitively ``1-categorical''.
Similarly, both \iProp and \spos are intuitively ``0-categorical'' (i.e.\ posetal).
Thus we may reasonably wonder, what happens in higher dimensions?
The obvious candidate for a 2-categorical (or, more precisely, $(2,1)$-categorical) relative free cocompletion of a point is a derivator of groupoids; but we have multiple notions of groupoid.

On the one hand, we have the standard notion of groupoid, with hom-sets.
These should yield a derivator \iGpd: the objects of $\iGpd(A)$ are pseudofunctors $A\to \iGpd$, and its morphisms are isomorphism classes of pseudonatural transformations.
In particular, the isomorphisms in the derivator \iGpd would be the \emph{equivalences} of groupoids, in the usual constructive sense with a specified pseudo-inverse functor.

On another hand, we can consider \emph{\cE-groupoids}, ``groupoids enriched over setoids'' (see e.g.~\cite{hs:constructive-ct,bd:tt-lcc-int} for \cE-categories).
These should yield a derivator \iEGpd.
And there is a third notion in between, of groupoids enriched over equivalence relations, which should yield a derivator \iRGpd.
It seems likely that we should have an analogous three notions of $n$-groupoid for all finite $n$, where the top level is enriched either over \iSet, \sreg, or \sex.
But in the limit $n\to \infty$, where there is no longer a ``top level'', it seems reasonable to expect the difference to disappear, so that there would be only one absolute free cocompletion of a point \iSpace.

\begin{conj}
  One can constructively define an absolute free cocompletion of a point using some kind of cubical sets, simplicial sets, or semisimplicial sets, along with three reflective localizations of it for each finite $n$, consisting of the $n$-groupoids enriched over sets, setoids, and equivalence relations at the top dimension.
\end{conj}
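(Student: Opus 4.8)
The plan is to reproduce, constructively, the classical strategy behind \cref{thm:spaces-afcp}, while keeping the explicit localization techniques of \cref{sec:free-cocompletion} as a guide for the lower truncations. Classically, \iSpace arises as $\nHo(\cM)$ for a model category \cM presenting spaces --- simplicial sets with the Kan--Quillen structure, or equivalently \cCat with the Thomason structure --- and its universal property is obtained by localizing \cCat so that the oplax colimits $\tint E$ appearing in \cref{thm:odot} become genuine homotopy colimits. I would begin by fixing a \emph{constructively} developed model of spaces: either the effective Kan simplicial sets or the equivariant cartesian cubical sets cited in the introduction. From such a \cM one builds a prederivator $A \mapsto \nHo(\cM^A)$, and the derivator axioms (Der1)--(Der5) must be verified; as throughout this paper, (Der1) should be asserted only for projective index sets, and the homotopy Kan extensions providing (Der3) and (Der4) should come from Quillen adjunctions on diagram categories.

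For the universal property, I would lift the argument of \cref{thm:sex-rfcc}. The morphism $\odot:\cCat\times\D\to\D$ of \cref{thm:odot} is available for \emph{any} left derivator \D and does not use 1-categoricity; what changes in the absolute case is the relevant class of equivalences. Instead of the \sex-equivalences, one localizes \cCat at the functors inverted by \emph{every} left derivator --- classically the weak homotopy equivalences of nerves. The key step is to show that $E\odot 1\to F\odot 1$ is an isomorphism whenever $\tint E\to\tint F$ is such a universal equivalence, and that the resulting localized derivator is exactly $\nHo(\cM)$; then $E\mapsto E\odot\ast$ would be the cocontinuous functor realizing the universal property, with an inverse $GX\cong X\odot G(\ast)$ as in \cref{thm:sex-rfcc}.

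The three reflective localizations for each finite $n$ should then be produced by iterating the enrichment analysis of \cref{sec:deqv}. At the top dimension, truncating hom-objects to sets, to equivalence relations, or to setoids corresponds exactly to the \iSet-, \sreg-, and \sex-equivalences, so the $n=1$ layer is already \cref{thm:set-eqv,thm:sreg-eqv,thm:sex-eqv} and the localizations are those exhibited in \cref{sec:free-cocompletion}. For $n>1$ I would define \iGpd, \iRGpd, and \iEGpd as derivators of (pseudofunctor-valued) diagrams of ordinary, equivalence-relation--enriched, and setoid-enriched groupoids, construct localization functors $L$ with right adjoints as in the examples closing \cref{sec:free-cocompletion}, and invoke a higher-dimensional analogue of \cref{thm:subsex-rfcp} to promote each to a relative free cocompletion. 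That the three notions collapse as $n\to\infty$ would follow from the absence of a top dimension: once every hom-object is itself a space, the set/setoid/equivalence-relation distinction evaporates, leaving a single \iSpace.

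The hard part will be the higher coherence, exactly as flagged in \cref{sec:free-cocompletion}. The explicit constructions $\Xtil$ and $\wp\Xtil$ used to prove \cref{thm:sex-rfcc} work only because setoids carry a two-level object/witness structure with unique higher cells; they do not extend to genuine $\infty$-groupoids, where an unbounded tower of coherences must be strictified. The missing ingredient is a constructive theory lifting a (weak) model structure --- for the base case, Henry's weak model category of setoids --- to diagram categories $\cM^A$, together with a proof that $\nHo(\cM^A)$ assembles into a pseudofunctor $\cCat\op\to\cCAT$ satisfying the derivator axioms without choice. Establishing this diagram-lifting constructively, and verifying that the Quillen adjunctions it produces have the pointwise universal property used in \cref{thm:lim,thm:colim} and \cref{thm:der4}, is where I expect essentially all the difficulty to reside.
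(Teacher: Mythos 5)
The statement you are addressing is a \emph{conjecture} in the paper: it sits in \cref{sec:anafunctors} among the ``Conclusions and speculations,'' and the paper deliberately gives no proof of it. Indeed the paper explicitly declines to carry out the very strategy you outline: after noting that classically one localizes \cCat via the Thomason model structure so that the oplax colimits $\tint E$ of \cref{thm:odot} become homotopy colimits, the author writes that it ``would be interesting to see whether this approach can be reproduced constructively, but we will not attempt to do that here,'' and the remark before the $\Xtil$ construction stresses that the explicit approach of \cref{sec:free-cocompletion} is ``only feasible because of the very simple nature of the derivator \sex; more complicated examples require more advanced techniques.'' So there is no proof in the paper to compare yours against; your proposal must stand on its own as a purported proof of the conjecture.

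Judged that way, it has a genuine gap --- one you yourself flag in your final paragraph. Every step that would make the argument constructive rather than classical is deferred rather than carried out: (i) you never verify that $A\mapsto\nHo(\cM^A)$ satisfies (Der1)--(Der5) without choice for either candidate \cM (the introduction notes the two constructive models of spaces are not even known to be constructively equivalent, so the choice of \cM is itself unresolved); (ii) the decisive step in the paper's own universality argument is cocontinuity of the descended product in its \emph{first} variable (\cref{thm:odt-cc}), which for \sex rests on the hands-on zigzag analysis of \cref{thm:sex-eqv} applied to $\tint\omega\etatil$ --- you give no substitute for this at the $\infty$-level, where the two-level object/witness structure with unique higher cells is exactly what disappears; (iii) the machinery you invoke to fill that hole, a constructive lifting of (weak) model structures to diagram categories $\cM^A$ with choice-free homotopy Kan extensions, is precisely what the paper says ``does not exist in the literature yet.'' Since these are the obstructions that make the statement a conjecture rather than a theorem, what you have written is a reasonable research plan --- essentially the one the paper itself sketches --- but it is not a proof, and presenting it as one would be misleading.
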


However, something funny happens with the locality preorder at dimension 2.
Just as the \iSet-equivalences are the functors inducing an isomorphism under the reflection $\pi_0$ of categories into sets, we expect the \iGpd-equivalences should be the functors inducing an equivalence under the reflection $\Pi_1$ of categories into groupoids.
But since $\Pi_1(A)$ has the same set of objects as $A$, if $f:A\to B$ is a \iGpd-equivalence then we have an actual function $\ob{B}\to \ob{A}$, suggesting that a \iGpd-equivalence should also be not just a \iSet-equivalence but a \sreg-equivalence.
Thus \sreg should be \iGpd-local, and similarly we expect \sex to be \iRGpd-local, leading to the placements of \iGpd, \iRGpd, and \iEGpd in the extension of \cref{fig:locality} shown in \cref{fig:more-locality}.

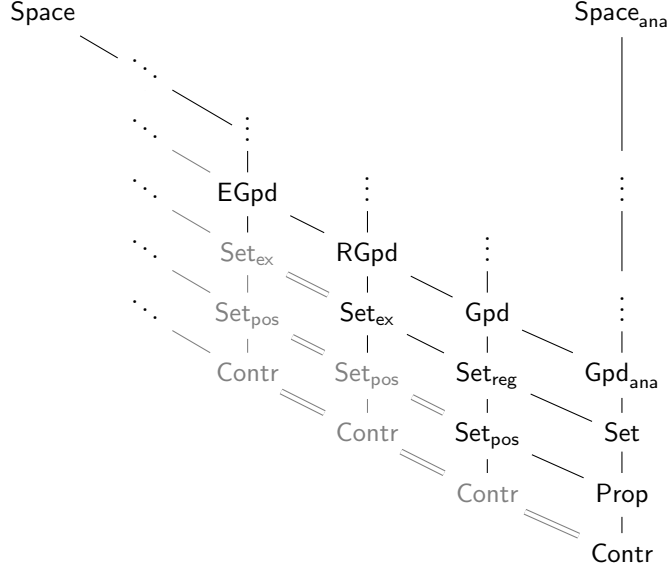
\begin{figure}
  \centering
  \[
  \begin{tikzcd}[row sep={.8cm,between origins},column sep=small]
    \iSpace \ar[dr,-] &&&&& \sana \ar[ddd,-]\\
    & \ddots \ar[d,-] \ar[dr,-] \\
    & \ddots \ar[dr,gray,-] & \vdots \ar[d,-] \\
    &\ddots \ar[dr,gray,-] &\iEGpd \ar[dr,-] \ar[d,-] & \vdots \ar[d,-]  & & \vdots \ar[dd,-] \\
    &\ddots \ar[dr,gray,-]&{\color{gray}\sex} \ar[dr,equals,gray] \ar[d,-,gray] &\iRGpd \ar[d,-] \ar[dr,-]& \vdots \ar[d,-] \\
    & \ddots \ar[dr,gray,-] &{\color{gray}\spos} \ar[d,gray,-] \ar[dr,equals,gray] &\sex \ar[dr,-] \ar[d,-] & \iGpd \ar[d,-] \ar[dr,-]& \vdots \ar[d,-]  \\
    &&{\color{gray}\iContr} \ar[dr,gray,equals]&{\color{gray}\spos} \ar[d,gray,-] \ar[dr,gray,equals] &\sreg \ar[dr,-] \ar[d,-] & \iGpd\ana \ar[d,-]\\
    &&&{\color{gray}\iContr} \ar[dr,gray,equals]&\spos \ar[d,-]\ar[dr,-] & \iSet \ar[d,-]\\
    &&&& {\color{gray}\iContr} \ar[dr,gray,equals] & \iProp \ar[d,-] \\
    &&&&& \iContr
  \end{tikzcd}
\]
  \caption{A conjectural enlargement of \cref{fig:locality}}
  \label{fig:more-locality}
\end{figure}

The diagonal rows\footnote{They are diagonal rather than horizontal, of course, so that the picture is still a sort of ``Hasse diagram'' of the locality relation (although we do not mean to exclude the possible existence of further intermediate objects not drawn).} of this diagram are at constant ``categorical dimension'' while moving vertically downwards passes to the subcategory of truncated objects.
That is, the categories of subterminal objects in \iSet and \sreg are equivalent to \iProp and \spos respectively, and we expect the categories of 0-truncated objects in \iGpd and \iRGpd to be equivalent to \sreg and \sex respectively.
Since \spos is also the category of subterminal objects in \sex, and \sex should also be the category of 0-truncated objects in \iEGpd, it is natural to extend the diagram further to the left in a way that ``stabilizes'' after a certain number of steps, as we have done in gray.
One can thus view ``exact completion'' as adding an additional dimension to the Baez--Dolan ``periodic table of $n$-categories''~\cite{bd:hda-tqft}, which stabilizes along the $n$-categorical row at the $(n+2)^{\mathrm{nd}}$ stage.

It is worth noting that the derivators in the ``middle'' of this diagram, though like all the others they are relative free cocompletions of a point, are not as well-endowed with exactness properties.
For instance, \iSet and \sex are both exact, but \sreg is not: an internal equivalence relation in \sreg is a \emph{pseudo}-equivalence relation in \iSet, but it can only be effective in \sreg if it is an actual equivalence relation.
Similarly, but perhaps more surprisingly, \iGpd is not exact as a $(2,1)$-category (in a sense like that of~\cite{street:stacks}): for if it were, its subcategory of 0-truncated objects would be exact as a 1-category, but this subcategory is \sreg.

I expect \iRGpd to also fail to be $(2,1)$-exact, though less obviously since its subcategory of 0-truncated objects should be \sex, which is 1-exact.
But \sex should also be the subcategory of 0-truncated objects in \iEGpd, which should be $(2,1)$-exact.
This is analogous to how \spos is ``$(0,1)$-exact'' (i.e.\ a distributive lattice), and is the subcategory of subterminal objects in both \sex and \sreg, though only the former is 1-exact.

% that sits below \iGpd, and whose
Is there a different 2-dimensional relative free cocompletion of a point whose category of 0-truncated objects is \iSet?
To guess what this might be, note that in the parts of \cref{fig:more-locality} that we understand precisely so far, moving to the right can be achieved by passing to a localization.
For instance, if we localize \spos by inverting the surjections, we obtain \iProp.
Similarly, if in \sex we invert the morphisms $f:X\to Y$ that reflect equality (in the sense that if there exists a witness that $f_0(x)\sim f_0(x')$ then there exists a witness that $x\sim x'$) and such that $f_0$ is split surjective, we obtain \sreg.
If we further invert the morphisms that reflect equality and such that $f_0$ is merely surjective, we obtain $\iSet$.

Analogously, it is natural to guess that \iRGpd should be obtainable from \iEGpd by inverting functors that are split-surjective on objects, split-full on morphisms, and reflect equality of parallel morphisms; while \iGpd should be similarly obtainable from \iRGpd by inverting functors that are split-surjective on objects, merely full on morphisms, and reflect equality of parallel morphisms.
This suggests that the ``missing link'' should be obtained from \iGpd by inverting the functors that are fully faithful and merely surjective on objects.
This is equivalent to inverting the \emph{weak equivalences}: functors that are fully faithful and essentially surjective.%
\footnote{Recall that every weak equivalence is an equivalence if and only if the axiom of choice holds.}
The morphisms in this localization are \emph{anafunctors}~\cite{makkai:avoiding-choice,bartels:hgt,roberts:ana,roberts:elem-ana}, so we denote it $\iGpd\ana$.

Similarly, if we present \iSet as a localization of \sex, we could call its morphisms \emph{anafunctions} and write $\iSet \simeq (\sex)\ana$.
Equivalently, we can observe that since \iSet is already exact, it is equivalent to its own exact completion \emph{as a regular category}, i.e.\ $\iSet \simeq \iSet_{\mathsf{ex/reg}}$; in general we can present the ex/reg completion as consisting of setoids or equivalence relations with anafunctions between them (``total and functional relations'').
This suggests that the missing link $\iGpd\ana$ should be the ``$(2,1)$-exact completion of $\iSet$ as a regular category''.
This makes sense because the definition of $\iGpd\ana$, unlike that of $\iGpd$, incorporates some information about the regular structure of \iSet, i.e.\ the surjective functions of sets.

% One way to see that this is a good candidate for a 2-dimensional version of \iSet is to note that since \iSet is already exact, it is equivalent to its own exact completion \emph{as a regular category}, i.e.\ $\iSet \simeq \iSet_{\mathsf{ex/reg}}$.
% %(And if we were working in weaker foundations, with images but not quotients so that $\iSet$ is regular but not exact, then it is $\iSet_{\mathsf{ex/reg}}$ that ought to fill the spot in \cref{fig:more-locality} where we have written \iSet.)
% Thus, above it we should expect to find the $(2,1)$-exact completion of $\iSet$ \emph{as a regular category}, say ``$\iSet_{(2,1)\cdot\mathsf{ex/reg}}$'', which should in particular have the property that the inclusion $\iSet\into\iSet_{(2,1)\cdot\mathsf{ex/reg}}$ should preserve effective epimorphisms.
% Surjective functions of discrete sets are not effective epimorphisms in $\iGpd$ unless they are split, but $\iGpd\ana$ is the most natural way to make them so; and in addition, $\iGpd\ana$ ought to be $(2,1)$-exact.

There are, however, issues with actually performing the localization leading to the hypothetical $\iGpd\ana$.
In particular, unlike \iRGpd and \iGpd, it is not a \emph{reflective} localization of \iEGpd.
Worse, even in ZF set theory, with excluded middle but no choice, it is impossible to prove that $\iGpd\ana$ is locally small, cartesian closed, or complete~\cite{ak:nonsmall-ana}, and hence it seems unlikely to be a derivator.
(This also implies that it cannot be presented by any sort of model category, although weaker structures like a fibration or cofibration category are a possibility.)
However, it may be easier to obtain at least a \emph{left} derivator of this sort, with colimits but not necessarily limits.

\begin{conj}
  % Assuming a weak form of choice, such as SCSA~\cite{makkai:avoiding-choice} % Small Cardinality Selection Axiom
  % or % Weakly Initial Sets of Covers
  % WISC~\cite{roberts:ana,roberts:wisc} (a.k.a.\ %the Axiom of Multiple Choice
  % AMC~\cite{vdb:pred-topos}),
  There is a left derivator $\iGpd\ana$ composed of groupoids and anafunctors.
  Moreover:
  \begin{itemize}
  \item $\iGpd\ana$ is a relative free cocompletion of a point, and is ``$(2,1)$-exact''.
  \item Every weak equivalence of categories is a $\iGpd\ana$-equivalence.
    %In particular, $\iGpd\ana$ is \iGpd-local.
  \item \iSet is $\iGpd\ana$-local, but \sreg and \spos are not.
  \item The subcategory of 0-truncated objects in $\iGpd\ana$ is \iSet.
  \end{itemize}
\end{conj}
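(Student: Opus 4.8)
The plan is to mirror the entire development of \cref{sec:derex,sec:free-cocompletion} one categorical dimension higher, since the conjectured $\iGpd\ana$ is precisely the localization of \iGpd (equivalently of the representable left derivator \cCat) at the weak equivalences. Concretely, I would first define $\iGpd\ana(A)$ by a coherent-diagram construction analogous to $\eex(A)$, replacing pseudo-equivalence relations by internal groupoids of \iSet and morphism representatives by anafunctor data (spans whose backward leg is surjective-on-objects and fully faithful), with the higher cells recording the coherence isomorphisms. Verifying (Der1) and (Der2) should proceed as in \cref{thm:der1,thm:der2}, using projectivity of the indexing sets to assemble representatives and $2$-cells over coproducts. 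For (Der3L) and (Der4L) one builds colimits of anafunctor-diagrams by the free-construction method of \cref{thm:colim,thm:free-pseqr}, now freely generating an internal groupoid rather than a pseudo-equivalence relation, and checks pointwiseness exactly as in \cref{thm:der3}. I would \emph{not} attempt (Der3R), consistent with the conjecture asserting only a left derivator. Note that since $\iGpd\ana$ is not a reflective localization of \iEGpd, the transfer result \cref{thm:subsex-rfcp} (which needs a right adjoint with invertible counit) is unavailable, so the universal property must be proved directly.

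Granting the left derivator, the relative free cocompletion property should follow from the machinery already in place. The crucial point is that $\odot\colon\cCat\times\D\to\D$ of \cref{thm:odot} inverts any $u$ with $\sum_a u_a$ a \D-equivalence; hence if every weak equivalence is a $\iGpd\ana$-equivalence and \D is $\iGpd\ana$-local, then $\odot$ inverts the weak equivalences and descends to a two-variable morphism $\odt\colon\iGpd\ana\times\D\to\D$, exactly as in \cref{thm:odt-components}. One then needs the analogue of the strictification $\Xtil$ of \cref{thm:self-odt}, representing each $X\in\iGpd\ana(A)$ by a strict diagram $\Xtil\in\cCat^A$ with $\Xtil\odot\ast\cong X$, together with a path-object $\wp\Xtil$ (\cref{thm:path}) whose legs are weak equivalences, used to show that the witnesses of equality---now the connecting isomorphisms and ff+eso spans---are inverted. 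With these in hand, the computation $GX\cong G(\Xtil\odot\ast)\cong\Xtil\odot G(\ast)=X\odt G(\ast)$ of \cref{thm:sex-rfcc} transfers verbatim.

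The four listed properties would then be checked as follows. That every weak equivalence is a $\iGpd\ana$-equivalence is immediate from the construction, and \iSet-locality follows because a fully faithful, essentially surjective functor induces a bijection on $\pi_0$ fibrewise, hence is an \iSet-equivalence by \cref{thm:set-eqv}. For the failure of \sreg- and \spos-locality the common theme is that, by \cref{thm:sreg-eqv,thm:epos-eqv}, these equivalences demand a \emph{specified} function $\ob{B}\to\ob{A}$ on object-sets that a bare weak equivalence does not supply: taking a surjection $p\colon E\epi X$ of sets with no section, regarding $X$ as a discrete groupoid and $E$ as the codiscrete groupoid on the fibres of $p$, the functor $E\to X$ is a weak equivalence whose \sreg-equivalence status would force an honest section of $p$ (since zigzags in the discrete $X$ are trivial), and the unpointed variant $E\to\done$ of a merely inhabited groupoid defeats \spos in the same way. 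Finally, the $0$-truncated objects are the discrete groupoids, and an anafunctor between discrete groupoids is a span whose backward leg is a bijection, i.e.\ an ordinary function; exactness of \iSet then identifies the $0$-truncated subcategory with \iSet, and a $2$-dimensional effectivity argument would give ``$(2,1)$-exactness''.

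The main obstacle is the very first step: constructing $\iGpd\ana(A)$ as a left derivator at all. By \cite{ak:nonsmall-ana} it is unprovable even in ZF that the target is locally small, so $\cchom(\iGpd\ana,\D)$ and $\hom(\iGpd\ana,\D)$ may fail to be genuine categories, and one must either impose a size bound on the diagrams generating anafunctors (a WISC-like hypothesis) or reformulate the derivator axioms to tolerate largeness. Entangled with this is the constructive verification of (Der4L) for anafunctor-colimits---the pointwiseness of left Kan extensions---where the absence of choice obstructs the simultaneous selection of anafunctor representatives, and where the strictification $f\mapsto\ftil$ and its path-object must be engineered carefully enough to make $\odt$ well defined on equivalence classes. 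I expect this strictification, rather than the formal free-cocompletion argument, to carry essentially all of the difficulty.
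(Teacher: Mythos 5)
This statement is a \emph{conjecture}, not a theorem: the paper offers no proof of it, and the surrounding discussion explains why the author expects one to be hard --- $\iGpd\ana$ is not a reflective localization of \iEGpd, and by~\cite{ak:nonsmall-ana} its local smallness, cartesian closure, and completeness are unprovable even in ZF. So there is no proof in the paper to compare yours against; the only question is whether your sketch settles the conjecture, and it does not, as your final paragraph concedes. Your plan --- define $\iGpd\ana(A)$ by coherent ana-diagrams, build colimits as in \cref{thm:colim}, and prove the universal property directly since \cref{thm:subsex-rfcp} is unavailable --- is the natural one and agrees with the paper's own speculation (including the ex/reg identification of the $0$-truncated subcategory with \iSet), but the two steps on which everything hinges are exactly the ones left open. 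First, the existence of $\iGpd\ana(A)$ as a locally small category carrying the colimit construction is blocked by the cited size obstruction, and your proposed escapes change the conjecture rather than prove it: a WISC-like bound changes the background theory, while ``reformulating the derivator axioms to tolerate largeness'' changes the very definitions (prederivator, \D-equivalence, locality, $\cchom$) in which the conjecture is phrased. Second, the strictification $X\mapsto\Xtil$ has no supplied analogue: for \sex it is an elementary set-level construction, but rectifying a coherent diagram of groupoids and anafunctors into a strict object of $\cCat^A$ is essentially stack completion, i.e.\ the content of ASC~\cite{bh:psmon-stcplt,jt:strong-stacks}, which is precisely what the constructive setting withholds and what the non-reflectivity of $\iGpd\ana$ in \iEGpd reflects.

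Beyond these admitted gaps there are two concrete errors. (i) Your argument for \iSet-locality runs in the wrong direction: showing that weak equivalences are \iSet-equivalences does not show that every \emph{$\iGpd\ana$-equivalence} is an \iSet-equivalence, and the latter class is strictly larger --- for instance $\done\into(\cdot\to\cdot)$ is a \iGpd-equivalence, hence conjecturally a $\iGpd\ana$-equivalence, but not a weak equivalence of categories. Identifying the $\iGpd\ana$-equivalences (the analogue of the converse halves of \cref{thm:set-eqv,thm:sex-eqv}) requires the explicit computation of $v_!\,v^*$, i.e.\ the colimit construction you have not built. (ii) Your \spos counterexample ``$E\to\done$ for a merely inhabited groupoid'' fails: in the paper's set-theoretic background, inhabitation of $E$ already yields a function $\ob{\done}\to\ob{E}$, which by \cref{thm:epos-eqv} makes $E\to\done$ a \spos-equivalence over $\done$; indeed no counterexample can live over $\done$ at all, since a $\iGpd\ana$-equivalence with inhabited codomain has inhabited domain. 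The failure of \spos-locality must be witnessed over a nontrivial index set --- e.g.\ your own $E\to X$ regarded over $I=X$, where \cref{thm:epos-eqv} forces a section of the sectionless surjection $p\maps E\epi X$ --- which is exactly the over-$I$ subtlety the paper emphasizes in distinguishing \spos from \iSet. (Your \sreg counterexample is fine over $\done$, because there the zigzag conditions of \cref{thm:sreg-eqv}, applied in the discrete category $X$, already force $s$ to be a section.)
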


Of course, we can ask analogous questions about $n$-groupoids for $2\le n\le \infty$.

\begin{conj}
  % Assuming a weak form of the axiom of choice,
  There is a left derivator $\sana$ composed of ``$\infty$-groupoids and anafunctors''.
  Moreover:
  \begin{itemize}
  \item $\sana$ is a relative free cocompletion of a point, and is ``$(\infty,1)$-exact''.
  \item Every weak equivalence of categories is an \sana-equivalence.
  \item \iSet and $\iGpd\ana$ are $\sana$-local, but \spos, \sreg, and \iGpd are not.
  \item The subcategory of 1-truncated objects in \sana is $\iGpd\ana$.
  \end{itemize}
\end{conj}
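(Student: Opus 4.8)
The plan is to replay, one categorical dimension at a time and then in the limit, the three-part strategy behind \cref{thm:sex-rfcc}: first equip a suitable category $\sana(A)$ of ``coherent $A$-diagrams of anafunctorially-presented $\infty$-groupoids'' with a left-derivator structure; then build the $\cCat$-action $\odot$ together with its localized refinement $\odt$; and finally read off the universal property exactly as was done for $\sex$. For the underlying objects I would fix a constructively well-behaved model of $\infty$-groupoids --- semisimplicial Kan complexes or cartesian cubical sets, to avoid the degeneracy and choice issues that afflict simplicial sets without excluded middle. An object of $\sana(A)$ would then be an $A$-indexed family of such objects together with the full tower of composition and coherence cells (the all-dimensional analogue of the four data defining a coherent diagram in $\eex(A)$), and a morphism would be an equivalence class of anafunctorial representatives --- spans whose backward leg is a weak equivalence --- modulo higher witnesses of equality. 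Axioms (Der1) and (Der2) should transfer as before, using projectivity to assemble representatives across coproducts and the higher analogue of \cref{thm:der2} for conservativity.

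The real content lies in (Der3L)--(Der4L): constructing left Kan extensions $u_!$ without choice. I would generalize \cref{thm:colim}, realizing the colimit of a diagram as a \emph{free} $\infty$-groupoid generated by its cells --- a higher-dimensional cell attachment mimicking the free pseudo-equivalence relation of \cref{thm:free-pseqr} --- and verifying the pointwise-extension formula against representatives and witnesses, the same device that powered \cref{thm:der3}. Because we ask only for colimits and not for limits, this sidesteps the failure of completeness noted in the discussion. The claimed $(\infty,1)$-exactness would then be extracted from this free-colimit description, by analogy with the expectation that $\iEGpd$ is $(2,1)$-exact while $\iGpd$ and $\iRGpd$ are not.

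With a left derivator in hand, the universal property follows the route used for $\sex$: define $E\odot M$ via the Grothendieck construction as in \cref{thm:odot}, then localize it to $\odt$ by replacing the strict tilde of a diagram with its anafunctorial analogue and annihilating the oplax-versus-genuine-colimit discrepancy with a higher path-space --- the $\infty$-dimensional replacement for $\wp\Xtil$ and the role played by \cref{thm:path,thm:rhtpy-quot}. Cocontinuity in the first variable, the analogue of \cref{thm:odt-cc}, again reduces to exhibiting a Grothendieck-construction comparison map as an $\sana$-equivalence. The remaining bullets become bookkeeping: every weak equivalence is an $\sana$-equivalence essentially by construction, since the morphisms are defined by inverting weak equivalences; $\iSet$ and $\iGpd\ana$ are $\sana$-local because their equivalences are detected by $\pi_0$ and $\Pi_1$, which factor through the higher connectedness governing $\sana$-equivalences; and the identification of the $1$-truncated objects with $\iGpd\ana$ is a ``descend one level'' statement, to be proved as in the passage from $\iRGpd$ to $\sex$, by checking that truncation is compatible with the anafunctor localization.

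The main obstacle --- and the reason this must remain conjectural --- is the constructive and size-theoretic viability of these free higher colimits. Already at dimension one, \cite{ak:nonsmall-ana} shows that even in ZF the bicategory of categories and anafunctors need not be locally small, so one cannot take for granted that $\sana(A)$ is a genuine large category, nor that the free $\infty$-groupoid colimits exist without a choice-like principle such as WISC. Worse, the morphism representatives do not compose strictly --- the phenomenon already visible in \cref{rmk:nonassoc} for $\eex(A)$ --- so the tower of coherences needed merely to define $\sana(A)$, let alone to lift the tilde-construction pseudonaturally, grows without bound and has no finite presentation. Making this coherence machinery precise, presumably through a constructively valid fibration- or cofibration-category presentation rather than a Quillen model category, is the crux; everything downstream is a higher-dimensional but conceptually faithful replay of the $\sex$ argument.
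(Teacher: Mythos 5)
First, a point of orientation: the statement you were asked to prove is one of the paper's \emph{conjectures}. The paper offers no proof of it --- only the surrounding discussion in \cref{sec:anafunctors} of why it is hard (the non-local-smallness of anafunctor localizations via \cite{ak:nonsmall-ana}, the consequent impossibility of a model-category presentation, and possible remedies such as WISC, SCSA, or the Axiom of Stack Completions, all of which change the background theory). So there is no paper proof to compare against; the only question is whether your proposal actually closes the conjecture. It does not, and you concede as much in your final paragraph. What you have written is a roadmap --- essentially an elaboration of the very roadmap the paper itself sketches (semisimplicial or cubical models, a left-derivator-only target, a fibration/cofibration-category presentation in place of a Quillen model structure) --- not an argument.

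The gaps sit exactly where the mathematical content would have to be. You never define $\sana(A)$: ``the full tower of composition and coherence cells'' is an infinite coherence problem with no finite presentation, and writing it down constructively is precisely the open problem the paper is circling; the analogy with the four finite data of a coherent diagram in $\eex(A)$ does not determine the tower, nor how representatives compose (the failure in \cref{rmk:nonassoc} becomes unboundedly worse). The left Kan extensions are asserted by analogy with \cref{thm:free-pseqr,thm:colim}, but the zigzag-word construction underlying the free pseudo-equivalence relation has no known all-dimensional analogue; a ``free $\infty$-groupoid generated by cells'' is exactly what one cannot currently build constructively, and you give no construction. The size obstruction from \cite{ak:nonsmall-ana} is not merely a reason the result ``must remain conjectural'': it threatens the assertion that $\sana$ is even a prederivator, since its values must be categories in \cCAT, and your proposal offers no workaround within the paper's stated background theory. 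Finally, the bullets you call ``bookkeeping'' are not. The claim that every weak equivalence of categories is an $\sana$-equivalence concerns weak equivalences of \emph{diagram shapes} $u:A\to B$ in \cCat, i.e.\ full faithfulness of $u^*$ on suitable diagrams as in \cref{defn:deqv}; this is not the same as the weak equivalences of $\infty$-groupoids you invert when defining morphisms of $\sana$, so ``by construction'' is a category error and a genuine homotopy-invariance argument is needed. The negative locality claims (\spos, \sreg, \iGpd not $\sana$-local) require explicit counterexamples you never address, and the identification of the $1$-truncated objects with $\iGpd\ana$ is said to follow ``as in the passage from \iRGpd to \sex'' --- but that passage is itself conjectural in the paper (\cref{fig:more-locality}), so you would be resting one conjecture on another.
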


These conjectural derivators $\iGpd\ana$ and $\sana$ are closely related to the issue raised in \cref{sec:introduction} that perhaps our definition of derivator is wrong: maybe we should use $\cCat\ana$ instead of $\cCat$.\footnote{It seems that replacing $\cCAT$ by $\cCAT\ana$ makes less of a difference.
  Since functors are in particular anafunctors, all our examples such as \sex are still derivators with this generalized definition.
  And as long as \emph{all} the functors $u^*,u_!,u_*$ in the target \D, and the components of derivator morphisms, are generalized to anafunctors simultaneously, I would expect essentially the same arguments for their universality to go through.}
Since $\cCat\ana$ is equivalent to the bicategory obtained by inverting the weak equivalence functors in \cCat, a natural definition of \textbf{ana-derivator} would be simply as a derivator such that $u^*:\D(B)\to\D(A)$ is a (perhaps weak) equivalence whenever $u:A\to B$ is a weak equivalence.

Of the derivators considered in this paper, \iSet and \iProp are ana-derivators, while it seems that the others are not (though I do not have a formal proof).
For instance, let $u:A\to B$ be a weak equivalence functor with $B$ discrete, and $X\in \spos(B)$.
Then $(u_* u^* X)_b$ is the power $X_b^{\ob{u^{-1}(b)}}$ of the set $X_b$ by the objects in the $u$-preimage of $b$.
The adjunction unit $X \to u_* u^* X$ consists of the diagonals $X_b \to X_b^{\ob{u^{-1}(b)}}$, but there seems no way to define a family of functions in the other direction without choosing elements of the fibers to give factors to project onto.

\begin{conj}
  $\iGpd\ana$ and $\sana$ are left ana-derivators.
  Moreover, \sana is the free cocompletion of a point among ana-derivators, while $\iGpd\ana$ is a relative free cocompletion of a point therein.
\end{conj}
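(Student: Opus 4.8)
The plan is to deduce both universal properties from the \emph{preceding} conjectures — which posit that \sana and $\iGpd\ana$ exist as left derivators and are relative free cocompletions of a point — together with the single new observation that every ana-derivator is automatically \sana-local. Granting the existence of these derivators, the argument then runs parallel to the passage from \cref{thm:sex-rfcc} to \cref{thm:subsex-rfcp}.

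First I would check that \sana and $\iGpd\ana$ are left ana-derivators, i.e.\ that $u^*:\D(B)\to\D(A)$ is an equivalence whenever $u:A\to B$ is a weak equivalence. For these particular derivators this ought to be structural rather than computational: their values are assembled from ($\infty$- or $1$-)groupoids and \emph{anafunctors}, and anafunctors are by construction the arrows one obtains by inverting the weak equivalences of \cCat. Thus $u$ becomes invertible in $\cCat\ana$, and restriction along an ana-isomorphism should be an equivalence; concretely one builds an ana-quasi-inverse to $u^*$ from a chosen ana-inverse of $u$. This is the step where the anafunctor structure of the target must be used in an essential way, so it cannot be settled until the precise construction of the derivators is fixed.

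Next I would identify the \sana-equivalences with the weak equivalences, which is what links ``ana-derivator'' to ``\sana-local''. The inclusion of weak equivalences into \sana-equivalences is asserted in the preceding conjecture; for the reverse inclusion I would use the terminal-object trick already exploited in the converse halves of \cref{thm:set-eqv,thm:sex-eqv}. Namely, if $u$ is a \sana-equivalence over $I=\pi_0(B)$, then by \cref{thm:deqv} the comparison $(vu)_!\,(vu)^*\ast \to v_!\,v^*\ast$ is an isomorphism in $\sana(I)$; but $v_!\,v^*\ast$ computes the $\infty$-groupoid reflection $\Pi_\infty$ of the fibers of $v$, so this isomorphism says precisely that $u$ induces an equivalence $\Pi_\infty(A_i)\simeq\Pi_\infty(B_i)$ for each $i$, i.e.\ that $u$ is a (fiberwise) weak equivalence. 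With the two classes identified, any ana-derivator \D inverts every weak equivalence by definition, hence (directly, since $u^*$ is then fully faithful) every \sana-equivalence is a \D-equivalence; that is, \D is \sana-local. The free-cocompletion claims now follow formally: since every ana-derivator is \sana-local, the relative free cocompletion property of \sana from the preceding conjecture specializes to give an equivalence $\cchom(\sana,\D)\to\D(\done)$ for every ana-derivator \D, which is exactly ``\sana is the free cocompletion of a point among ana-derivators''; and restricting the preceding conjecture for $\iGpd\ana$ to those \sana-local \D that are moreover $\iGpd\ana$-local yields the ``relative free cocompletion therein''.

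The main obstacle is emphatically not this bookkeeping but the prerequisite on which everything rests: the \emph{existence} of \sana and $\iGpd\ana$ as left derivators. As the paper stresses, even for $1$-groupoids the bicategory $\cCat\ana$ cannot be proved locally small, cartesian closed, or complete without extra axioms \cite{ak:nonsmall-ana}, so producing a $2$-functor $\cCat\op\to\cCAT$ equipped with the left adjoints of (Der3L) and satisfying (Der4L) will demand genuinely new technology — presumably a fibration- or cofibration-category presentation in place of a model-categorical one, and a higher-dimensional substitute for the explicit path-space strictification of \cref{thm:odot,thm:path,thm:rhtpy-quot}. A secondary difficulty is confirming that the \sana-equivalences are \emph{exactly} the weak equivalences rather than some strictly larger saturated class (\cref{thm:deqv-sat}), since the reduction above depends on that equality; but I expect the construction of the derivators themselves to be by far the hardest part, which is precisely why the statement is recorded only as a conjecture.
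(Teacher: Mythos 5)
This statement is one of the paper's closing \emph{conjectures}: the paper deliberately offers no proof of it, and indeed explicitly flags the obstructions (e.g.\ that $\iGpd\ana$ cannot be shown locally small, cartesian closed, or complete even in ZF~\cite{ak:nonsmall-ana}). So there is no argument of the paper to compare yours against; the only question is whether your proposal would actually settle the conjecture, and it would not. As you say yourself, it is a conditional reduction to the two preceding (equally unproven) conjectures plus one new claim --- that every ana-derivator is \sana-local --- and it is exactly that new claim where the argument breaks down, not merely at the existence of the derivators.

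The gap is your identification of the \sana-equivalences with the weak equivalences of categories. In this paper (and in the localization defining anafunctors) a \emph{weak equivalence} means a fully faithful, essentially surjective functor, and that is all the ana-derivator condition lets you invert. Your own terminal-object computation, via \cref{thm:deqv}, shows something different: that the \sana-equivalences should be the functors inducing equivalences of $\infty$-groupoid reflections of fibers --- the constructive analogue of \emph{Thomason} weak equivalences. That class is strictly larger: the projection $\dtwo\to\done$, or any category with a terminal object mapped to $\done$, induces an equivalence on $\Pi_\infty$ but is not fully faithful, hence not a weak equivalence of categories. Accordingly, the paper's preceding conjecture asserts only the one inclusion (``every weak equivalence of categories is an \sana-equivalence''), never the converse, and the converse is false already classically. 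So ``any ana-derivator inverts every weak equivalence, hence every \sana-equivalence'' is a non sequitur, and with it the passage from ``relative free cocompletion among \sana-local left derivators'' to ``free cocompletion among ana-derivators'' collapses. What is actually needed there is a constructive counterpart of the Grothendieck--Cisinski theorem that the \emph{minimal basic localizer} is the class of weak homotopy equivalences --- classically this is the hard core of the proofs behind \cref{thm:spaces-afcp}, not bookkeeping --- so this step, together with the existence of the derivators, is the real content of the conjecture. (By contrast, the $\iGpd\ana$ half of your reduction is fine as far as it goes: if the preceding conjecture held, its universal property among all $\iGpd\ana$-local left derivators would restrict formally to the ana-derivators among them.)
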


\begin{rmk}
  It is natural to wonder, if the right-hand column in \cref{fig:more-locality} has its ``own notion of derivator'' (the above-defined ana-derivators), why is that not the case for the other columns?
  In fact, there are other ways to vary the notion of derivator.
  The notion of derivator we have worked with corresponds roughly to the second column from the right; but one could also replace the 2-categories \cCat and/or \cCAT by \cE-2-categories of \cE-categories, or \cR-2-categories of \cR-categories.%
%   Of course, if we made the choice to use E-categories in place of categories globally in constructive mathematics, then the 2-categories \cCat and \cCAT in our definition of derivator would have to be replaced by E-2-categories, the ``homotopy categories'' would have to be replaced by ``homotopy E-categories'' as in~\cite{henry:weak-modelcats}, and so on.
  \footnote{To continue getting new notions beyond the fourth column, one would need to generalize to ``$n$-derivators'' in the sense of~\cite{raptis:hderiv}, with the domain \cCat replaced by some version of $(n,1)\text{-}\cCat$.
  That is, the notion of derivator can vary not only with the column but also with the row.}
  I have not pursued this direction; the goal of this paper was to show that \emph{even} if we try as hard as possible to take sets and set-based categories as our basic notions, we seem to be led, ineluctably, either to setoids and \cE-groupoids, or to anafunctors.

  The next question is, if only the right-hand column of \cref{fig:more-locality} consists of $\cCat\ana$-derivators, why does the \emph{whole} figure consist of \cCat-derivators, rather than just the two right-hand columns?
  In fact, I would expect that if we define \sex (for instance) as an \cECat-derivator, it would \emph{not} be a ``\cCat-derivator'' in the sense that $u^*$ is an equivalence for any \cE-functor $u$ that is inverted by the reflection of \cECat into \cCat.
  The difference is that \cCat is a \emph{reflective} localization of \cECat, so that we can make the \cECat-derivator \sex into a \cCat-derivator in a different way by simply \emph{restricting} its domain to the sub-2-category \cCat of \cECat.
  The latter restriction is the derivator we have called \sex in this paper.
\end{rmk}

A positive solution to the above conjectures would, I believe, give a systematic explanation of many confusing aspects of homotopy theory in set-based constructive mathematics.
However, it is not clear whether it would conclusively answer the question of what the ``correct'' constructive theory of spaces is, since both candidates $\iSpace$ and $\sana$ have drawbacks: the former truncates to $\sex$ rather than \iSet, while the latter is not locally small, cartesian closed, or complete.

Of course, such bifurcations of classical notions are not uncommon in constructive mathematics.
However, in this case there is more to be said: if we are willing to modify the background theory (while still keeping it ``constructive'' in at least some sense), we can make $\iGpd\ana$ and $\sana$ much better-behaved.

It is known that local smallness and cartesian closure of $\iGpd\ana$ (and also, presumably, $\sana$) requires much less than the full axiom of choice: it suffices to assume SCSA~\cite{makkai:avoiding-choice} or WISC~\cite{roberts:ana} (a.k.a.\ AMC~\cite{vdb:pred-topos}).
These weak choice axioms have at least some claim to being constructive, as they often hold in large classes of models of constructive mathematics, such as Grothendieck toposes, realizability toposes, and exact completions.
I do not know whether these axioms make $\iGpd\ana$ complete, but there is another axiom that should do so: the Axiom of Stack Completions~\cite{bh:psmon-stcplt}, which implies that $\iGpd\ana$ is equivalent to a \emph{reflective} localization of \iGpd (hence also of \iEGpd), whose objects are the ``intrinsic stacks'' relative to surjections of sets.
The constructive nature of ASC is perhaps debatable, but at least it holds in all Grothendieck toposes~\cite{jt:strong-stacks}.

Another approach is to choose instead to do constructive homotopy theory based on a foundational system in which spaces are primitive objects, such as homotopy type theory.
This is my preferred solution, so I will conclude with some remarks about its advantages.

\begin{rmk}\label{rmk:univalence}
  As noted in~\cite{lumsdaine:ecats}, the diagonals of \cref{fig:more-locality} bear a strong resemblance to the hierarchy of \emph{saturation} or \emph{univalence} conditions on (higher-)categorical structures defined in homotopy type theory~\cite{aks:rezk,anst:up}.
  When a groupoid is presented by a diagram on an inverse-category signature as in~\cite{makkai:folds,anst:up}, it has three ranks of type dependency, corresponding to the objects, morphisms, and equalities.
  Roughly speaking, \cE-groupoids correspond to unrestricted categories of this sort, while \cR-groupoids are univalent at the top rank (equalities), and ordinary groupoids are univalent at the top two ranks (equalities and morphisms).

  In a set-based foundation, it is not possible to be more univalent than this; but in homotopy type theory, we can also impose univalence conditions at the bottom rank of objects.
  The resulting homotopy theory \iUGpd of \emph{univalent groupoids} is a \emph{reflective} localization of \iGpd\footnote{For the expert, note that here we interpret ``groupoids'' as particular \emph{precategories} in the sense of~\cite{aks:rezk,hottbook}, with no dimension restriction on their type of objects.} at the weak equivalences, closely related to the category of ``intrinsic stacks'' mentioned above in connection with ASC.
  Hence, \iUGpd plays a similar role to $\iGpd\ana$, but without the attendant disadvantages.
  In particular, it is locally small, cartesian closed, exact, and has limits as well as colimits, while its subcategory of 0-truncated objects is \iSet.
  Similarly, the category of univalent $\infty$-groupoids (spaces) plays the expected role of $\iSpace\ana$.

  In fact, a ``univalent groupoid'' is equivalently just a type with the property of being a 1-type, while a ``univalent space'' is simply a type with no restrictions.
  That is, in homotopy type theory the primitive objects are the objects of $\iSpace\ana$ rather than those of \iSet, so that none of the elaborate work involved in defining higher groupoids and homotopy spaces is necessary.
  (The related notions of higher \emph{category}, however, are still nontrivial.)

  I expect that the primitive spaces in homotopy type theory form a derivator (although proving this may require an enhanced theory such as~\cite{ack:2ltt}).
  It is unclear whether the resulting derivator of univalent spaces would be a free cocompletion of a point;
  the answer might depend on how univalent the 1-categories in \cCat are assumed to be, and/or on strong classicality axioms such as
  $\mathsf{AC}_{\infty,-1}$ from~\cite[Exercise 7.8]{hottbook}.
  (In particular, since univalent 1-categories are now a reflective localization of non-univalent ones, it seems likely that all the other derivators in \cref{fig:more-locality} will still exist even if we replace \cCat by \cUCat.
  Thus $\iSpace\ana$ may not be a free cocompletion of a point unless there is a classicality axiom to collapse the columns.)

  However, the ``correctness criterion'' advanced in this paper for a homotopy theory of spaces is not justified for homotopy type theory anyway.
  This criterion seeks to characterize the homotopy theory of spaces in terms of sets (or at most 1-categories); thus it makes sense in a world whose primitive objects are sets, but not in a world where spaces are already present as primitive objects.
\end{rmk}

\bibliographystyle{alpha}
\bibliography{all}

\end{document}